\documentclass[10pt]{amsart}
\pdfoutput=1
\usepackage{amsmath, amsthm, amssymb,slashed,stmaryrd}
\usepackage{ifpdf}
\usepackage[pdftex]{graphicx}
\usepackage{tikz}
\usetikzlibrary{matrix,arrows,calc}

\usepackage[pdftex,plainpages=false,hypertexnames=false,pdfpagelabels]{hyperref}
 \setlength\topmargin{0in}
\setlength\headheight{0in}
\setlength\headsep{.2in}
\setlength\textheight{9in}
\addtolength{\hoffset}{-0.25in} 
\addtolength{\textwidth}{.5in} 
\setlength\parindent{0.25in} 

\long\def\todo#1{{\color{red} {#1}}}

 \theoremstyle{plain}
 \newtheorem{thm}{Theorem}[section]
    
 \newtheorem{cor}[thm]{Corollary}
  
 \newtheorem{lem}[thm]{Lemma}
 \newtheorem{prop}[thm]{Proposition}
 \newtheorem{conj}[thm]{Conjecture}
  \newtheorem{hyp}[thm]{Hypothesis}
 \theoremstyle{definition}
 \newtheorem{defn}[thm]{Definition}
 \newtheorem{notation}[thm]{Notation}
 \newtheorem{ex}[thm]{Example}

 \newtheorem*{thm*}{Theorem}
 \theoremstyle{remark}
 \newtheorem{rmk}[thm]{Remark}
  \newtheorem{terminology}[thm]{Terminology}
 \newcommand{\colim}{\mathop{\rm colim}\limits}

\def\beq{\begin{eqnarray}}
\def\eeq{\end{eqnarray}}
 \newcommand{\bp}{\begin{proof}[Proof]}
 \newcommand{\ep}{\end{proof}}

\DeclareSymbolFont{bbold}{U}{bbold}{m}{n}
\DeclareSymbolFontAlphabet{\mathbbold}{bbold}

\def\Ell{{\rm Ell}}
\def\wz{{\rm wz}}

\def\dEll{\widehat{{\rm Ell}}{}}

\def\TMF{{\rm TMF}}

\def\eigen{{\epsilon}}
\def\Mst{\mc{M}}
\def\Fcq{\mc{F}}
\def\Mstil{\widetilde{\Mst}}
\def\Ftil{\widetilde{\Fcq}}
\def\SMQ{\mathcal{S}_{\rm MQ}}
\def\x{{\bf x}}
\def\deg{{\rm deg}}
\def\aux{{a}}
\def\omegas{{\omega}}
\def\omegasbar{{\overline{\omega}}}

\def\Mell{\mathcal{M}_{\rm ell}}

\def\O{{\mathcal{O}}}
\def\H{{\rm H}}
\def\HH{{\mathbb H}}
\def\Spin{{\rm Spin}}

\def\Ch{{\rm C}}

\def\Det{{\rm Det}}
\def\Ad{{\rm Ad}}
\def\Map{{\sf Map}}
\def\pf{{\rm pf}}

\def\EE{{\mathcal{E}}}
\def\Euni{\mathcal{E}}

\def\red{{\rm red}}

\def\pt{{\rm pt}}

\def\Sym{{\rm Sym}}

\def\pf{{\rm pf}}
\def\MF{{\rm MF}}
\def\vol{{\rm vol}}

\def\Pic{{\sf Pic}}
\def\ev{{\rm ev}}
\def\odd{{\rm odd}}

\def\Fer{{\rm Fer}}
\def\pre{{\rm pre}}

\def\R{{\mathbb{R}}}
\def\fg{{\mathfrak{g}}}

\def\E{{\mathbb{E}}}

\def\T{{\mathbb{T}}}

\def\CP{{\mathbb{CP}}}

\def\bS{{\mathbb{S}}}
\def\id{{{\rm id}}}

\def\C{{\mathbb{C}}}

\def\Z{{\mathbb{Z}}}
\def\X{{\mathcal{X}}}
\def\Y{{\mathcal{Y}}}
\def\F{{\mathcal{F}}}

\def\Vect{{\sf Vect}}
 
\def\End{{\sf End}}
\def\Aut{{\sf Aut}}
\def\Bun{{\sf Bun}}

\def\SL{{\rm SL}}
\def\GL{{\rm GL}}
\def\MP{{\rm MP}}
\def\Hom{{\sf Hom}}
\def\Fun{{\sf Fun}}
\def\SM{{\sf Map}}


\def\Conf{{\sf Conf}}

\def\Lat{{\sf Lat}}
\def\cL{{\mathcal{F}}}

\newcommand{\op}{{\sf{op}}}   
\newcommand{\sq}{{ / \!\! / }}
\newcommand{\cq}{/}
\newcommand{\gq}{{ / \!\! / }}
\newcommand{\nsq}{{\sq^{\!\nabla}}\!}
\newcommand{\ncq}{{/^\nabla}\! }

\def\twocommute{\ensuremath{\rotatebox[origin=c]{30}{$\Rightarrow$}}}
\def\downin{\ensuremath{\rotatebox[origin=c]{90}{$\in$}}}

\vfuzz4pt 
\hfuzz4pt 

\newcommand\nc{\newcommand}
\nc\mf\mathfrak
\nc\mc\mathcal
\nc\mb\mathbb

\begin{document}

\title[A model for equivariant elliptic cohomology from quantum field theory]{A model for complex analytic equivariant elliptic cohomology from quantum field theory}

\author{Daniel Berwick-Evans and Arnav Tripathy}

\date{\today}

\begin{abstract}
We construct a global geometric model for complex analytic equivariant elliptic cohomology for all compact Lie groups. 
Cocycles are specified by functions on the space of fields of the two-dimensional sigma model with background gauge fields and $\mc{N} = (0, 1)$ supersymmetry. We also consider a theory of free fermions valued in a representation whose partition function is a section of a determinant line bundle. We identify this section with a cocycle representative of the (twisted) equivariant elliptic Euler class of the representation. Finally, we show that the moduli stack of $U(1)$-gauge fields carries a multiplication compatible with the complex analytic group structure on the universal (dual) elliptic curve, with the Euler class providing a choice of coordinate. This provides a physical manifestation of the elliptic group law central to the homotopy-theoretic construction of elliptic cohomology. 
\end{abstract}

\maketitle 
\setcounter{tocdepth}{1}
\tableofcontents

\section{Introduction}\label{intro}

Elliptic cohomology grew out of a combination of Quillen's deep work on the role of formal group laws in stable homotopy theory~\cite{Quillen} and Ochanine's construction of elliptic genera~\cite{Ochanine,Segal_Elliptic,LRS}. In the late 1980s, Witten showed that certain 2-dimensional supersymmetric quantum field theories have deformation invariants valued in elliptic genera~\cite{Witten_Elliptic,Witten_Dirac}. Shortly thereafter, Segal suggested the existence of a map from a (yet to be constructed) space of 2-dimensional supersymmetric field theories to elliptic cohomology, realizing these quantum field theories as geometric cocycles~\cite{Segal_Elliptic,SegalElliptic}. This spurred some dazzling developments in homotopy theory led by Hopkins and collaborators~\cite{HopkinsICM94,AHSI,HopkinsICM2002,GoerssHopkins,AHSII,Goerss,Lurie,AHR,LurieI,LurieII,LurieIII}. One highlight is the construction of the universal elliptic cohomology of topological modular forms (TMF) with its string orientation. This orientation lifts the universal elliptic genus (now called the \emph{Witten genus}) to a map of ring spectra, analogous to how the Spin orientation in real K-theory lifts the $\hat{A}$-genus. 

Despite all this progress, Segal's proposed connection between field theories and elliptic cohomology remains unrealized. The original ideas have been refined considerably by Stolz and Teichner, leading to a conjecture~\cite[Conjecture~1.17]{ST11} which we loosely summarize as follows.

\begin{conj}[Segal, Stolz--Teichner]\label{conj}
For any smooth manifold $M$, there exists a natural isomorphism
\beq
&&\left\{\begin{array}{c} 2{\rm -dimensional \ quantum\ field\ theories} \\ {\rm with} \ \mathcal{N}=(0,1)\  {\rm supersymmetry\ over}\ M\end{array}\right\}/{\rm deformation} \stackrel{\sim}{\to} \TMF(M)\label{eq:TMFconj}
\eeq
that realizes deformation classes of field theories as classes in $\TMF(M)$. 
\end{conj}

Taken at face value, this is extremely surprising: quantum field theories are objects that belong to the world of analysis and differential geometry, whereas elliptic cohomology is constructed using sophisticated algebraic and homotopy-theoretic techniques. The statement is powerful in either direction. Using the left-hand side to understand the right-hand side would provide geometric cocycles for elliptic cohomology, as was Segal's original motivation. 
Conversely, the right-hand side gives a homotopy theoretic characterization of deformation classes of two-dimensional field theories, which would be a tool of great interest to high-energy theorists, e.g., see~\cite{GJFW,VafaTMF}. 

To date, the primary evidence for~\eqref{eq:TMFconj} comes from analogous statements relating K-theory and \emph{one}-dimensional supersymmetric quantum field theories, i.e., supersymmetric quantum mechanics. Indeed, Stolz and Teichner~\cite{ST04,HST} constructed a space of supersymmetric quantum mechanical theories that represents the spectrum KO, giving a parallel to~\eqref{eq:TMFconj}. The physics argument for the index theorem~\cite{WittenMorse,Alvarez} has also been deeply influential, where the index map in K-theory is identified with a quantization map in supersymmetric quantum mechanics. This point of view leads to the slogan ``the Witten genus is the index of the Dirac operator on loop space." To date, this can only be understood at a physics level of rigor~\cite{Witten_Dirac}. However, considerable insights were made early on~\cite{Taubes,BottTaubes} and progress on the analytic underpinnings continues~\cite{KottkeMelroseI,KottkeMelroseII,Melrosecourse}. 

In short, Conjecture~\ref{conj} asks for an upgrade of certain structures in supersymmetric quantum mechanics to ones in supersymmetric quantum field theory. The mathematics of quantum field theory has been fraught with a myriad of technical difficulties, which is largely why~\eqref{eq:TMFconj} is so difficult. Indeed, there are swaths of quantum field theory for which there is currently no hope for rigorous foundations. Other pieces (e.g., free theories) are well-understood by mathematicians but usually not very interesting to physicists. Conjecture~\ref{conj} identifies a Goldilocks zone: a class of quantum field theories for which one might hope that rigorous foundations exist, and yet---in view of the complexities in TMF---must be highly nontrivial.

\subsection*{An equivariant refinement of Conjecture~\ref{conj}} Supposing one could construct the space of quantum field theories on the left side of~\eqref{eq:TMFconj}, there remains another significant challenge: writing down a map. Fortunately, there is a universal property characterizing TMF. Roughly, it is the global sections of a sheaf of elliptic cohomology theories on the moduli stack of elliptic curves. 
For a fixed family of elliptic curves~$E$, this sheaf defines an elliptic cohomology theory $h_E$ that is characterized by its theory of Chern classes. For~$c$ the Chern class of the tautological line, consider pullbacks 
\beq
&&c\in h_E(\CP^\infty)\stackrel{p_1^*,p_2^*,m^*}{\longrightarrow} h_E(\CP^\infty\times \CP^\infty),\quad p_1,p_2,m\colon \CP^\infty\times \CP^\infty\to \CP^\infty,\label{eq:FGL}
\eeq
along projections $p_1,p_2$ and the multiplication map $m$ that classifies the tensor product of tautological lines. This gives a universal formula for the Chern class of a tensor product of line bundles in terms of a power series~$F$,
$$
m^*c=F(p_1^*c,p_2^*c)\in h_E(\CP^\infty\times \CP^\infty),\qquad F\in h_E(\pt)[[x,y]].
$$
By definition of the elliptic cohomology theory~$h_E$,~$F$ is the formal group law of the elliptic curve with choice of coordinate determined by~$c$. This leads to an important question: where in 2-dimensional physics do we find elliptic (formal) group laws?

One place is $U(1)$-gauge theory, where classical fields are principal $U(1)$-bundles with connection. Such bundles can be tensored, and this endows moduli spaces of $U(1)$-gauge fields with a multiplication. In particular, the flat $U(1)$-bundles on a genus~1 Riemann surface can be identified with degree zero holomorphic line bundles with group operation given by tensor product. This complex analytic group is canonically isomorphic to an elliptic curve. 
The appearance of elliptic curves as groups---rather than merely their elliptic formal group laws---suggests a connection with \emph{equivariant} elliptic cohomology. 

The coefficients for Grojnowski's complex analytic $U(1)$-equivariant elliptic cohomology is the sheaf of functions on the dual elliptic curve, $\Ell_{U(1)}(\pt)\simeq \mathcal{O}_{\EE^\vee}$~\cite{Grojnowski}. Naturality for the homomorphisms $m,p_1,p_2\colon U(1)\times U(1)\to U(1)$ then lifts the formal group law~\eqref{eq:FGL} to multiplication on the curve itself, expressed in terms of its sheaf of functions. An important structure in the theory is a decompletion of~\eqref{eq:FGL} in the sense of Atiyah--Segal completion~\cite{Atiyah_Segal_complete}. Namely, Ando constructed a $\ell$-twisted class
\beq
\begin{array}{ccl}
\Ell^\ell_{U(1)}(\pt)&\stackrel{{\rm completion}}{\longrightarrow}& h_E(BU(1))=h_E(\CP^\infty)\\
\stackrel{\downin}{c_{U(1)}} &\stackrel{}{\mapsto} & \phantom{blah}\stackrel{\downin}{c} 
\end{array}\label{eq:ellcomplete}
\eeq
that refines the non-equivariant Chern classes $c\in h_E(\CP^\infty)$ for each elliptic curve~\cite{Ando}. 
Geometrically, \emph{twisted} means that $c_{U(1)}$ is a section of a line bundle over $\EE^\vee$. Restricting to an elliptic curve curve $E$ and trivializing this line bundle near the identity $e\in E$ identifies $c_{U(1)}$ with a choice of coordinate~$c$ as in~\eqref{eq:FGL}. 


Returning to the discussion of $U(1)$-gauge theory, the sheaf $\Ell_{U(1)}(\pt)\simeq \mathcal{O}_{\EE^\vee}$ can be identified with functions on the moduli space of flat $U(1)$-bundles on a genus~1 Riemann surface. In physics, this sheaf arises as functions on a moduli space of \emph{classical vacua} or \emph{dimensionally reduced} fields in $U(1)$-gauge theory on genus~1 Riemann surfaces. There is a well-known line bundle over this moduli space that encodes the \emph{anomaly} of a family of free fermion theories. These theories are built out of the $\bar\partial$-operator on an elliptic curve twisted by a degree zero holomorphic line bundle, and the anomaly line bundle is the Quillen determinant line bundle of these $\bar\partial$-operators~\cite[\S4]{Freed_Det}. Furthermore, this line bundle has a holomorphic section determined by the \emph{partition function} of the quantum field theory. This partition function can be identified with the Weierstrass $\sigma$-function, which is a particular $\theta$-function that features prominently in the string orientation of topological modular forms~\cite{AHSI,AHR}. 

This discussion leads to a refinement of Conjecture~\ref{conj}. In the following, $\TMF_G$ denotes the derived global sections of a refinement of the sheaf $\Ell_G$ to one over~$\Z$; see~\cite{LurieIII,LenartDavid}.




\begin{conj}\label{Gconj}
For a smooth manifold $M$ with the action of a compact Lie group~$G$, there exists a natural isomorphism
\beq
&&\left\{\begin{array}{c} 2{\rm -dimensional \ quantum\ field\ theories} \\ {\rm with} \ \mathcal{N}=(0,1)\  {\rm supersymmetry\ and }\\  {\rm background\ } G \ {\rm flavor \ symmetry\ over} \ M\end{array}\right\}/{\rm deformation} \stackrel{\sim}{\to} \TMF_G(M)\label{eq:GTMFconj}
\eeq
that realizes deformation classes of field theories as classes in $\TMF_G(M)$. Furthermore, the partition function of the free fermion with background $U(1)$-gauge fields gives a cocycle refinement of the twisted equivariant Chern class, $c_{U(1)}\in \TMF^\ell_{U(1)}(\pt)$. 
\end{conj}

The relationship between gauge theories and equivariant refinements has been hinted at in the literature for years, e.g., for finite groups in~\cite{Segal_Elliptic,LibgoberSzczesny,GanterHecke}, in relation to loop spaces and loop groups in~\cite{Witten_Dirac,Brylinski,BottTaubes,KitchlooII,Rezk,Spong}, and for lower dimensional field theories in~\cite{GuilleminSternberg} and~\cite{ST11}. Indeed, the discussion in~\cite[\S1.7]{ST11} hints at a version of Conjecture~\ref{Gconj}. Although $\TMF_G$ has yet to be constructed in complete generality, the complex analytic theory has been around since the `90s~\cite{Grojnowski}, and many salient features of the general theory are known~\cite{Lurie,LurieIII,LenartDavid}.

Upgrading Conjecture~\ref{conj} to Conjecture~\ref{Gconj} serves four purposes. First, \emph{equivariant} elliptic cohomology satisfies an even stronger universal property, making a map~\eqref{eq:GTMFconj} easier to construct than~\eqref{eq:TMFconj} as outlined in~\cite[\S5.5]{Lurie}. Second, equivariant elliptic cohomology incorporates positive energy representations of loop groups, which allows one to utilize various tools from representation theory that have well-known connections with 2-dimensional field theory~\cite{PressleySegal,SegalCFT}. This runs in analogy to the situation in K-theory, wherein the representation theoretic content of equivariant refinements clarify many seemingly non-equivariant structures, such as the spin orientation and power operations. Third, equivariant TMF has significantly more structure, making it harder to guess the ``wrong" definition on the left hand side in~\eqref{eq:GTMFconj}. 
This idea of adding structure is parallel to a critical step in the construction of the derived sheaf whose global sections are~TMF: upgrading  the sheaf to one of $E_\infty$-algebras is what makes the obstruction theory tractable~\cite[\S6.2]{HopkinsICM2002},\cite[pg.~9]{Lurie}. 
Fourth and finally, the geometry of elliptic curves and their group laws is more apparent in the equivariant context. Clarifying the structures in physics that produce (formal) group laws is important not only for Conjecture~\ref{conj}, but also in getting to the root of the many parallels between structures in chromatic homotopy theory at height~$d$ and quantum field theory in dimension~$d$, e.g., see~\cite{AndoMorava,MoravaHKR,Ganterpower,ST11,Powerops}.

There are many hints from physics that a resolution of Conjecture~\ref{Gconj} will lead to genuinely new mathematics. 
Recent invariants of quantum field theories are offering hints for how to recover certain torsion classes in TMF from the geometry of manifolds~\cite{GJF2,GJFW}. In a different direction, equivariant elliptic cohomology is already playing a role in the study of 3d $\mathcal{N} = 4$ gauge theories via their 2d supersymmetric boundary conditions~\cite{AganagicOkounkov}. Finally, taking twisted compactifications of 6d $\mc{N} = (0, 1)$ field theories on $4$-manifolds yield 2d $\mc{N} = (0, 1)$-supersymmetric field theories that (by Conjecture~\ref{Gconj}) should define classes in TMF. These have an equivariant refinement by residual flavor symmetry in the 2d compactification. Gukov, Pei, Putrov, and Vafa~\cite{VafaTMF} have recently explored these intriguing $\TMF_G$-valued 4-manifold invariants, which notably generalize classical invariants such as (modified) Donaldson and Vafa--Witten invariants. Surely, these applications will continue to develop and expand as the theory does. 

\subsection*{Summary of results}

The main result of this paper is a proof of Conjecture~\ref{Gconj} over the complex numbers. For any reasonable definition of the left hand side in Conjectures~\ref{conj} and~\ref{Gconj}, there is a character map called the \emph{partition function}, which gives the downward arrow on the left:
\beq
\begin{tikzpicture}[baseline=(basepoint)];
\node (A) at (0,0) {$\left\{\begin{array}{c} 2{\rm -dimensional \ quantum\ field\ theories} \\ {\rm with} \ \mathcal{N}=(0,1)\  {\rm supersymmetry\ and }\\ G\ {\rm flavor\ symmetry\ over} \ M\end{array}\right\}$};
\node (B) at (9,0) {$\TMF_G(M)$};
\node (C) at (0,-2) {$\mathcal{O}(\cL_0(M\ncq G))$};
\node (D) at (9,-2) {$\R\Gamma(\Bun_G(\EE);\Ell_G(M))$}; 
\draw[->,dashed] (A) to node [above] {Conjecture~\ref{Gconj}}  (B);
\draw[->,dashed] (A) to node [left] {${\rm partition\ function}$}  (C);
\draw[->] (C) to node [below] {Theorem~\ref{thm:comparison}} (D);
\draw[->,dashed] (B) to node [right] {$\otimes \C$} (D);
\path (0,-.75) coordinate (basepoint);
\end{tikzpicture}\nonumber
\eeq
where $\cL_0(M\ncq G)$ is a certain moduli stack of \emph{inertia fields} constructed below. 
The upper horizontal arrow is the content of Conjecture~\ref{Gconj}, though the source and target have yet to be defined at their desired levels of generality. However, complexification on the right is expected to recover (derived global sections of) Grojnowski's complex analytic equivariant elliptic cohomology ${\Ell}_G(M)$. We construct a lower horizontal arrow as a morphism of sheaves of commutative differential graded algebras that extracts from the sheaf of functions on $\cL_0(M\ncq G)$ the de~Rham cocycle model for ${\Ell}_G(M)$ from~\cite{BET0}. In particular, this constructs Grojnowski's complex analytic equivariant elliptic cohomology, proving the first statement in Conjecture~\ref{Gconj} over $\C$.

As essentially a formal consequence of the definitions, 
 the moduli space~$\F_0(\pt\ncq U(1))$ of $U(1)$-gauge fields carries a commutative multiplication. We show that the universal dual elliptic curve $\EE^\vee$ (as a complex analytic group) is a subgroup,
\beq
\begin{tikzpicture}[baseline=(basepoint)];
\node (A) at (0,0) {$\EE^\vee$};
\node (B) at (6,0) {$\left\{\begin{array}{c} U(1) {\rm -gauge\ fields}\\ {\rm with\ multiplication }  \end{array}\right\}$};
\node (C) at (0,-1) {$c_{U(1)}$};
\node (D) at (6,-1) {$\begin{array}{c} {\rm free\ fermion}\\ {\rm partition\ function} \end{array}$}; 
\draw[->,right hook-latex] (A) to node [above] {Theorem~\ref{thm:III}}  (B);
\draw[|->] (D) to node [above] {Theorem~\ref{thm:II}} (C);
\path (0,-.75) coordinate (basepoint);
\end{tikzpicture}\nonumber
\eeq
and furthermore the free fermion partition function is a section of an \emph{anomaly line bundle} over $\F_0(\pt\ncq U(1))$ whose restriction recovers $c_{U(1)}$. The anomaly line bundle is nontrivial, but a choice of trivialization near the trivial $U(1)$-bundle (i.e., the neutral element in $\F_0(\pt\ncq U(1))$) determines a choice of coordinate on~$\EE^\vee$. This recovers the elliptic formal group law. 

We further observe that the section $c_{U(1)}$ can be identified with the super character of the vacuum representation of $LU(1)\simeq L\Spin(2)$~\cite[\S1.2]{Kefeng}, \cite[\S11.1]{Ando}; this is expanded upon in~\cite{BET0}. This tripartite description of $c_{U(1)}$---as a coordinate for a (formal) group law, a character of a loop group representation, and a partition function of a quantum field theory---gives a clear link between homotopy theory, representation theory, and physics. These three objects all have refinements to ones over~$\Z$, namely a coordinate over~$\Z$, a positive energy representation of the loop group~$LU(1)$, and the Fock space associated with the $U(1)$-gauged free fermion. This observation illuminates a path toward proving Conjecture~\ref{Gconj} over~$\Z$. 

\subsection*{Main construction: The stack of inertia fields}

Let $G$ be a compact Lie group. For a $G$-manifold~$M$, we construct a stack of fields for the classical gauged sigma model with $\mathcal{N}=(0,1)$ supersymmetry, denoted~$\cL(M\nsq G)$. Essentially, $\cL(M\nsq G)$ is an enhancement of the mapping stack~$\Map(T^{2|1},[M\nsq G])$, where $T^{2|1}\simeq \R^{2|1}/\Z^2$ is a $2|1$-dimensional super torus with conformal structure and $[M\nsq G]$ is the stack classifying principal $G$-bundles with connection and a $G$-equivariant map to $M$. There is a substack $\cL_0(M\nsq G)\subset \cL(M\nsq G)$ of \emph{inertia fields} defined analogously to the the (double) \emph{inertia} or \emph{ghost loop} stack
$$
\Map([*\sq \Z^2],[M\nsq G])\subset \Map(T^2,[M\nsq G]).
$$
We informally state the effect of the ``super" aspect as follows.

\begin{lem}[Lemma~\ref{lem:present} and \S\ref{sec:slogan}] \label{slogan1} Functions on $\F_0(M\nsq G)$ are differential forms on the ordinary double inertia stack of $[M\nsq G]$ , together with an action of symmetries,
$$
C^\infty(\F_0(M\nsq G))\simeq \Omega^\bullet(\Map([\pt\sq \Z^2],[M\nsq G]);C^\infty(\Lat))\circlearrowleft{\rm symmetries}
$$
with coefficients in functions on the moduli space $\Lat$ of conformal structures on super tori. 
\end{lem}

The action of symmetries leads to certain equivariant de~Rham complexes, using the previously-known super geometric interpretation of equivariant de~Rham cohomology reviewed in \S\ref{sec:appende}. Specifically, we recall that for ordinary double inertia stacks, \emph{twisted sectors} are specified by a pair of commuting elements~$h_1,h_2\in G$ that determine a flat $G$-bundle on~$T^2=\R^2/\Z^2$. We show that restriction to an analogously defined twisted sector determines a map from $C^\infty(\F_0(M\nsq G))$ to the equivariant de~Rham complex of $M^{h_1,h_2}$ for the action by the centralizer of $h_1,h_2$; see~\S\ref{sec:revisit}.

We define $\cL_0(M\ncq G)$ to be the coarse quotient of the stack $\F_0(M\nsq G)$ by a class of isomorphisms, namely, gauge transformations. In~\S\ref{sec:Qcoh1} we consider a subsheaf of \emph{analytic} functions on $\cL_0(M\ncq G)$, defined as smooth functions satisfying an invariance condition
motivated by ideas from physics: theories with $\mathcal{N}=(0,1)$ supersymmetry have observables that vary holomorphically under deformations of the super torus and the flat $G$-bundle; see Remark~\ref{rmk:analytic} below and~\cite[\S4.4]{DualityMock} for a closely related argument for the $\mathcal{N}=(0,1)$ supersymmetric sigma model with compact target. 
We informally state this analytic condition as follows.

\begin{defn}[Definition~\ref{defn:generalanalytic}]\label{def:informalanalytic} The sheaf $\mathcal{O}_{\cL_0(M\ncq G)}$ of analytic functions on $\F_0(M\ncq G)$ consists of gauge-invariant functions on $\F_0(M\nsq G)$ with holomorphic dependence on the moduli of flat $G$-bundles (see Remark~\ref{rmk:Hodge}). 
\end{defn}


\subsection*{Results I: A cocycle model}
Complex analytic equivariant elliptic cohomology of a $G$-manifold~$M$ is a sheaf of graded rings~$\Ell_G(M)$ on the moduli space of $G$-bundles on elliptic curves, $
\Bun_G(\EE)$~\cite{Grojnowski}. The de~Rham model for equivariant elliptic cohomology is a sheaf of commutative differential graded algebras (cdgas), $\dEll_G(M)$, on $\Bun_G(\EE)$,~\cite{BET0} (see~\S\ref{appen:ell} below for a review). There are two main steps to compare $\dEll_G(M)$ with $\mathcal{O}_{\cL_0(M\ncq G)}$:
\begin{enumerate}
\item[\underline{Step 1.}] Extract from $\mathcal{O}_{\cL_0(M\ncq G)}$ a sheaf of cdgas $\Ch^\bullet(\cL_0(M\ncq G))$ on a moduli space of $G$-bundles $\Mst_G$, with $\Mst_G$ constructed as a quotient of $\F_0(\pt\ncq G)$.
\item[\underline{Step 2.}] Compare $\dEll_G(M)$ on $\Bun_G(\EE)$ with $\Ch^\bullet(\cL_0(M\ncq G))$ on $\Mst_G$.
\end{enumerate}


\begin{rmk}Step~1 is a common procedure in supersymmetric field theory called \emph{$Q$-cohomology}: the supersymmetry generator (usually denoted~$Q$) determines a differential on the graded algebra of functions on fields. \emph{Supersymmetric localization} asserts that the restriction of $Q$-cohomology along the inclusion $\cL_0(M\nsq G)\subset \cL(M\nsq G)$ is a quasi-isomorphism; this is a formal application of equivariant localization in the infinite-dimensional setting. See~\cite{BElocalization} for a proof in the case that~$G=\{e\}$. For the case of an abelian compact Lie group, methods of~\cite{Spong} can likely be adapted to rigorously construct the localization quasi-isomorphism. 
\end{rmk}

Step~2 requires we find a comparison map between the complex analytic stack~$\Bun_G(\EE)$ and the stack $\Mst_G$ defined on the site of supermanifolds. To do this, we regard $\Bun_G(\EE)$ as a smooth stack (by forgetting the complex structure) and restrict $\Mst_G$ along the fully faithful embedding $i\colon {\sf Mfld}\to {\sf SMfld}$ of manifolds into supermanifolds. Then we construct a map between stacks on the site of smooth manifolds,
\beq
\iota\colon \Bun_G(\EE){\dashrightarrow} \Mst_G.\label{eq:iota1}
\eeq
 

\begin{thm}[Theorem~\ref{lem:proofofmain}] \label{thm:comparison}
There is an isomorphism of sheaves of commutative differential graded algebras on $\Bun_G(\EE)$,
\beq
&&\iota^{-1}\Ch^\bullet(\cL_0(M\ncq G))\stackrel{\sim}{\to} \dEll_G^\bullet(M)\label{eq:maincomparison}
\eeq
natural in~$M$ and~$G$, witnessing the sheaf of analytic functions $\mathcal{O}_{\cL_0(M\ncq G)}$ as a cocycle model for complex analytic equivariant elliptic cohomology.
\end{thm}

\begin{rmk} \label{rmk:Hodge} We recall that the Hodge bundles $\omega^{\otimes n}$ on $\Mell$ are the line bundles whose global sections are weight $n$ modular forms.  We use the same notation $\omega^{\otimes n}$ to denote the pullback of these line bundles back along the forgetful functor $\Bun_G(\EE)\to \Mell$. A consequence of Theorem~\ref{thm:comparison} are isomorphisms $\iota^{-1}\Ch^{2\bullet}(\cL_0(\pt\ncq G))\simeq \omega^{-\otimes \bullet}$, and in particular $\iota^{-1}\Ch^0(\cL_0(\pt\ncq G))\simeq \omega^{\otimes 0}\simeq \mathcal{O}_{\Bun_G(\EE)}$ is the sheaf of holomorphic functions on $\Bun_G(\EE)$.
\end{rmk}

\begin{rmk} \label{rmk:gaugeint}
Equivariant elliptic cohomology over~$\Z$ lives on a (relative) coarse moduli space of $G$-bundles over elliptic curves rather than the full moduli stack~\cite[\S5.1]{Lurie}. Theorem~\ref{thm:comparison} gives a physical explanation for this subtlety: $Q$-cohomology requires us to pass to gauge-invariant observables, which are functions on a coarse quotient of $G$-bundles. 
\end{rmk}

\subsection*{Results II: A construction of equivariant elliptic Euler classes}


Given a unitary representation~$\rho\colon G\to U(V)$, we construct a family of operators $D_\rho$ over $\F_0(\pt\nsq G)$. Physically these operators determine the free fermion classical field theory with background gauge fields. The partition function of the associated quantum theory is a regularized determinant of~$D_\rho$. Over super stacks, such regularizations become quite technical; this makes it unclear as to when different regularization procedures agree. So instead we take an approach that extends the well-developed theory of determinant lines for smooth and holomorphic families of (non-super) manifolds.


Freed~\cite[\S4]{Freed_Det} explains how the partition function of a
quantum field theory can be given rigorous meaning as a section of a determinant line bundle~\cite{Quillendet,BismutFreed1,BismutFreed2,Wittenanomaly}. In particular, twisting the $\bar\partial$-operator on an elliptic curve by a degree zero holomorphic line bundle $L$ defines a family of operators $\bar\partial_L$ over $\Bun_{U(1)}(\EE)$ whose determinant $\det(\bar\partial_L)$ is the partition function of the 2-dimensional free fermion quantum field theory with background $U(1)$-gauge fields. Indeed, formulas for the section $\det(\bar\partial_L)$ in a preferred trivialization of the determinant line bundle $\Det(\bar\partial_L)$ coincide with the formulas from physics for the partition function of the free fermion theory~\cite{Theta},~\cite[Equation~4.11]{Freed_Det}.


For $\rho=\id\colon U(1)\to U(1)$, we show that the family of operators $D_\rho$ on $\F_0(\pt\ncq  U(1))$ pulls back to the family of operators~$\bar\partial_L$ on $\Bun_{U(1)}(\EE)$; see Lemma~\ref{lem:dbarcompare}. We define a line bundle $\Det(D_\rho)$ and section $\det(D_\rho)$ on $\F_0(\pt\ncq  U(1))$ by requiring the restriction to $\Bun_{U(1)}(\EE)$ be the line bundle $\Det(\bar\partial_L)$ with section $\det(\bar\partial_L)$. 
Indeed, we show that there is a \emph{unique} analytic line bundle $\Det(D_\rho)$ with section $\det(D_\rho)$ on $\F_0(\pt\ncq U(1))$ with this property; see Proposition~\ref{prop:Unext}. Here, an analytic line is defined as a locally free rank~1 sheaf over the sheaf of analytic functions on $\F_0(\pt\ncq U(1))$ (see the informal Definition~\ref{def:informalanalytic}). By reducing to maximal tori, this further uniquely determines analytic line bundles $\Det(D_\rho)$ with sections $\det(D_\rho)$ over $\F_0(\pt\ncq G)$ when $G$ is connected with torsion free fundamental group. Under certain conditions on $\rho$ (see below) we identify $\Det(D_\rho)$ and its section $\det(D_\rho)$ with the unique analytic extension of the Looijenga line bundles $\mathcal{L}(c)$ on $\Bun_G(\EE)$, defined relative to a class $[c]\in \H^4(BG;\Z)$. 




%
%
%
%
%

\begin{thm}[Proposition~\ref{prop:Unext}, Theorem~\ref{thm:Eulercompare}]\label{thm:II} For $G$ connected with torsion-free fundamental group, the Freed--Quillen determinant line and section extend uniquely to the super moduli space $\F_0(\pt\ncq G)$, defining a line bundle $\Det(D_\rho)$ with section $\det(D_\rho)$. This determines a section of the complex of sheaves $\Ch^\bullet(\Det(D_\rho))$ on $\Mst_G$ concentrated in degree $2{\rm dim}(\rho)$, whose restriction along~\eqref{eq:iota1} recovers the twisted equivariant Euler class from~\cite{BET0}. If $c_1(\rho)=0$ mod~2, then $\Det(D_{\rho})$ is the unique analytic extension of $\mc{L}\Big(\frac{1}{2}c_1(\rho)^2 - c_2(\rho)\Big) \otimes \omega^{-{\rm dim}(\rho)}$ for $\frac{1}{2}c_1(\rho)^2 - c_2(\rho)=\frac{p_1}{2}(\rho)\in \H^4(BG;\Z).$
\end{thm}

%

\begin{rmk}
The line bundle $\Det(D_\rho)$ is the \emph{anomaly} of the quantum field theory built out of $D_\rho$. In particular, this line measures the extent to which the partition function is not actually a function. The characteristic class obstructions $c_1(\rho)\pmod{2}$ and $\frac{1}{2}c_1(\rho)^2 - c_2(\rho)=\frac{p_1}{2}(\rho)$ in Theorem~\ref{thm:II} are the expected string anomaly~\cite{Wittenanomaly,Freed_Det}. The remaining line~$\omega^{-n}$ is the conformal anomaly. 
\end{rmk}


At the root of Theorem~\ref{thm:II} is the fact that the determinant section~$\det(D_\rho)$ is essentially a product of Weierstrass $\sigma$-functions,
\beq
&&\sigma(\tau, z) := (e^{\pi i z}-e^{-\pi i z}) \prod_{n=1}^{\infty} \frac{(1-q^n e^{2\pi i z})(1 - q^n e^{-2\pi i z})}{(1 - q^n)^2}\in \mathcal{O}(\HH\times \C)\label{eq:Weierstrass}
 \eeq
 for $q=\exp(2\pi i \tau)$ with $\tau\in \HH$ the upper half plane and $z\in \C$. We caution that there are inconsistent conventions for the $\sigma$-function in the literature; we follow~\cite{HopkinsICM2002}.

\subsection*{Results III: Field theories and formal group laws}
There is a (weakly) commutative group structure on $U(1)$-gauge fields: given a pair of $U(1)$-principal bundles with connection, one can take their product, i.e., the product of $U(1)$-bundles compatible with the tensor product of the associated complex line bundles. This geometric operation coincides with the map
\beq
&&\F_0(\pt\nsq U(1))\times_{\F_0(\pt)}\F_0(\pt\nsq U(1))\simeq \cL_0(\pt\nsq U(1)\times U(1))\stackrel{m}{\to} \cL_0(\pt\nsq U(1))\label{eq:wkgrp}
\eeq
where $m$ is induced by naturality in the multiplication homomorphism~$U(1)\times U(1)\to U(1)$. We show that~\eqref{eq:wkgrp} determines a multiplication on the quotient $\Mst_{U(1)}$ which we compare with the group operation on $\EE^\vee\simeq\Bun_{U(1)}(\EE)$ under the map
\beq
\EE^\vee\simeq \Bun_{U(1)}(\EE)\stackrel{\iota}{\to} \Mst_{U(1)}.\label{Eq:iotainjective}
\eeq
In the following $\widetilde{\EE}^\vee$ and $\Mstil_{U(1)}$ denote $\SL_2(\Z)$-covers of $\EE^\vee$ and $\Mst_{U(1)}$, corresponding to the $\SL_2(\Z)$ cover $\HH\to [\HH\sq \SL_2(\Z)]\simeq \Mell$ of the moduli stack of elliptic curves. 

\begin{thm}[Proposition~\ref{prop:85} and Theorem~\ref{thm:GL}]\label{thm:III}
The map~\eqref{Eq:iotainjective} is an injective homomorphism of commutative group objects over~$\HH$. This homomorphism is compatible with the isomorphism of sheaves of analytic functions on $\Mst_{U(1)}$ and holomorphic functions on~${\EE}^\vee$. Any choice of analytic trivialization of $\Det(\mathcal{D})$ in a neighborhood of the trivial bundle $\Mstil_{\{e\}}\hookrightarrow \Mstil_{U(1)}$ determines a formal group law associated with the complex analytic group~$\widetilde{\EE}^\vee$ with coordinate given by the restriction of $\det(\mathcal{D})$ along~\eqref{Eq:iotainjective} in the chosen trivialization of $\Det(\mathcal{D})$. 
\end{thm}

The above gives a physical explanation for the appearance of elliptic group laws. These group laws are central to the construction of elliptic cohomology, and hence topological modular forms. This being the case, it is difficult to imagine a proof of Conjecture~\ref{conj} that does not involve elliptic formal group laws in some essential way. Theorem~\ref{thm:III} gives geometric means to access this information on the physics side of the proposed equivalence. 


\subsection*{Outline} Section~\ref{sec:motivated} overviews the ideas from physics that go into our constructions. Section~\ref{sec:fields} gives rigorous meaning to the objects that were informally introduced in~\S\ref{sec:motivated}, defining a stack on the site of supermanifolds of (inertia) fields. Section~\ref{sec:twisted} introduces the main technical tools we use to analyze inertia fields. One crucial construction is that of \emph{twisted sectors}, which defines a map from functions on $\F_0(M\nsq G)$ to an equivariant de~Rham complex of the fixed point sets $M^{h_1,h_2}$ for every pair of commuting elements $h_1,h_2\in G$. Section~\ref{sec:Qcoh1} defines the sheaf of analytic functions on inertia fields and the $Q$-cohomology sheaf. Section~\ref{sec:compare} compares this physical definition of equivariant elliptic cohomology with the de~Rham model from~\cite{BET0}, proving Theorem~\ref{thm:comparison}. 
Section~\ref{sec:freefer} introduces the gauged free fermion theory whose partition function is defined as a section of a determinant line. This constructs the Looijenga line bundle and cocycle refinement of the elliptic Euler class, proving Theorem~\ref{thm:II}. In Section~\ref{sec:FGL} we establish the connection between these ideas from physics and formal group laws, proving Theorem~\ref{thm:III}. Appendix~\ref{appen:equivdeRham} gives a brief introduction to supermanifolds and superstacks, and collects some technical results we require in the body of the paper. Appendix~\ref{sec:appende} reviews the previously-known super geometric interpretation of equivariant de~Rham cohomology. 
None of the results are new, but precise statements and proofs can be difficult to extract from the literature. Finally, Appendix~\ref{appen:ell} gives a quick review of the de~Rham model for equivariant elliptic cohomology from~\cite{BET0}.



\subsection*{Notation and conventions}
Throughout, $M$ will denote a smooth manifold equipped with the action of a compact Lie group~$G$. For simplicity we make the technical assumption that $M$ embeds $G$-equivariantly into a finite-dimensional $G$-representation. This is automatically satisfied when $M$ is compact by results of Mostow~\cite{Mostow} and Palais~\cite{Palais}. 
For $h \in G$, we denote by $M^h$ the submanifold of $M$ fixed by~$h$. Similarly, if $h = (h_1, h_2)$ is a pair of elements in $G$, we use the same notation $M^h=M^{h_1,h_2}$ for the submanifold fixed by both $h_1, h_2$. When $G$ acts on itself by conjugation, we observe that $G_0^h$ is the centralizer of $h$.
For a super Lie group $H$ acting on a supermanifold $N$, there are three possible quotients we will use:
\begin{enumerate}
\item the coarse quotient, denoted $N\cq H$, taken in sheaves on the site of supermanifolds;
\item the groupoid quotient, denoted $N\gq H$, taken in super Lie groupoids;
\item the stack quotient, denoted $[N\sq H]$, taken in stacks on the site of supermanifolds.
\end{enumerate} 
We caution that the double slash notation is also sometimes used to denote the~GIT quotient, but we always mean the groupoid or stack quotient. Diagrams in stacks are always assumed to be 2-commutative unless stated otherwise; 2-commutativity is extra data that we often omit in the notation. 

Unless otherwise stated, all the objects in this paper are $\Z/2$-graded, i.e., \emph{super}: vector space will mean super vector space, algebra will mean super algebra, Lie group will mean super Lie group, etc. We occasionally use the super adjective for emphasis. Tensor products of algebras of functions or spaces of sections will always be taken as the projective tensor product of Fr\'echet spaces. This is a completion of the algebraic tensor product having the key property that $C^\infty(M\times N)\simeq C^\infty(M;C^\infty(N))\simeq C^\infty(M)\otimes C^\infty(N)$ for supermanifolds~$M$ and~$N$. 

Sheaves of commutative differential graded algebras (cdgas) or sheaves of chain complexes are always taken in the strict sense: they are complexes of sheaves with additional structures. 

We define modular forms as functions on the upper half plane $\HH$ or the space of based lattices $\Lat$ with properties. The sheaf of holomorphic functions on $\HH$ and $\Lat$ will always be taken to be the one that imposes meromorphicity at infinity, so that by ``modular forms'' we always implictly mean ``weakly holomorphic modular forms.'' More precisely, for an open $U\subset \HH$, the sections $\mathcal{O}(U)$ are the holomorphic functions on $U$ with at-worst polynomial growth along any geodesic escaping to $\partial \HH$ in the standard hyperbolic metric. 
 
\subsection*{Acknowledgements} This work relies on many ideas that have been circulating in the community for years, based on both the published and unpublished work of many people. We were especially influenced by the wealth of ideas of Mike Hopkins, Graeme Segal, Stephan Stolz, and Peter Teichner. In addition, we would like to thank Mina Aganagic, Matt Ando, Ralph Cohen, Kevin Costello, Tudor Dimofte, Dan Freed, Soren Galatius, Nora Ganter, Theo Johnson-Freyd, Sheldon Katz, Nitu Kitchloo, Tom Nevins, Andrei Okounkov, Natalie Paquette, Arun Ram, and Charles Rezk for numerous insights, conversations, helpful comments on the draft, and patient encouragement. Finally, A.T. acknowledges the support of MSRI and the NSF through grants~1705008 and~1440140.

\section{Motivation: Classical fields, $Q$-cohomology, and localization}\label{sec:motivated}

The goal of this section is to explain the key ideas from physics that go into the cocycle model for equivariant elliptic cohomology (and in particular, $Q$-cohomology) while avoiding the language of supermanifolds. Precise definitions for the central objects are given in~\S\ref{sec:fields}. 

\subsection{Classical fields and the path integral}

We begin with an informal definition of the category of fields for the $\mathcal{N}=(0,1)$ supersymmetric sigma model with target $M$, background flavor symmetry for a compact Lie group~$G$, and fermions valued in a vector bundle $V\to M$. Below, a spin structure on a Riemann surface~$\Sigma$ is taken to mean a holomorphic line bundle~$\mathbb{S}$ on $\Sigma$ equipped with an isomorphism $\mathbb{S}\otimes\mathbb{S}\simeq \Omega^{1,0}_\Sigma$ as in~\cite{AtiyahSpin}.

\begin{defn}[Informal] \label{defn:informalfields} Given input data:
\begin{enumerate}
\item a compact Lie group $G$ with bi-invariant metric;
\item a Riemannian manifold $M$ with an isometric $G$-action; and
\item a $G$-equivariant vector bundle $V\to M$ with invariant metric and connection
\end{enumerate}
the \emph{groupoid of fields} has as objects quadruples 
\begin{enumerate}
\item a principal $G$-bundle $P\to \Sigma$ with connection over a spin Riemann surface $\Sigma$;
\item a section $x\in \Gamma(\Sigma;P\times_G X)$;
\item a right-moving spinor $\psi\in \Gamma(\Sigma;\overline{\mathbb{S}}\otimes x^*P\times_G TX)$; and
\item a left-moving spinor $\eta\in \Gamma(\Sigma;\mathbb{S}\otimes x^*P\times_GV)$. 
\end{enumerate}
\emph{Isomorphisms} between fields are isomorphisms of $G$-bundles with connection $P\to P'$ covering an isomorphism of spin Riemann surfaces $\Sigma\to \Sigma'$ such that $(x',\psi',\eta')$ pull back to $(x,\psi,\eta)$. The \emph{classical action} is a function on fields given by
\beq
\mathcal{S}(x,\psi,\eta)=\frac{1}{2}\int_\Sigma \big(\langle \partial x,\overline{\partial}x\rangle+\langle \psi,\partial_\nabla \psi\rangle+\langle \eta,\overline{\partial}_\nabla \eta\rangle\big)\label{eq:classicalaction}
\eeq
where the pairings are defined using the metrics on~$G,M$ and $V$, and the operators~$\partial_\nabla,\overline{\partial}_\nabla$ are twists of the Dolbeault differentials on $\Sigma$ using the $G$-connection on $P$ and the connections on~$V$ and~$TM$. The classical action is invariant under isomorphisms of fields. 
\end{defn}

\begin{ex} When $G=\{e\}$, the classical fields consist of maps $x\colon \Sigma\to M$, and sections $\psi\in \Gamma(\Sigma;\overline{\mathbb{S}}\otimes x^*TM)$ and $\eta\in \Gamma(\Sigma;\mathbb{S}\otimes x^*V)$. Morphisms between fields come from pulling back along isomorphisms of Riemann surfaces. The classical action reduces to the one in~\cite[pg.~509]{strings1}, which is the worldsheet theory of the heterotic string. 
\end{ex}

\begin{ex}\label{ex:fermions} When $M=\{\pt\}$ is the 1-point space, an equivariant vector bundle $V$ is the data of a $G$-representation. Then a field is just a spinor $\eta\in \Gamma(\Sigma;\mathbb{S}\otimes P\times_GV)$, as $x$ and $\psi$ are no additional data. Furthermore,~\eqref{eq:classicalaction} becomes the chiral free fermion classical action with background gauge fields. The superspace extension of this field theory (when~$\Sigma$ has genus~1) is the input to our construction of equivariant elliptic Euler classes in~\S\ref{sec:freefer}. 
\end{ex}

\begin{ex} \label{eq:groupstructurespinor}
If we further restrict Example~\ref{ex:fermions} to $G=U(1)$ and $V=\C$ the standard representation of $U(1)$, then fields are sections $\eta\in \Gamma(\Sigma;\mathbb{S}\otimes L)$ for complex line bundles~$L$. The tensor product of principal $U(1)$-bundles extends to a multiplication on fields if~$\mathbb{S}$ is trivialized. Indeed, a trivialization specifies an isomorphism~$(\mathbb{S}\otimes L)\otimes (\mathbb{S}\otimes L')\simeq (\mathbb{S}\otimes L\otimes L')$, so that sections $(\eta,\eta')$ can be multiplied as $(\eta\otimes\eta')$. For $\mathbb{S}$ to be trivializable, $\Sigma$ must be of genus~1 and endowed with the odd spin structure. 
\end{ex}

Quantizing the classical field theory in Definition~\ref{defn:informalfields} using the path integral looks to evaluate \emph{(quantum) expectation values}
\beq
\langle f\rangle =\int fe^{-\mathcal{S}}[dxd\psi d\eta]\label{eq:pathint}
\eeq
where $f$ is a function on fields (i.e., a \emph{classical observable}) and $[dxd\psi d\eta]$ is a hypothetical measure on fields. The result $\langle f\rangle$ is expected to be a function on the moduli space $\Bun_G^\nabla$ of $G$-bundles with connections over Riemann surfaces. This is why the gauge fields---i.e., principal bundles with connections---are called \emph{background fields} in this context: the path integral regards them as being constant. More generally, in the presence of \emph{anomalies} we might ask that~$\langle f\rangle$ be a section of a line bundle~$\mathcal{L}$, so we would like a map
\beq
\langle-\rangle\colon \{{\rm functions\ on \ fields} \}\dashrightarrow \Gamma(\Bun_G^\nabla;\mathcal{L}).\label{eq:expectation}
\eeq
Unfortunately, the measure in~\eqref{eq:pathint} is notoriously difficult to construct. 

\begin{rmk}\label{rmk:grading}
One reason that Definition~\ref{defn:informalfields} is only informal is that functions on fields carry a $\Z/2$-grading: coordinate functions dual to~$\psi$ and $\eta$ are regarded as odd, whereas coordinate functions in~$x$ are even. Supermanifolds and superstacks formalize this idea, as we will make precise in~\S\ref{sec:fields}. Below we assume that functions on fields are a $\Z/2$-graded algebra. 
\end{rmk}

\subsection{Supersymmetry and $Q$-cohomology} A common method to evaluate the path integral is to impose symmetry constraints on the map~\eqref{eq:expectation}, with the hope that any would-be path integral only depends on a finite-dimensional measure. It turns out that the field theory from Definition~\ref{defn:informalfields} permits such a treatment, as we shall explain. 

\begin{rmk}
Another reason Definition~\ref{defn:informalfields} is informal is that it only defines fields as a set, whereas fields are usually treated as a differential-geometric object. For example, the classical action is a smooth function whose critical points are classical solutions. Below we will assume that such differential-geometric constructions are well-behaved, referring to~\S\ref{sec:fields} for a rigorous definition of fields.  
\end{rmk}

\begin{defn}
An \emph{infinitesimal symmetry} on the space of fields is a vector field $X$ preserving the classical action, i.e.,~$X\mathcal{S}=0$. 
\end{defn} 

\begin{hyp}\label{hyp1}
The map~\eqref{eq:expectation} must respect infinitesimal classical symmetries, meaning $\langle Xf\rangle=0$ for any infinitesimal symmetry $X$.
\end{hyp}

\begin{ex}\label{ex:symmetry}
Given a 1-parameter family of isomorphisms of fields in the sense of Definition~\ref{defn:informalfields}, differentiation defines an infinitesimal symmetry. 
\end{ex}

There are important hidden symmetries called \emph{supersymmetries} that do not arise from differentiation of families of isomorphisms. To describe these, it is convenient to introduce an additional \emph{auxiliary field} denoted~$\aux$ defined as follows.

\begin{defn}[Informal fields, part 2]\label{defn:inffields2} Define objects of the category of fields as quintuples with data (1)-(4) as previously, and additionally 
\begin{enumerate}
\item[(5)] a section $\aux \in \Gamma(\Sigma,x^*V)$.
\end{enumerate} 
Morphisms of fields are defined as before, with the additional condition that the $G$-bundle isomorphism preserve $\aux$. The classical action is modified by adding the term $\int_\Sigma \langle \aux,\aux\rangle$. 
\end{defn}

\begin{rmk} Because the term in the classical action involving $\aux$ has no derivatives, formally integrating over $\aux$ changes the map~\eqref{eq:expectation} by an overall constant. Hence, typically the field~$\aux$ is ignored when passing to the quantum theory.\end{rmk}

\begin{ex}[A supersymmetry]\label{ex:supersym} Suppose we are given a holomorphic section $\epsilon$ of~$\mathbb{S}^\vee$. This determines a supersymmetry that is usually written as
\beq
\delta x=\overline{\epsilon} \psi,\quad \delta \psi=\overline{\epsilon} \overline{\partial} x,\quad \delta\eta=\epsilon {\partial}_\nabla \aux,\quad \delta \aux=\epsilon\eta.\label{eq:infsuper}
\eeq
This defines a vector field on fields that at the point $(x,\psi,\eta,\aux)$ is the tangent vector $(\psi,\overline{\partial}x,\overline{\partial}_\nabla \aux,\eta),$ using that $\mathbb{S}\otimes\mathbb{S}\simeq \Omega^{1,0}_\Sigma$. This determines an infinitesimal symmetry of~$\mathcal{S}$. 
\end{ex}

\begin{rmk} The data~\eqref{eq:infsuper} defining the supersymmetry are incomplete: we must also specify an infinitesimal action on the $G$-bundle~$P$ with its connection. This turns out to involve a weak action on a category. One way to get a strict action is to pass to \emph{gauge-invariant observables}, meaning functions on the (coarse) quotient $\F(M\nsq G)/{\rm gauge}$; compare Remark~\ref{rmk:gaugeint}. For this reason, below we shall restrict to gauge-invariant functions on fields. 
\end{rmk}

\begin{ex} In the case that $G=\{e\}$, the supersymmetry from the previous example is the chiral supersymmetry of the heterotic worldsheet, e.g., see~\cite[Equations~2.6-2.7]{AlvarezSinger}. 
\end{ex}

We observe that a nonvanishing global holomorphic section $\epsilon$ of~$\mathbb{S}^\vee$ exists if and only if $\Sigma$ has genus~1 and the spin structure is odd (aliases: periodic-periodic, nonbounding). Hereafter, we will restrict to this component of the category of fields, and we shall fix the trivialization of $\mathbb{S}^\vee$. For the remainder of the section, we will also set $V=\{0\}$ to simplify the discussion. 

\begin{notation}[Informal] Let $\F(M\nsq G)$ denote the component of the category of fields in which $\Sigma$ has genus~1 with nonbounding spin structure together with a nonvanishing holomorphic section $\epsilon$ of $\mathbb{S}$, and set $V=\{0\}$. Let~$Q$ denote the derivation on gauge-invariant functions on~$\F(M\nsq G)$ associated with the infinitesimal symmetry from~\eqref{eq:infsuper}. 
\end{notation}

\begin{rmk}
We observe that the conditions for the existence of a group structure on $U(1)$-gauge fields as in Example~\ref{eq:groupstructurespinor} are the same conditions required for the existence of a global supersymmetry. 
\end{rmk}

The vector field~$Q$ on $\F(M\nsq G)$ is \emph{odd}, meaning it is an odd derivation relative to the $\Z/2$-grading on functions on fields outlined in Remark~\ref{rmk:grading}. Hence, restricting to $Q$-closed functions on fields yields a $\Z/2$-graded chain complex with differential $Q$.

\begin{defn}[Informal]
The \emph{$Q$-cohomology ring}, denoted $\Ch^\bullet(\F(M\nsq G))$, is the $\Z/2$-graded chain complex for the differential~$Q$ acting on 
\beq
\{{\rm gauge- and} \ Q^2{\rm-invariant\ functions\ on}\ \F(M\nsq G)\}.\label{eq:chainQ}
\eeq
\end{defn}

By Hypothesis~\ref{hyp1}, the map~\eqref{eq:expectation} on $Q$-closed functions on $\F(M\nsq G)/{\rm gauge}$ only depends on the cohomology of the chain complex $\Ch^\bullet(\F(M\nsq G))$. The flexibility offered by replacing~\eqref{eq:chainQ} with any quasi-isomorphic chain complex will make it easier to construct a map~\eqref{eq:expectation} as
\beq
\langle-\rangle\colon \H(\Ch^\bullet(\F(M\nsq G)))\dashrightarrow \Gamma(\Bun_G^\nabla;\mathcal{L}).\label{eq:expectation2}
\eeq

\subsection{Localization} \label{sec:local}We recall the following theorem in equivariant cohomology.
 
 \begin{thm}[\cite{BerlineVergne2}, Proposition~2.1] \label{thm:BV1} Let~$M$ be a compact manifold with the action by a torus~$\T$. For $\xi\in\mathfrak{t}$, consider the complex $(\Omega(M)^\xi,Q)$ of $\xi$-invariant differential forms on~$M$ with the Cartan differential $Q=d-\iota_\xi$. Let $M_0\subset M$ be the zero set of~$\xi$. The natural restriction map 
 $$
 (\Omega(M)^\xi,Q)\to (\Omega(M_0),d)
 $$
 is a quasi-isomorphism of $\Z/2$-graded complexes. 
 \end{thm}

Supersymmetric localization looks to apply the above theorem to spaces of fields in (supersymmetric) quantum field theory. In our situation, a genus~1 Riemann surface $\Sigma$ can be described as $\Sigma\simeq \C/\Lambda$ for $\pi_1(\Sigma)=\Lambda\subset \C$ a lattice. Viewed as a group, the torus $\C/\Lambda\simeq \mathbb{T}$ acts on $\Sigma$. This endows the fields $\F(M\nsq G)$ with a $\mathbb{T}$-action. It turns out that the supersymmetry operator~$Q$ can be viewed as an equivariant differential. 

\begin{conj}[Informal]\label{conj:local} Theorem~\ref{thm:BV1} holds for the $\mathbb{T}$-action on $\F(M\nsq G)$ and the complex vector field determined by $Q$, meaning the map induced by restriction
$$
\Ch^\bullet(\F(M\nsq G))\to \H(\{{\rm zero\ set\ of\ } Q^2\},Q)=:\Ch^\bullet(\F_0(M\ncq G))
$$
is a quasi-isomorphism where $\Ch^\bullet(\F_0(M\ncq G))$ denotes the complex for $Q$ acting on gauge-invariant functions functions on~$\F_0(M\nsq G)=\{{\rm zero\ set\ of\ } Q^2\}\subset \F(M\nsq G)$. 
\end{conj}

Since the space of fields in this context is infinite-dimensional, one can not hope to apply Theorem~\ref{thm:BV1} directly. However, loop space versions of this conjecture (replacing the Riemann surface $\Sigma$ with a circle) are part of the story that explains the Atiyah--Singer index theorem in the language of supersymmetric quantum mechanics~\cite{WittenMorse,AtiyahCircular}. Furthermore, Conjecture~\ref{conj:local} has been made precise and verified when $G=\{e\}$~\cite{BElocalization}. For the purposes of this paper, we apply the above conjecture formally in the case of interest, and focus our study on the chain complex $\Ch^\bullet(\F_0(M\ncq G))$. 

\subsection{Fields in the language of stacks} To bridge the gap between the informal description of fields in Definition~\ref{defn:informalfields} and the rigorous description in the next section, we translate Definition~\ref{defn:informalfields} and~\ref{defn:inffields2} into the language of stacks on the site of (non-super) manifolds. 

The input data in Definition~\ref{defn:informalfields} can be rephrased in terms of a quotient stack $[M\nsq G]$ and a vector bundle $[V\nsq G]$ over $[M\nsq G]$. We recall that the stack $[M\nsq G]$ classifies principal $G$-bundles $P$ with connection and a $G$-equivariant map $P\to M$. We remark that the equivariant vector bundle~$TM$ defines a vector bundle $[TM\nsq G]$ over $[M\nsq G]$ in the category of stacks. Then (from Definition~\ref{defn:inffields2}) the category of fields has as objects quadruples 
\beq
\begin{array}{c}\phi\colon \Sigma\to [M\nsq G], \ \ \psi\in \Gamma(\Sigma,\overline{\mathbb{S}}\otimes \phi^*[TM\nsq G]), \\ 
\eta\in \Gamma(\Sigma,{\mathbb{S}}\otimes \phi^*[V\nsq G]), \ \ \aux\in \Gamma(\Sigma,\phi^*[V\nsq G]),
\end{array}\label{eq:fields2}
\eeq
where $\psi$ and $\eta$ are equivalent to the original data given by the same notation, and $\phi$ is equivalent to $(P,x)$. Isomorphisms between objects come from isomorphisms $\phi\stackrel{\sim}{\to}\phi'$ between maps of stacks that are compatible with the sections $\psi$ and $\eta$. In the special case that $\Sigma=\C/\Lambda$ has genus~1, this category of fields has an action by $\mathbb{T}\simeq \C/\Lambda$ acting on $\Sigma\simeq \C/\Lambda$, which changes $\phi$ by precomposition and $\psi,\eta$ by pullback. Taking the nonbounding spin structure on $\Sigma\simeq \C/\Lambda$, a nonvanishing holomorphic section of the spinor bundle determines a supersymmetry operator~$Q$. The zero set of $Q^2$ consists of maps $\phi\colon \Sigma\to [M\nsq G]$ associated to a \emph{flat} $G$-bundle over $\Sigma$ and sections $(x,\psi,\eta)$ that are (covariantly) constant over $\Sigma$. 

This stacky-type description of fields will be our approach taken in the next section, with some important modifications. First, we consider \emph{smooth families} of surfaces $\Sigma$; this allows one to make rigorous sense of the differential geometry of the stack of fields, e.g., vector fields come from differentiation along the family parameter. Second, the description~\eqref{eq:fields2} is replaced by an analogous one where $\Sigma$ is a \emph{super}manifold, making fields into a stack on the site of supermanifolds. There are two reasons for adopting this language of superstacks. First, functions on supermanifolds have a $\Z/2$-grading, so that functions on fields as a superstack have a $\Z/2$-grading as well. Second, the supersymmetry operator $Q$ comes from an infinitesimal automorphism of the supermanifold $\Sigma$ in this setup. Hence, this language of superstacks gives a precise description of fields and the $Q$-cohomology ring that was described informally above. 

\section{Classical fields as a superstack}\label{sec:fields}

The goal of this section is to give rigorous definitions for the space of fields $\F(M\nsq G)$ and some related constructions. These definitions are couched in the language of supermanifolds and superstacks; see Appendix~\ref{sec:smfld} for a brief review. 

\subsection{The moduli of super conformal tori}\label{sec:supertori}

Recall two descriptions of $S$-points of $\R^{2|1}$
\beq
\R^{2|1}(S)&\simeq& \{x,y \in C^\infty(S)^\ev,\ \theta \in C^\infty(S)^\odd\mid (x)_\red=\overline{(x)}_\red, (y)_\red=\overline{(y)}_\red\}\label{eq:r211}\\
&\simeq& \{z,w \in C^\infty(S)^{\ev}, \theta\in C^\infty(S)^\odd\mid (z)_\red=\overline{(w)}_\red\},\nonumber
\eeq
where reality conditions are imposed on the reduced manifold $S_\red\hookrightarrow S$. The isomorphism between descriptions in~\eqref{eq:r211} is $(x,y)\mapsto (x+iy,x-iy)=(z,w)$. We adopt the standard (though potentially misleading) notation~$\overline{z}:=w$ so that
\beq
\R^{2|1}(S)\simeq \{z,\bar z \in C^\infty(S)^{\ev}, \theta\in C^\infty(S)^\odd\mid (z)_\red=\overline{(\bar z)}_\red\} \label{eq:r212}
\eeq
but we emphasize that $z$ and $\bar z$ are only complex conjugates on their restriction to~$S_\red$; see Example~\ref{ex:SptRnm} for further discussion. 

Define the group of \emph{super Euclidean translations}, denoted~$\E^{2|1}$, as the (super) Lie group whose underlying supermanifold is $\R^{2|1}$ with multiplication
\beq
&&(z,\bar{z},\theta)\cdot (z',\bar{z}',\theta')=(z+z',\bar{z}+\bar{z}'+\theta\theta',\theta+\theta'),\quad (z,\bar{z},\theta),\ (z',\bar{z}',\theta')\in \R^{2|1}(S),\label{eq:defnE21}
\eeq
where the formula is given in terms of the functor of points description from~\eqref{eq:r212}. Since $\theta\theta'=-\theta'\theta$, we observe that~$\E^{2|1}$ is noncommutative. The reduced Lie group of~$\E^{2|1}$ is the usual group of Euclidean translations~$\E^2$, and the canonical inclusion~$\E^2\hookrightarrow \E^{2|1}$ is a homomorphism of Lie groups. The vector fields
\beq
D=\partial_\theta-\theta\partial_{\bar z}  \qquad Q=\partial_\theta+\theta\partial_{\bar z}\label{eq:defnQ}
\eeq
are left- and right-invariant on $\E^{2|1}$, respectively. These satisfy
\beq
\frac{1}{2}[D,D]=D^2=-\partial_{\bar z} \qquad \frac{1}{2}[Q,Q]=Q^2=\partial_{\bar z},\qquad [Q,D]=0,\label{eq:Qsq}
\eeq
where $[-,-]$ denotes the graded (or \emph{super}) commutator. Similarly to~\eqref{eq:r212}, we adopt the notation
$$
\C^\times(S)=\{\mu,\bar\mu\in (C^\infty(S)^\times)^\ev \mid(\mu)_\red=\overline{(\bar \mu)}_\red\}.
$$

\begin{defn}\label{defn:isom} 
Define the \emph{rigid conformal group} as
$$
\Conf(\R^{2|1}):=\E^{2|1}\rtimes \C^\times
$$
with the action of $\C^\times$ on $\E^{2|1}$ by 
\beq
&&(\mu,\bar{\mu})\cdot(z,\bar{z},\theta)=(\mu^{2}z,\bar{\mu}^{2}\bar{z},\bar{\mu}\theta),\quad (\mu,\bar{\mu})\in \C^\times(S),\ (z,\bar{z},\theta)\in \R^{2|1}(S).\label{eq:supertrans}
\eeq
A map $S\times \R^{2|1}\to S'\times \R^{2|1}$ over a base change $S\to S'$ is a \emph{fiberwise rigid conformal map} if it is equal to a composition
$$
S\times \R^{2|1}\to S\times \Conf(\R^{2|1})\times \R^{2|1}\to \R^{2|1}
$$
where the first arrow specifies an $S$-point of $\Conf(\R^{2|1})$, and the second arrow is the left action of $\Conf(\R^{2|1})$ on $\R^{2|1}$.
\end{defn}

\begin{defn} Define the supermanifold $\Lat$ of \emph{based lattices} as the open sub supermanifold $\Lat\subset \C^\times\times \C^\times$ whose $S$-points are $(\lambda_1,\bar\lambda_1,\lambda_2,\bar\lambda_2)\in \Lat(S)\subset (\C^\times\times\C^\times)(S)$ with the property that $(\lambda_1/\lambda_2,\bar\lambda_1/\bar\lambda_2)\in \HH(S)$ defines an $S$-point of the upper half plane, $\HH\subset \C$. We often use the shorthand $\Lambda=(\lambda_1,\bar\lambda_1,\lambda_2,\bar\lambda_2)$ for an $S$-point of~$\Lat$. 
\end{defn}

\begin{rmk}
We observe that the map on $S$-points $(\lambda_1,\bar\lambda_1,\lambda_2,\bar\lambda_2)\to (\lambda_1/\lambda_2,\bar\lambda_1/\bar\lambda_2,\lambda_2,\bar\lambda_2)$ gives an isomorphism $\Lat\simeq \HH\times \C^\times$. In particular, $\Lat$ is representable and is in the image of the faithful embedding of manifolds into supermanifolds. Furthermore, as an open submanifold $\Lat\subset \C^\times\times \C^\times$, it has a canonical complex structure. 
\end{rmk}

\begin{rmk}
The canonical $\Lat$-point $(\lambda_1,\bar\lambda_1,\lambda_2,\bar\lambda_2)\in \Lat(\Lat)\subset (\C^\times\times \C^\times)(\Lat)$ can be identified with the functions $\lambda_1,\bar\lambda_1,\lambda_2,\bar\lambda_2\in C^\infty(\Lat)$ that are the restrictions of the standard holomorphic coordinate functions along the inclusion $\Lat\hookrightarrow \C\times \C$. It will also be useful to identify these functions on $\Lat$ with natural transformations $\lambda_1,\bar\lambda_1,\lambda_2,\bar\lambda_2\colon \Lat\to C^\infty(-)^\ev$, that assign the functions $\lambda_1,\bar\lambda_1,\lambda_2,\bar\lambda_2 \in C^\infty(S)^\ev$ to an $S$-point $(\lambda_1,\bar\lambda_1,\lambda_2,\bar\lambda_2)\in \Lat(S)$. For this reason, we will sometimes abuse notation, letting $\lambda_1,\bar\lambda_1,\lambda_2,\bar\lambda_2$ denote both functions on $\Lat$ and part of the data of an $S$-point of $\Lat$. 
\end{rmk}

A family of based lattices can be identified with an $S$-family of monomorphisms $\Lambda\colon S\times \Z^2\to S\times \C\simeq S\times \R^2\subset S \times \R^{2|1}$ where the image of the generators of $\Z^2$ is given by $(\lambda_1,\bar\lambda_1)$ and $(\lambda_2,\bar\lambda_2)$. Let $T^{2|1}_\Lambda:=(S\times \R^{2|1})/\Z^2$ denote the quotient by the (free) $\Z^2$-action on $S\times \R^{2|1}$ determined by $\Lambda$. 

\begin{defn} \label{defn:21fibconf} A map $T^{2|1}_\Lambda\to T^{2|1}_{\Lambda'}$ over a base change $S\to S'$ is a \emph{fiberwise rigid conformal map} if there exists a commutative square
\beq
\begin{tikzpicture}[baseline=(basepoint)];
\node (A) at (0,0) {$S\times \R^{2|1}$};
\node (B) at (4,0) {$S'\times \R^{2|1}$};
\node (C) at (0,-1.5) {$T^{2|1}_\Lambda$};
\node (D) at (4,-1.5) {$T^{2|1}_{\Lambda'}$}; 
\draw[->,dashed] (A) to  (B);
\draw[->>] (A) to  (C);
\draw[->] (C) to (D);
\draw[->>] (B) to (D);
\path (0,-.75) coordinate (basepoint);
\end{tikzpicture}\label{eq:supertorusmap}
\eeq
where the vertical arrows are the $\Z^2$-quotient maps, and the dashed arrow is a fiberwise rigid conformal map. We further require that the dashed arrow is $\Z^2$-equivariant relative to a map $S \times \Z^2 \to S' \times \Z^2$ determined by the base change $S \to S'$ and an $S$-family of homomorphisms $S\times \Z^2\to S\times \Z^2$ specified by an $S$-point of~$\SL_2(\Z)$.
\end{defn}
\begin{rmk} The full super conformal group of $\R^{2|1}$ has as $S$-points maps $S\times \R^{2|1}\to S\times \R^{2|1}$ preserving the odd distribution generated by the vector field~$D$. The rigid conformal group is the subgroup of the super conformal group that preserves the flat structure on $\R^2\subset \R^{2|1}$ (but not the Euclidean metric itself). When applied to maps~\eqref{eq:supertorusmap}, this mimics uniformization of Riemann surfaces: one reduces from all conformal surfaces to those with a constant curvature metric. Uniformization can also be applied to the super case; e.g., see~\cite[\S5.2]{Wittenmoduli} for genus $g=1$ and~\cite{CraneRabin} for genus $g>1$. With this in mind, hereafter we drop the modifier ``rigid" for the maps $T^{2|1}_\Lambda\to T^{2|1}_{\Lambda'}$ in Definition~\ref{defn:21fibconf}. \end{rmk}

\begin{defn}\label{defn:supertori} The \emph{moduli stack of super tori} is the stackification of the prestack whose objects over $S$ are given by families $T^{2|1}_\Lambda\to S$ for $\Lambda\in \Lat(S)$, and whose morphisms over a base change $S\to S'$ are fiberwise conformal maps $T^{2|1}_\Lambda\to T^{2|1}_{\Lambda'}$. We refer to an $S$-point of the moduli stack of super tori as an \emph{$S$-family of super tori}.
\end{defn}

\begin{rmk}\label{rmk:trivtori} Explicitly, an $S$-family of super tori is a family of supermanifolds $T \to S$ for which there exists an open cover $\{S_\alpha\}$ of $S$ with isomorphisms $T|_{S_\alpha}\simeq T_{\Lambda_i}$ for $\Lambda_i\in \Lat(S_\alpha)$ and gluing data $\varphi_{ij}\colon T|_{S_\alpha\cap S_\beta}\simeq T^{2|1}_{\Lambda_i}|_{S_\alpha\cap S_\beta}\to T^{2|1}_{\Lambda_j}|_{S_\beta\cap S_\alpha}\simeq T|_{S_\beta\cap S_\alpha}$ where the middle arrow is a fiberwise conformal map. These gluing data are required to satisfy a cocycle condition. 
\end{rmk}

\begin{lem} \label{lem:55}The moduli stack of super tori is presented by the super Lie groupoid,
\beq
(\Conf(\R^{2|1})\times \SL_2(\Z)\times \Lat)/\Z^2\rightrightarrows \Lat,\label{eq:Mtorigrpd}
\eeq
where the $\Z^2$-quotient is by the action 
\beq
&&(m,n)\cdot (w,\bar w,\eta,\mu,\bar\mu,\gamma,\Lambda)\mapsto (w+n\lambda_1+m\lambda_2,\bar w+n\bar\lambda_1+m\bar\lambda_2,\eta,\mu,\bar\mu,\gamma,\Lambda), \label{eq:Lambdaact}
\eeq
$$
(w,\bar w,\eta)\in \E^{2|1}(S),  \ (\mu,\bar\mu) \in \C^\times(S), \ \gamma \in \SL_2(\Z)(S),
$$$$ 
\Lambda=(\lambda_1,\bar\lambda_1,\lambda_2,\bar\lambda_2)\in \Lat(S), \ (m,n)\in \Z^2(S).
$$
The source map in~\eqref{eq:Mtorigrpd} is the projection. The target map uses the isomorphism $\Conf(\R^{2|1})\simeq \E^{2|1}\rtimes \C^\times$, projects along 
$(\Conf(\R^{2|1})\times \SL_2(\Z)\times \Lat)/\Z^2\to \C^\times\times \SL_2(\Z)\times \Lat$, and composes with the $\C^\times\times \SL_2(\Z)$-action on $\Lat$ given by 
\beq
(\mu,\bar\mu,\left[\begin{array}{cc} a& b \\ c & d\end{array}\right])\cdot (\lambda_1,\bar\lambda_1,\lambda_2,\bar\lambda_2)&=&(\mu^{2}(a\lambda_1+b\lambda_2),\bar\mu^{2}(a\bar\lambda_1+b\bar\lambda_2),\label{eq:lataction}\\ 
&& \mu^{2}(c\lambda_1+d\lambda_2),\bar\mu^{2}(c\bar\lambda_1+d\bar\lambda_2))\in \Lat(S)\nonumber
\eeq
$$\quad (\mu,\bar\mu) \in \C^\times(S), \ \left[\begin{array}{cc} a& b \\ c & d\end{array}\right]\in \SL_2(\Z)(S),\quad (\lambda_1,\bar\lambda_1,\lambda_2,\bar\lambda_2)\in \Lat(S)\subset (\C\times \C)(S).
$$ 

\end{lem}

\bp We prove that $S$-points of the super Lie groupoid~\eqref{eq:Mtorigrpd} are equivalent to the prestack whose stackification is the stack of super tori in Definition~\ref{defn:supertori}. First we observe a pair of maps $S\to \Conf(\R^{2|1})\times \SL_2(\Z)$ determine the same map $T^{2|1}_\Lambda\to T^{2|1}_{\Lambda'}$ in the diagram~\eqref{eq:supertorusmap} if and only if they determine the same section of the bundle of Lie groups $(S\times \Conf(\R^{2|1})\times \SL_2(\Z))/\Z^2\to S$, for the quotient by the fiberwise subgroup $S\times \Z^2\stackrel{\Lambda}{\hookrightarrow}S\times \E^2\hookrightarrow S\times \E^{2|1}\hookrightarrow S\times \E^{2|1}\rtimes \C^\times\times \SL_2(\Z)$. 
Since objects of the prestack are determined by $S$-points of~$\Lat$, the case $S=\Lat$ is the universal one and the lemma follows. 
\ep

The following shows that families of super tori~$T^{2|1}_\Lambda$ are all isomorphic to a standard family. This allows one to think of $\Lat$ as the moduli space of super conformal structures on the standard super torus $\R^{2|1}/\Z^2$. 

\begin{lem} \label{lem:torusstnd}For $\Lambda\in \Lat(S)$ determining an $S$-family of super tori $T^{2|1}_\Lambda$, there exists an isomorphism of supermanifolds,
\beq
&&S\times (\R^{2|1}/\Z^2)\stackrel{\sim}{\to} T^{2|1}_\Lambda\label{eq:isotostd}
\eeq
for the $\Z^2$-action on $\R^{2|1}$ from the standard inclusion $\Z^2\subset \E^2\subset \E^{2|1}$. 
\end{lem}
\bp
For $\Lambda=(\lambda_1,\bar\lambda_1,\lambda_2,\bar\lambda_2)\in \Lat(S)$, consider the map 
$$
\R^{2|1}(S)\to \R^{2|1}(S),\qquad (x,y,\theta)\mapsto (\lambda_1x+\lambda_2y,\overline{\lambda}_1x+\overline{\lambda}_2y,\theta),
$$
where the source $S$-point is specified as in~\eqref{eq:r211} while the target is specified as in~\eqref{eq:r212}. It is easy to check that this map on $S$-points is a $\Z^2$-equivariant isomorphism for the standard $\Z^2$-action on the source and the $\Z^2$-action determined by~$\Lambda$ in the target. This gives the claimed isomorphism~\eqref{eq:isotostd}.
\ep

\subsection{Defining fields as a stack}\label{sec:maindef}

Let $M$ be a $G$-manifold. We recall that the stack~$[M\nsq G]$ classifies principal $G$-bundles with connection $(P,\nabla)$ equipped with a $G$-equivariant map~$P\to M$; see Example~\ref{defn:stackquoconn}.

\begin{defn} 
\label{defn:fields} For a $G$-manifold $M$ define the \emph{stack of fields}, denoted $\cL(M\nsq G)$, whose objects are maps~$\Phi\colon T^{2|1}\to [M\nsq G]$ where $T^{2|1}$ is an $S$-family of super tori, and whose morphisms are 2-commuting triangles
\beq
\begin{tikzpicture}[baseline=(basepoint)];
\node (A) at (0,0) {$T^{2|1}$};
\node (B) at (5,0) {$T'^{2|1}$};
\node (C) at (2.5,-1.5) {$[M\nsq G]$};
\node (D) at (2.5,-.6) {$\twocommute$};
\draw[->] (A) to node [above=1pt] {$f$} (B);
\draw[->] (A) to node [left=4pt] {$\Phi$} (C);
\draw[->] (B) to node [right=4pt] {$\Phi'$} (C);
\path (0,-.75) coordinate (basepoint);
\end{tikzpicture}\label{eq:commutetri}
\eeq
where $f$ is a fiberwise conformal map covering a base change $S\to S'$. 
\end{defn}

\begin{rmk}\label{rmk:unravel} Unwinding the definitions, an object of $\F(M\nsq G)$ is a $G$-bundle with connection over a family of a super tori,~$(P,\nabla)\to T^{2|1}$, and a $G$-equivariant map $\phi\colon P\to M$. Morphisms are fiberwise conformal maps of super tori $T^{2|1}\to T'^{2|1}$ covered by isomorphisms of $G$-bundles with connection $(P,\nabla)\to (P',\nabla')$ that are compatible with the maps to~$M$.
\end{rmk}

\begin{rmk}\label{rmk:nat}
We observe that a morphism of stacks $[M\nsq G]\to [N\nsq H]$ induces a morphism of stacks of fields, $\F(M\nsq G)\to \F(N\nsq H)$. 
\end{rmk}

\begin{rmk}\label{rmk:E2action}
The stack $\F(M\nsq G)$ admits a weak $\E^{2|1}$-action, 
\beq
\E^{2|1}\times \F(M\nsq G)\to \F(M\nsq G)\label{eq:weakE21action}
\eeq
determined by the $\E^{2|1}(S)$-action on $T^{2|1}_\Lambda$ by super translation as in Definition~\ref{defn:21fibconf}. 
In particular, the vector field~$Q$ on~$\E^{2|1}$ gives an infinitesimal action on the stack, which will eventually lead to the $Q$-cohomology sheaf that encodes $G$-equivariant elliptic cocycles for~$M$. Formally applying localization (as in~\S\ref{sec:local}) to this $Q$-cohomology sheaf leads us to consider the substack of zeros of the vector field~$Q^2=\partial_{\bar z}$. This is the substack whose objects are invariant under the action on~$\E^2<\E^{2|1}$. We analyze this substack (and give a precise definition) in~\S\ref{sec:inertia}. The groupoid presentation of~$\F(M\nsq G)$ we construct in the next subsection will allow us to relate this ``localized" substack with the mathematically better-known concept of an \emph{inertia stack}. 
\end{rmk}


\subsection{A groupoid presentation of fields}

Mapping stacks are reviewed in Example~\ref{eg:defnMap}. We start with the following observation relating the stack of fields with the mapping stack $\Map(\R^{2|1}/\Z^2,[M\nsq G])$. 
Using Lemma~\ref{lem:torusstnd} to view $\Lat$ as the space of super conformal structures on the standard super torus $\R^{2|1}/\Z^2$, define a map of stacks 
\beq
\Lat\times \Map(\R^{2|1}/\Z^2,[M\nsq G]) \to \F(M\nsq G)\label{eq:epistacksLat}
\eeq
that sends $\Lambda\in \Lat(S)$ and $\Phi\colon S\times \R^{2|1}/\Z^2\to [M\nsq G]$ to the composition 
\beq
T^{2|1}_\Lambda\stackrel{\sim}{\leftarrow} S\times (\R^{2|1}/\Z^2)\to [M\nsq G] \label{eq:epistacksLat2}
\eeq
where the isomorphism with $T^{2|1}_\Lambda$ is from Lemma~\ref{lem:torusstnd}. Define the map~\eqref{eq:epistacksLat} on morphisms by sending an isomorphism of the source to the isomorphism in the target determined by
\beq
&&
\begin{tikzpicture}[baseline=(basepoint)];
\node (A) at (1.5,-.75) {$T^{2|1}_\Lambda$};
\node (B) at (4,-.75) {$S\times \R^{2|1}/ \Z^2$};
\node (D) at (9,-.75) {$[(M\times \Omega^1(-;\fg))\sq G)].$}; 
\node (E) at (6.2,-.75) {$g\ \Downarrow$};
\draw[->] (B) to node [above] {$\sim$} (A);
\draw[->,bend left=20] (B) to (D);
\draw[->,bend right=20] (B) to (D);
\path (0,-.75) coordinate (basepoint);
\end{tikzpicture}\nonumber
\eeq
regarded as a 2-commuting triangle~\eqref{eq:commutetri} with $f=\id$.

\begin{lem}\label{lem:epi}
The map~\eqref{eq:epistacksLat} is an epimorphism of stacks with image consisting of objects of the form $T^{2|1}\simeq T^{2|1}_\Lambda$ and morphisms as in~\eqref{eq:commutetri} with the constraint that $f=\id$. 
\end{lem}

\bp
By construction, the image is as stated. To prove the epimorphism statement, note that for any $S$-family of super tori $T^{2|1}\to S$ there exists a cover $\{S_\alpha\}$ of $S$ such that $T^{2|1}|_{S_\alpha}\simeq T^{2|1}_{\Lambda_\alpha}$ for $\Lambda_\alpha\in \Lat(S_\alpha)$; see Remark~\ref{rmk:trivtori}. Hence for a map $\Phi\colon T^{2|1}\to [M\nsq G]$, there exists a cover $\{S_\alpha\}$ so that the restriction of $\Phi_\alpha$ to $S_\alpha$ is isomorphic to a map $\Phi_\alpha\colon T^{2|1}_{\Lambda_\alpha}\to [M\nsq G]$. Such a map $\Phi_\alpha$ can be uniquely expressed as the composition~\eqref{eq:epistacksLat2} using Lemma~\ref{lem:torusstnd}. The statement of the lemma then follows directly from Definition~\ref{defn:fields} and Definition~\ref{defn:epi} of an epimorphism of stacks. 
\ep

We recall from Example~\ref{defn:stackquoconn} that $M\nsq G:=(M\times \Omega^1(-;\fg))\sq G$ is the action groupoid for the diagonal action of $G$ on $M$ and on $\Omega^1(-;\fg)$ by gauge transformations~\eqref{eq:gaugedef}. Below and throughout, we use the terminology \emph{generalized supermanifold} for a sheaf on the site of supermanifolds. 

\begin{defn}
For super Lie groupoids $\mathcal{G}$ and $\mathcal{H}$, define the generalized supermanifold $\Fun(\mathcal{G},\mathcal{H})$ whose $S$-points are the set of functors $S\times\mathcal{G}\to \mathcal{H}$ and to a base change $S'\to S$ pulls back such functors.
\end{defn}
We will be particularly in interested in the sheaf $\Fun(\R^{2|1}\sq \Z^2,M\nsq G)$ whose $S$-points are functors
\beq
S\times (\R^{2|1}\sq \Z^2)\to M\nsq G.\label{eq:Liefunctor}
\eeq
We emphasize that $\Fun(\R^{2|1}\sq \Z^2,M\nsq G)$ is a generalized supermanifold, i.e., a sheaf of \emph{sets}. It can viewed as a subsheaf of the mapping sheaves
\beq
&&\label{rmk:itsasubsheaf}\resizebox{.9\textwidth}{!}{
$\Fun(\R^{2|1}\sq \Z^2,M\nsq G)\hookrightarrow \Map(\R^{2|1},M\times \Omega^1(-;\fg))\times \Map(\R^{2|1}\times \Z^2,M\times \Omega^1(-;\fg)\times G)$}
\eeq
where the inclusion sends a functor~\eqref{eq:Liefunctor} to its values on objects and morphisms, i.e., maps of sheaves $S\times \R^{2|1}\to M\times \Omega^1(-;\fg)$ and $S\times \R^{2|1}\times \Z^2\to M\times \Omega^1(-;\fg)\times G$. 

%
%


Regarding the generalized supermanifold $\Fun(\R^{2|1}\sq \Z^2,M\nsq G)$ as a stack, there is a map of stacks
\beq
u\colon \Fun(\R^{2|1}\sq \Z^2,M\nsq G)\to \Map(\R^{2|1}/\Z^2,[M\nsq G]) \label{eq:epistacksLat3}
\eeq
that on $S$-points sends a functor between generalized super Lie groupoids to a map on the underlying stacks
$$
S\times \R^{2|1}/\Z^2\simeq [S\times \R^{2|1}\sq \Z^2]\to [(M\times \Omega^1(-;\fg))\sq G)]\simeq [M\nsq G]. 
$$
The image of~\eqref{eq:epistacksLat3} consists of those $G$-bundles $P\to S\times \R^{2|1}/\Z^2$ with connection and $G$-equivariant map $P\to M$ with the property that $P$ is trivializable when pulled back along~$S\times \R^{2|1}\to S\times \R^{2|1}/\Z^2$. From this one sees that~\eqref{eq:epistacksLat3} is an epimorphism of stacks: by the contractibility of $\R^{2|1}$, for any principal bundle $P \to S\times \R^{2|1}/\Z^2$, there exists a cover $\{S_\alpha\}$ of $S$ such $P|_{S_\alpha}$ trivializes when pulled back along the quotient map $S_\alpha\times \R^{2|1}\to S_\alpha\times \R^{2|1}/\Z^2$.

See Definition~\ref{defn:genatlas} for the definition of a generalized atlas. 

\begin{lem}\label{lem:atlas1}
The map
\beq
&&\Lat\times \Fun(\R^{2|1}\sq \Z^2,M\nsq G)\stackrel{\id_\Lat\times u}{\longrightarrow} \Lat\times \Map(\R^{2|1}/\Z^2,[M\nsq G])\to  \F(M\nsq G)\label{eq:atlas1}
\eeq
determined by~\eqref{eq:epistacksLat} and~\eqref{eq:epistacksLat3} is a generalized atlas for the stack. 
\end{lem}

\bp 
By Lemma~\ref{lem:epi} and the epimorphism~\eqref{eq:epistacksLat3}, the map~\eqref{eq:atlas1} is a composition of epimorphisms of stacks and therefore an epimorphism. By Lemma~\ref{lem:genatlas} it remains to check that the 2-pullback 
\beq
\begin{tikzpicture}[baseline=(basepoint)];
\node (A) at (0,0) {$\mathcal{P}$};
\node (B) at (7,0) {$\Lat\times \Fun(\R^{2|1}\sq \Z^2,M\nsq G)$};
\node (C) at (0,-1.5) {$\Lat\times \Fun(\R^{2|1}\sq \Z^2,M\nsq G)$};
\node (D) at (7,-1.5) {$\F(M\nsq G)$}; 
\draw[->] (A) to node [above] {$t$} (B);
\draw[->] (A) to node [left] {$s$} (C);
\draw[->] (C) to (D);
\draw[->] (B) to (D);
\path (0,-.75) coordinate (basepoint);
\end{tikzpicture}\label{diag:2pullback}
\eeq
is equivalent to a generalized supermanifold and that $s$ and $t$ are submersions of sheaves.

We start by giving an explicit description of the 2-pullback. From~\eqref{eq:epistacksLat2} the image of an $S$-point under~\eqref{eq:atlas1} is the object of $\F(M\nsq G)$ given by
$$
T^{2|1}_\Lambda\stackrel{\sim}{\leftarrow} [S\times \R^{2|1}\sq \Z^2]\to [(M\times \Omega^1(-;\fg))\sq G)]\simeq [M\nsq G].
$$
An $S$-point of the 2-pullback therefore consists of
\beq
&&\begin{tikzpicture}[baseline=(basepoint)];
\node (A) at (0,0) {$T^{2|1}_\Lambda$};
\node (B) at (4,0) {$[S\times \R^{2|1}\sq \Z^2]$};
\node (BB) at (4,-1.5) {$[S\times \R^{2|1}\sq \Z^2]$};
\node (C) at (0,-1.5) {$T^{2|1}_{\Lambda'}$};
\node (D) at (9,-.75) {$[(M\times \Omega^1(-;\fg))\sq G]$}; 
\node (E) at (6.2,-.75) {$g\ \twocommute$};
\draw[->] (B) to node [above] {$\sim$} (A);
\draw[->] (A) to node [left] {$f$} (C);
\draw[->] (BB) to node [below] {$\sim$} (C);
\draw[->,bend left=10] (B) to (D);
\draw[->,bend right=10] (BB) to (D);
\draw[->,dashed ] (B) to node [left] {$[\tilde{f}]$} (BB);
\path (0,-.75) coordinate (basepoint);
\end{tikzpicture}\label{diag:2pullback2}
\eeq
where $f$ is a fiberwise conformal map and $g$ is an isomorphism between maps to the stack~$[M\nsq G]$. 

We claim that there is an arrow $[\tilde{f}]$ determined by a functor $\tilde{f}\colon S\times \R^{2|1}\sq \Z^2\to S\times \R^{2|1}\sq \Z^2$ making the square in~\eqref{diag:2pullback2} commute, and furthermore that this functor is determined up to unique isomorphism. Indeed, the value of $\tilde{f}$ on objects is determined by the isomorphisms~\eqref{eq:isotostd} together with a lift of $f$ to the universal cover~\eqref{eq:supertorusmap}. This map between universal covers is determined by an $S$-point of $\Conf(\R^{2|1})\times \SL_2(\Z)$. The lift is unique up to translations in the lattice. Hence, $f$ is a section of $\SL_2(\Z)\times (\Conf(\R^{2|1})\times S)/\Z^2\to S$ for the $\Z^2$-action on $\Conf(\R^{2|1})$ determined by~\eqref{eq:Lambdaact}.  

The remaining data in~\eqref{diag:2pullback2} is~$g$, the isomorphism between maps to $[M\nsq G]$. 
Pulling back to the cover
$$
S\times \R^{2|1}\to S\times \R^{2|1}/\Z^2\simeq [S\times \R^{2|1}\sq \Z^2]
$$
we find that $g$ is determined by $\tilde{g}\in \Map(\R^{2|1},G)(S)$, an automorphism of the trivial $G$-bundle over $S\times \R^{2|1}$. We claim that the data $\tilde{g}$ determines an isomorphism~$g$ between maps to $[M\nsq G]$ if and only if it satisfies the conditions to determine a natural transformation between functors $\phi,\phi'\colon S\times \R^{2|1}\sq \Z^2\rightrightarrows M\nsq G$. Indeed, the data of a natural transformation is a map $\eta\colon S\times \R^{2|1}\to M\times \Omega^1(-;\fg)\times G$ given by $\eta=\phi_0\times \tilde{g}$ where $\phi_0$ is the value of $\phi$ on objects. Then $\phi_0\times \tilde{g}$ determines a natural isomorphism if the usual diagrams commute
\beq
\begin{tikzpicture}[baseline=(basepoint)]
\node (A) at (2.5,0) {$S\times \R^{2|1}$};
\node (B) at (2.5,-1.5) {$M\times \Omega^1(-;\fg)\times G$};
\node (C) at (-1,-1.5) {$M\times \Omega^1(-;\fg)$};
\node (D) at (6,-1.5) {$M\times \Omega^1(-;\fg)$};
\draw[->] (A) to node[right] {$\eta$} (B);
\draw[->] (A) to node[above] {$\phi_0$} (C);
\draw[->] (A) to node[above] {$\phi_0'$} (D);
\draw[->] (B) to node[below] {$s$} (D);
\draw[->] (B) to node[below] {$t$} (C);
\path (0,-.75) coordinate (basepoint);
\end{tikzpicture}
\label{eq:nattransf1}\\
\begin{tikzpicture}[baseline=(basepoint)]
\node (A) at (0,0) {$S\times \R^{2|1}\times \Z^2$};
\node (B) at (5,0) {$M\times \Omega^1(-;\fg)\times G\times G $};
\node (C) at (0,-1.5) {$M\times \Omega^1(-;\fg)\times G\times G $};
\node (D) at (5,-1.5) {$M\times \Omega^1(-;\fg)\times G,$};
\draw[->] (A) to node[above] {$\phi_1\times \eta\circ s$} (B);
\draw[->] (A) to node[left] {$\eta\circ t\times \phi_1'$} (C);
\draw[->] (B) to node[right] {$c$} (D);
\draw[->] (C) to node[below] {$c$} (D);
\path (0,-.75) coordinate (basepoint);
\end{tikzpicture}\label{eq:nattransf2}
\eeq 
where $c$ is composition in the groupoid, which in this case is determined by multiplication $G\times G\to G$ using the description of the fibered product
$$
M\times \Omega^1(-;\fg)\times G\times G\simeq (M\times \Omega^1(-;\fg)\times G)\times_{M\times \Omega^1(-;\fg)} M\times \Omega^1(-;\fg)\times G\to M\times \Omega^1(-;\fg)\times G.
$$
The diagram~\eqref{eq:nattransf1} shows that the natural transformation provides an isomorphism between (trivial) $G$-bundles with connection compatible with an equivariant map to~$M$ with source and target of this isomorphism determined by $\phi_0$ and $\phi_0'$, respectively. In the second diagram, $\phi_1$ and $\phi_1'$ are the values of $\phi$ and $\phi'$ on morphisms. This diagram shows that $\phi'_1$ is uniquely determined by $\phi_1$ and $\tilde{g}$, since $\tilde{g}$ acts by conjugation to define $\phi_1'$. Finally, we note that a pair of natural transformations determined by the data $\tilde{g}$ give the \emph{same} isomorphism~$g$ between $G$-bundles with connection if they differ by an automorphism of the trivial $G$-bundle coming from the $\Z^2$-action determined by~$\phi_1$. In particular, natural transformations of the form
\beq
\tilde{g}\colon S\times \R^{2|1}\hookrightarrow S\times \R^{2|1}\times \Z^2\stackrel{\phi_1}\to M\times G\label{eq:gaugesub}
\eeq
determined by the inclusion at some $(n,m)\in \Z^2$ result in the identity isomorphism~$g=\id$. 

All together, this shows that an $S$-point of the 2-pullback is a pair of $S$-points of $\Lat\times \Fun(\R^{2|1}\sq \Z^2,M\nsq G)$ together with a section of the bundle of groups
\beq
(\SL_2(\Z)\times\Conf(\R^{2|1})\ltimes \Map(\R^{2|1},G)\times S)/\Z^2\to S\label{eq:morbundle}
\eeq
where $\tilde{g}\in \Map(\R^{2|1},G)(S)$, $\tilde{f}\in \SL_2(\Z)\times\Conf(\R^{2|1})(S)$, and the quotient in~\eqref{eq:morbundle} is by the $S$-family of $\Z^2$ subgroups of $\SL_2(\Z)\times\Conf(\R^{2|1})\ltimes \Map(\R^{2|1},G)$ given by~\eqref{eq:Lambdaact} and~\eqref{eq:gaugesub}. In this 2-pullback, the $S$-points of $\Lat$ are related by the action of $\tilde{f}\in (\SL_2(\Z)\times \Conf(\R^{2|1}))(S)$, whereas the $S$-points of $\Fun(\R^{2|1}\sq \Z^2,M\nsq G)$ are related by the action of the gauge transformation $\tilde{g}\in \Map(\R^{2|1},G)(S)$. From this description, we find that 
\beq
&&\mathcal{P}\simeq \big(\SL_2(\Z)\times \Conf(\R^{2|1})\ltimes \Map(\R^{2|1},G)\times \Lat\times \Fun(\R^{2|1}\sq \Z^2,M\nsq G)\big)/\Z^2\label{eq:heresthepullback}
\eeq
where the $\Z^2$-quotient is as in~\eqref{eq:morbundle} for $S=\Lat\times \Fun(\R^{2|1}\sq \Z^2,M\nsq G)$. Hence, $\mathcal{P}$ is indeed a sheaf of sets. Furthermore, the map $s$ in~\eqref{diag:2pullback} is induced by the evident projection, and the map $t$ comes from the action of $\SL_2(\Z)\times \Conf(\R^{2|1})$ on $\Lat$ and the action of~$\Map(\R^{2|1},G)$ on $\Fun(\R^{2|1}\sq \Z^2,M\nsq G)$. This shows that~\eqref{eq:morbundle} is a submersion for each~$S$, so we conclude that both $s$ and $t$ are submersions of sheaves. This proves the lemma. 
\ep

Proposition~\ref{prop:appenpresentation} shows that $\F(M\nsq G)$ is presented by the groupoid $\{\mathcal{P}\rightrightarrows \Lat\times \Fun(\R^{2|1}\sq \Z^2,M\nsq G)\}$:

\begin{cor} \label{cor:atlas1}
The generalized atlas from Lemma~\ref{lem:atlas1} has associated groupoid presentation 
\beq
\F(M\nsq G)\simeq \left[\begin{array}{c} \SL_2(\Z)\times \big(\Conf(\R^{2|1})\ltimes \Map(\R^{2|1},G)\times \Lat\times \Fun(\R^{2|1}\sq \Z^2,M\nsq G)\big)/\Z^2\\ s\downarrow\downarrow t \\ \Lat\times \Fun(\R^{2|1}\sq \Z^2,M\nsq G)\end{array}\right].\nonumber 
\eeq
\end{cor}

\subsection{Inertia fields}\label{sec:inertia}

Remark~\ref{rmk:E2action} explains why we should be interested in the substack of $\F(M\nsq G)$ whose objects are invariant under the $\E^2$-action. We recall in the case of maps from a 2-torus $T^2$ to a stack $\X$, the~$\E^2$-fixed points correspond to the (double) inertia stack,
\beq
&&\Map([\pt\sq \Z^2],\X)\simeq \Map([\R^2\sq \Z^2],\X)^{\E^2}
\subset \Map([\R^2\sq \Z^2],\X)\simeq \Map(T^2,\X),\nonumber
\eeq
using that the (free) $\E^2$-action on $T^2\simeq[\R^2\sq \Z^2]$ taken in stacks has quotient $[\pt\sq \Z^2]$; see~\cite[Theorem~3.6.4]{LupercioUribe} for the case of loop groupoids and compare~\cite{stringorbifolds} for a more physics-oriented point of view. The following definition takes $\F_0(M\nsq G):=\F(M\nsq G)^{\E^2}\subset \F(M\nsq G)$ as the $\E^2$-fixed substack in the same sense.

\begin{defn}\label{defn:inertia} 
Define the substack $\F_0(M\nsq G)\subset \F(M\nsq G)$ of \emph{inertia fields} as the stack underlying the full subgroupoid of Corollary~\ref{cor:atlas1} with objects the subsheaf 
\beq
\Lat\times \Fun(\R^{0|1}\sq \Z^2,M\nsq G)\hookrightarrow \Lat\times \Fun(\R^{2|1}\sq \Z^2,M\nsq G)\label{eq:subatlas}
\eeq
where the inclusion~\eqref{eq:subatlas} is induced by the quotient map $\R^{2|1}\to \R^{2|1}/\E^2\simeq \R^{0|1}$. 
\end{defn}

Taking the full subgroupoid associated to $\Lat\times \Fun(\R^{0|1}\sq \Z^2,M\nsq G)$ in the groupoid presentation from Corollary~\ref{cor:atlas1}, we obtain the following. 

\begin{lem} \label{lem:present}
The stack of inertia fields has the groupoid presentation 
\beq
\F_0(M\nsq G)\simeq \left[\begin{array}{c} \big(\SL_2(\Z)\times \Conf(\R^{2|1})\ltimes \Map(\R^{0|1},G)\times \Lat\times \Fun(\R^{0|1}\sq \Z^2,M\nsq G)\big)/\Z^2\\ s\downarrow\downarrow t \\ \Lat\times \Fun(\R^{0|1}\sq \Z^2,M\nsq G)\end{array}\right].\nonumber 
\eeq
\end{lem}
\bp
We need only restrict the morphisms in the groupoid presentation from Corollary~\ref{cor:atlas1} along the inclusion~\eqref{eq:subatlas}. The morphisms over an $S$-point of the target of this inclusion are determined by sections of the bundle of groups~\eqref{eq:morbundle} determining a diagram~\eqref{diag:2pullback2}. Morphisms over an $S$-point of $\Lat\times \Fun(\R^{0|1}\sq \Z^2,M\nsq G)$ therefore correspond to the subspace of sections of~\eqref{eq:morbundle} determining a diagram 
\beq
&&\begin{tikzpicture}[baseline=(basepoint)];
\node (A) at (1.5,0) {$T^{2|1}_\Lambda$};
\node (B) at (4,0) {$[S\times \R^{2|1}\sq \Z^2]$};
\node (BB) at (4,-1.5) {$[S\times \R^{2|1}\sq \Z^2]$};
\node (F) at (7,0) {$[S\times \R^{0|1}\sq \Z^2]$};
\node (FF) at (7,-1.5) {$[S\times \R^{0|1}\sq \Z^2]$};
\node (C) at (1.5,-1.5) {$T^{2|1}_{\Lambda'}$};
\node (D) at (11.5,-.75) {$[(M\times \Omega^1(-;\fg))\sq G]$}; 
\node (E) at (8.8,-.75) {$g\ \twocommute$};
\draw[->] (B) to node [above] {$\sim$} (A);
\draw[->] (A) to node [left] {$f$} (C);
\draw[->] (BB) to node [below] {$\sim$} (C);
\draw[->,bend left=10] (F) to (D);
\draw[->,bend right=10] (FF) to (D);
\draw[->,dashed]  (B) to node [left] {$[\tilde{f}]$} (BB);
\draw[->,dashed] (F) to node [left] {$[\tilde{f}_0]$} (FF);
\draw[->] (B) to (F);
\draw[->] (BB) to (FF);
\path (0,-.75) coordinate (basepoint);
\end{tikzpicture}\label{diag:2pullback3}
\eeq
where the map $[S\times \R^{2|1}\sq \Z^2]\to [S\times \R^{0|1}\sq \Z^2]$ is determined by the quotient map $\R^{2|1}\to \R^{2|1}/\E^2\simeq \R^{0|1}$. We observe that there is a uniquely determined functor $\tilde{f}_0\colon S\times \R^{0|1}\sq \Z^2\to S\times \R^{0|1}\sq \Z^2$ associated with a strictly commuting diagram of super Lie groupoids underlying the middle square in~\eqref{diag:2pullback3}. Indeed, $\tilde{f}_0$ is the $\E^2$-quotient of $\tilde{f}$, which is well-defined because $\Z^2\subset \E^2$. 
From this we see that the only restriction on morphisms in the subsheaf~\eqref{eq:subatlas} is on~$g$: to remain in the subsheaf $\Lat\times \Fun(\R^{0|1}\sq \Z^2,M\nsq G)$, $g$ must come from a natural transformation between functors $S\times \R^{0|1}\sq \Z^2\to M\nsq G$. This is the data of a map $\tilde{g}\colon S\times \R^{2|1}\to S\times \R^{0|1}\to G$. Hence $\tilde{g}$ must be determined by an $S$-point of the subgroup~$\Map(\R^{0|1},G)<\Map(\R^{2|1},G)$. This proves the lemma. 
\ep

\subsection{Functions on inertia fields as differential forms on inertia stacks}\label{sec:slogan}
We recall that for a supermanifold $N$, the algebra $C^\infty(\Map(\R^{0|1},N))$ is (a completion of the) algebra of differential forms on $N$. Using that the $\Z^2$-action on $\R^{0|1}$ is trivial in~\eqref{eq:subatlas}, we have the isomorphisms of sheaves
\beq
 \Fun(\R^{0|1}\sq \Z^2,M\nsq G)&\simeq&  \Fun(\R^{0|1}\sq \Z^2,M\nsq G)\nonumber \\
 &\simeq&  \Fun(\R^{0|1}\times \pt \sq \Z^2,M\nsq G)\nonumber \\
 &\simeq &\Map(\R^{0|1},\Fun(\pt\sq \Z^2,M\nsq G)).  \label{eq:piTinert2}
\eeq
So from this we have (up to a completion) 
$$
C^\infty( \Fun(\R^{0|1}\sq \Z^2,M\nsq G))\simeq \Omega^\bullet (\Fun(\pt\sq \Z^2,M\nsq G)). 
$$
Therefore we can view functions on the atlas for $\F_0(M\nsq G)$ as differential forms on the double inertia stack of $[M\nsq G]$ tensored with~ functions on $\Lat$. The structure of the stack $\F_0(M\nsq G)$ determines an action on these functions by the groupoid in Lemma~\ref{lem:present}. This leads to the informal Lemma~\ref{slogan1} stated in the introduction.

\section{Twisted sectors and equivariant de~Rham complexes}\label{sec:twisted}

This section is a bit technical, so we provide a brief overview. Broadly stated, our goal is to give a more explicit description of the generalized atlases from the previous section,
$$
\Lat\times \Fun(\R^{2|1}\sq \Z^2,M\nsq G)\to \F(M\nsq G),\qquad \Lat\times \Fun(\R^{0|1}\sq \Z^2,M\nsq G)\to \F_0(M\nsq G)
$$
that streamlines later constructions while also being amenable to computations. The description comes from realizing the generalized atlases as subsheaves of a somewhat simpler sheaf, refining ~\eqref{rmk:itsasubsheaf}. 

This subsheaf description leads to three relationships between functions on inertia fields and equivariant de~Rham complexes. First, in~\S\ref{sec:rephrase} we rephrase the groupoid presentation of inertia fields from Lemma~\ref{lem:present} in terms of a group action on objects coming from the super geometric interpretation of equivariant de~Rham cohomology in the Weil model (see Lemma~\ref{lem:deRhamactioninertia}). Second, in~\S\ref{sec:WZ} we construct a different groupoid presentation of $\F_0(M\nsq G)$ in terms of the \emph{Wess--Zumino gauge} that connects with equivariant de~Rham cohomology in the Cartan model. Third, in~\S\ref{subsec:twist} we define \emph{twisted sectors} $\F_0(M\nsq G)_h$ which sit in a span of stacks
\beq
\SM(\R^{0|1},[M^h\nsq G_0^h])\sq(\E^{0|1}\rtimes \C^\times) \twoheadleftarrow \F_0(M\nsq G)_h\hookrightarrow \F_0(M\nsq G).\label{eq:twistedsector}
\eeq 
The faithful functor on the right realizes a twisted sector as a substack of inertia fields depending on a choice of homomorphism~$h\colon \Z^2\to G$, or equivalently, a pair of commuting elements in~$G$. The arrow on the left has as its target the stack whose sheaf of functions determines the previously-known super geometric interpretation of the Weil model for $G_0^h$-equivariant de~Rham cohomology of $M^h$, where $G_0^h:=G^h\bigcap G_0$. Hence, the spans~\eqref{eq:twistedsector} for each~$h$ give a bridge between functions on~$\F_0(M\nsq G)$ and the data that enters into the de~Rham model for complex analytic equivariant elliptic cohomology for each pair of commuting elements in~$G$ (see Definition~\ref{defn:ellcocycle}). 
In order for the restriction to $\F_0(M\nsq G)_h$ to support a square zero odd operator, we must pass to gauge-invariant functions on $\F_0(M\nsq G)$ as defined in~\S\ref{sec:gaugeinvariant}. In terms of the Weil model, this is the passage to horizontal basic forms; in the Cartan model, this is the passage to $G$-invariant forms. In~\S\ref{sec:revisit} we show that the restriction of gauge-invariant functions along~\eqref{eq:twistedsector} does indeed determine a map to a chain complex that computes the $G_0^h$-equivariant de~Rham cohomology of $M^h$.


\subsection{Atlases as subsheaves}\label{sec:subsheaf}

We start with some notation: for super Lie groups $H$ and~$K$, let $\Hom(H,K)$ denote the generalized supermanifold that assigns the set of $S$-families of homomorphisms $S\times H\to S\times K$ over a supermanifold~$S$, and pulls these homomorphisms back along base changes. We recall from Definition~\ref{defn:subsheaf} that a \emph{subsheaf} of a sheaf $\Y$ is a sheaf $\X$ where $\X(S)\subset \Y(S)$ is a subset for each $S$. 

\begin{lem}\label{rmk:subsheaf}
The generalized supermanifold $\Fun(\R^{2|1}\sq \Z^2,M\nsq G)$ is a subsheaf, 
\beq
&&\Fun(\R^{2|1}\sq \Z^2,M\nsq G)\hookrightarrow\Map(\R^{2|1},M)\times \Omega^1(-\times \R^{2|1};\fg)\times\Hom(\Z^2,\Map(\R^{2|1},G)). \label{eq:subsheaf2} 
\eeq
\end{lem} 
\bp
As observed in ~\eqref{rmk:itsasubsheaf}, a functor $S\times \R^{2|1}\sq \Z^2\to M\nsq G$ is determined by the data of a map between morphisms and a map between objects of the respective groupoids. This gives an inclusion as a subsheaf
$$
\Fun(\R^{2|1}\sq \Z^2,M\nsq G)\hookrightarrow \Map(\R^{2|1},M\times \Omega^1(-;\fg))\times \Map(\R^{2|1}\times \Z^2,M\times \Omega^1(-;\fg)\times G). 
$$
However, this description has redundancies coming from properties these data satisfy to define a functor. One property is compatibility with source and target maps. Because $M\nsq G$ is an action groupoid, this implies that the map on morphisms is determined by the map on objects together with a map $S\times \R^{2|1}\times \Z^2\to G$. So in fact we have 
\beq
\phantom{BBB}\Fun(\R^{2|1}\sq \Z^2,M\nsq G)&\hookrightarrow& \Map(\R^{2|1},M\times \Omega^1(-;\fg))\times \Map(\R^{2|1}\times \Z^2,G)\nonumber \\
&\simeq &\Map(\R^{2|1},M)\times \Omega^1(-\times \R^{2|1};\fg)\times \Map(\R^{2|1}\times \Z^2,G).\label{eq:subsheaf} 
\eeq
Compatibility with composition refines this subsheaf further. Observe the inclusion
$$
\Hom(\Z^2,\Map(\R^{2|1},G))\hookrightarrow \Map(\Z^2,\Map(\R^{2|1},G))\simeq \Map(\R^{2|1}\times \Z^2,G)
$$ 
using the group structure on $\Map(\R^{2|1},G)$ coming from pointwise multiplication of maps to~$G$. For the value of a functor on morphisms to be compatible with composition, we obtain the subsheaf description~\eqref{eq:subsheaf}. This proves the lemma.
\ep

Replacing the source groupoid with $\R^{0|1}\sq \Z^2$ leads to the following. 

\begin{lem}\label{cor:subsheaf}
The generalized supermanifold $\Fun(\R^{0|1}\sq \Z^2,M\nsq G)$ is a subsheaf, 
\beq
&&\Fun(\R^{0|1}\sq \Z^2,M\nsq G)\hookrightarrow \Map(\R^{0|1},M)\times \Omega^1(-\times \R^{0|1};\fg)\times \Hom(\Z^2,\Map(\R^{0|1}, G)).\label{eq:objsubsh1}
\eeq
\end{lem}
\bp
Consider the square,
\beq
&&\begin{tikzpicture}[baseline=(basepoint)];
\node (A) at (0,0) {$\Fun(\R^{0|1}\sq \Z^2,M\nsq G)$};
\node (B) at (7,0) {$\Map(\R^{0|1},M)\times \Omega^1(-\times \R^{0|1};\fg)\times \Hom(\Z^2,\Map(\R^{0|1}, G))$};
\node (C) at (0,-1.5) {$\Fun(\R^{2|1}\sq \Z^2,M\nsq G)$};
\node (D) at (7,-1.5) {$\Map(\R^{2|1},M)\times \Omega^1(-\times \R^{2|1};\fg)\times\Hom(\Z^2,\Map(\R^{2|1},G))$}; 
\draw[->,dashed] (A) to (B);
\draw[->,right hook-latex] (A) to (C);
\draw[->,right hook-latex] (C) to (D);
\draw[->,right hook-latex] (B) to (D);
\path (0,-.75) coordinate (basepoint);
\end{tikzpicture}\label{diag:itsasquare}
\eeq
where the left vertical arrow is~\eqref{eq:subatlas}, the lower horizontal inclusion is~\eqref{eq:subsheaf2}, and the right vertical arrow is determined by maps
$$
\Omega^1(-\times \R^{0|1};\fg)\hookrightarrow \Omega^1(-\times \R^{2|1};\fg),\qquad \Map(\R^{0|1},M)\hookrightarrow \Map(\R^{2|1},M),$$$$\Hom(\Z^2,\Map(\R^{0|1},G))\hookrightarrow \Hom(\Z^2,\Map(\R^{2|1},G))
$$
induced by the quotient $\R^{2|1}\to \R^{2|1}/\E^2\simeq \R^{0|1}$. We observe that~\eqref{diag:itsasquare} is a pullback square: an $S$-point of the upper right gives data for a putative functor as in the upper left, but need not satisfy the required properties; it satisfies these properties if and only if it is comes from an $S$-point in the lower left. Hence, the square is a pullback square. Since pullbacks preserve monomorphisms, the dashed arrow is an inclusion. 
\ep

\begin{rmk}\label{rmk:invt}
We observe the isomorphisms of sheaves 
\beq
\Map(\R^{2|1},M)\times \Omega^1(-\times \R^{2|1};\fg)&\simeq& \Fun(\R^{2|1},M\nsq G)\label{eq:descent1} \\
\Map(\R^{0|1},M)\times \Omega^1(-\times \R^{0|1};\fg)&\simeq& \Fun(\R^{0|1},M\nsq G).\label{eq:descent2}
\eeq
The image of the subsheaves~\eqref{eq:subsheaf2} and~\eqref{eq:objsubsh1} are precisely those $S$-points that descend in the sense of the additional data and property of the dashed arrow
\beq
\begin{tikzpicture}[baseline=(basepoint)];
\node (A) at (0,0) {$S\times \R^{2|1}$};
\node (B) at (0,-1.5) {$S\times \R^{2|1}\sq \Z^2$};
\node (C) at (3,-.75) {$M\nsq G$};
\draw[->] (A) to (B);
\draw[->] (A) to (C);
\draw[->,dashed] (B) to (C);
\path (0,-.75) coordinate (basepoint);
\end{tikzpicture}\qquad 
\begin{tikzpicture}[baseline=(basepoint)];
\node (A) at (0,0) {$S\times \R^{0|1}$};
\node (B) at (0,-1.5) {$S\times \R^{0|1}\sq \Z^2$};
\node (C) at (3,-.75) {$M\nsq G.$};
\draw[->] (A) to (B);
\draw[->] (A) to (C);
\draw[->,dashed] (B) to (C);
\path (0,-.75) coordinate (basepoint);
\end{tikzpicture}\label{eq:descentcondition}
\eeq
The data of the dashed arrow is supplied by $S\to \Hom(\Z^2,\Map(\R^{2|1},G))$, respectively, $S\to \Hom(\Z^2,\Map(\R^{0|1},G))$. This allows one to view an $S$-point of the target of the inclusion~\eqref{eq:subsheaf2} (respectively,~\eqref{eq:objsubsh1}) as a triple of data: (1) a trivial $G$-bundle with connection over $S\times \R^{2|1}$ (respectively, $S\times \R^{0|1}$); (2) a $G$-equivariant map from the trivial $G$-bundle to~$M$ from~\eqref{eq:descent1} (respectively,~\eqref{eq:descent2}), and (3) a $\Z^2$-action on the trivial $G$-bundle specified by an $S$-point of $\Hom(\Z^2,\Map(\R^{2|1},G))$ (respectively, $\Hom(\Z^2,\Map(\R^{0|1},G))$). Relative to this triple, the condition~\eqref{eq:descentcondition} is precisely that the $G$-connection and map to $M$ be invariant under the $\Z^2$-action. In other words, the images~\eqref{eq:subsheaf2} and~\eqref{eq:objsubsh1} are $S$-points satisfying a descent condition.
\end{rmk}

\subsection{Rephrasing the groupoid presentation of inertia fields}\label{sec:rephrase}
The following lemma gives the first point of contact between inertia fields and equivariant cohomology, by way of the super geometric interpretation of the equivariant de~Rham complex reviewed in~\S\ref{sec:appende}. 


\begin{lem}\label{lem:deRhamactioninertia}
The target map in the groupoid presentation of inertia fields from Lemma~\ref{lem:present} is the composition of the projection $p$ and an action map $\alpha$
\beq
&&\big(\SL_2(\Z)\times (\E^{2|1}\rtimes \C^\times)\ltimes \Map(\R^{0|1},G)\times \Lat\times \Fun(\R^{0|1}\sq \Z^2,M\nsq G)\big)/\Z^2\nonumber \\
\phantom{BB} && \stackrel{p}{\to} \big(\SL_2(\Z)\times (\E^{0|1}\rtimes \C^\times)\ltimes \Map(\R^{0|1},G)\big) \times \Lat\times \Fun(\R^{0|1}\sq \Z^2,M\nsq G)\nonumber\\
\phantom{BB} &&\stackrel{\alpha}{\to} \Lat\times \Fun(\R^{0|1}\sq \Z^2,M\nsq G).\label{eq:itsanaction}
\eeq
The projection $p$ is determined by $\E^{2|1}\to \E^{0|1}$, and the action $\alpha$ is the restriction of the $\SL_2(\Z)\times (\E^{0|1}\rtimes \C^\times)\ltimes \Map(\R^{0|1},G)$-action on
$$
\Lat\times \Map(\R^{0|1},M)\times\Omega^1(-\times \R^{0|1};\fg)\times \Hom(\Z^2,\Map(\R^{0|1},G))\supset \Lat\times \Fun(\R^{0|1}\sq \Z^2,M\nsq G)
$$
determined by:
\begin{enumerate}
\item the action of $\SL_2(\Z)\times \C^\times$ on $\Lat$ from~\eqref{eq:lataction};
\item the action of $(\E^{0|1}\rtimes \C^\times) \ltimes \Map(\R^{0|1},G)$ on $\Map(\R^{0|1},M)$ coming from the precomposition action of $\E^{0|1}\rtimes \C^\times$ on $\R^{0|1}$ and the postcomposition action by $G$ on $M$, as computed in~\eqref{eq:PiTaction1}, \eqref{eq:PiTaction2}, \eqref{eq:act5}, and~\eqref{eq:act6};
\item the action of $(\E^{0|1}\rtimes \C^\times)\ltimes \Map(\R^{0|1},G)$ on $\Omega^1(-\times \R^{0|1};\fg)$ by extended gauge transformations, i.e., gauge transformations defined in~\eqref{eq:gaugedef} and the pullback of 1-forms along the $\E^{0|1}\rtimes \C^\times$-action on~$\R^{0|1}$ computed in~\eqref{eq:act1}, \eqref{eq:act2}, \eqref{eq:act3} and~\eqref{eq:act4}; 
\item the action of $\SL_2(\Z)\times (\E^{0|1}\rtimes \C^\times)\ltimes \Map(\R^{0|1},G)$ on $\Hom(\Z^2,\Map(\R^{0|1},G))$ where $\SL_2(\Z)$ acts by precomposition on $\Z^2$ and $(\E^{0|1}\rtimes \C^\times)\ltimes \Map(\R^{0|1},G)$ acts on $\Map(\R^{0|1},G)$ as in (2) with $M=G$ using the conjugation action of $G$ on itself. 
\end{enumerate} 
\end{lem}

\begin{rmk}\label{rmk:actiononsubsheaf}
Following Lemma~\ref{lem:01action}, the formulas~\eqref{eq:PiTaction1}-\eqref{eq:PiTaction2} and~\eqref{eq:act1}-\eqref{eq:act6} only compute the actions described above on the subsheaf
$$
\resizebox{\textwidth}{!}{%
$\Lat\times \Map(\R^{0|1},M)\times\underline{\fg}\times\Pi \fg \times \Hom(\Z^2,\Map(\R^{0|1},G))\subset \Lat\times \Map(\R^{0|1},M)\times \Omega^1(-\times \R^{0|1};\fg) \times \Hom(\Z^2,\Map(\R^{0|1},G))$
}
$$
associated with the inclusion $\underline{\fg}\times\Pi \fg\subset \Omega^1(-\times \R^{0|1};\fg)$. We refer to~\S\ref{appen:vb} for more on the sheaf $\underline{\fg}:=C^\infty(-;\fg)$ whose $S$-points are sections of the trivial bundle with fiber the vector space~$\fg$. In view of Lemma~\ref{lem:stalk}, this restriction completely determines the action on functions, and therefore is the relevant computation when comparing with the super geometric interpretation of the Weil model; see Corollary~\ref{cor:Weil}. For this reason, below we shall ignore the distinction between formulas for the actions on $\underline{\fg}\times \Pi \fg$ and formulas for the action on~$\Omega^1(-\times \R^{0|1};\fg)$. 
\end{rmk}

\begin{proof}[Proof of Lemma~\ref{lem:deRhamactioninertia}]
We will trace through the action of isomorphisms on objects using the description from~\eqref{diag:2pullback3}. In that diagram, the conformal isometry $f$ between families of super tori is determined by an $S$-point $\tilde{f}\in (\Conf(\R^{2|1})\times \SL_2(\Z))(S)$ and the isomorphism of $G$-bundles~$g$ is determined by an $S$-point $\tilde{g}\in \Map(\R^{0|1},G)(S)$. 

Since $f$ is a map between super tori, the action (1) is clear from the groupoid presentation of super tori in Lemma~\ref{lem:55}. Similarly, the $\SL_2(\Z)$-action in (4) comes from the change of basis action on $\Z^2$ acting on maps $S\times \R^{0|1}\times \Z^2\to G$. 

In the notation of~\eqref{diag:2pullback3}, the $\Conf(\R^{2|1})$-action on~$\tilde{f}$ determines an action of $\tilde{f}_0$ that factors through the quotient
$$
\Conf(\R^{2|1})/\E^2\simeq \E^{0|1}\rtimes \C^\times. 
$$
This gives rise to an $\E^{0|1}\rtimes \C^\times$-action by precomposition on maps $S\times \R^{0|1}\to M\times \Omega^1(-;\fg)$ and $S\times \R^{0|1}\times \Z^2\to M\times \Omega^1(-;\fg)\times G$, taking the trivial action of $\E^{0|1}\rtimes \C^\times$ on $\Z^2$ in the latter case. This restricts to an $\E^{0|1}\rtimes \C^\times$-action on $\Fun(\R^{0|1}\sq \Z^2,M\nsq G)$ along the inclusion~\eqref{eq:objsubsh1}, and determines the $\E^{0|1}\rtimes \C^\times$-actions in (2), (3) and (4) in the statement of the lemma. From the computations in the appendix, the action on $S\times \R^{0|1}\to M$ is given by the formulas~\eqref{eq:PiTaction1}, \eqref{eq:PiTaction2}, while the action on $S\times \R^{0|1}\to \Omega^1(-;\fg)$ is~\eqref{eq:act1}, \eqref{eq:act2}. 

The action of the an $S$-point of the super Lie group $\Map(\R^{0|1},G)$ on a map $S\times \R^{0|1}\to M\times\Omega^1(-;\fg)$ is determined by~\eqref{eq:nattransf1}, and comes from actions on the maps $S\times \R^{0|1}\to M$ and $S\times \R^{0|1}\to \Omega^1(-;\fg)$. The latter is the post-composition action of $G\ltimes\Pi\fg\simeq \Map(\R^{0|1},G)$ on $\Omega^1(-\times \R^{0|1};\fg)$ by gauge transformations, leading to the claimed formulas~\eqref{eq:act3}, \eqref{eq:act4}. The action of $g\colon S\times \R^{0|1}\to G$ on a map $\phi\colon S\times \R^{0|1}\to M$ is by the composition
$$
S\times \R^{0|1}\stackrel{g\times \phi}{\longrightarrow} G\times M\stackrel{{\rm act}}{\to} M
$$
where ${\rm act}$ is the $G$-action on $M$. A formula for this composition is computed in \eqref{eq:act5}, and~\eqref{eq:act6}. The action of $\Map(\R^{0|1},G)$ on $\Hom(\Z^2,\Map(\R^{0|1},G))$ is determined by ~\eqref{eq:nattransf2}, given by postcomposition with the conjugation action. \ep

\subsection{The Wess--Zumino gauge} \label{sec:WZ} Using Lemma~\ref{cor:subsheaf}, we describe another groupoid presentation of inertia fields that will turn out to be related to the Cartan model for equivariant cohomology. Define the subsheaf (see~\S\ref{sec:appenCartan})
\beq
\Omega^1(-\times \R^{0|1};\fg)_\wz \subset \Omega^1(-\times \R^{0|1};\fg)\label{eq:wz1}
\eeq
whose $S$-points are $\fg$-valued 1-forms on $S\times \R^{0|1}$ with $\chi=0$ in the description  
\beq
A=d\theta\otimes \chi+\theta d\theta \otimes X+A_S\in \Omega^1(S\times \R^{0|1})\otimes \fg\simeq \Omega^1(S\times \R^{0|1};\fg)\label{eq:1formexpand}
\eeq
where $\theta\in C^\infty(\R^{0|1})$ is the standard coordinate, $\chi\in \Pi\fg(S)\simeq C^\infty(S;\fg)^\odd$, $X\in \underline{\fg}(S):= C^\infty(S;\fg)^\ev$ and $A_S\in \Omega^1(S;\fg)$. The subscript~$\wz$ in~\eqref{eq:wz1} stands for ``Wess--Zumino," and the $\chi=0$ condition is a choice of gauge fixing; see Lemma~\ref{lem:WZ01}. 

\begin{defn}\label{defn:WZsubsheaf} Define the subsheaf $\Fun(\R^{0|1}\sq \Z^2,M\nsq G)_{\wz}\subset \Fun(\R^{0|1}\sq \Z^2,M\nsq G)$ as the pullback
\beq
\begin{tikzpicture}[baseline=(basepoint)];
\node (A) at (0,0) {$\Fun(\R^{0|1}\sq \Z^2,M\nsq G)_{\wz}$};
\node (B) at (7.5,0) {$\Map(\R^{0|1},M)\times \Omega^1(-\times \R^{0|1};\fg)_{\wz} \times \Hom(\Z^2,\Map(\R^{0|1}, G))$};
\node (D) at (7.5,-1.25) {$\Map(\R^{0|1},M)\times \Omega^1(-\times \R^{0|1};\fg)\times \Hom(\Z^2,\Map(\R^{0|1}, G))$};
\node (C) at (0,-1.25) {$\Fun(\R^{0|1}\sq \Z^2,M\nsq G)$};
\draw[->] (A) to (B);
\draw[->] (A) to (C);
\draw[->,right hook-latex] (B) to (D);
\draw[->,right hook-latex] (C) to (D);
\path (0,-.75) coordinate (basepoint);
\end{tikzpicture}\nonumber
\eeq
where the lower horizontal arrow is the inclusion~\eqref{eq:objsubsh1} and the right vertical inclusion is determined by~\eqref{eq:wz1}. 
\end{defn}

Equivalently, $\Fun(\R^{0|1}\sq \Z^2,M\nsq G)_{\wz}$ is the subsheaf of functors where the datum $A\in \Omega^1(S\times \R^{0|1};\fg)$ is in the image of the subsheaf~\eqref{eq:wz1}. Replacing the groupoid $\R^{0|1}\sq \Z^2$ with $\R^{0|1}$, we also adopt the notation $\Fun(\R^{0|1},M\nsq G)_{\wz}\subset \Fun(\R^{0|1},M\nsq G)$ for the subsheaf defined as the pullback
\beq
\begin{tikzpicture}[baseline=(basepoint)];
\node (A) at (0,0) {$\Fun(\R^{0|1},M\nsq G)_{\wz}$};
\node (B) at (7.5,0) {$\Map(\R^{0|1},M)\times \Omega^1(-\times \R^{0|1};\fg)_{\wz} $};
\node (D) at (7.5,-1.25) {$\Map(\R^{0|1},M)\times \Omega^1(-\times \R^{0|1};\fg).$};
\node (C) at (0,-1.25) {$\Fun(\R^{0|1},M\nsq G)$};
\draw[->] (A) to node [above] {$\simeq$} (B);
\draw[->,right hook-latex] (A) to (C);
\draw[->,right hook-latex] (B) to (D);
\draw[->] (C) to node [above] {$\simeq$} (D);
\path (0,-.75) coordinate (basepoint);
\end{tikzpicture}\nonumber
\eeq

\begin{lem}\label{lem:WZsubsheaf} There is a canonical inclusion 
\beq
&&\Fun(\R^{0|1}\sq \Z^2,M\nsq G)_\wz\hookrightarrow \Omega^1(-\times \R^{0|1};\fg)_\wz\times \Hom(\Z^2,G)\times \Map(\R^{0|1},M)\label{eq:lem:WZsubsheaf}
\eeq
where in addition to the restriction on the connection~\eqref{eq:wz1}, we have the further restriction on~$\Hom(\Z^2,G)\subset \Hom(\Z^2,\Map(\R^{0|1},G))$ induced by the canonical inclusion $G<\Map(\R^{0|1},G)$ of the reduced subgroup. 
\end{lem} 
\bp
Suppose we are given a functor $S\times \R^{0|1}\sq \Z^2\to M\nsq G$, part of whose data is an element $A\in \Omega^1(S\times \R^{0|1};\fg)_\wz$ and~$h\colon S\times \R^{0|1}\times \Z^2\to G$. From~\eqref{eq:act3} and~\eqref{eq:act4}, compatibility of these data with target maps in the respective groupoids demands that~$h$ be of the form
$$
h\colon S\times \R^{0|1}\times \Z^2\stackrel{p}{\to} S\times \Z^2\to G,
$$
where $p$ is the projection. 
Hence, we obtain the subsheaf description~\eqref{eq:lem:WZsubsheaf}
refining~\eqref{eq:objsubsh1}, and the statement of the lemma follows. 
\ep

\begin{lem} \label{lem:WZ}
The composition 
\beq
&&\Lat\times \Fun(\R^{0|1}\sq \Z^2,M\nsq G)_{\wz}\subset \Lat\times \Fun(\R^{0|1}\sq \Z^2,M\nsq G)\to \F_0(M\nsq G)\label{eq:WZatlas}
\eeq
is a generalized atlas for the stack, whose associated groupoid presentation is the full subgroupoid of Lemma~\ref{lem:present} whose objects are the subsheaf 
\beq
\Lat\times \Fun(\R^{0|1}\sq \Z^2,M\nsq G)_{\wz}\subset \Lat\times \Fun(\R^{0|1}\sq \Z^2,M\nsq G).\label{eq:WZsubsheaf}
\eeq
\end{lem}

\bp The claim follows if the inclusion of the full subgroupoid with objects~\eqref{eq:WZsubsheaf} into the groupoid from Lemma~\ref{lem:present} is an equivalence. Hence it suffices to show that this inclusion of groupoids is essentially surjective. This is a consequence of the computation~\eqref{eq:act4}, where a gauge transformation $-\chi \in \Pi \fg(S)< (G\ltimes\Pi \fg)(S)\simeq \Map(\R^{0|1},G)(S)$ (in the notation of~\eqref{eq:1formexpand}) puts an arbitrary connection $A\in \Omega^1(S\times \R^{0|1};\fg)$ in the Wess--Zumino gauge. 
\ep

\subsection{Twisted sectors}\label{subsec:twist}

Let $h_1,h_2\in G$ be a pair of commuting elements defining a homomorphism $h=\langle h_1,h_2\rangle \colon \Z^2\to G$. Consider the subsheaves
\beq
\Omega^1(-\times \R^{0|1};\fg^h)\subset \Omega^1(-\times \R^{0|1};\fg),\quad \Map(\R^{0|1},M^h)\subset \Map(\R^{0|1},M)\label{eq:someinclusion}
\eeq
where $\fg^h\subset \fg$ is the subalgebra fixed by the adjoint action of $h_1,h_2$, and $M^h\subset M$ is the submanifold fixed by $h_1,h_2$. We will also use $G_0^h \subset G$ as the subgroup fixed by $h_1, h_2$, i.e., the common centralizer of both elements. Finally, for $G_0<G$ the connected component of the identity, define
\beq
G_0^h:=(G_0)^h=G^h\bigcap G_0=G^{h_1,h_2}\bigcap G_0=\subset G\label{defn:G0h}
\eeq
Observe that $h$ determines a map $\pt \to \Hom(\Z^2,\Map(\R^{0|1},G))$ from the composition
$$
\Z^2\stackrel{h}{\to} G\hookrightarrow \Map(\R^{0|1},G) 
$$ 
with the canonical inclusion of the reduced manifold of $\Map(\R^{0|1},G)$. 
Together with~\eqref{eq:someinclusion}, we obtain 
a subsheaf 
\beq
&&\resizebox{.95\textwidth}{!}{%
$\Map(\R^{0|1},M^h)\times \Omega^1(-\times \R^{0|1};\fg^h){\hookrightarrow} \Map(\R^{0|1},M)\times \Omega^1(-\times \R^{0|1};\fg)\times \Hom(\Z^2,\Map(\R^{0|1}, G)).$}\label{eq:above}
\eeq

\begin{defn} \label{defn:twisted}
Define the \emph{$h$-twisted sector} as the stack presented by the groupoid, 
\beq
\resizebox{\textwidth}{!}{$
\F_0(M\nsq G)_h:= \left[\begin{array}{c} \big(\Conf(\R^{2|1})\ltimes \Map(\R^{0|1},G_0^h)\times \Lat\times \Omega^1(-\times \R^{0|1};\fg^h)\times \Map(\R^{0|1},M^h)\big)/\Z^2\\ s\downarrow\downarrow t \\ \Lat\times \Omega^1(-\times \R^{0|1};\fg^h)\times \Map(\R^{0|1},M^h)\end{array}\right]\nonumber $}
\eeq
where the $\Z^2$-quotient is for the restriction of the action defining the morphisms in Lemma~\ref{lem:present}.
The source map above is the projection and target map is the restriction of the action from Lemma~\ref{lem:deRhamactioninertia} to the subsheaf~\eqref{eq:above}.
\end{defn}

\begin{rmk} That the action restricts (so that the above definition is well-defined) follows from the fact that the action by the subgroup of gauge transformations $\Map(\R^{0|1},G_0^h)<\Map(\R^{0|1},G)$ preserves the subsheaf $\Omega^1(-\times \R^{0|1};\fg^h)\subset \Omega^1(-\times\R^{0|1};\fg)$. 
\end{rmk}

We will construct a faithful functor $\F_0(M\nsq G)_h\hookrightarrow \F_0(M\nsq G)$ induced by a functor between groupoids whose value on objects is determined by the following. 

 \begin{lem} \label{cor:jh}
The map~\eqref{eq:above} factors through the inclusion~\eqref{eq:objsubsh1}:
\beq
\resizebox{\textwidth}{!}{%
\begin{tikzpicture}[baseline=(basepoint)];
\node (A) at (0,0) {$\Map(\R^{0|1},M^h)\times \Omega^1(-\times \R^{0|1};\fg^h)$};
\node (B) at (8.5,0) {$\Map(\R^{0|1},M)\times \Omega^1(-\times \R^{0|1};\fg)\times \Hom(\Z^2,\Map(\R^{0|1}, G)).$};
\node (C) at (0,-1.5) {$\Fun(\R^{0|1}\sq \Z^2,M\nsq G)$};
\draw[->,right hook-latex] (A) to (B);
\draw[->,dashed] (A) to node [left] {$j_h$} (C);
\draw[->,right hook-latex] (C) to (B);
\path (0,-.75) coordinate (basepoint);
\end{tikzpicture}\nonumber
}
\eeq
\end{lem}

\bp
An $S$-point of the sheaf $\Map(\R^{0|1},M^h)\times \Omega^1(-\times \R^{0|1};\fg^h)$ determines a map 
\beq
\phi_0 \colon S\times \R^{0|1}\to M^h\times \Omega^1(-\times\R^{0|1};\fg^h)\hookrightarrow M\times \Omega^1(-\times\R^{0|1};\fg). \label{eq:objectsmap}
\eeq
Next we define the map $\phi_1:=\phi_0\times h$
\beq
S\times \R^{0|1}\times \Z^2\stackrel{\phi_1}{\longrightarrow} M\times \Omega^1(-\times \R^{0|1};\fg)\times G,\label{eq:morphismsmap}
\eeq
for $h\colon \Z^2\to G$ the data defining the twisted sector. We claim that $\phi_0$ and $\phi_1$ are the data of a functor $\phi\colon S\times \R^{0|1}\sq \Z^2\to M\nsq G$, and hence an $S$-point of $\Fun(\R^{0|1}\sq \Z^2,M\nsq G)$. To see this, first observe that $\phi_1$ is automatically compatible with the source map. It is compatible with the target map precisely because $\phi_0$ has image in the invariant subsheaf $M^h\times \Omega^1(-\times\R^{0|1};\fg^h)\hookrightarrow M\times \Omega^1(-\times \R^{0|1};\fg)$ for the $\Z^2$-action determined by $h$. Compatibility with unit and composition follows from $h$ being a homomorphism. Hence, the the map factors as claimed.
\ep

The above proof in fact provides a more refined factorization of $j_h$ as follows. 

\begin{cor}\label{cor:morefactor}
The map $j_h$ factors as 
\beq
&&\Map(\R^{0|1},M^h)\times \Omega^1(-\times \R^{0|1};\fg^h)\to \Fun(\R^{0|1}\sq \Z^2,M^h\nsq G_0^h)\to \Fun(\R^{0|1}\sq \Z^2,M\nsq G)\label{eq:factorjh}
\eeq
\end{cor}

\begin{lem}\label{lem:faithfulfunctor}
There is a faithful functor from the groupoid presentation of $\F_0(M\nsq G)_h$ to the groupoid presentation of $\F_0(M\nsq G)$ in Lemma~\ref{lem:present}. The value of this functor on objects is the map $j_h$ from Lemma~\ref{cor:jh}, and its value on morphisms is determined by the injective homomorphism
\beq
\Conf(\R^{2|1})\ltimes \Map(\R^{0|1},G_0^h)\hookrightarrow \Conf(\R^{2|1})\ltimes \Map(\R^{0|1},G)\times \SL_2(\Z).\label{eq:itsahomomorphism}
\eeq
This determines the map of stacks 
\beq
\F_0(M\nsq G)_h\hookrightarrow \F_0(M\nsq G). \label{eq:twistedinclude}
\eeq
\end{lem}

\bp
By  Definition~\ref{defn:twisted} and Lemma~\ref{lem:deRhamactioninertia}, the inclusion $j_h$ is equivariant relative to the homomorphism~\eqref{eq:itsahomomorphism}. Next we observe that the $\Z^2$-quotient defining the morphisms in Definition~\ref{defn:twisted} is (by definition) the restriction of the $\Z^2$-quotient defining the morphisms in the groupoid presentation from Lemma~\ref{lem:present}. The lemma immediately follows. 
\ep

\begin{cor}  
We have the factorization
$$
\F_0(M\nsq G)_h\to \F_0(M^h\nsq G_0^h)\to \F_0(M\nsq G) 
$$
associated with the inclusions of groupoids presenting the stacks. 
\end{cor}
\bp
This follows immediately from naturality of the groupoid presentation of Lemma~\ref{lem:present} and Corollary~\ref{cor:morefactor}.
\ep

\begin{lem}\label{lem:twisttoWeil}
There is a full and essentially surjective functor 
\beq
\resizebox{\textwidth}{!}{$
\left\{\begin{array}{c} \big(\Conf(\R^{2|1})\ltimes \Map(\R^{0|1},G_0^h)\times \Lat\times \Fun(\R^{0|1},M^h\nsq G_0^h)\big)/\Z^2\\ s\downarrow\downarrow t \\ \Lat\times \Fun(\R^{0|1},M^h\nsq G_0^h)\end{array}\right\}\to \Lat\times \Fun(\R^{0|1},M^h\nsq G_0^h)\sq ((\E^{0|1}\rtimes \C^\times)\ltimes \Map(\R^{0|1},G_0^h)) \nonumber $}
\eeq
from the groupoid presentation of $\F_0(M\nsq G)_h$ to an action groupoid defined by the usual $\C^\times$-action on $\Lat$ and the $(\E^{0|1}\rtimes \C^\times)\ltimes \Map(\R^{0|1},G_0^h)$-action on $\Fun(\R^{0|1},M^h\nsq G_0^h)$ from Lemma~\ref{lem:01present}. 
\end{lem}
\bp
The data of a functor is given by the identity map on objects and the map on morphisms determined by the surjective homomorphism 
$$
\Conf(\R^{2|1})\ltimes \Map(\R^{0|1},G_0^h)\simeq (\E^{2|1}\rtimes \C^\times)\ltimes \Map(\R^{0|1},G_0^h)\twoheadrightarrow (\E^{0|1}\rtimes \C^\times)\ltimes \Map(\R^{0|1},G_0^h)
$$
coming from the projection $\E^{2|1}\to \E^{2|1}/\E^2\simeq \E^{0|1}$. By restricting the action in Lemma~\ref{lem:deRhamactioninertia} along~\eqref{eq:above}, we find that these data satisfy the requisite properties to determine a functor. 
\ep

From Lemmas~\ref{lem:twisttoWeil} and~\ref{lem:01present}, we obtain a map of stacks
\beq
\F_0(M\nsq G)_h\twoheadrightarrow (\Lat \times \SM(\R^{0|1},[M\nsq G]))\sq \E^{0|1}\rtimes \C^\times.\label{eq:Weilcompare}
\eeq
From Lemma~\ref{lem:superWeil}, the sheaf of functions on $\SM(\R^{0|1},[M\nsq G])\sq\E^{0|1}\rtimes \C^\times$ is a completion of the Weil complex of the $G_0^h$-manifold $M^h$. As a consequence of~\eqref{eq:Weilcompare}, the sheaf of functions on $\F_0(M\nsq G)_h$ encodes a similar type of Weil complex with coefficients in $C^\infty(\Lat)$, as we now describe. 

From Lemmas~\ref{lem:PiT} and~\ref{lem:stalk} along with properties of the projective tensor product, we observe that functions on the objects in the groupoid presentation of $\F_0(M\nsq G)_h$ are 
\beq
 C^\infty(\Lat\times \Fun(\R^{0|1},M^h\nsq G_0^h))&\simeq &C^\infty(\Lat\times \Omega^1(-\times \R^{0|1};\fg^h)\times \Map(\R^{0|1},M^h))\nonumber\\
 &\simeq& \Omega(M^h;C^\infty(\Lat)))\otimes \mathcal{O}(\fg_\C^h)\otimes \Lambda \fg_\C^\vee.\label{eq:itsaWeilcomplex}
\eeq
This algebra is (a completion of) the Weil complex of $M$ with coefficients in $C^\infty(\Lat)$. We compute the action of morphisms in Definition~\ref{defn:twisted} as follows.

\begin{lem}\label{lem:sectordeRham}
The groupoid action on functions~\eqref{eq:itsaWeilcomplex} coming from the groupoid presentation of $\F_0(M\nsq G)_h$ in Definition~\ref{defn:twisted} is determined by an action of the group 
$$
(\Conf(\R^{2|1})\ltimes \Map(\R^{0|1},G_0^h))/\E^2\simeq (\E^{0|1}\rtimes \C^\times)\ltimes \Map(\R^{0|1},G_0^h)
$$
characterized by:
\begin{enumerate}
\item The $\E^{0|1}\rtimes \C^\times$-action on $C^\infty(\Lat)$ through the $\C^\times$-action on $\Lat$ from~\eqref{eq:lataction};
\item The $\E^{0|1}\rtimes \C^\times$-action on $\Omega(M^h)\otimes \mathcal{O}(\fg_\C^h)\otimes \Lambda(\fg_\C^h)^\vee$ generated by the (negative) grading operator and (negative) Weil differential,
$$
\alpha\otimes f\otimes \zeta \mapsto \bar\mu^{-k-2l-j}(\alpha\otimes f\otimes \zeta-\eta d_W(\alpha\otimes f\otimes \zeta)) $$$$ \alpha\in \Omega^k(M^h),\ f\in \Sym^l((\fg_\C^h)^\vee)\subset \mathcal{O}(\fg_\C^h), \ \zeta\in \Lambda^j(\fg_\C^h)^\vee
$$
for $(\mu,\bar\mu)$ complex coordinates on $\C^\times$ and $\eta\in C^\infty(\E^{0|1})$ the standard coordinate. 
\item The $G_0^h\ltimes \Pi \fg^h \simeq \Map(\R^{0|1},G_0^h)$-action on $\Omega(M^h)\otimes \mathcal{O}(\fg_\C^h)\otimes \Lambda(\fg_\C^h)^\vee$ from the Weil complex: $G_0^h$ acts through its action on $M^h$ and the adjoint action on $\fg_\C^h$, while $\Pi \fg$ acts by contraction with vector fields generating the infinitesimal $G$-action. 
\end{enumerate}

\end{lem}
\bp
By Lemma~\ref{lem:faithfulfunctor}, restricting to the subgroupoid associated with a twisted sector leads to a groupoid action on functions determined by restricting the actions (1)-(4) in the statement of Lemma~\ref{lem:deRhamactioninertia}. The statement (1) in the present lemma that specifies the action as the pullback of functions for the action in Lemma~\ref{lem:deRhamactioninertia} is therefore clear. 

The statements (2) and (3) follow from the formulas~\eqref{eq:PiTaction1}, \eqref{eq:PiTaction2}, \eqref{eq:act1}, \eqref{eq:act2}, \eqref{eq:act3}, \eqref{eq:act4}, \eqref{eq:act5} \eqref{eq:act6} for the $(\E^{0|1}\rtimes \C^\times)\ltimes \Map(\R^{0|1},G)$-action on $C^\infty(\Omega^1(-\times\R^{0|1};\fg)\times \Map(\R^{0|1},M))$ as computed in Lemma~\ref{lem:superWeil}. 
\ep

We will need to understand how these equivariant de~Rham complexes for each~$h$ fit together to define functions on $\F_0(M\nsq G)$. To start, we can ask for conditions under which $h$ and $h'$ determine isomorphic substacks $\F_0(M\nsq G)_h,\F_0(M\nsq G)_{h'}\hookrightarrow \F_0(M\nsq G)$. To that end, define a left action of $G\times \SL_2(\Z)$ on $\Hom(\Z^2,G)$ by
\beq
(g,\gamma)\cdot h=g(\gamma\cdot h)g^{-1}=(gh_1^dh_2^{-b}g^{-1},gh_1^{-c}h_2^ag^{-1})\qquad \gamma=\left[\begin{array}{cc} a& b \\ c & d\end{array}\right]\label{eq:HomZGact}
\eeq
for $h=\langle h_1,h_2\rangle$, i.e., the precomposition action by $\SL_2(\Z)$ and postcomposition action by~$G$. We observe that the action~\eqref{eq:HomZGact} is covered by morphisms of groupoids 
$$
M^h\nsq G_0^h\to M^{g(\gamma\cdot h)g^{-1}}\nsq G^{g(\gamma\cdot h)g^{-1}}_0
$$ 
induced by the diffeomorphisms $M^h\to M^{g(\gamma\cdot h)g^{-1}}$ and $G_0^h\to G^{g(\gamma\cdot h)g^{-1}}_0$ from the $g$-action on~$M$ and the conjugation action of~$g$ on~$G$. Using  naturality of the sheaf 
$$
\Fun(\R^{0|1},M^h\nsq G_0^h)\simeq \Omega^1(-\times \R^{0|1};\fg^h)\times \Map(\R^{0|1},M^h)
$$
in the stack $M^h\nsq G_0^h$ and Definition~\ref{defn:twisted}, we obtain the following. 

\begin{cor}\label{cor:mapbetweentwist}
An element $(g,\gamma)\in G\times \SL_2(\Z)$ induces a map of sheaves
\beq
&&\Map(\R^{0|1},M^h)\times \Omega^1(-\times \R^{0|1};\fg^h) \to \Map(\R^{0|1},M^{g(\gamma\cdot h)g^{-1}})\times \Omega^1(-\times \R^{0|1};\fg^{g(\gamma\cdot h)g^{-1}}) 
\label{eq:twistmap}
\eeq
that determines a morphism of stacks
\beq
\F_0(M\nsq G)_h\to \F_0(M\nsq G)_{g(\gamma\cdot h)g^{-1}} \label{eq:twistmap2}
\eeq
compatible with the maps~\eqref{eq:twistedinclude}. 
\end{cor}

We will also require a description of twisted sectors in the Wess--Zumino gauge. From Definition~\ref{defn:WZsubsheaf}, we have the pullback square
\beq
\begin{tikzpicture}[baseline=(basepoint)];
\node (A) at (0,0) {$ \Lat\times \Fun(\R^{0|1},M^h\nsq G_0^h)_\wz$};
\node (B) at (6,0) {$\Lat\times \Fun(\R^{0|1}\sq \Z^2,M\nsq G)_\wz $};
\node (D) at (6,-1.25) {$\Lat\times \Fun(\R^{0|1}\sq \Z^2,M\nsq G).$};
\node (C) at (0,-1.25) {$\Lat\times \Fun(\R^{0|1},M^h\nsq G_0^h)$};
\draw[->,right hook-latex] (A) to (B);
\draw[->,right hook-latex] (A) to (C);
\draw[->,right hook-latex] (B) to (D);
\draw[->,right hook-latex] (C) to (D);
\path (0,-.75) coordinate (basepoint);
\end{tikzpicture}\label{eq:WZsubgroupoid}
\eeq
The following is proved using the same argument as in Lemma~\ref{lem:WZ}. 

\begin{cor}\label{cor:sectordeRham}
The restriction of the groupoid presentation of the twisted sector $\F_0(M\nsq G)_h$ from Definition~\ref{defn:twisted} to the full subgroupoid with objects~$\Lat\times \Fun(\R^{0|1},M^h\nsq G_0^h)_\wz$ gives an equivalent groupoid. In particular, 
$$
\Lat\times \Fun(\R^{0|1},M^h\nsq G_0^h)_\wz\hookrightarrow \Lat\times \Fun(\R^{0|1},M^h\nsq G_0^h)\to \F_0(M\nsq G)_h
$$
is a generalized atlas for the stack. 
\end{cor}

\subsection{Inertia fields modulo gauge}\label{sec:gaugeinvariant}
Let $G_0\triangleleft G$ be the connected component of the identity. Consider the normal subgroup 
\beq
\E^2\times\Map(\R^{0|1},G_0)\triangleleft \Conf(\R^{2|1})\ltimes \Map(\R^{0|1},G)\times \SL_2(\Z)\label{eq:quotientgrp}
\eeq
with the associated quotient
$$
\E^{0|1}\rtimes \C^\times\times \pi_0(G)\times \SL_2(\Z)\simeq (\Conf(\R^{2|1})\ltimes \Map(\R^{0|1},G)\times \SL_2(\Z))/(\E^2\times\Map(\R^{0|1},G_0)).
$$
\begin{defn} 
 Define the coarse quotient in sheaves
\beq
\Ftil_0(M\ncq G)&:=& \Lat\times \Fun(\R^{0|1}\sq \Z^2,M\nsq G)/(\E^2\times \Map(\R^{0|1},G_0))\nonumber\\
&\simeq& \Lat\times \Fun(\R^{0|1}\sq \Z^2,M\nsq G)/\Map(\R^{0|1},G_0)\label{eq:coarsequotient}
\eeq
where the isomorphism uses that the $\E^2$-action is trivial. 
\end{defn}

\begin{defn}\label{defn:gaugeinvt} Define the stack of \emph{gauge-invariant inertia fields} $\F_0(M\ncq G)$ as the stack underlying the action groupoid, 
\beq
\F_0(M\ncq G)\simeq [\Ftil_0(M\ncq G)\sq \E^{0|1}\rtimes \C^\times\times \SL_2(\Z)\times \pi_0(G)].\label{eq:F0Gaction2}\label{eq:F0Gaction}
\eeq
gotten from the taking the coarse quotient of the objects by the action of~$\E^2\times \Map(\R^{0|1},G_0)$ in the description of the groupoid presentation of $\F_0(M\nsq G)$ in Lemma~\ref{lem:deRhamactioninertia}, then using the residual action by the quotient by the normal subgroup~\eqref{eq:quotientgrp}.
\end{defn}


By Lemma~\ref{lem:present} and Definition~\ref{defn:gaugeinvt}, there is a canonical quotient map 
\beq
q\colon \F_0(M\nsq G)\to \F_0(M\ncq G)\label{eq:F0MG}
\eeq
coming from the map on groupoid presentations induced by taking coarse quotients. Functions that descend along $q$ are precisely those functions that are invariant under the action of gauge transformations $\Map(\R^{0|1},G_0)<\Map(\R^{0|1},G)$ in the connected component of the identity. Below we will often refer to these as \emph{gauge-invariant functions}, though we note that this terminology does not require invariance under $\pi_0(G)=\Map(\R^{0|1},G)/\Map(\R^{0|1},G_0)$.

\begin{rmk}
The notation $\F_0(M\ncq G)$ might be somewhat misleading: this stack is not gotten by applying $\F_0(-)$ to a coarse quotient~$M\ncq G$, but rather is a coarse quotient of $\F_0(M\nsq G)$. Alas, this was the least cumbersome notation we were able to find. 
\end{rmk}

\begin{rmk} Incorporating the group $\E^2$ in~\eqref{eq:quotientgrp} and the coarse quotient~\eqref{eq:coarsequotient} is a matter of convenience: it results in a stack $\F_0(M\ncq G)$ underlying an action groupoid~\eqref{eq:F0Gaction}. A version of~\eqref{eq:F0Gaction} with a quotient by only the gauge transformations $\Map(\R^{0|1},G_0)$ would result in a somewhat more complicated stack than~$\F_0(M\ncq G)$, but its sheaf of functions would contain the same information since $\E^2$ acts trivially on objects. 
\end{rmk}

It will be useful to have a version of the stack $\F_0(M\ncq G)$ phrased as a quotient of the inertia fields in the Wess--Zumino gauge. Consider the map
\beq
( \Lat\times \Fun(\R^{0|1}\sq \Z^2,M\nsq G)_{\rm wz})/G_0&\to&\Ftil_0(M\ncq G)\label{eq:WZinvt}
\eeq
induced by the inclusion $\Fun(\R^{0|1}\sq \Z^2,M\nsq G)_\wz \hookrightarrow \Fun(\R^{0|1}\sq \Z^2,M\nsq G)$ and the homomorphism $G_0\hookrightarrow \Map(\R^{0|1},G_0$). 

\begin{lem} \label{lem:WZcoarse}
The map~\eqref{eq:WZinvt} is an isomorphism of generalized supermanifolds. Hence, postcomposition with $\Ftil_0(M\ncq G)\to \F_0(M\ncq G)$ determines a generalized atlas for the stack,
\beq
( \Lat\times\Fun(\R^{0|1}\sq \Z^2,M\nsq G)_{\rm wz})/G_0 \to \F_0(M\ncq G).
\eeq
\end{lem}
\bp
We note that there is an isomorphism of generalized supermanifolds
\beq
\Omega^1(-\times \R^{0|1};\fg)_\wz=\Omega^1(-;\fg)\times\underline{\fg}\simeq  \Omega^1(-\times \R^{0|1};\fg)/\Pi\fg\label{eq:WZquotient}
\eeq
for the free $\Pi\fg$-action on $\Omega^1(-\times \R^{0|1};\fg)$ by gauge transformations; see~\eqref{eq:act4}. Furthermore, $\Pi\fg\triangleleft G_0\ltimes \Pi\fg \simeq \Map(\R^{0|1},G_0)$ is normal, so that $G_0$ acts on the quotient $\Omega^1(-\times \R^{0|1};\fg)_\wz$ and we have the isomorphism
$$
(\Omega^1(-\times \R^{0|1};\fg)_\wz)/G_0\simeq \Omega^1(-\times \R^{0|1};\fg)/\Map(\R^{0|1},G_0).
$$ 
This shows that~\eqref{eq:WZinvt} is in fact an isomorphism of generalized supermanifolds, and the result follows. 
\ep

\subsection{Sheaves of chain complexes on gauge-invariant inertia fields}\label{sec:Qcoh}

Recall the canonical line bundle over $[\pt\sq \C^\times]$ given by the homomorphism of generalized Lie groups~\eqref{eq:GL1rep}. Consider the pullback of the parity reversal of this line bundle along 
\beq
&&\F_0(\pt\ncq G)\simeq [\Ftil_0(\pt\ncq G)\sq (\E^{0|1}\rtimes \C^\times\times\SL_2(\Z)\times\pi_0(G))]\to [\pt\sq \C^\times]\label{eq:sHodge}
\eeq
where the second arrow is the map of stacks determined by the projection homomorphism $\E^{0|1}\rtimes \C^\times\times\SL_2(\Z)\times\pi_0(G)\to \C^\times$. Let $\omegas^{-1/2}$ denote the sheaf of sections of the associated equivariant line bundle on $\Ftil_0(\pt\ncq G)$. For $k\in \Z$ let $\omegas^{-k/2}:=(\omegas^{-1/2})^{\otimes k}$ denote sections of the $k$th tensor power. Explicitly, sections of $\omega^{k/2}$ satisfy 
\beq
&&(\mu,\bar\mu)^*f= \mu^{-k}f \qquad f\in C^\infty(\Ftil_0(\pt\ncq G)),\ \ (\mu,\bar\mu)^*f\in C^\infty(\C^\times\times \Ftil_0(\pt\ncq G)),\label{eq:sectionsofHodge}
\eeq
where $(\mu,\bar\mu)\in C^\infty(\C^\times)$ are the standard complex coordinates and $(\mu,\bar\mu)^*f$ denotes the pullback of a function along the $\C^\times$-action. 
Multiplication of functions on $\Ftil_0(\pt\ncq G)$ gives maps of line bundles 
\beq
\omega^{k/2}\otimes \omega^{l/2}\to \omega^{(k+l)/2}. \label{eq:multiplicationonomega}
\eeq

\begin{rmk} The notation $\omega^{k/2}$ is intended to remind one of the Hodge bundle over the moduli stack of elliptic curves. This accounts for various minus signs; indeed the transformation law for weight~$k$ modular forms is of the same form as~\eqref{eq:sectionsofHodge}. 
\end{rmk}

Similarly, let $\omegasbar^{-1/2}$ denote the line bundle on $\F_0(\pt\ncq G)$ coming from the homomorphism of generalized Lie groups in~\eqref{eq:GL1rep2}, and $\omegasbar^{-k/2}:=(\omegasbar^{-1/2})^{\otimes k}$ denote the $k$th tensor power. Define the function 
\beq
\vol=\frac{1}{2i}(\lambda_1\bar\lambda_2-\lambda_2\bar\lambda_1)\in C^\infty(\Lat)^{\SL_2(\Z)}\subset C^\infty(\Lat) \label{eq:vol}
\eeq
whose value on a based lattice $\Lambda$ is the volume of $\R^2/\Lambda$ using the flat metric. Since this function takes values in the positive reals, it has a positive square root~$\vol^{1/2}\in C^\infty(\Lat)^{\SL_2(\Z)}$. We observe that $\vol^{1/2}$ determines a nonvanishing section of $\omegas^{-1/2}\otimes \omegasbar^{-1/2}$ over $\Ftil_0(\pt\ncq G)$, and hence an isomorphism 
\beq
\omegas^{k/2}\simeq \omegasbar^{-k/2}\label{eq:conjiso}
\eeq
for each $k\in \Z$. 

Define the map $\pi$ 
\beq
&&\resizebox{.95\textwidth}{!}{%
$\Ftil_0(M\ncq G)=\big(\Lat\times \Fun(\R^{0|1}\sq \Z^2,M\nsq G)_\wz\big)/G_0\stackrel{\pi}{\to} \big(\Lat\times \Fun(\R^{0|1}\sq \Z^2,\pt\nsq G)_\wz\big)/G_0=\Ftil_0(\pt\ncq G)$
}
\eeq
induced by the terminal $G$-equivariant map $M\to \pt$. Since this map is $\E^{0|1}\rtimes \C^\times\times \pi_0(G)\times \SL_2(\Z)$-equivariant, it determines a map of stacks we also denote by $\pi$, which is part of the commutative square
\beq
\begin{tikzpicture}[baseline=(basepoint)];
\node (A) at (0,0) {$\F_0(M\nsq G)$};
\node (B) at (6,0) {$\F_0(\pt\nsq G)$};
\node (D) at (6,-1.25) {$\F_0(\pt\ncq G)$};
\node (C) at (0,-1.25) {$\F_0(M\ncq G)$};
\draw[->] (A) to (B);
\draw[->] (A) to node [left] {$q$} (C);
\draw[->] (B) to node [right] {$q$} (D);
\draw[->] (C) to node [above] {$\pi$} (D);
\path (0,-.75) coordinate (basepoint);
\end{tikzpicture}\label{defn:pifirst}
\eeq
where the vertical arrows are the coarse quotient maps~\eqref{eq:F0MG}, and the upper horizontal map comes from naturality in the the $G$-equivariant map $M\to \pt$, following Remark~\ref{rmk:nat}. We consider the direct image $\pi_*C^\infty_{\F_0(M\ncq G)}$ of the sheaf of smooth functions, regarded as an $\E^{0|1}\rtimes \C^\times\times \pi_0(G)\times \SL_2(\Z)$-equivariant sheaf on $\Ftil_0(\pt\ncq G)$. 
Define the coarse quotient
\beq
\Mstil_G:=\Ftil_0(\pt\ncq G)/\C^\times,\qquad \Mst_G:=[\Mstil_G\sq \SL_2(\Z)\times \pi_0(G)]. \label{eq:Mtildef}
\eeq

\begin{lem} \label{lem:smoothchaincomplex}
The vector space of $\C^\times$-invariant sections of~$\bigoplus_{k\in \Z} (\pi_*C^\infty_{\F_0(M\ncq G)}\otimes \omega^{-k/2})$ on $\Ftil_0(\pt\ncq G)$ determines a $\pi_0(G)\times \SL_2(\Z)$-equivariant sheaf of commutative differential graded algebras (cdgas) on $\Mstil_G$ with graded multiplication determined by multiplication in $C^\infty_{\F_0(M\ncq G)}$ and the maps of line bundles~\eqref{eq:multiplicationonomega}. The differential is determined by the vector field generating the $\E^{0|1}$-action on $\Ftil_0(\pt\ncq G)$. Finally, this sheaf of cdgas is 2-periodic, with an invertible degree $-2$ (Bott) element determined by $\lambda_2^{-1}\in C^\infty(\Lat)$, viewed as a nonvanishing section of $\omega^{2/2}=\omega$. The Bott element is not $\SL_2(\Z)$-invariant. 
\end{lem}
\bp
Using Proposition~\ref{prop:opensofquot}, the first part of the argument is largely formal: sections on the $\C^\times$-quotient are precisely the $\C^\times$-invariant sections over $\C^\times$-invariant open subsheaves of~$\Ftil_0(M\ncq G)$. Because $-1\in \C^\times$ acts as the grading involution on functions, the $\Z$-grading from these $\C^\times$-weight spaces refines the $\Z/2$-grading on functions. Hence, we obtain a sheaf of graded commutative algebras on $\Ftil_0(M\ncq G)\cq \C^\times=\Mstil_G$ whose $k$th graded piece consists of sections satisfying~\eqref{eq:sectionsofHodge} over a $\C^\times$-invariant open subsheaf $U\subset \Ftil_0(M\ncq G)$. The existence of the $\pi_0(G)\times \SL_2(\Z)$-action is also formal, since this action commutes with the $\C^\times$-action defining the grading. Thus far we have verified that $\C^\times$-invariant sections of~$\bigoplus_{k\in \Z} (\pi_*C^\infty_{\F_0(M\ncq G)}\otimes \omega^{k/2})$ determine a sheaf of commutative graded algebras on $\Mst_G$. 


We endow this sheaf with a differential as follows. First, using~\eqref{eq:supertrans} we observe that there is a homomorphism $\E^{2|1}\rtimes \C^\times\to \E^{0|1}\rtimes \C^\times$, where the target semidirect product is defined by the action
$$
\C^\times\times \E^{0|1}\to \E^{0|1}\qquad (\mu,\bar\mu)\cdot \theta= \overline{\mu}\theta\qquad (\mu,\bar\mu)\in \C^\times(S), \theta\in \E^{0|1}(S). 
$$ 
Let $\partial_\theta$ be a generator for the Lie algebra of $\E^{0|1}$, and $\partial_\mu,\partial_{\bar \mu}$ be the standard generators for the complexified Lie algebra of~$\C^\times$. We use the same notation to denote the associated derivations on functions on $\Ftil_0(M\ncq G)$ coming from the infinitesimal $\E^{0|1}\rtimes \C^\times$-action. Since the action on functions is a right action, $\partial_\theta$ defines a map $\overline{\omega}^{k/2}\to \overline{\omega}^{(k+1)/2}$. Using the isomorphism of line bundles~\eqref{eq:conjiso}, the differential 
\beq
Q:=\vol^{1/2}\partial_\theta\label{notation:Q}
\eeq 
gives a map $\omega^{k/2}\to \omega^{(k-1)/2}$. We claim the operator $Q$ is a $C^\infty_{\Ftil_0(\pt\ncq G)}$-module map. To see this consider the $\E^{0|1}\rtimes \C^\times$-action as the restrictions along the inclusions
\beq
\resizebox{\textwidth}{!}{%
\begin{tikzpicture}[baseline=(basepoint)];
\node (A) at (0,0) {$\big(\Lat\times \Fun(\R^{0|1}\sq \Z^2,M\nsq G)_\wz\big)/G_0$};
\node (B) at (8,0) {$\big(\Lat\times \Omega^1(-\times \R^{0|1};\fg)_\wz\times \Map(\R^{0|1},M)\times \Hom(\Z^2,G)\big)/G_0$};
\node (C) at (0,-1.25) {$\big(\Lat\times \Fun(\R^{0|1}\sq \Z^2,\pt\nsq G)_\wz\big)/G_0$};
\node (D) at (8,-1.25) {$\big(\Lat\times \Omega^1(-\times \R^{0|1};\fg)_\wz\times \Hom(\Z^2,G)\big)/G_0.$};
\draw[->,right hook-latex] (A) to (B);
\draw[->] (A) to node [left] {$\pi$} (C);
\draw[->] (B) to node [right] {$p$} (D);
\draw[->,right hook-latex] (C) to  (D);
\path (0,-.75) coordinate (basepoint);
\end{tikzpicture}\nonumber
}
\eeq
By Lemmas~\ref{lem:deRhamactioninertia} and~\ref{lem:superCartan}, the odd derivation $Q$ restricts from the Cartan differential for the objects in the right hand column, up to the normalization factor of $\vol^{1/2}$. We recall that the Cartan differential acts trivially on 
$$
C^\infty(\Omega^1(-\times \R^{0|1};\fg)_\wz)\simeq \mathcal{O}(\fg_\C),
$$
and the $\E^{0|1}$-action on $\Lat\times \Hom(\Z^2,G)$ is also trivial. Therefore, $Q$ acts by the zero derivation on functions on~$\Ftil_0(\pt\ncq G)$. Since $Q$ acts by degree~$+1$ derivations on $\bigoplus_{k\in \Z} (\pi_*C^\infty_{\F_0(M\ncq G)}\otimes \omega^{-k/2})$, it therefore is linear over $\C^\times$-invariant functions on ${\Ftil_0(\pt\ncq G)}$, i.e., functions on $\Mstil_G$. Furthermore, because the $\E^{0|1}$-action commutes with the $\pi_0(G)\times \SL_2(\Z)$-action, the differential is compatible with the $\pi_0(G)\times \SL_2(\Z)$-equivariant structure. 

Finally, we observe that $\lambda_2^{-1}\in C^\infty(\Lat)\hookrightarrow C^\infty(\Ftil_0(M\ncq G))$ determines a (nonvanishing) section of $\omega^{2/2}$, and so descends to an invertible degree~$-2$ element in the associated cdga. This defines the claimed Bott class whose $\SL_2(\Z)$-transformation properties are determined by
$$
\lambda_2\mapsto c\lambda_1+d\lambda_2=\lambda_2(c\tau+d)\qquad \tau=\lambda_1/\lambda_2,\ \left[\begin{array}{cc} a & b \\ c & d\end{array}\right]\in \SL_2(\Z).
$$
\ep


%

\subsection{Gauge-invariant twisted sectors and equivariant de Rham complexes}\label{sec:revisit}
Now we turn attention to gauge-invariant twisted sectors. 

\begin{defn}\label{defn:twistedgauge} Define the sheaf 
$$
\Ftil_0(M\ncq G)_h:=\big(\Lat\times \Fun(\R^{0|1},M^h\nsq G_0^h)\big)/\E^2\times \Map(\R^{0|1},G_0^h)
$$
as the coarse quotient of objects in Definition~\ref{defn:twisted}, where $G_0^h$ is defined in~\eqref{defn:G0h}. Define the \emph{gauge-invariant $h$-twisted sector} as the stack
\beq
\F_0(M\ncq G)_h:= [\Ftil_0(M\ncq G)_h\sq \E^{0|1}\rtimes\C^\times] 
\eeq
\end{defn}
From~\eqref{eq:twistedinclude} and~\eqref{eq:F0Gaction}, we have the 2-commuting square
\beq
\begin{tikzpicture}[baseline=(basepoint)];
\node (A) at (0,0) {$\F_0(M\nsq G)_h$};
\node (B) at (6,0) {$\F_0(M\nsq G)$};
\node (D) at (6,-1.25) {$\F_0(M\ncq G)$};
\node (C) at (0,-1.25) {$\F_0(M\ncq G)_h$};
\draw[->] (A) to (B);
\draw[->] (A) to node [left] {$q$} (C);
\draw[->] (B) to node [right] {$q$} (D);
\draw[->] (C) to (D);
\path (0,-.75) coordinate (basepoint);
\end{tikzpicture}
\eeq
where the vertical arrows are the coarse quotient maps. By Lemma~\ref{lem:smoothchaincomplex}, the restriction along the inclusion $\Ftil_0(M\ncq G)_h\hookrightarrow \Ftil_0(M\ncq G)$ of generalized atlases gives a map of cdgas
\beq
\Gamma(\Ftil_0(M\ncq G)/\C^\times;\omega^{-\bullet/2})\to \Gamma(\Ftil_0(M\ncq G)_h/\C^\times;\omega^{-\bullet/2}). \label{eq:twistedsectorchain}
\eeq


\begin{prop}\label{prop:twisted}
There is an isomorphism of cdgas
\beq
\Big(\Omega^\bullet(M^h;C^\infty(\HH)[\beta,\beta^{-1}])\otimes \mathcal{O}(\fg_\C^h)\Big)^{G_0^h}\stackrel{\sim}{\to} \Gamma(\Ftil_0(M\ncq G)_h/\C^\times;\omega^{-\bullet/2})\label{eq:isovs}
\eeq
where the grading on the left is for differential forms with values in the graded ring $C^\infty(\HH)[\beta,\beta^{-1}]$ with $|\beta|=-2$ and $\mathcal{O}(\fg_\C^h)$ is in degree zero. The differential on the left is the Cartan differential $d-\beta^{-1}\iota$. For $\gamma\times [g]\in \SL_2(\Z)\times G/G_0\simeq \SL_2(\Z)\times \pi_0(G)$, a choice of representative $g\in G$ determines a commutative square
\beq
\begin{tikzpicture}[baseline=(basepoint)];
\node (A) at (0,0) {$(\Omega^\bullet(M^{h'};C^\infty(\HH)[\beta,\beta^{-1}])\otimes \mathcal{O}(\fg_\C^{h'}))^{G^{h'}_0}$};
\node (B) at (6,0) {$\Gamma(\Ftil_0(M\ncq G)_{h'}/\C^\times;\omega^{-\bullet/2})$};
\node (D) at (6,-1.4) {$\Gamma(\Ftil_0(M\ncq G)_h/\C^\times;\omega^{-\bullet/2})$};
\node (C) at (0,-1.4) {$(\Omega^\bullet(M^h;C^\infty(\HH)[\beta,\beta^{-1}])\otimes \mathcal{O}(\fg_\C^h))^{G_0^h}$};
\draw[->] (A) to (B);
\draw[->] (A) to node [left] {$(\gamma\times g)^*$} (C);
\draw[->] (B) to (D);
\draw[->] (C) to (D);
\path (0,-.75) coordinate (basepoint);
\end{tikzpicture}\label{eq:isovs2}
\eeq
where $h'=g(\gamma\cdot h)g^{-1}$ is the image of $h$ under the action~\eqref{eq:HomZGact}, the map on the right is the pullback along~\eqref{eq:twistmap2}, and the map on the left is determined by the pullback of differential forms and
$$
z\mapsto \Ad_g^*(z)/(\tau c+d), \ \ \beta\mapsto \beta/(c\tau+d) \qquad \tau\in \HH, \ \left[\begin{array}{cc} a & b \\ c & d\end{array}\right]\in \SL_2(\Z),\ z\in (\fg_\C^\vee)^{G_0}\subset  \mathcal{O}(\fg_\C)^{G_0}.
$$ 
\end{prop}

\bp
Consider the isomorphisms of algebras
\beq
C^\infty(\Ftil_0(M\ncq G)_h)&\simeq& \left(\Omega(M^h;C^\infty(\Lat))\otimes \mathcal{O}(\fg_\C^h)\otimes \Lambda \fg_\C^\vee\right)^{\Map(\R^{0|1},G_0^h)} \label{eq:Weiltwist}\\
&\simeq& \left(\Omega(M^h;C^\infty(\Lat))\otimes \mathcal{O}(\fg_\C^h)\right)^{G_0^h}\label{eq:twistedCartan}
\eeq
where the first isomorphism uses Lemmas~\ref{lem:PiT} and~\ref{lem:stalk}, while the second isomorphism uses the description of $\Omega^1(-\times \R^{0|1};\fg^h)_\wz$ as the quotient~\eqref{eq:WZquotient}, or equivalently, Corollary~\ref{cor:Cartanvs}. We then use Lemma~\ref{lem:sectordeRham} to compute the action of $\E^{0|1}\rtimes \C^\times\times \pi_0(G)\times \SL_2(\Z)$ on this graded algebra. We observe that the subalgebra generated by the elements
$$
\alpha \in \Omega^k(M^h;C^\infty(\Lat))\qquad z\in \fg_\C^\vee\subset\mathcal{O}(\fg_\C)
$$
is dense in~\eqref{eq:twistedCartan}, and therefore it suffices to compute the action on these functions. We will define the map~\eqref{eq:isovs} in terms of the isomorphism $\varphi$ defined by
\beq
\Omega(M^h;C^\infty(\Lat))\otimes \mathcal{O}(\fg_\C^h)&\stackrel{\varphi}{\to}& \Omega(M^h;C^\infty(\Lat))\otimes \mathcal{O}(\fg_\C^h)\supset C^\infty(\Ftil_0(M\ncq G)_h)\nonumber \\
\alpha&\mapsto&\vol^{k/2}\alpha\label{eq:varphi}\\
z&\mapsto& (\lambda_2^{-1}\vol)  z.\nonumber
\eeq
Namely, we restrict to the subalgebra of the source of $\varphi$ whose image is the direct sum over $\C^\times$-weight spaces in $C^\infty(\Ftil_0(M\ncq G)_h)$. We remark that the purpose of~\eqref{eq:varphi} is to implement the isomorphism between line bundles $\bar\omega^{k/2}$ and $\omega^{-k/2}$ in the proof of Lemma~\ref{lem:smoothchaincomplex}.

To show that $\varphi$ determines the isomorphism~\eqref{eq:isovs}, first we compute the $\C^\times$-weight spaces in $C^\infty(\Ftil_0(M\ncq G)_h)$ in terms of the actions 
\beq
(\mu,\bar\mu)\cdot\varphi(\alpha)&=&(\mu^k\bar\mu^k\vol^{k/2} )(\bar\mu^{-k} L_{\mu,\bar\mu}^*\alpha)=\mu^k(\vol^{k/2}L_{\mu,\bar\mu}^*\alpha)=\mu^k\varphi(L_{\mu,\bar\mu}^*\alpha)\label{eq:formact}\\
(\mu,\bar\mu)\cdot \varphi(z)&=&(\mu^{-2}\lambda^{-1})(\mu^2\bar\mu^2\vol) \bar\mu^{-2}z=(\lambda_2^{-1}\vol) z=\varphi(z)\label{eq:Lieact}
\eeq
where $L_{\mu,\bar\mu}\colon \Lat\to \Lat$ is the $\C^\times$-action on $\Lat$, and $L_{\mu,\bar\mu}^*\colon \Omega(M^h;C^\infty(\Lat))\to \Omega(M^h;C^\infty(\Lat))$ is the action on coefficients by pullback. Hence, a $k$-form $\alpha$ has $\C^\times$-weight $k$ and the coordinate functions $\varphi(z)$ have $\C^\times$-weight zero. Next observe that $\Lat/\C^\times\simeq \HH$ and pulling back along the map
$$
\Lat\to \HH,\qquad \Lat(S)\ni (\lambda_1,\bar\lambda_1,\lambda_2,\bar\lambda_2)\mapsto (\lambda_1/\lambda_2,\bar\lambda_1/\bar\lambda_2)=:(\tau,\bar\tau)\in \HH(S)
$$
gives an isomorphism $C^\infty(\HH)\stackrel{\sim}{\to} C^\infty(\Lat)^{\C^\times}$ with the $\C^\times$-invariant functions, i.e., the zeroth weight space. Functions in all other weight spaces can be written as $\lambda_2^{k}F(\tau,\bar\tau)$ for $k\in \Z$ and $F\in C^\infty(\HH)\stackrel{\sim}{\to} C^\infty(\Lat)^{\C^\times}$. This follows by noting that the odd weight spaces are trivial and $\lambda_2^{-1}\mapsto \mu^{-2}\lambda_2^{-1}$ under the $\C^\times$-action. Hence, the direct sum over all weight spaces for the $\C^\times$-action is $\Omega(M;C^\infty(\HH)[\lambda_2,\lambda_2^{-1}])\otimes\mathcal{O}(\fg_\C^h)\subset \Omega(M;C^\infty(\Lat))\otimes\mathcal{O}(\fg_\C^h)$, where the $k$th weight space is spanned by degree $j$ differential forms with values in the subspace of $C^\infty(\HH)[\lambda_2,\lambda^{-1}_2]$ of homogeneous Laurent degree $k-j$. This shows that the map~$\varphi$ restricted to weight spaces yields the isomorphism~\eqref{eq:isovs} of graded vector spaces, where we identify $\beta\in C^\infty(\HH)[\beta,\beta^{-1}]$ with $\lambda_2^{-1}\in C^\infty(\Lat)$ and take $G_0^h$-invariants. 

We verify commutativity of~\eqref{eq:isovs2} for the $\SL_2(\Z)$-action via
$$
\lambda_2\mapsto c\lambda_1+d\lambda_2=\lambda_2(c\tau+d),\qquad \implies \qquad \beta\mapsto \beta/(c\tau+d)
$$
$$
\varphi(z)= \frac{\vol}{\lambda_2} z\mapsto \frac{\vol}{(c\lambda_1+d\lambda_2)} z=  \frac{\vol}{\lambda_2}\frac{z}{(c\tau+d)} =\frac{\varphi(z)}{c\tau+d}
$$
where we used that $\vol\in C^\infty(\Lat)^{\SL_2(\Z)}$ is $\SL_2(\Z)$-invariant. Finally, we observe that $M^h=M^{\gamma\cdot h}$ as subsets of $M$ for $\gamma \in \SL_2(\Z)$. Hence the action map $\Omega(M^h)\simeq C^\infty(\Map(\R^{0|1},M^h))\to \Omega(M^{\gamma\cdot h})\simeq C^\infty(\Map(\R^{0|1},M^{\gamma \cdot h}))$ is the identity map of algebras.

Next we verify commutativity of~\eqref{eq:isovs2} for the $\pi_0(G)$-action using the isomorphism $\pi_0(G)\simeq G/G_0$. Since $\Lat$ (and in particular, $\lambda_2$ and $\vol$) are $G$-invariant, the $\pi_0(G)$-action on coefficients $C^\infty(\HH)[\beta,\beta^{-1}]\simeq C^\infty(\HH)[\lambda_2,\lambda_2^{-1}]\subset C^\infty(\Lat)$ is trivial. The action on 
$$
C^\infty((\Map(\R^{0|1},M^h)\times \Omega^1(-\times \R^{0|1};\fg^h)_\wz)/G_0^h)\simeq (\Omega(M^h)\otimes  \mathcal{O}(\fg^h_\C))^{G_0^h}
$$ 
is determined by the $G$-action on $\Map(\R^{0|1},M^h)\subset \Map(\R^{0|1},M)$ and $\Omega^1(-\times \R^{0|1};\fg^h)_\wz\subset \Omega^1(-\times \R^{0|1};\fg)_\wz$. By Lemma~\ref{lem:sectordeRham} part (3) and using the identification $\Omega^1(-\times \R^{0|1};\fg^h)_\wz\simeq \Omega^1(-\times \R^{0|1};\fg^h)/\Pi\fg$, the action on $C^\infty(\Omega^1(-\times \R^{0|1};\fg^h)_\wz)\simeq \mathcal{\O}(\fg_\C)$ is determined by the adjoint action of $G$ on $\fg_\C$, and the action on $C^\infty(\Map(\R^{0|1},M^h))\simeq \Omega(M^h)$ is given by the left action of $G$ on $M$. This determines the action of $[g]\in G/G_0\simeq \pi_0(G)$ by choosing a representative $g\in G$ and observing
$$
\Omega(M^h;C^\infty(\Lat))\otimes \mathcal{O}(\fg_\C^h)\to \Omega(M^{ghg^{-1}};C^\infty(\Lat))\otimes\mathcal{O}(\fg_\C^{ghg^{-1}}). 
$$ 
This is compatible with the maps~\eqref{eq:twistmap} by their definition. 

Lastly, we compute the infinitesimal $\E^{0|1}$-action. By Lemma~\ref{lem:Cartandiff}, we recall that this action comes from descending the $\E^{0|1}$-action on $\Map(\R^{0|1},M^h)\times \Omega^1(-\times \R^{0|1};\fg^h)$ to the subquotient $(\Map(\R^{0|1},M^h)\times \Omega^1(-\times \R^{0|1};\fg^h)_\wz)/G_0^h$. The resulting odd derivation on $(\Omega(M^h)\otimes\mathcal{O}(\fg_\C^h))^{G_0^h} \simeq C^\infty((\Map(\R^{0|1},M^h)\times \Omega^1(-\times \R^{0|1};\fg^h)_\wz)/G_0^h)$ is the Cartan differential. 
The normalization on $Q$ from~\eqref{notation:Q} and intertwining with $\varphi$ modifies the Cartan differential $d_C$ as
$$
\varphi^{-1}\circ Q\circ \varphi=\varphi^{-1}\circ (\vol^{1/2}d_C)\circ \varphi=d-\lambda_2\iota. 
$$
This agrees with the Cartan differential $d-\beta^{-1}\iota$ under the isomorphism~\eqref{eq:isovs}. 
\ep

\section{Deforming twisted sectors and analytic functions}\label{sec:Qcoh1}

Our eventual goal is to understand functions on $\F_0(M\ncq G)$ in terms of restrictions to each gauge-invariant twisted sector~$\F_0(M\ncq G)_h\subset \F_0(M\ncq G)$ together with compatibility requirements as~$h\colon \Z^2\to G$ varies. Proposition~\ref{prop:twisted} both characterizes the data of each restriction and gives a compatibility requirement when~$h$ and~$h'$ are related by the action of the discrete group $\pi_0(G)\times \SL_2(\Z)$ on conjugacy classes $[h]$ and $[h']$. The focus of this section is to understand compatibility coming from \emph{continuous} deformations of twisted sectors, i.e., when~$h$ and~$h'$ are close to each other in the space of maps $\Z^2\to G$. Motivated by ideas in physics, we define a sheaf of \emph{analytic functions} on $\F_0(M\ncq G)$ that roughly requires an analytic dependence on these continuous deformations of $h\colon \Z^2\to G$. In~\S\ref{sec:compare}, we will show that sections of this analytic sheaf determine cocycles for equivariant elliptic cohomology. 
We briefly state the idea of analytic functions defined below. A deformation of a map $h=\langle h_1,h_2\rangle \colon \Z^2\to G$ is given by $h_\epsilon=\langle h_1e^{\epsilon_1 X_1},h_2e^{\epsilon_2 X_2}\rangle \colon \Z^2\to G$ where the Lie algebra elements~$X_1,X_2\in \mf{g}$ are such that $h_1e^{\epsilon_1 X_1}$ and $h_2e^{\epsilon_2 X_2}$ commute. The elements $X_1,X_2$ determine a 1-dimensional complex subspace ${\rm Span}((X_1,0),(0,X_2))\subset \fg\times \fg\simeq \fg_\C$. Then a function $f$ on pairs of commuting elements has \emph{analytic dependence} if for every family~$h_\epsilon$ of pairs of commuting elements, the restriction of~$f$ determines a holomorphic function on an $\epsilon$-ball in~$\C\simeq {\rm Span}((X_1,0),(0,X_2))$. When applying this definition to inertia fields, the particular identification~$\fg\times \fg\simeq \fg_\C$ will depend on a modular parameter $\tau\in \HH$ or~$\Lambda\in \Lat$. Roughly, a function on inertia fields is analytic if it (1) has analytic dependence on pairs of commuting elements and (2) depends holomorphically on the modular parameter. 

The difficulty in making the above sketch precise stems from the fact that (especially for nonabelian~$G$) the space of commuting elements and attendant fixed point sets $M^{h_1,h_2}$ can be very complicated. To simplify this situation, we will restrict to deformations $h_\epsilon$ as above in which $X_1,X_2$ commute and the abelian subalgebra they generate $\langle X_1,X_2\rangle=:\mathfrak{a}$ is a subalgebra of~$\fg^h\supset \mathfrak{a}$. It turns out that the fixed point sets in this case satisfy~$M^{h_\epsilon}\subset M^h$ and $G^{h_\epsilon}_0\subset G_0^h$ for $\epsilon$ sufficiently small; see~\eqref{eq:BGdeform} below. These inclusions will give compatibility conditions on functions between restrictions to $h$-twisted sectors and $h'$-twisted sectors; see Lemma~\ref{lem:compatibledeform}.

\subsection{Deforming twisted sectors}
For any commuting subalgebra $\mathfrak{a}\subset \fg^h$, define
\beq
M^{h,\mf{a}}\subset M^h,\quad G^{h,\mf{a}}_0\subset G_0^h,\quad \fg^{h,\mf{a}}\subset \fg^h\label{eq:theseareinclusions}
\eeq
as follows. Let $M^{h,\mf{a}}$ be the submanifold of $M^h$ on which the vector fields associated with the infinitesimal $\mf{a}$-action are all zero, and let $G^{h,\mf{a}}_0$ be defined the same way using the conjugation action of $G$ on itself. Finally, let $\fg^{h,\mf{a}}$ denote the subalgebra of $\fg^h$ on which the adjoint $\mf{a}$-action is trivial. Using the inclusions~\eqref{eq:theseareinclusions}, for each commuting subalgebra $\mf{a}\subset \fg^h$ we obtain a subsheaf of the atlas for the $h$-twisted sector
\beq
&&\Lat\times \Fun(\R^{0|1},M^{h,\mf{a}}\nsq G^{h,\mf{a}}_0)_\wz \to \Lat\times \Fun(\R^{0|1},M^h\nsq G_0^h)_\wz\to  \F_0(M\nsq G)_h,\label{eq:substacksector}
\eeq
where the second arrow is the atlas for the $h$-twisted sector from Corollary~\ref{cor:sectordeRham}. 

For $\epsilon>0$, let $B_\epsilon\subset\mf{a}\times\mf{a}$ denote an $\epsilon$-ball centered at the origin $(0,0)\in \mf{a}\times \mf{a}$. We will define a map $\tilde{\delta}_\epsilon$ that sits in the diagram 
\beq
\resizebox{\textwidth}{!}{%
\begin{tikzpicture}[baseline=(basepoint)];
\node (BB) at (6,1.5) {$\Lat\times \Omega^1(-\times \R^{0|1};\fg^h)_\wz\times \Map(\R^{0|1},M^h)\times G_0^h\times G_0^h$};
\node (B) at (6,0) {$\Lat\times \Fun(\R^{0|1}\sq \Z^2,M^h\nsq G_0^h)_\wz$};
\node (A) at (-2,0) {$B_\epsilon\times \Lat\times \Fun(\R^{0|1},M^{h,\mf{a}}\nsq G^{h,\mf{a}}_0)_\wz$};
\node (D) at (6,-1.5) {$ \Lat\times \Fun(\R^{0|1},M^h\nsq G_0^h)_\wz$};
\node (C) at (-2,-1.5) {$\Lat\times \Fun(\R^{0|1},M^{h,\mf{a}}\nsq G^{h,\mf{a}}_0)_\wz$};
\draw[->,dashed] (A) to node [below] {$\delta_\epsilon$} (B);
\draw[->] (A) to node [left] {$p$} (C);
\draw[->,right hook-latex] (D) to (B);
\draw[->,right hook-latex] (B) to (BB);
\draw[->] (C) to (D);
\draw[->] (A) to node [above] {$\tilde{\delta}_\epsilon$} (BB);
\path (0,1) coordinate (basepoint);
\end{tikzpicture}\label{eq:opensubanaly}}
\eeq
and show that there is a uniquely determined map~$\delta_\epsilon$ making the diagram commute. The lower inclusion on the right is determined by the restriction of~\eqref{eq:factorjh} to the Wess--Zumino gauge, while the upper inclusion uses the Lemma~\ref{lem:WZsubsheaf}. The arrow $p$ is the projection. The lower horizontal arrow is from~\eqref{eq:substacksector}.

\begin{defn}\label{eq:gCaction}
For $h=(h_1,h_2)\in \Hom(\Z^2,G)$ and $\mf{a}\subset \mf{g}^h$ a commuting subalgebra, define a map
$$
 \Lat\times \mf{a}\times \mf{a}\times\Fun(\R^{0|1},M^{h,\mf{a}}\nsq G^{h,\mf{a}}_0)_\wz \stackrel{\tilde{\delta}_\epsilon}{\to}\Lat\times G_0^h\times G_0^h\times \Omega^1(-\times \R^{0|1};\fg^h)_\wz  \times\Map(\R^{0|1},M^h)
$$
that on $S$-points is 
\beq
(\Lambda,(X_1, X_2),A,\phi)\mapsto (\Lambda,(h_1e^{X_1},h_2e^{X_2}),A+\theta d\theta (\lambda_2X_1-\lambda_1 X_2),\phi)\label{eq:tdeform}
\eeq
where 
$$
(X_1,X_2)\in (\mf{a}\times \mf{a})(S), \quad A\in \Omega^1(S\times \R^{0|1};\fg^{h,\mf{a}})_\wz,\quad \phi\in \Map(\R^{0|1},M^{h,\mf{a}})(S),
$$
and therefore $(h_1e^{X_1},h_2e^{X_2})\in (G_0^h\times G_0^h)(S)$. 
\end{defn}


\begin{lem}\label{lem:defo} The map $\tilde{\delta}_\epsilon$ uniquely determines a map $\delta_\epsilon$ in~\eqref{eq:opensubanaly} making the diagram commute. 
\end{lem}
\bp
The claim amounts to showing that the data in the target of~\eqref{eq:tdeform} define a functor
$$
\big(B_\epsilon\times \Lat\times \Fun(\R^{0|1},M^{h,\mf{a}}\nsq G^{h,\mf{a}}_0)_\wz\big)\times \R^{0|1}\sq \Z^2\to M^h\nsq G_0^h.
$$
First we observe that by virtue of $X_1,X_2\in \mf{a}(S)\subset \fg^h(S)$ being in an abelian subalgebra, the $S$-points $h_1e^{X_1},h_2e^{X_2}\in G_0^h(S)$ commute and so determine an $S$-family of homomorphisms
$$
(h_1e^{X_1},h_2e^{X_2})\in \Hom(\Z^2,G_0^h)(S)\hookrightarrow (G_0^h\times G_0^h)(S).
$$
From the subsheaf description of Lemma~\ref{lem:WZsubsheaf}, this reduces the claim in the present lemma to checking compatibility with the source and target maps. The former is automatic. Compatibility with the target map demands that $A+\theta d\theta(\lambda_2 X_1-\lambda_1 X_2)$ and $\phi$ be invariant under the $\Z^2$-action generated by $(h_1e^{X_1},h_2e^{X_2})$. From the assumption that $A\in \Omega^1(S\times \R^{0|1};\fg^{h,\mf{a}})$ and $\phi\in \Map(\R^{0|1},M^{h,\mf{a}})(S)$, we obtain the stronger statement of invariance under the separate $\Z^2$-actions of $(h_1,h_2)$ and $(e^{X_1},e^{X_2})$. 
This proves the lemma.\ep

For $\mf{a}\subset \fg^h$ a commuting subalgebra and $\epsilon>0$ satisfying the condition in Lemma~\ref{lem:defo}, there are a pair of maps
$$
\delta_\epsilon,\delta_0\colon B_\epsilon\times \Lat\times \Fun(\R^{0|1},M^{h,\mf{a}}\nsq G^{h,\mf{a}}_0)_\wz\rightrightarrows \Lat\times \Fun(\R^{0|1}\sq \Z^2,M^h\nsq G_0^h)_\wz
$$
where $\delta_0$ is the composition of the projection with the first arrow in~\eqref{eq:substacksector} and the inclusion determined by~\eqref{eq:factorjh}. 
Given an open subsheaf $U\subset \Lat\times \Fun(\R^{0|1}\sq \Z^2,M\nsq G)_\wz$, we shall consider the diagram 
 \beq
 \resizebox{\textwidth}{!}{%
\begin{tikzpicture}[baseline=(basepoint)];
\node (A) at (0,0) {$U_0,U_\epsilon$};
\node (B) at (6,0) {$U\bigcap \Lat\times \Fun(\R^{0|1}\sq \Z^2,M^h\nsq G_0^h)_\wz$};
\node (D) at (6,-1.25) {$\Lat\times \Fun(\R^{0|1}\sq \Z^2,M^h\nsq G_0^h)_\wz$};
\node (E) at (11.5,-1.25){$\Lat\times \Fun(\R^{0|1}\sq \Z^2,M\nsq G)_\wz,$};
\node (EE) at (11.5,0) {$U$};
\node (C) at (0,-1.25) {$B_\epsilon\times \Lat\times \Fun(\R^{0|1},M^{h,\mf{a}}\nsq G^{h,\mf{a}}_0)_\wz$};
\draw[->] (A) to (B);
\draw[->,right hook-latex] (A) to (C);
\draw[->,right hook-latex] (B) to node [right] {$i_U$} (D);
\draw[->] (C) to node [above] {$\delta_\epsilon,\delta_0$} (D);
\draw[->] (D) to (E);
\draw[->,right hook-latex] (EE) to (E);
\draw[->] (B) to (EE);
\path (0,-.5) coordinate (basepoint);
\end{tikzpicture}}\label{eq:analyticdiagram}
\eeq
where the inclusion $i_U$ defines an open subsheaf of $\Lat\times \Fun(\R^{0|1}\sq \Z^2,M^h\nsq G_0^h)_\wz$, and the further pullback to $\Lat\times \Fun(\R^{0|1}\sq \Z^2,M^h\nsq G_0^h)_\wz$ along $\delta_0$ and $\delta_\epsilon$, respectively, determine open subsheaves 
$$
U_0,U_\epsilon \subset B_\epsilon\times \Lat\times \Fun(\R^{0|1},M^{h,\mf{a}}\nsq G^{h,\mf{a}}_0)_\wz,
$$
In the case that $U=\Lat\times \Fun(\R^{0|1}\sq \Z^2,M^h\nsq G_0^h)_\wz$, we observe that $U_\epsilon$ parameterizes a deformation of the map
\beq
\Lat\times \Fun(\R^{0|1},M^{h,\mf{a}}\nsq G^{h,\mf{a}}_0)_\wz\to \F_0(M\nsq G)\label{eq:maptobedeformed}
\eeq
with deformation parameter in~$B_\epsilon$, whereas $U_0$ parameterizes the constant deformation of~\eqref{eq:maptobedeformed}. In the general case, $U_0$ and $U_\epsilon$ denote the restrictions of these deformations to an open subsheaf. We can compare them on $U_0\bigcap U_\epsilon\subset B_\epsilon\times \Lat\times \Fun(\R^{0|1},M^{h,\mf{a}}\nsq G^{h,\mf{a}}_0)_\wz$, which is also an open subsheaf.

\begin{defn}\label{defn:deform} A function $f\in C^\infty(U)$ on an open subsheaf $U\subset \Lat\times \Fun(\R^{0|1}\sq \Z^2,M\nsq G)_\wz$ is \emph{invariant under deformations} if for all $h\colon \Z^2\to G$ and $\mf{a}\subset \fg^h$ there exists an $\epsilon$ such $\delta_0^*f=\delta_\epsilon^*f\in C^\infty(U_\epsilon\bigcap U_0)$ where $\delta_0^*f$ and $\delta_\epsilon^*f$ denote the pullback of $f$ along the lower row in the diagram~\eqref{eq:analyticdiagram}. Define the \emph{sheaf of deformation invariant functions} as the sheafification of the subsheaf of deformation invariant smooth functions.
\end{defn}

Given $h\colon\Z^2\to G$ suppose $h'=(h_1',h_2')=(h_1e^{X_1},h_2e^{X_2})$ for $X_1,X_2\in B_\epsilon\subset \mf{g}^h$ where $B_\epsilon$ an $\epsilon$-ball centered at~0, and where $[X_1,X_2]=0$. A result of Block and Getzler~\cite[Lemma~1.3]{BlockGetzler} (and its mild generalization~\cite[Lemma~3.2]{BET0}) shows that there exists an $\epsilon>0$ such that for all such $h'$ coming from $X_1,X_2\in B_\epsilon$ we have equalities of subspaces, 
\beq
M^{h'}=M^{h,\mf{a}}\subset M^h,\quad G^{h'}_0=G^{h,\mf{a}}_0\subset G_0^h,\quad \fg^{h'}=\fg^{h,\mf{a}}\subset \fg^h.\label{eq:BGdeform}
\eeq
where $\mf{a}=\langle X_1,X_2\rangle \subset \mf{g}^h$ is the abelian subalgebra generated by $X_1,X_2$. 
%
%
This determines a restriction map 
\beq
{\rm res}\colon \mathcal{O}(\fg_\C^h;\Omega^\bullet(M^h;C^\infty(\Lat)))\to \mathcal{O}(\fg_\C^{h'};\Omega^\bullet(M^{h'};C^\infty(\Lat))).\label{eq:analyticrestrict}
\eeq
Under the isomorphism of algebras~\eqref{eq:twistedCartan}, invariance under~\eqref{eq:tdeform} demands the following condition. 

\begin{lem}\label{lem:compatibledeform} Suppose $U\subset \Lat\times \Fun(\R^{0|1}\sq \Z^2,M\nsq G)_\wz$ is an open subsheaf containing the objects of the $h$-twisted sector $\Lat\times \Fun(\R^{0|1},M^h\nsq G_0^h)_\wz\subset U$ and $h'\colon \Z^2\to G$ is determined by $X_1,X_2\in B_\epsilon \subset \fg^h$ satisfying the conditions for~\eqref{eq:BGdeform}. Then for a function $f\in C^\infty(U)$ invariant under deformations, for $\epsilon$ sufficiently small the images of $f$ under the restrictions
 \beq
 \resizebox{\textwidth}{!}{%
\begin{tikzpicture}[baseline=(basepoint)];
\node (AA) at (-1,0) {$f$};
\node (A) at (0,0) {$\in C^\infty(U)$};
\node (BB) at (1,1) {$f_h$};
\node (B) at (4,1) {$\in C^\infty(\Lat\times \Fun(\R^{0|1},M^h\nsq G_0^h)_\wz)$};
\node (C) at (10,1) {$\mathcal{O}(\fg_\C^h;\Omega^\bullet(M^h;C^\infty(\Lat)))$};
\node (DD) at (.7,-1) {$f_{h'} $};
\node (D) at (4,-1) {$\in C^\infty(\Lat\times \Fun(\R^{0|1},M^{h'}\nsq G^{h'}_0)_\wz)$};
\node (E) at (10,-1) {$\mathcal{O}(\fg_\C^{h'};\Omega^\bullet(M^{h'};C^\infty(\Lat)))$};
\draw[->] (A) to (B);
\draw[->] (B) to node [above] {$\simeq$} (C);
\draw[->] (A) to (D);
\draw[->] (D) to node [above] {$\simeq$} (E);
\draw[->] (C) to node [right] {${\rm res}$} (E);
\draw[|->,bend left] (AA) to (BB);
\draw[|->,bend right] (AA) to (DD);
\path (0,-.5) coordinate (basepoint);
\end{tikzpicture}}\nonumber 
\eeq
 satisfy
$$
{\rm res}(f_h(X))=f_{h'}(X+(\lambda_2 X_1-\lambda_1 X_2))\in \mathcal{O}(\fg_\C^{h'};\Omega^\bullet(M^{h'};C^\infty(\Lat)))
$$
for $h'=(h_1e^{X_1},h_2e^{X_2})$, where the isomorphisms in the diagram follow from Corollary~\ref{cor:Cartanvs} and ${\rm res}$ is the restriction~\eqref{eq:analyticrestrict}. 
\end{lem}
\bp 
Under the isomorphisms in the diagram from Corollary~\ref{cor:Cartanvs}, we recall that the dependence on $\fg_\C^{h'}$ coincides with the dependence on the $\theta d\theta$-component of $A\in \Omega^1(-\times \R^{0|1};\fg^h)_\wz$. Invariance under~\eqref{eq:tdeform} requires that $f_{h'}$ be the restriction of~$f_h$ followed by the pullback along translation by $\lambda_2 X_1-\lambda_1 X_2$ acting on $\Omega^1(-\times \R^{0|1};\fg)_\wz$. This reproduces the claimed formula.
\ep

\subsection{Analytic functions on $\F_0(M\ncq G)$}\label{sec:analytic}

The definition of analyticity has two parts: (1) holomorphic dependence on the complex structure on a super torus and (2) invariance under deformations of $G$-bundles. The former requires an appropriate adjustment for the normalization of this complex structure, using an extension of the automorphism $\varphi$ from~\eqref{eq:varphi} defined as follows. Consider the isomorphism
\beq
\varphi\colon \Lat\times \Fun(\R^{0|1}\sq \Z^2,M\nsq G)_\wz \to \Lat\times \Fun(\R^{0|1}\sq \Z^2,M\nsq G)_\wz\label{eq:varphimap}
\eeq
that (using the subsheaf description from Lemma~\ref{lem:WZsubsheaf}) is determined by the map on $S$-points
$$
(\Lambda,A,h,(x,\psi))\stackrel{\varphi}{\mapsto} (\Lambda,(\Lambda^*(\vol/\lambda_2)) A,h,(x,(\Lambda^*\vol^{1/2})\psi)
$$
$$
\Lambda\colon S\to \Lat,\quad A\in \Omega^1(S\times \R^{0|1};\fg)_\wz,\quad h\colon S\times \Z^2\to G,\quad (x,\psi)\in \Pi TM(S)
$$
for $\vol\in C^\infty(\Lat)$ defined in~\eqref{eq:vol}, and $\lambda_2\in C^\infty(\Lat)$ the restriction along the inclusion~$\Lat\subset \C\times \C$ of the second holomorphic coordinate function on~$\C\times \C$. 
\begin{defn}\label{defn:hollat} For $U\subset \Lat\times \Fun(\R^{0|1}\sq \Z^2,M\nsq G)_\wz$ an open subsheaf, $f\in C^\infty(U)$ has \emph{holomorphic dependence on $\Lat$} if 
$$
\partial_{\bar\lambda_1}((\varphi^{-1})^*f)=0\qquad \partial_{\bar\lambda_2}((\varphi^{-1})^*f)=0
$$
where $\partial_{\bar\lambda_1},\partial_{\bar\lambda_2}$ are the restrictions of the complex vector fields on $\Lat$. 
\end{defn}

The definition of analytic functions imposes the conditions from Definitions~\ref{defn:deform} and~\ref{defn:hollat} simultaneously on open subsheaves of $\F_0(M\ncq G)$. By Lemma~\ref{prop:opensofquot}, open subsheaves
$$
U\hookrightarrow \Lat\times\Fun(\R^{0|1}\sq \Z^2,M\nsq G)_\wz/G_0=\Ftil_0(M\ncq G)
$$
are equivalent to $G_0$-invariant open subsheaves $\widetilde{U}\hookrightarrow \Lat\times\Fun(\R^{0|1}\sq \Z^2,M\nsq G)_\wz$, with the identifications
\beq
\widetilde{U}/G_0\simeq U,\qquad C^\infty(\widetilde{U})^{G_0}\simeq C^\infty(U). \label{eq:weidentify}
\eeq
We use these identifications and notation in the following. 

\begin{defn}\label{defn:generalanalytic}
Let $U\to \Ftil_0(M\ncq G)$ be an open subsheaf. A function $f\in C^\infty(U)$ is \emph{analytic} if (1) the associated $G_0$-invariant function $\tilde{f}\in C^\infty(\widetilde{U})$ is invariant under deformations as in Definition~\ref{defn:deform} and (2) $\tilde{f}\in C^\infty(\widetilde{U})$ has holomorphic dependence on~$\Lat$ as in Definition~\ref{defn:hollat}. Define the \emph{sheaf of analytic functions} on $\Ftil_0(M\ncq G)$ as the sheafification of the presheaf of analytic functions.
\end{defn}
\begin{rmk} Since sheafification preserves monomorphisms, the sheaf of analytic functions is a subsheaf of smooth functions on $(\Lat\times \Fun(\R^{0|1}\sq \Z^2,M\nsq G)_\wz)\cq G_0=\Ftil_0(M\ncq G)$. \end{rmk}

\begin{lem} The sheaf of analytic functions on $\Ftil_0(M\ncq G)$ descends canonically to a sheaf on the stack $\cL_0(M\ncq G)$ and is natural for maps of stacks $\F_0(M\ncq G)\to \F_0(N\ncq H)$ associated with smooth maps $M\to N$ that are equivariant with respect to group homomorphisms~$G\to H$. 
 \end{lem}
 
 \bp 
From the quotient groupoid presentation~\eqref{eq:F0Gaction} of $\F_0(M\ncq G)$, descent is equivalent to checking that (as a subsheaf of smooth functions) the sheaf of analytic functions on $\Ftil_0(M\ncq G)$ is preserved by the action of $\E^{0|1}\rtimes \C^\times\times \pi_0(G)\times \SL_2(\Z)$.  Because $\E^{0|1}\times \pi_0(G)$ acts trivially on $\Lat$, holomorphic dependence on $\Lat$ is purely a question of the $\C^\times\times \SL_2(\Z)$-action. This action preserves the kernel of the derivations in Definition~\ref{defn:hollat}. 
Hence, functions with holomorphic dependence on $\Lat$ descend to the stack.

To verify invariance under deformations of $G$-bundles descends, consider open subsheaves $U,V$ related by the action of~$\alpha\in (\E^{0|1}\rtimes \C^\times\times \SL_2(\Z)\times \pi_0(G))(U)$ as in the triangle on the right,
\beq
\resizebox{\textwidth}{!}{%
\begin{tikzpicture}[baseline=(basepoint)];
\node (A) at (1,0) {$U$};
\node (B) at (6,-.75) {$\Ftil_0(M\ncq G)$};
\node (C) at (1,-1.5) {$V$};
\node (AA) at (-4,0) {$U\bigcap \Lat\times \Fun(\R^{0|1}\sq \Z^2,M^h\nsq G_0^h)_\wz/G_0^h$};
\node (BB) at (-4,-1.5) {$V\bigcap \Lat\times \Fun(\R^{0|1}\sq \Z^2,M^{k}\nsq G_0^{k})_\wz/G^{k}_0$};
\draw[->,right hook-latex] (A) to (B);
\draw[->] (A) to node [left] {$\alpha$} (C);
\draw[->,right hook-latex] (C) to (B);
\draw[->] (AA) to (A);
\draw[->] (BB) to (C);
\draw[->,dashed] (AA) to (BB);
\path (0,-.75) coordinate (basepoint);
\end{tikzpicture}\nonumber}
\eeq
Invariance under deformations from Definition~\ref{defn:deform} involves the rectangle on the left. We will show that for each $h$ and $\alpha$ there exists a $k\colon \Z^2\to G$ making the square commute. Furthermore, for any abelian subalgebra $\mf{a}\subset \fg^h$ and $\epsilon>0$, there is an abelian subalgebra $\mf{b}\subset \fg^{k}$ and a $\delta>0$ so that the condition for being invariant under deformations relative to $(h,\mf{a},\epsilon)$ are sent to conditions for $(k,\mf{b},\delta)$ under $\alpha$. 

Because $\pi_0(G)$ and $\SL_2(\Z)$ are discrete, by restricting to a subsheaf of $U$ we can assume that the composition $\alpha\colon U\to \E^{0|1}\rtimes \C^\times\times  \pi_0(G)\times \SL_2(\Z)\to \pi_0(G)\times \SL_2(\Z)$ is constant, determined by an element $\gamma\in \SL_2(\Z)$ and $[g]\in \pi_0(G)\simeq G/G_0$. For the latter group element we fix a choice of representative $g\in G$. Then we observe that the choice of $k=(g,\gamma)\cdot h$ from~\eqref{eq:HomZGact} makes the square on the left commute where we choose the map given by Corollary~\ref{cor:mapbetweentwist}. Furthermore, the invariance condition for $\mf{a}\subset \fg^h$ corresponds to the invariance condition for $\mf{b}=\Ad_{g}\mf{a}\subset \fg^{k}=\fg^{ghg^{-1}}$. 
This shows that the analytic conditions are compatible with the isomorphism $\alpha$, and hence that the sheaf of analytic functions descends to the stack.
 
It remains to check naturality. However, this follows from the fact that the diagram~\eqref{eq:opensubanaly} is natural for equivariant maps $M\to N$ relative to group homomorphisms $\varphi\colon G\to H$, where restricting the map~$d\varphi \colon \fg\to \mf{h}$ to an abelian subalgebra intertwines the deformations. 
 \ep

\begin{notation} Let $\mc{O}_{\cL_0(M\ncq G)}$ denote the sheaf of analytic functions on $\cL_0(M\ncq G)$.
 \end{notation}
 
\subsection{The $Q$-cohomology sheaf}
In this subsection, let $\omega_{C^\infty}^{1/2}$ denote the sheaf of (smooth) sections of the line bundle on $\Ftil(\pt\ncq G)$ previously denoted by $\omega^{1/2}$ and defined by~\eqref{eq:sHodge}. Since the $\C^\times$-action on $\Ftil_0(\pt\ncq G)$ preserves the subsheaf $\mc{O}_{\Ftil_0(\pt\ncq G)}\subset C^\infty_{\Ftil_0(\pt\ncq G)}$, the sheaf $\omega_{C^\infty}^{1/2}$ has an analytic structure in the sense that there is a sheaf of $\mathcal{O}_{\F(\pt\ncq G)}$-modules $\omega^{1/2}$ and an isomorphism of sheaves
$$
\omega^{1/2}\otimes_{\mc{O}_{\Ftil_0(\pt\ncq G)}} C^\infty_{\Ftil_0(\pt\ncq G)} \simeq \omega^{1/2}_{C^\infty}. 
$$
Similarly define $\omega^{k/2}:=(\omega^{1/2})^{\otimes k}$ as tensor powers of $\omega^{1/2}$, which equivalently are subsheaves of tensor powers $(\omega^{1/2}_{C^\infty})^{\otimes k}$ with an $\mc{O}_{\Ftil_0(\pt\ncq G)}$-module structure. Explicitly, $\omega^{-k/2}$ has as its global sections analytic functions on $\Ftil_0(\pt\ncq G)$ satisfing the condition~\eqref{eq:sectionsofHodge}. 

As in~\eqref{defn:pifirst}, let $\pi$ be the map
\beq\label{defn:pi}
\pi\colon \F_0(M\ncq G)\to \F_0(\pt\ncq G)
\eeq
induced by the $G$-equivariant map $M\to \pt$. Consider the direct image sheaf $\pi_*\mc{O}_{\cL_0(M\ncq G)}$. We now give a variant of the construction in Lemma~\ref{lem:smoothchaincomplex} showing that $\pi_*\mc{O}_{\cL_0(M\ncq G)}$ determines a sheaf of chain complexes on the stack~$\Mst_G$ defined in~\eqref{eq:Mtildef}. 
For future convenience, we phrase this construction in a somewhat more general setting. Let $\mathcal{O}_{\Mstil_G}$ denote the sheaf whose sections are the $\C^\times$-invariant analytic functions on~$\Ftil_0(\pt\ncq G)$, using the correspondence between $\C^\times$-invariant open subsheaves of $\Ftil_0(\pt\ncq G)$ and open subsheaves of the coarse quotient, $\Mstil_G=\Ftil_0(\pt\ncq G)/\C^\times$; see Lemma~\ref{prop:opensofquot}. 

\begin{lem} \label{lem:chaincomplex}
Let $\mathcal{A}$ be a $\Z/2$-graded, $\E^{0|1}\rtimes \C^\times$-equivariant sheaf of $\mathcal{O}_{\Ftil_0(\pt\ncq G)}$-modules on~$\Ftil_0(\pt\ncq G)$ such that the action of $\{\pm 1\}\subset \C^\times$ is equal to the grading involution on~$\mathcal{A}$. The $\C^\times$-invariant sections, 
\beq
\Ch^\bullet(\mathcal{A}):=((\mathcal{A}\otimes \omega^{-\bullet/2})^{\C^\times},Q)\label{eq:CHdef}
\eeq
determine a sheaf of 2-periodic chain complexes in $\mathcal{O}_{\Mstil_G}$-modules on the coarse quotient~$\Mstil_G:=\Ftil_0(\pt\ncq G)\cq \C^\times$. If~$\mathcal{A}$ has the additional structure of a sheaf of super algebras, then $\Ch^\bullet(\mathcal{A})$ carries the structure of a sheaf of cdgas. If $\mathcal{A}$ is $\E^{0|1}\rtimes \C^\times\times\SL_2(\Z)\times\pi_0(G)$-equivariant, then $\Ch^\bullet(\mathcal{A})$ is $\SL_2(\Z)\times \pi_0(G)$-equivariant and defines a sheaf on the stack
$$
\Mst_G=[\Mstil_G\sq \SL_2(\Z)\times \pi_0(G)].
$$
\end{lem}
\bp 
The argument that $(\mathcal{A}\otimes \omega^{-\bullet/2})^{\C^\times}$ determines a graded vector space with odd differential $Q$ (defined by~\eqref{notation:Q}) is the same as in the proof of Lemma~\ref{lem:smoothchaincomplex}, using that the grading by $\C^\times$-weight spaces is compatible with the $\Z/2$-grading on $\mathcal{A}$ by the assumption that $\{\pm 1\}\subset \C^\times$ acts by the grading involution. Then the previously defined map $Q$ on smooth sections of $\omega^{k/2}$ restricts to analytic sections because the sheaf $\mathcal{A}$ of $\mathcal{O}_{\Ftil_0(\pt\ncq G)}$-modules is assumed to be $\E^{0|1}$-equivariant. If $\mathcal{A}$ has an algebra structure, then $Q$ acts by odd derivations on this algebra and we obtain a sheaf of cdgas. 
\ep

\begin{defn} \label{defn:Qcoh}The \emph{$Q$-cohomology sheaf} of a $G$-manifold $M$ is the $\SL_2(\Z)\times\pi_0(G)$-equivariant sheaf of commutative differential graded algebras on $\Mstil_G$ given by $\Ch^\bullet(\pi_*\mathcal{O}_{\F_0(M\ncq G)})$ from Lemma~\ref{lem:chaincomplex} using $\pi$ from~\eqref{defn:pi}. 
\end{defn}

\begin{notation}\label{eq:Qcohomnotation}
To simplify notation we write $\Ch^\bullet(\F_0(M\ncq G))=\Ch^\bullet(\pi_*\mathcal{O}_{\F_0(M\ncq G)})$. We continue to denote the differential on this complex by $Q$. 
\end{notation}

To end the section, we prove a key property satisfied by the restriction of analytic functions to twisted sectors. Suppose that $h$ and $h'$ satisfy the hypothesis of Lemma~\ref{lem:compatibledeform}. Consider the diagram  
 \beq
&&\begin{tikzpicture}[baseline=(basepoint)];
\node (A) at (0,0) {$ \Gamma(\Ftil_0(M\ncq G)_h/\C^\times;\omega^{-\bullet/2})$};
\node (B) at (7,0) {$\Big(\Omega^\bullet(M^h;C^\infty(\HH)[\beta,\beta^{-1}])\otimes \mathcal{O}(\fg_\C^h)\Big)^{G_0^h}$};
\node (C) at (0,-1.5) {$ \Gamma(\Ftil_0(M\ncq G)_{h'}/\C^\times;\omega^{-\bullet/2})$};
\node (D) at (7,-1.5) {$\Big(\Omega^\bullet(M^{h'};C^\infty(\HH)[\beta,\beta^{-1}])\otimes \mathcal{O}(\fg_\C^{h'})\Big)^{G^{h'}_0}$};
\draw[->] (A) to node [above] {$\simeq$} (B);
\draw[->] (C) to node [above] {$\simeq$} (D);
\draw[->] (A) to (C);
\draw[->] (B) to node [right] {${\rm res}$} (D);
\path (0,-.75) coordinate (basepoint);
\end{tikzpicture}\label{diag:analyticfibers}
\eeq
where the horizontal isomorphisms are from Proposition~\ref{prop:twisted}, the arrow ${\rm res}$ is determined by~\eqref{eq:analyticrestrict}, and the remaining arrow is uniquely determined by ${\rm res}$. Restricting to analytic sections in~\eqref{diag:analyticfibers} imposes the following compatibility condition. 

\begin{lem}\label{lem:deRhamanalytic} Let $U\subset \F_0(M\ncq G)$ be a $\C^\times$-invariant open subsheaf containing $\Ftil_0(M\ncq G)_h$ and $\Ftil_0(M\ncq G)_{h'}$ for $h=(h_1,h_2)$ and $h'=(h_1e^{X_1},h_2e^{X_2})$ for $X_1,X_2\in B_\epsilon\subset \fg^h_\C$ as in Lemma~\ref{lem:compatibledeform}. Let $f\in \Ch^k(\F_0(M\ncq G))(U)$ be an analytic section of $\omega^{-k/2}$ over $U$. 
Then for $\epsilon$ sufficiently small, the restriction of $f$ to twisted sectors determines elements $f_h$ and $f_{h'}$ under the isomorphisms~\eqref{eq:isovs}
 \beq
 \resizebox{\textwidth}{!}{%
\begin{tikzpicture}[baseline=(basepoint)];
\node (A) at (0,0) {$f_h\in \mathcal{O}(\Ftil_0(M\ncq G)_h/\C^\times;\omega^{-k/2})$};
\node (B) at (7,0) {$\Big(\Omega^k(M^h;\mathcal{O}(\HH)[\beta,\beta^{-1}])\otimes \mathcal{O}(\fg_\C^h)\Big)^{G_0^h}$};
\node (C) at (0,-1.5) {$f_{h'}\in \mathcal{O}(\Ftil_0(M\ncq G)_{h'}/\C^\times;\omega^{-k/2})$};
\node (D) at (7,-1.5) {$\Big(\Omega^k(M^{h'};\mathcal{O}(\HH)[\beta,\beta^{-1}])\otimes \mathcal{O}(\fg_\C^{h'})\Big)^{G^{h'}_0}$};
\draw[->] (A) to node [above] {$\simeq$} (B);
\draw[->] (C) to node [above] {$\simeq$} (D);
\draw[->] (A) to (C);
\draw[->] (B) to node [right] {${\rm res}$} (D);
\path (0,-.75) coordinate (basepoint);
\end{tikzpicture}}\nonumber
\eeq
satisfying the compatibility condition
\beq
&&{\rm res}(f_h(X))=f_{h'}(X+(X_1-\tau X_2))\in \Big(\Omega^\bullet(M^{h'};\mathcal{O}(\HH)[\beta,\beta^{-1}])\otimes \mathcal{O}(\fg_\C^{h'})\Big)^{G^{h'}_0}  \label{eq:formulaforanalytic}
\eeq
where $\tau=\lambda_1/\lambda_2$. 
\end{lem}

\bp
The fact that the isomorphism~\eqref{eq:isovs} when restricted to analytic functions has image in differential forms valued in $\mathcal{O}(\HH)[\beta,\beta^{-1}]$ follows from the compatibility of this isomorphism with holomorphic dependence on lattices in Definition~\ref{defn:hollat}; this comes from the isomorphism~\eqref{eq:varphi} being compatible (by definition) with~\eqref{eq:varphimap}. The remaining part of the statement follows from Lemma~\ref{lem:compatibledeform}, using that $\lambda_2 X_1-\lambda_1 X_2=\lambda_2(X_1-\tau X_2)$ and taking care with the normalization of coordinate functions on $\fg^h_\C$ from~\eqref{eq:varphi}.
\ep

\section{A cocycle model for equivariant elliptic cohomology}\label{sec:compare}

In this section we prove Theorem~\ref{thm:comparison}, giving an isomorphism of sheaves of cdgas on~$\Bun_G(\EE)$,
\beq
\iota^{-1}i^*\Ch^\bullet(\F_0(M\ncq G))\stackrel{\sim}{\to} \dEll_G^\bullet(M). \label{eq:mainiso}
\eeq
The functor $i^*$ restricts a sheaf on supermanifolds to a sheaf on manifolds; it was omitted from the notation in the statement of Theorem~\ref{thm:comparison}, but we include it in this section to keep track of the ambient category. With the sheaf $\Ch^\bullet(\F_0(M\ncq G))$ on $\Mst_G$ defined in the previous section, we define the functor~$i^*$ in~\S\ref{sec:sheafsection} and the functor~$\iota^{-1}$ in~\S\ref{sec:comparisonstack}. The composition of these gives a functor from sheaves on $\Mst_G$ to sheaves on $\Bun_G(\EE)$. We prove a few technical results about open subsheaves of~$\Bun_G(\EE)$ and~$\Mst_G$ in~\S\ref{sec:topology} before proving Theorem~\ref{thm:comparison} in~\S\ref{sec:thm:comparison}.

\subsection{From sheaves on supermanifolds to sheaves on manifolds}\label{sec:sheafsection}
Consider the restriction of presheaves on ${\sf SMfld}$ to presheaves on ${\sf Mfld}$ along the fully faithful embedding~$i$,
\beq
i\colon  {\sf Mfld}\hookrightarrow  {\sf SMfld},\qquad i^*\colon {\sf PSh}_{\sf SMfld}\to {\sf PSh}_{\sf Mfld}\label{eq:embed}
\eeq
that regards an ordinary manifold as a supermanifold using its sheaf of complex-valued smooth functions. The functor $i^*$ has a fully faithful left adjoint,
$$
i_!^{\rm pre}\colon {\sf PSh}_{\sf Mfld}\to {\sf PSh}_{\sf SMfld}\quad i_!^{\pre}(\X)=\colim_{N\to \X}i(N),
$$
i.e., $i_!^{\pre}\X$ is defined by expressing a presheaf $\X$ on manifolds as a colimit of representables $N\to \X$, and then computing the colimit in presheaves on supermanifolds.

\begin{rmk} The functor $i_!^{\rm pre}$ is the left Kan extension 
\beq
\begin{tikzpicture}[baseline=(basepoint)];
\node (A) at (0,0) {${\sf Mfld}$};
\node (B) at (3,0) {${\sf SMfld}$};
\node (BB) at (6,0) {${\sf PSh}_{\sf SMfld}$};
\node (C) at (3,-1.2) {${\sf PSh}_{\sf Mfld}$};
\draw[->] (A) to node [above] {$i$} (B);
\draw[->] (B) to node [above] {$Y$} (BB);
\draw[->] (A) to node [below] {$Y$} (C);
\draw[->,dashed] (C) to node [right=8pt] {${\rm Lan}=i_!^\pre$} (BB);
\path (0,-.75) coordinate (basepoint);
\end{tikzpicture}\nonumber
\eeq
where $Y$ denotes the Yoneda embedding in the two respective cases. 
\end{rmk} 

Define $i_!$ as the composition of $i_!^{\pre}$ and the sheafification functor. We observe the adjunction in sheaves
\beq
{\sf Sh}_{\sf Mfld}(\X,i^*\Y)\simeq {\sf Sh}_{\sf SMfld}(i_!\X,\Y)\label{eq:adjunction}
\eeq
where above $i^*$ denotes the restriction of $i^*$ to sheaves. 

The endofunctor $i_!i^*\colon {\sf Sh}_{\sf SMfld}\to {\sf Sh}_{\sf SMfld}$ extends the reduced manifold functor to sheaves, i.e., given a sheaf $\Y\in {\sf Sh}_{\sf SMfld}$ it produces a sheaf on supermanifolds that is in the image of $i^*$ together with a map
$$
i_!i^*\Y\to \Y. 
$$
On representables, the above becomes the injection of supermanifolds $S_\red \hookrightarrow S$. This need not be an injection for an arbitrary sheaf $\Y$, as the following example shows. 

\begin{ex}\label{ex:istartV} Let $V$ be a real vector space. We recall from~\S\ref{appen:vb} that $\underline{V}$ is the sheaf whose $S$-points are sections of the trivial bundle with fiber~$V$,
$$
\underline{V}(S)=C^\infty(S)\otimes_\R V.
$$ 
We observe that $i^*\underline{V}\simeq V_\C$, since 
$$
(i^*\underline{V})(S)=C^\infty(S)\otimes_\R V \simeq C^\infty(S)\otimes_\C V_\C\simeq C^\infty(S;V_\C)={\sf Mfld}(S,V_\C).
$$ 
Then $V_\C\simeq i_!V_\C\simeq i_!i^*\underline{V}\to \underline{V}$ is the surjection $V_\C\to\underline{V}$ discussed in~\S\ref{appen:vb}. 
\end{ex}

\begin{lem} \label{lem:warmupopen}Given a supermanifold $T$, its category of open supermanifolds $\mathrm{Open}(T)$ is equivalent to the category of open subsets of its reduced manifold~$\mathrm{Open}(T_{\mathrm{red}})$ with the equivalence $\mathrm{Open}(T)\to \mathrm{Open}(T_{\mathrm{red}})$ implemented by the reduced manifold functor~$U \mapsto U_{\mathrm{red}}$. \end{lem}

\begin{proof} First we observe that the functor is well-defined: if $U\subset T$ is open, $U_{\rm red}\subset T_{\rm red}$ is open. We define an inverse functor as follows. For an open subset $V\subset T_{\mathrm{red}}$, consider the presheaf~$U$ on supermanifolds whose $S$-points are the subset $U(S)\subset T(S)$ given by the condition that $S_{\mathrm{red}} \to T_{\mathrm{red}}$ factors through $V$. This defines a subsheaf by definition. Checking that~$U$ is representable and defines an open subsheaf can be done locally in~$T$. This allows us to assume~$T=\R^{n|m}$. Then we observe that~$U$ is represented by the supermanifold gotten by restricting the structure sheaf of $\R^{n|m}$ to $V\subset \R^n$. This is an open sub supermanifold $U\subset \R^{n|m}$ whose reduced manifold is~$V$. This construction is natural in the open subset $V\subset T_{\rm red}$ and determines the required inverse functor.\end{proof}

For a sheaf $\X\in {\sf Sh}_{\sf SMfld}$ and $\Y\in {\sf Sh}_{\sf Mfld}$, we recall the categories $\mathrm{Open}(\X)$ and $\mathrm{Open}(\Y)$ of open subsheaves; see Definition~\ref{defn:opensub}. 

\begin{lem} \label{lem:reducedtopology} Given $\X \in {\sf Sh}_{\sf SMfld}$, $\mathrm{Open}(\X) \stackrel{i^*}{\to} \mathrm{Open}(i^*\X)$ is a natural isomorphism. \end{lem}

\begin{proof} We first note that the functor $i^*$ is well-defined. 
This follows from the fact that~$i^*$ sends subsheaves to subsheaves, preserves pullbacks (being a right adjoint), and the observation in Lemma~\ref{lem:warmupopen} that the reduced manifold functor sends open sub supermanifolds to open submanifolds. 
We construct an inverse to $i^*$ analogously to the inverse functor in the proof of Lemma~\ref{lem:warmupopen}: given an open subsheaf $V \subset i^* \X$, define a subsheaf $U \subset \X$ whose $S$-points are maps $S\to \X$ such that $i^*S \to i^*\X$ factors through $V$. This condition is natural and so defines a subsheaf $U\subset \X$ for which $i^* U = V$ as subsheaves of $i^*\X$. To check that $U\subset \X$ is an open subsheaf, pull it back along a map $S\to \X$ and apply the argument from the proof of Lemma~\ref{lem:warmupopen} to show that the pullback is an open sub supermanifold of $S$. This determines the claimed inverse functor. \end{proof}

\begin{rmk}
We recall the terminology \emph{generalized manifold} and \emph{generalized supermanifold} for a sheaf on the site of manifolds and a sheaf on the site of supermanifolds, respectively. We use this terminology below especially when considering categories of sheaves on generalized (super) manifolds. 
\end{rmk}

\begin{defn}\label{defn:sheafrestrict} Let $\Y$ be a generalized supermanifold and $i^*\Y$ its restriction to manifolds. Given a sheaf $\mathcal{A}$ on $\Y$, define a sheaf $i^*\mathcal{A}$ on $i^*\Y$ whose value on an open subsheaf $U\subset i^*\Y$ is
\beq
(i^*\mathcal{A})(U):= \mathcal{A}((i^*)^{-1}U)\label{eq:istarcolim}
\eeq
using the inverse to the equivalence from Lemma~\ref{lem:reducedtopology}. 
\end{defn}

\begin{ex} Consider the sheaf of functions $C^\infty_N$ on a supermanifold $N$. View $N$ as a representable sheaf. Then $i^*N \simeq N_\red$ is the reduced manifold of~$N$, and~$i^*C^\infty_N$ is the sheaf on $N_\red$ defining the supermanifold $N$, i.e., the pair $(N_\red,i^*C^\infty_N)$ is the smooth manifold and sheaf of rings defining the supermanifold $N$ as a locally ringed space. 
\end{ex}

\subsection{Comparing moduli stacks}\label{sec:comparisonstack}

Our next goal is to construct a map 
$$
\iota\colon \Bun_G(\EE)\dashrightarrow i^*\Mst_G 
$$
so that the $Q$-cohomology sheaf from Definition~\ref{defn:Qcoh} can be compared with the de~Rham model for equivariant elliptic cohomology. The moduli stack of $G$-bundles on elliptic curves is defined as
$$
\Bun_G(\EE) \simeq [\HH \times \mc{C}^2[G]\sq \SL_2(\Z) \times \pi_0(G)],\qquad \mathcal{C}^2[G]=\mathcal{C}^2(G)/G_0.
$$ 
See~\eqref{eq:BunG} for details.

\begin{lem}\label{lem:iota}
There is a functor of generalized super Lie groupoids,
$$
(\Lat\times\Hom(\Z^2,G))\sq (\C^\times\times G)\to \big(\Lat\times \Fun(\R^{0|1}\sq \Z^2,\pt\nsq G)_\wz \big)\sq (\C^\times \times G).
$$
After taking coarse quotients by $\C^\times\times G_0$ and $\C^\times\times G_0$ and restricting along~\eqref{eq:embed}, this yields a $\pi_0(G)\times \SL_2(\Z)$-equivariant morphism of generalized manifolds $\HH\times\mathcal{C}^2[G]\to i^*\Mstil_G$ determining a morphism of stacks, 
\beq
\iota\colon \Bun_G(\EE)\to [i^*\Mstil_G\sq \pi_0(G)\times \SL_2(\Z)]\simeq i^*\Mst_G.\label{eq:iota}
\eeq
\end{lem}
\bp
Consider the map 
\beq
\Lat\times\Hom(\Z^2,G)&\to& \Lat \times \Fun(\R^{0|1}\sq \Z^2,\pt\nsq G)_\wz \label{eq:objmap} \\
&&\subset \Lat\times \Hom(\Z^2,G)\times \Omega^1(-\times \R^{0|1};\mf{g})_\wz\nonumber
\eeq
determined by the inclusion along $0\colon \pt \to \Omega^1(-\times \R^{0|1};\mf{g})_\wz$ for the 1-form $0\in \Omega^1(\pt\times \R^{0|1};\fg)_\wz$. This inclusion is $\C^\times\times G$-equivariant, and so determines a map of quotient super Lie groupoids. 

Using Lemma~\ref{lem:quotientcom} and that $i^*$ preserves products, we observe the isomorphism of generalized manifolds
$$
\HH\times\mathcal{C}^2[G]\simeq \Lat\cq \C^\times\times i^*\Hom(\Z^2,G)\cq G_0\simeq i^*(\Lat\times\Hom(\Z^2,G)\cq (\C^\times\times G_0)).
$$ 
Hence by the definition~\eqref{eq:coarsequotient} of $\Ftil_0(\pt\ncq G)$, the isomorphism~\eqref{eq:WZinvt}, and the definition of $\Mstil_G$ in~\eqref{eq:Mtildef}, applying coarse quotients to the map of generalized super Lie groupoids determined by the $G\times \C^\times$-equivariant map~\eqref{eq:objmap} and applying $i^*$ produces a $\pi_0(G)\times \SL_2(\Z)$-equivariant morphism of generalized manifolds
$$
\HH\times\mathcal{C}^2[G]\to i^*\Mstil_G.
$$
This completes the proof. 
\ep

\subsection{The topology of open subsheaves in~$\Mst_G$}\label{sec:topology}

By the definition of inverse image sheaf,
\beq
(\iota^{-1}i^*\Ch^\bullet(\F_0(M\ncq G)))(V\times U_h^\epsilon)&\simeq &\colim_{U\supset \iota(V\times U_h^\epsilon)} (i^*\Ch^\bullet(\F_0(M\ncq G)))(U)\label{eq:colim}
\eeq
where the colimit is indexed by open subsheaves $U \subset  i^*\Mstil_G$ such that $U$ contains the image of $V\times U_h^\epsilon$ under $\iota\colon \HH\times \mathcal{C}^2[G]\to i^*\Mstil_G$. We remind that the colimits are taken in complexes of sheaves, i.e., are strict colimits rather than homotopy colimits of cdgas. 

To compute~\eqref{eq:colim} we need a more explicit description of the open subsheaves $U\subset i^*\Mstil_G$ in the colimit. We have
$$
\Mstil_G=(\Lat\times \Fun(\R^{0|1}\sq \Z^2,\pt\nsq G)_\wz)/(G_0\times \C^\times)\simeq \HH\times (\Fun(\R^{0|1}\sq \Z^2,\pt\nsq G)_\wz)/G_0.
$$

\begin{lem} There is an equivalence of categories between open subsheaves of $\Mstil_G$ and $G_0\times\C^\times$-invariant open subsheaves of $\Lat\times(\mathcal{C}^2(G)\times \fg_\C)^{\Z^2}$. 
\end{lem}

\bp By Lemma~\ref{lem:reducedtopology}, Lemma~\ref{lem:orbitopen} and Proposition~\ref{prop:opensofquot}, open subsheaves of $\Mstil_G$ are equivalent to $G_0\times \C^\times $-invariant open subsheaves of 
\beq
i^*(\Lat\times \Fun(\R^{0|1}\sq \Z^2,\pt\nsq G)_\wz) &\simeq& \Lat\times i^*\big((\Hom(\Z^2,G)\times \Omega^1(-\times \R^{0|1};\fg)_\wz)^{\Z^2}\big)\nonumber\\
&\simeq& \Lat\times \big(i^*(\Hom(\Z^2,G)\times \underline{\fg}\times \Omega^1(-;\fg))\big)^{\Z^2}\nonumber\\
&\simeq&\Lat\times (\mathcal{C}^2(G)\times \fg_\C\times \Omega^1(-;\fg))^{\Z^2},\label{eq:istarcomp}
\eeq
where the description of functors as a $\Z^2$-fixed subsheaf is in Remark~\ref{rmk:invt}. 
The above isomorphisms use that $i^*$ preserves limits (being a right adjoint) and Example~\ref{ex:istartV}. Consider the $G_0\times \C^\times$-equivariant map
\beq
p\colon \Lat\times (\mathcal{C}^2(G)\times \fg_\C\times \Omega^1(-;\fg)\times \mathcal{C}^2(G))^{\Z^2}\to \Lat\times (\fg_\C\times\mathcal{C}^2(G))^{\Z^2}\label{eq:mapcalledp}
\eeq
determined by the obvious projection. Hence, a $G_0\times \C^\times$-invariant open subsheaf $U'\subset \Lat\times (\fg_\C\times\mathcal{C}^2(G))^{\Z^2}$ determines the $G_0\times \C^\times$-invariant open subsheaf $p^*U'\subset \Lat\times (\mathcal{C}^2(G)\times \fg_\C\times \Omega^1(-;\fg)\times \mathcal{C}^2(G))^{\Z^2}$. Lemma~\ref{lem:technicalomega} below shows that $p^*$ induces an equivalence of categories of open subsheaves. \ep

\begin{lem}\label{lem:technicalomega} Let $Z$ be a concrete sheaf on the site of manifolds and $p\colon \tilde{Z} \to Z$ a morphism of sheaves such that for all $z\colon \pt\to Z$ we have $z^* \tilde{Z} \simeq \Omega^1(-; V_z)$ for a finite-dimensional real vector space $V_z$. Suppose moreover that we have a section $0\colon Z \to \tilde{Z}$ of $p$. Then for any open subsheaf $U \subset \tilde{Z}$, we have $U = p^* 0^* U$. In particular, all open subsheaves of $\tilde{Z}$ pull back from open subsheaves of $Z$. \end{lem}

\begin{rmk}
We observe that the map $p$ in~\eqref{eq:mapcalledp} has a section determined by inclusion along $0\colon \pt\to \Omega^1(-;\fg)$, the zero differential form; this is why we use the notation $0$ for the section in this lemma.
\end{rmk}

\begin{proof} 
We observe that the open subsheaf $0^*U\subset Z$ of the concrete sheaf is determined by the subset
$$0^*U(\pt) := \{ z \colon \pt \to Z \mid z^*U\neq \emptyset\}\subset Z(\pt)$$ 
where $\emptyset$ is the empty sheaf. Indeed, the pullback $z^* U$ is an open subsheaf of $\Omega^1(-; V_z)$. By Proposition~\ref{prop:opensofOmega}, this open subsheaf is either empty or the entire sheaf. It remains to show that $U \simeq p^* 0^*U$.

This is equivalent to showing that for all manifolds $N$ and $N$-points $N \stackrel{f}{\to} \tilde{Z}$, we have $f^* U \simeq (p \circ f)^* 0^*U$. Consider the commutative square 
\beq
\begin{tikzpicture}[baseline=(basepoint)];
\node (A) at (-1.5,0) {$\tilde{N}$};
\node (B) at (1,0) {$\tilde{Z}$};
\node (D) at (1,-1.25) {$Z$};
\node (C) at (-1.5,-1.25) {$N$};
\draw[->] (A) to node [above] {$\tilde{f}$} (B);
\draw[->,bend left] (D) to node [left] {$0$} (B);
\draw[->,bend left] (C) to node [left] {$0_N$} (A);
\draw[->] (A) to node [right] {$p_N$} (C);
\draw[->] (B) to node [right] {$p$} (D);
\draw[->] (C) to node [below] {$p \circ f$} (D);
\path (0,-.75) coordinate (basepoint);
\end{tikzpicture}\nonumber 
\eeq
where $\tilde{N}\simeq N \times_Z \tilde{Z}$ is the pullback and $0_N\colon N \to \tilde{N}$ is the pullback of the section~$0\colon Z\to \tilde{Z}$. Define $U_{\tilde{N}} := \tilde{f}^*U\subset \tilde{N}$ and observe
$$
U_N := 0_N^* U_{\tilde{N}} \simeq (\tilde{f} \circ 0)^* U \simeq (0 \circ (p \circ f))^* U \simeq (p \circ f)^* 0^*U.
$$ 
An alternative characterization of $U_N$ is 
\beq
U_N \simeq \{ n\colon \pt\to N \mid n^*U_{\tilde{N}}\neq \emptyset \}.\label{anothercharacterization}
\eeq
As before, this equivalence follows from Proposition~\ref{prop:opensofOmega}. Next, consider the section $s\colon  N \to \tilde{N}$ determined by $N \stackrel{\text{id}}{\to} N, N \stackrel{f}{\to} \tilde{Z}$. The characterization~\eqref{anothercharacterization} of $U_N$ makes it equally clear that $U_N \simeq s^* U_{\tilde{N}}$, but $\tilde{f} \circ s \simeq f$, so $s^* U_{\tilde{N}} \simeq s^* \tilde{f}^* U \simeq f^* U$ and hence we have shown $f^* U \simeq (p \circ f)^* 0^*U$, as desired.
 \end{proof}

\subsection{The proof of Theorem~\ref{thm:comparison}}\label{sec:thm:comparison}

As reviewed in~\S\ref{appen:ell}, the sheaf $\dEll_G^\bullet(M)$ is defined relative to a basis of open subsheaves of $\HH\times\mathcal{C}^2[G]$ given by $V\times U_h^\epsilon$ where $V\subset \HH$ is an open subset and $U_h^\epsilon\subset \mathcal{C}^2[G]$ is an open subsheaf characterized by~\eqref{eq:Utildefn}. From Definition~\ref{defn:ellcocycle}, the value of $\dEll_G^\bullet(M)$ on $V\times U_h^\epsilon$ is a collection of compatible differential forms in $\Omega_{G^{h'}_0}(M^{h'};\mathcal{O}(V)[\beta, \beta^{-1}])$ for each $h'\in U_h^\epsilon$. This gives maps ${\rm res}_{h'}$ in the diagram,
\beq
\begin{tikzpicture}[baseline=(basepoint)];
\node (A) at (0,0) {$\iota^{-1}i^*\Ch^\bullet(\F_0(M\ncq G))(V\times U_h^\epsilon)$};
\node (B) at (6,0) {$\dEll^\bullet(M\nsq G)(V\times U_h^\epsilon) $};
\node (C) at (6,-1.5) {$\Omega^{\bullet}_{G^{h'}_0}(M^{h'};\mathcal{O}(V)[\beta, \beta^{-1}]).$};
\draw[->,dashed] (A) to node [above] {$\kappa_{V\times U_h^\epsilon}$}  (B);
\draw[->] (B) to node [right] {${\rm res}_{h'}$} (C);
\draw[->,dashed] (A) to node [below] {$\kappa_{h'}$} (C);
\path (0,-.75) coordinate (basepoint);
\end{tikzpicture}\label{eq:plan}
\eeq
We will construct a morphism of cdgas $\kappa_{V\times U_h^\epsilon}$ by constructing a compatible family of maps~$\kappa_{h'}$.

\begin{lem}
For each $h'\in U_h^\epsilon$, and $V\subset \HH$, there is a map of cdgas
\beq
\kappa_{h'}\colon \iota^{-1}i^*\Ch^\bullet(\cL_0(M\ncq G))(V\times U_h^\epsilon)\to  \Omega^{\bullet}_{G_0^{h'}} (M^{h'};\mathcal{O}(V)[\beta, \beta^{-1}])\label{eq:kappah}
\eeq
determined by restriction to the $h'$-twisted sector and the isomorphism~\eqref{eq:isovs}. The target of~\eqref{eq:kappah} is the version of the Cartan complex from Definition~\ref{defn:valuesinasheaf}.
\end{lem} 

\bp
The source of~\eqref{eq:kappah} is a colimit over open subsheaves~$U\subset \Mstil_G$ where $i^*U$ contains $\iota(V\times U_h^\epsilon)$. Consider the pullback squares depending on an open subsheaf~$U\subset \Mstil_G$ and $h'$,
\beq
\begin{tikzpicture}[baseline=(basepoint)];
\node (A) at (0,0) {$U$};
\node (B) at (6,0) {$\Mstil_G\simeq \HH\times (\Fun(\R^{0|1}\sq \Z^2,\pt\nsq G)_\wz)/G_0$};
\node (BB) at (9.2,0) {$\null$};
\node (C) at (6,1.5) {$\Ftil_0(M\ncq G)\simeq \Lat\times (\Fun(\R^{0|1}\sq \Z^2,M\nsq G)_\wz)/G_0$};
\node (D) at (6,3) {$\Lat\times (\Fun(\R^{0|1},M^{h'}\nsq G^{h'}_0)_\wz)/G^{h'}_0$};
\node (DD) at (9,3) {$\null$};
\node (E) at (0,1.5) {$\pi'^{-1}U$};
\node (F) at (0,3) {$\pi_{h'}^{-1}U$};
\draw[->] (A) to  (B);
\draw[->] (C) to node [right] {$\pi'$} (B);
\draw[->,right hook-latex] (D) to (C);
\draw[->] (E) to (A);
\draw[->] (E) to (C);
\draw[->] (F) to (D);
\draw[->] (F) to (E);
\draw[->,bend left=60] (DD) to node [right] {$\pi_{h'}$} (BB);
\path (0,1.5) coordinate (basepoint);
\end{tikzpicture}\label{eq:plan2}
\eeq
where $\pi'$ is the composition of $\pi\colon \Ftil_0(M\ncq G)\to \Ftil_0(\pt\ncq G)$ from~\eqref{defn:pi} with the $\C^\times$-quotient map $\Ftil_0(\pt\ncq G)\to \Ftil_0(\pt\ncq G)/\C^\times=\Mstil_G$, and $\pi_{h'}$ is the composition of the inclusion determined by Lemma~\ref{cor:jh} and $\pi'$. Then there is a restriction map with values in the open subsheaf $\pi^{-1}_{h'}U$ of the gauge-invariant $h'$-twisted sector
\beq
&&\Ch^\bullet(\F_0(M\ncq G))(U)\subset \mathcal{O}_{\F_0(M\ncq G)}(\pi'^{-1}U)\subset C^\infty(\pi'^{-1}U)\to C^\infty(\pi^{-1}_{h'}U)\label{eq:colimitrestrict}
\eeq
where the first inclusion is as the direct sum of $\C^\times$-weight spaces, the second is the inclusion of analytic functions into smooth functions, and the final arrow is restriction along the map $\pi_{h'}^{-1}U\to \pi'^{-1}U$ in diagram~\eqref{eq:plan2}. 

Next we specialize~\eqref{eq:colimitrestrict} to the case of an open subsheaf $U\subset \Mstil_G$ where $i^*U$ contains $\iota(V\times U_h^\epsilon)$. We have
$$
\Lat\times \Fun(\R^{0|1},M^{h'}\nsq G^{h'}_0)_\wz\simeq \Lat\times \underline{\fg}^{h'}\times \Omega^1(-;\fg^{h'})\times \Map(\R^{0|1},M^{h'}).
$$
Using Proposition~\ref{prop:opensofquot}, we identify $\pi_{h'}^{-1}U$ in~\eqref{eq:plan2} with a $G_0\times \C^\times$-invariant open subsheaf of the right hand side above. Furthermore, by inspection of~\eqref{eq:plan2} we observe that this open subsheaf has the form $W\times \Map(\R^{0|1},M^{h'})$ for 
$$
W\subset \Lat\times \underline{\fg}^{h'}\times \Omega^1(-;\fg^{h'}).
$$
By Lemmas~\ref{lem:reducedtopology} and~\ref{lem:technicalomega}, $W$ is determined by a $\C^\times$-invariant open subset of the representable sheaf~$\Lat\times \fg_\C^h$, where this open subset must contain $q^{-1}V\times \{0\} \subset \Lat\times \fg_\C^{h'}$ where $q\colon \Lat\to \Lat/\C^\times\simeq \HH$ is the $\C^\times$-quotient map. Setting $W'=W/\C^\times$, we may assume (by cofinality) that $W'\subset V\times \fg_\C^{h'}\subset \HH\times \fg_\C^{h'}$ is an open subset containing $V\times\{0\}$. Restriction along~\eqref{eq:colimitrestrict} and using the isomorphism~\eqref{eq:isovs} to identify the $\C^\times$-weight spaces with differential form data, we obtain a map
$$
\iota^{-1}i^*\Ch^\bullet(\cL_0(M\ncq G))(V\times U_h^\epsilon)\to  \colim_{W'\supset V\times \{0\}} \mathcal{O}(W';\Omega^{\bullet}_{G_0^{h'}} (M^{h'})[\beta, \beta^{-1}]).
$$
The statement of the lemma follows immediately by identifying this colimit with the definition of the right hand side of~\eqref{eq:kappah}, per Definition~\ref{defn:valuesinasheaf}. \ep

\begin{lem} 
The maps~\eqref{eq:kappah} assemble into a map of cdgas
\beq
\kappa_{V\times U_h^\epsilon} \colon \iota^{-1}i^*\Ch^\bullet(\cL_0(M\ncq G))(V\times U_h^\epsilon) \to \dEll_G(M)(V\times U_h^\epsilon).\label{eq:kappa1}
\eeq
\end{lem}
\bp
It remains to show that the maps~\eqref{eq:kappah} for each $h'\in U_h^\epsilon$ are compatible with the invariance and analytic conditions in Definition~\ref{defn:ellcocycle}. The invariance condition follows from the fact that the sheaf $\Ch^\bullet(\cL_0(M\ncq G))$ on $\Mstil_G$ consists of $G_0\times \C^\times$-invariant sections, which requires the values at conjugate twisted sectors to be compatible as computed in Proposition~\ref{prop:twisted}. This is the same as the invariance condition in Definition~\ref{defn:ellcocycle}. Finally, the compatibility condition for functions restricted to twisted sectors in Lemma~\ref{lem:deRhamanalytic} matches the analytic condition in Definition~\ref{defn:ellcocycle}. Hence, we obtain a map of cdgas as claimed. 
\ep

\begin{lem} 
The maps~\eqref{eq:kappa1} assemble into a morphism of sheaves of cdgas on $\Bun_G(\EE)$
\beq
\kappa \colon \iota^{-1}i^*\Ch^\bullet(\cL_0(M\ncq G)) \to \dEll_G(M).\label{eq:kappa}
\eeq
\end{lem}

\bp
By Corollary~\ref{cor:topofCG},  the open subsheaves $V\times U_h^\epsilon$ form a basis for the topology of $\HH\times\mathcal{C}^2[G]$. Hence, to show that the maps $\kappa_{V\times U_h^\epsilon}$ determine a morphism of sheaves on $\Bun_G(\EE)\simeq [\HH\times\mathcal{C}^2[G]\sq \SL_2(\Z)\times \pi_0(G)]$, we need to check: (1) compatibility for restrictions along inclusions $V\times U_h^\epsilon\subset V'\times (U_h^\epsilon)'$ and (2) equivariance for $\SL_2(\Z)\times \pi_0(G)$-action. 

Part (1) follows from how the map was constructed as a colimit of restrictions, and so is automatically compatible with a further restriction~$V\times U_h^\epsilon\subset V'\times (U_h^\epsilon)'$. 

For (2), we need to compute the action of $\SL_2(\Z)$ and $\pi_0(G)$ on twisted sectors. These are computed in Proposition~\ref{prop:twisted}, and agree with the actions in Definition~\ref{defn:ellcocycle}. 
\ep

\begin{thm}\label{lem:proofofmain}
The map $\kappa$ is an isomorphism of sheaves of cdgas. 
\end{thm}

\bp
For $h\in \mathcal{C}^2(G)$, we obtain a map 
\beq
j_h\colon [\HH\sq \Gamma] \to \Bun_G(\EE)\label{eq:jh}
\eeq
that on objects includes at $[h]\in \mathcal{C}^2[G]$, where $\Gamma<\SL_2(\Z)\times \pi_0(G)$ is the stabilizer of $[h]$ for the $\SL_2(\Z)\times \pi_0(G)$-action on $\mathcal{C}^2[G]$. We will show that the morphism of sheaves when pulled back to $j_h$ is an isomorphism for all $h$. This implies the claimed isomorphism of sheaves, e.g., because it implies the map induces an isomorphism on stalks.

In~\cite[Proposition~3.7]{BET0}, an isomorphism was constructed 
\beq
j_h^*\dEll_G^\bullet(M)(V)\simeq \Omega^{\bullet}_{G_0^h} (M^{h};\mathcal{O}(V)[\beta, \beta^{-1}]), \qquad V\subset \HH \label{eq:stalkcompute}
\eeq
for the $\Gamma$-equivariant structure on the right hand side inherited from Definition~\ref{defn:ellcocycle}. We will compute 
\beq
j_h^*\iota^{-1}i^*\Ch^\bullet(\cL_0(M\ncq G))=\colim_{i^*U\supset \iota(V\times \{[h]\}) } \Ch^\bullet(\cL_0(M\ncq G))(U)\label{eq:thecolimit}
\eeq
where the colimit is indexed by open subsheaves $U\subset \Mstil_G$ such that $i^*U\supset \iota (V\times\{ [h]\})$. By cofinality of diagrams computing the colimit, we may assume that for some $\epsilon>0$, 
$$
U=U\bigcap (\HH\times B_\epsilon^{\fg} \times \Omega^1(-\times\R^{0|1};\fg^h)_\wz)/G_0
$$
where $B_\epsilon^{\fg}\subset \fg\times \fg$ is an $\epsilon$-ball at $(0,0)$ and the intersection is taken in
\beq
&&\resizebox{.9\textwidth}{!}{$
(\HH\times\Omega^1(-\times \R^{0|1};\fg)_{\wz} \times G\times G)/G_0\supset (\HH\times\Fun(\R^{0|1}\sq \Z^2,\pt\nsq G)_\wz)/G_0 \simeq \Mstil_G$} \label{eq:itsabigsheaf}
\eeq
with $(\HH\times \Omega^1(-\times\R^{0|1};\fg^h)_\wz\times B_\epsilon^{\fg} )/G_0$ regarded as an open subsheaf using the map 
\beq
&& \fg \times \fg \stackrel{i_h}{\to} G\times G\supset \Hom(\Z^2,G) \qquad (X_1,X_2)\mapsto (h_1e^{X_1},h_2e^{X_2})\quad X_1,X_2\in \fg(S).\label{eq:wanttobeinj}
\eeq

We define subsheaves of~$U$ that will be of use in computing the right hand side of~\eqref{eq:thecolimit}. Let $T_h\subset G_0^h$ denote a choice of maximal torus for the identity component, let $\mf{t}_{\fg^h}$ denote the Lie algebra of $T_h$, and let $N_h$ denote the normalizer of $T_h\subset G_0^h$. Let $B_\epsilon^{\mf{t}_{\fg^h}}\subset \mf{t}_{\fg^h}\times \mf{t}_{\fg^h}$ and $B_\epsilon^{\fg^h}\subset \fg^h\times \fg^h$ denote open balls containing the origin, $(0,0)\in \mf{t}_{\fg^h}\times\mf{t}_{\fg^h}\subset \fg^h\times\fg^h$. Define (non-open) subsheaves
\beq
U_h&:=&U\bigcap (\HH\times \{h\}\times \Omega^1(-\times\R^{0|1};\mf{t}_{\fg^h})_\wz)/N_h\subset \Mstil_G\label{eq:Udefns1}\\
U_{\mf{t}_{\fg^h}}&:=&U\bigcap (\HH\times B_\epsilon^{\mf{t}_{\fg^h}}\times \Omega^1(-\times\R^{0|1};\mf{t}_{\fg^h})_\wz)/N_h\subset \Mstil_G\label{eq:Udefns2}\\
U_{\fg^h}&:=&U\bigcap (\HH\times B_\epsilon^{\fg^h} \times \Omega^1(-\times\R^{0|1};\fg^h)_\wz)/G_0^h\subset \Mstil_G\label{eq:Udefns3}
\eeq
where the intersections~\eqref{eq:Udefns1} and~\eqref{eq:Udefns2} are taken in the sheaf
$$
(\HH\times\Omega^1(-\times \R^{0|1};\fg)_{\wz} \times \Hom(\Z^2,G))/G_0\supset (\HH\times \Fun(\R^{0|1}\sq \Z^2,\pt\nsq G))/G_0
$$
while the intersection~\eqref{eq:Udefns3} is taken inside of the larger sheaf~\eqref{eq:itsabigsheaf}.
The intersections~\eqref{eq:Udefns2} and~\eqref{eq:Udefns3} use the restriction of $i_h$ to $\mf{t}_{\fg^h}\times \mf{t}_{\fg^h}$ and $\fg^h\times \fg^h$. This map is an injection in a neighborhood of $(0,0)$, and so for sufficiently small $\epsilon$, $i_h$ does indeed define a subsheaf. 
This completes the construction of the subsheaves~\eqref{eq:Udefns1}-\eqref{eq:Udefns3}. 

There are the evident inclusions 
$$
U_h\hookrightarrow U_{\mf{t}_{\fg^h}}\hookrightarrow U_{\fg^h}\hookrightarrow U\subset \Mstil_G,
$$
and we shall consider the restriction of analytic functions to these subsheaves. By Proposition~\ref{prop:twisted}, the restriction to $U_h$ can be identified with equivariant differential forms,
$$
\Ch^\bullet(\cL_0(M\ncq G))(U)\to \Ch^\bullet(\cL_0(M\ncq G))(U_h)\simeq \mathcal{O}(\widetilde{U}_h;\Omega^\bullet(M)[\beta,\beta^{-1}])^{N_h}
$$
where $\widetilde{U}_h$ is the open subset of $\HH\times(\mf{t}_{\fg^h})_\C$ containing $V\times \{0\}$ gotten by taking the preimage of $U_h$ along $\HH\times(\mf{t}_{\fg^h})_\C\to \HH\times \Omega^1(-;\mf{t}_{\fg^h})/N_h\supset U_h$. By Lemma~\ref{lem:deRhamanalytic}, the further restriction 
\beq
\Ch^\bullet(\cL_0(M\ncq G))(U_{\mf{t}_{\fg^h}})\to \Ch^\bullet(\cL_0(M\ncq G))(U_h)\label{eq:analyticisomrestriction}
\eeq
is an isomorphism for $\epsilon$ sufficiently small in~\eqref{eq:Udefns2}; equivalently, this follows from applying Definition~\ref{defn:deform} to the diagram~\eqref{eq:analyticdiagram} for $\mf{a}= \mf{t}_{\fg^h}$. From Proposition~\ref{prop:torus}, the restriction 
$$
\Ch^\bullet(\cL_0(M\ncq G))(U_{\fg^h})\to \Ch^\bullet(\cL_0(M\ncq G))(U_{\mf{t}_{\fg^h}})
$$
is an isomorphism. Proposition~\ref{prop:ellgrp} shows that the restriction 
$$
\Ch^\bullet(\cL_0(M\ncq G))(U)\to \Ch^\bullet(\cL_0(M\ncq G))(U_{\fg^h})
$$
is also an isomorphism. Altogether, we've shown that the restriction map
$$
\Ch^\bullet(\cL_0(M\ncq G))(U)\to \Ch^\bullet(\cL_0(M\ncq G))(U_h)
$$
is an isomorphism. Hence, the relevant colimit can be computed as 
\beq
j_h^*\iota^{-1}i^*\Ch^\bullet(\cL_0(M\ncq G))&=&\colim_{U\subset \Mstil_G} \Ch^\bullet(\cL_0(M\ncq G))(U)\nonumber\\
&\simeq&\colim_{U_h} \Ch^\bullet(\cL_0(M\ncq G))(U_h)\nonumber\\
&\simeq& \colim_{\widetilde{U}_h\subset \HH\times(\mf{t}_{\fg^h})_\C} \mathcal{O}(\widetilde{U}_h;\Omega^\bullet(M)[\beta,\beta^{-1}])^{N_h}\nonumber\\
&\simeq & (\Omega_{T^h}(M;\mathcal{O}(V)[\beta,\beta^{-1}]))^{N_h}\nonumber\\
&\simeq & \Omega_{G_0^h}(M;\mathcal{O}(V)[\beta,\beta^{-1}])\nonumber
\eeq
where the first colimit is over $U\subset \Mstil_G$ so that $i^*U$ contains $\iota(V\times \{[h]\})$, the second colimit is over $U_h\subset (\HH\times \{h\}\times \Omega^1(-\times\R^{0|1};\mf{t}_{\fg^h})_\wz)/N_h$ containing $V\times \{0\}$ 
and the third colimit is over $\widetilde{U}_h\subset \HH\times (\mf{t}_{\fg^h})_\C$ containing $V\times \{0\}$. Together with~\eqref{eq:stalkcompute}, we find that the map~$\kappa$ from~\eqref{eq:kappa} induces an isomorphism. This proves the theorem. 
\ep

\section{Free fermions and the equivariant elliptic Euler class}\label{sec:freefer}

Building on the work of~\cite{Quillendet,Wittenanomaly,BismutFreed1,BismutFreed2}, Freed~\cite{Freed_Det_Torsion,Freed_Det} explained how fermionic path integrals in quantum field theory can be given mathematically rigorous meaning in terms of determinant line bundles. In~\S\ref{sec:detreview}-\S\ref{sec:anomalies} we review this construction and compare it with the Looijenga line bundles over $\Bun_G(\EE)$. Sections~\S\ref{sec:superfamofops}-\S\ref{sec:sdetline} extend the determinant line construction, associating to any complex representation $\rho\colon G\to U(n)$ an analytic line bundle with section over the supermoduli space $\F_0(\pt\ncq G)$. We show that this constructs equivariant elliptic Euler classes as partition functions of free fermion theories. We show that the obstruction to trivializing the determinant line bundle (up to a power of the Hodge bundle) agrees with the string obstruction from equivariant elliptic cohomology. Hence, anomalies for the free fermions correspond to the obstruction to being oriented in elliptic cohomology. 

\subsection{Review of determinant line bundles on $\Bun_{U(1)}(\EE)$}\label{sec:detreview}

We start with the determinant line associated to a Dirac operator on an elliptic curve twisted by a flat line bundle; we refer to~\cite[\S4]{Freed_Det} for details. The constructions in this subsection take place in the category of complex manifolds and stacks on the site of complex manifolds. Consider the $\HH$-family~$\widetilde{\EE}$ given by the quotient,
$$
\widetilde{\EE}:=(\HH\times \C)/\Z^2\to \HH,\qquad (m,n)\cdot (\tau,z)=(\tau,z+m\tau+n)
$$
for the $\Z^2$-action on $(\tau,z)\in \HH\times \C$ as indicated. The fiber of this family at~$\tau\in \HH$ is the complex elliptic curve~$\widetilde{\EE}_\tau=\C/\tau\Z\oplus \Z$. We endow this family with the fiberwise periodic-periodic (or non-bounding) spin structure, wherein the spinor bundle $\bS$ at a fiber~$\tau\in \HH$ is a trivial holomorphic square root of the canonical bundle, $\bS\otimes \bS\simeq \Omega^{0,1}_{E_\tau}$. Given a degree zero holomorphic line bundle~$L$ on $E_\tau$, the Dolbeault operator~$\bar\partial_L$ of~$L$ can be identified with the $L$-twisted (chiral) Dirac operator on~$E_\tau$. Varying $\tau$ and $L$ gives a family of operators we also denote by $\bar\partial_L$ over~$\Pic^0(\widetilde{\EE})$, the moduli of degree zero holomorphic line bundles over~$\widetilde{\EE}$. This moduli space is canonically isomorphic to the dual of $\widetilde{\EE}$
$$
\Pic^0(\widetilde{\EE})\simeq \widetilde{\EE}^\vee\simeq (\HH\times \C)/\Z^2,\qquad (m,n)\cdot (\tau,z)=(\tau,z+m-\tau n),
$$
where $\widetilde{\EE}^\vee$ is defined as the $\Z^2$-quotient for the action indicated above. The isomorphism $\widetilde{\EE}\stackrel{\sim}{\to}\Pic^0(\widetilde{\EE})$ is determined by sending~$(\tau,z)\in \HH\times \C$ to the flat complex line bundle corresponding to the character of $\pi_1(\widetilde{\EE}_\tau)$
$$
m\tau +n\mapsto e^{2\pi i (mu+nv)},\qquad z=u-\tau v,\qquad m\tau+n\in \pi_1(\EE_\tau).
$$
Let $\MP_2(\Z)$ be the metaplectic double cover of $\SL_2(\Z)$. The $\SL_2(\Z)$-action on $\HH$ by fractional linear transformations lifts to an $\MP_2(\Z)$-action on elliptic curves with periodic-periodic spin structure, giving moduli stacks over~$\Mell$,
$$
[\HH\sq \MP_2(\Z)],\qquad [\widetilde{\EE}\sq \MP_2(\Z)], \qquad [\widetilde{\EE}^\vee\sq \MP_2(\Z)], 
$$
of elliptic curves with periodic-periodic spin structure, the universal elliptic curve with periodic-periodic spin structure, and the universal dual elliptic curve with spin structure, respectively. Following the discussion above, we also view $[\widetilde{\EE}^\vee\sq \MP_2(\Z)]$ as the moduli stack of elliptic curves with periodic-periodic spin structure and a degree zero holomorphic line bundle. 

From~\cite{Quillen} and~\cite[\S4]{Freed_Det}, there is a determinant line $\Det(\bar\partial_L)\to \widetilde{\EE}^\vee$ with metric and connection associated with the family of operators $\bar\partial_L$ defined above. Since the $\MP_2(\Z)$-action preserves the geometric data, $\Det(\bar\partial_L)$ has an $\MP_2(\Z)$-equivariant structure for which the metric and connection are preserved. Furthermore, there is an $\MP_2(\Z)$-invariant holomorphic section~$\det(\bar\partial_L)$. Pulling back along $p\colon \HH\times \C\to \widetilde{\EE}^\vee$, there is a (non-holomorphic) trivialization of $\Det(\bar\partial_L)$ where~$\det(\bar\partial_L)$ can be expressed as the function on $\HH\times \C$ given by the formula~\cite[Proposition~4.10]{Freed_Det},
\beq
&&\resizebox{.9\textwidth}{!}{$p^*\det(\bar\partial_L)=q^{(6v^2+1)/12}e^{-\pi i u}(e^{\pi i z}-e^{-\pi i z})\prod_{n=1}^\infty(1-q^ne^{2\pi i z})(1-q^ne^{-2\pi i z})$},\label{eq:Freedtriv}
\eeq
for $q=e^{2\pi i \tau}$, and $z=u-\tau v$ for $u,v\in \R$. 
The right hand side of~\eqref{eq:Freedtriv} closely resembles the Weierstrass $\sigma$-function $\sigma(\tau,z)\in \mathcal{O}(\HH\times \C)$ defined in~\eqref{eq:Weierstrass}. We recall that $\sigma(\tau,z)$ transforms under the action of $\MP_2(\Z)\ltimes \Z^2$ by nonvanishing holomorphic functions on~$\HH\times \C$. The action factors through $\SL_2(\Z)\ltimes \Z^2$, and is given by
\begin{eqnarray} 
\sigma\left(\frac{a\tau+b}{c\tau+d}, \frac{z}{c\tau+d}\right) &=&(c\tau+d)^{-1}e^{\pi i \frac{cz^2}{c\tau+d}} \sigma(\tau, z) \label{eq:sigmasign1} \\ 
\sigma(\tau, z + m\tau+n) &=&(-1)^{m+n} e^{-\pi i (m^2\tau+2mz)} \sigma(\tau, z). \label{eq:sigmasign} 
\end{eqnarray} 
Hence, $\sigma(\tau,z)$ a Jacobi form of index $1/2$ and weight~$-1$.
 

\begin{defn}\label{defn:A}
Let $\mathcal{A}$ be the holomorphic line bundle on $[\widetilde{\EE}^\vee \sq \MP_2(\Z)]$ determined by the transformation properties~\eqref{eq:sigmasign1} and \eqref{eq:sigmasign}, i.e., $\mathcal{A}$ is the holomorphic line on $[\widetilde{\EE}^\vee\sq \MP_2(\Z)]\simeq [(\HH\times \C)\sq (\MP_2(\Z)\ltimes \Z^2)]$ for which $\sigma(\tau,z)$ determines a section. 
\end{defn}
In an abuse of notation, we will use $\sigma(\tau,z)$ to denote the function~\eqref{eq:Weierstrass} and $\sigma$ to denote the section of $\mathcal{A}$. The following can be extracted from~\cite[\S3]{Theta}. 
\begin{lem} \label{lem:detLoo}
There is an isomorphism of holomorphic line bundles on $[\widetilde{\EE}^\vee\sq\MP_2(\Z)]$ 
\beq
\Det(\bar\partial_L)\simeq \mathcal{A}\label{eq:detlooiso}
\eeq
that sends the section $\det(\bar\partial_L)$ to the section $\sigma$ of $\mathcal{A}$ determined by the Weierstrass $\sigma$-function~\eqref{eq:Weierstrass}.
\end{lem}

\bp
Consider the trivialization of $\Det(\bar\partial_L)$ on $\HH\times \C$ where the determinant section is given by~\eqref{eq:Freedtriv}. Then take the trivialization of~$\mathcal{A}$ in which the section~$\sigma$ is the function $\sigma(\tau,z)\in \mathcal{O}(\HH\times \C)$. The ratio of sections in these chosen trivializations is 
$$
p^*\det(\bar\partial_L)/\sigma=q^{(6v^2+1)/12}e^{-\pi i u}\eta(q)^2=q^{(3v^2+1)/6} e^{-\pi i u}\prod_{n>0} (1-q^n)^2\in C^\infty(\HH\times \C)
$$ 
where $\eta(q)\in \mathcal{O}(\HH)$ is the Dedekind $\eta$-function. By inspection, this ratio is a nowhere vanishing function on $\HH \times \C$. Therefore it defines a nowhere vanishing section of $\Det(\bar\partial_L) \otimes \mc{A}^{-1}$ on $[\widetilde{\EE}^{\vee}\sq\MP_2(\Z)]$. We conclude that $\Det(\bar\partial_L) \simeq \mc{A}$ on this stack. Moreover, under the chosen isomorphism $\Det(\bar\partial_L) \stackrel{\sim}{\to} \mc{A}$, we have by construction that $\det(\bar\partial_L)$ maps to $\sigma$. 
\ep

\begin{rmk}
Since the transformation properties of $\sigma(\tau,z)$ are determined by the $\SL_2(\Z)\ltimes \Z^2$-action on $\HH\times \C$, we observe that $\mathcal{A}$ pulls back along $[\widetilde{\EE}^\vee\sq \MP_2(\Z)]\to [\widetilde{\EE}^\vee\sq \SL_2(\Z)]\simeq \EE^\vee$. From~\eqref{eq:detlooiso} we conclude that $\Det(\bar\partial_L)$ also pulls back from $\EE^\vee$. This observation can also be understood from the construction of $\Det(\bar\partial_L)$ in~\cite{Quillen} that does not depend on a spin structure; see also~\cite[\S2]{Freed_Det}. 
\end{rmk}

There is a completely analogous story when we consider the Dirac operator twisted by $n$-tuples of flat line bundles. This yields a determinant line bundle, 
$$
\Det(\bar\partial_{L_1\oplus\cdots \oplus L_n})\to \widetilde{\EE}^\vee\times_\HH\cdots \times_\HH \widetilde{\EE}^\vee\simeq (\HH\times \C^n)/\Z^{2n}
$$
over the $n$-fold iterated fibered product that again has a holomorphic section $\det(\bar\partial_{L_1\oplus\cdots \oplus L_n})$. Replaying the above analysis gives the following: 

\begin{lem} \label{lem:detLoon}
There is an isomorphism of  holomorphic line bundles on $[(\HH\times \C^n)/\Z^{2n}\sq\MP_2(\Z)]$ 
\beq
\Det(\bar\partial_{L_1\oplus\cdots \oplus L_n})\simeq \mathcal{A}^{\boxtimes n}.\label{eq:boxiso}
\eeq
This isomorphism sends the section $\det(\bar\partial_{L_1\oplus\cdots \oplus L_n})$ to the section of $\mathcal{A}^{\boxtimes n}$ determined by the product of Weierstrass $\sigma$-functions,  $\prod_{i=1}^n \sigma(\tau,z_i)\in \mathcal{O}(\mb{H}\times \C^n)$. 
\end{lem}

We introduce the abbreviated notation 
\beq
\sigma(\tau,  \mathbf{z}) := \prod_{i=1}^n \sigma(\tau,  z_i)\in \mathcal{O}(\HH\times \C^n),\qquad {\bf z}\in \C^n.\label{eq:sigmatau}
\eeq

\subsection{Determinant and Looijenga line bundles on $\Bun_G(\EE)$}

For the remainder of the section we restrict to the case that $G$ is connected with torsion-free fundamental group. This implies that any pair of commuting elements can be simultaneously conjugated into a maximal torus~\cite{Borel}. In particular, we have 
$$
\widetilde{\Bun}_G(\EE)\simeq \HH\times (T\times T)/W,\qquad \Bun_G(\EE)\simeq [\widetilde{\Bun}_G(\EE)\sq \SL_2(\Z)],
$$
where $W=N(T)/T$ is the Weyl group. Let $X_*(T)={\rm ker}(\exp \colon \mathfrak{t}\to T)$ denote the cocharacter lattice. The epimorphism in sheaves
\beq
&&\HH\times \mf{t}_\C \simeq \HH\times \mf{t}\times \mf{t} \to (\HH\times \mf{t}\times \mf{t})/X_*(T)\oplus X_*(T)\simeq \HH\times T\times T\to  \HH\times (T\times T)/W\label{eq:bigepi}
\eeq
determines an epimorphism $\HH\times \mf{t}_\C\to \Bun_G(\EE)$ of stacks. 


Consider now the special case $G = U(n)$. The standard basis makes the identification $\mf{t}_{\C} \simeq \C^n$ between the complexified Lie algebra of the maximal torus and $\C^n$. Using this identification, the function $\sigma(\tau, \mathbf{z)}$ from~\eqref{eq:sigmatau} defines a function on $\HH \times \mf{t}_{\C}\simeq \HH\times \C^n$. This function is $W$-invariant and has transformation properties under $X_*(T)^{\oplus 2} \rtimes \SL_2(\Z)\simeq \Z^{2n}\rtimes \SL_2(\Z)$ generalizing~\eqref{eq:sigmasign1} and \eqref{eq:sigmasign}: 
\begin{eqnarray} 
\sigma\left(\frac{a\tau+b}{c\tau+d}, \frac{{\bf z}}{c\tau+d}\right) &=&(c\tau+d)^{-n}e^{\pi i \frac{c({\bf z}\cdot {\bf z})}{c\tau+d}} \sigma(\tau, {\bf z}) \label{eq:bigsigmasign1} \\ 
\sigma(\tau, {\bf z} + {\bf m}\tau+{\bf n}) &=&(-1)^{|{\bf m}|+|{\bf n}|} e^{-\pi i (({\bf m}\cdot {\bf m})\tau+2({\bf m}\cdot {\bf z}))} \sigma(\tau, {\bf z}). \label{eq:bigsigmasign} 
\end{eqnarray} 
where $\mathbf{z} \in \C^n\simeq \mf{t}_{\C}$, $\mathbf{m}, \mathbf{n} \in \Z^n\simeq X_*(T)$, the dot product above is the standard dot product on $\Z^n$ extended $\C$-bilinearly to $\C^n\simeq \mf{t}_{\C}$, and $|\mathbf{m}|=\sum_{i=1}^n m_i$ for $\mathbf{m}=(m_1,\dots,m_n) \in \Z^n$. We recall that the Weyl group of $U(n)$ is $W=\Sigma_n$, the symmetric group. 

\begin{defn}\label{defn:Aboxtime}
Let $\mc{A}_n$ denote the line bundle on 
$$
\Bun_{U(n)}(\EE)\simeq [((\HH\times\mf{t}_\C)\cq \Z^{2n}\rtimes \Sigma_n) \sq \SL_2(\Z)]
$$ 
determined by the $\Sigma_n$-invariant function~\eqref{eq:sigmatau} with the above transformation properties for the action of $X_*(T)^{\oplus 2} \rtimes \SL_2(\Z)\simeq \Z^{2n}\rtimes \SL_2(\Z)$. \end{defn}

\begin{rmk} 
Let $\bar\partial_V$ denote the family of operators over $\Bun_{U(n)}(\EE)$ that at a point $(\tau,[h_1,h_2])\in \HH\times \mathcal{C}^2[G]$ is the $\bar\partial$-operator on the flat (and hence holomorphic) rank~$n$ bundle over $\widetilde{\mathcal{E}}_\tau$ associated to $\pi_1(\widetilde{\mathcal{E}}_\tau)\stackrel{\langle h_1,h_2\rangle}{\to} U(n)\to \Aut(\C^n)$ for the standard representation of $U(n)$. By reducing to $(h_1,h_2)$ in a maximal torus and applying Lemma~\ref{lem:detLoon}, there is an isomorphism of line bundles $\Det(\bar\partial_V)\simeq \mathcal{A}_n$ sending $\det(\bar\partial_V)$ to the ($W$-invariant) function~$\sigma(\tau,  \mathbf{z})$. Hence, $\mathcal{A}_n$ is also a determinant line bundle with determinant section determined by~$\sigma(\tau,\mathbf{z})$. 
\end{rmk}



We recall that a pairing $(- \cdot -)$ on $X_*(T)$ is \emph{even} if ${\bf n}\cdot {\bf n} \in 2\Z$ for ${\bf n} \in X_*(T)$. 

\begin{defn} Let $G$ be a connected compact Lie group with torsion free fundamental group. Given a $W$-invariant bilinear form $(- \cdot -)$ on $\mf{t}_\C$ whose restriction to $X_*(T)$ is even, define the \emph{Looijenga line bundle} for the given pairing as the holomorphic line bundle on $\Bun_G(\EE)$ whose sections are functions $f \in \mc{O}(\HH \times \mf{t}_{\C})^W$ satisfying 
\beq
f \Big( \frac{a \tau + b}{c \tau + d}, \frac{\mathbf{z}}{c \tau + d} \Big) &=& e^{\pi i \frac{c( \mathbf{z} \cdot \mathbf{z})}{c\tau + d}} f(\tau, \mathbf{z}) \label{eq:Loo1}\\
f(\tau, \mathbf{z} + \mathbf{m} + \mathbf{n} \tau) &=& e^{-\pi i ((\mathbf{n} \cdot \mathbf{n}) \tau + 2(\mathbf{n} \cdot \mathbf{z})} f(\tau, \mathbf{z}) \label{eq:Loo2}
\eeq
for the action by $X_*(T)^{\oplus 2}\rtimes \SL_2(\Z)$, where $\mathbf{m}, \mathbf{n} \in X_*(T)$. 
\end{defn}

\begin{notation}
A $W$-invariant quadratic form on $X_*(T)$ determines a class 
$$
[c]\in \Big( \mathrm{Sym}^2(\H^2(BT; \Z)) \Big)^W \simeq (\H^4(BT; \Z))^W
$$ 
Conversely, given a degree~4-cocycle $c$ with $[c]\in (\H^4(BT; \Z))^W$, we use the notation 
$$
\mc{L}(c) \in \mathrm{Pic}(\Bun_G(\EE))
$$ 
for the Looijenga line associated with the bilinear form $2c$, with the factor of~2 to guarantee evenness. The isomorphism class of the holomorphic line bundle $\mc{L}(c)$ depends only on the cohomology class $[c]$. 
\end{notation}

\begin{rmk} The transformation properties~\eqref{eq:sigmasign1} and \eqref{eq:sigmasign} of $\sigma(\tau,z)$ are nearly the same as those for the Looijenga line~\eqref{eq:Loo1} and~\eqref{eq:Loo2} on $\Bun_{U(1)}(\EE)$ associated with the bilinear form $\ell(n, m) = nm$ on $X_*(U(1)) \simeq \Z$. However, this bilinear form is not even, and moreover there are additional signs in~\eqref{eq:sigmasign}. Instead, $\mc{A}$ is a \emph{square-root} of the Looijenga line in that $\mathcal{A}\otimes\mathcal{A}\simeq \mc{L}(c)$ for $c$ the (positive) generator of $H^4(BU(1);\Z)$, i.e., the Looijenga line built from bilinear form $\ell(n, m) = 2nm$. Hence, morally $\mc{A}$ is a Looijenga line of ``half-level.'' Relatedly, $\sigma$ is a Jacobi form of index $1/2$. \end{rmk}

\subsection{Representations and anomalies}\label{sec:anomalies}

Recall the line bundle $\mc{A}_n$ on $\Bun_{U(n)}(\EE)$ from Definition~\ref{defn:Aboxtime} of which $\sigma(\tau, \mathbf{z})$ is a section. For a representation $\rho\colon G\to U(n)$, define the \emph{anomaly line} $\rho^*\mathcal{A}_n$ as the pullback of $\mathcal{A}_n$ under the map of stacks $\Bun_G(\EE) \to \Bun_{U(n)}(\EE)$ determined by~$\rho$. Vanishing of certain characteristic classes will allow us to compare~$\rho^*\mathcal{A}_n$ with a tensor product of Hodge bundles and a Looijenga line bundle. To set up these classes, define $[c_k(\rho)]\in \H^{2k}(BG;\Z)$ as the pullback of the universal Chern class $c_k\in \H^{2k}(BU(n))$ along~$B\rho$,
\beq
&&\begin{tikzpicture}[baseline=(basepoint)];
\node (A) at (0,0) {$BG$};
\node (B) at (2.5,0) {$BU(n)$};
\node (C) at (5,0) {$BSO(2n)$};
\node (D) at (5,1.2) {$B\Spin(2n)$};
\draw[->] (A) to node [below] {$B\rho$} (B);
\draw[->] (B) to  (C);
\draw[->] (D) to (C);
\draw[->,dashed] (A) to node [above] {$B\tilde\rho$} (D);
\path (0,-.75) coordinate (basepoint);
\end{tikzpicture}\nonumber
\eeq
and $[w_k(\rho)]\in \H^k(BG;\Z/2)$ as the pullback of the universal Stiefel--Whitney class $w_k\in \H^k(BSO(2n);\Z/2)$ along the lower composition. The mod~2 reduction of $[c_1(\rho)]$ is $[w_2(\rho)]$. If $[w_2(\rho)]=0$, then $\rho$ is spin and there exists a lift $B\tilde{\rho}$. The pullback of the fractional Pontryagin class $\frac{p_1}{2}\in \H^2(B\Spin(2n);\Z)$ gives a class $[\frac{p_1}{2}(\rho)]\in \H^2(BG;\Z)$, and we have the relation 
\beq
2\cdot [\frac{p_1}{2}(\rho)]=[c_1(\rho)^2-2c_2(\rho)]\in \H^4(BG;\Z).\label{eq:p12}
\eeq
For $G$ connected, we hence find $[c_1(\rho)^2-2c_2(\rho)]$ is (canonically) divisible by~2 when $[c_1(\rho)]\equiv 0 \mod 2$. In this case, we use the notation $[\frac{1}{2}c_1(\rho)^2-c_2(\rho)]$ for $[\frac{p_1}{2}(\rho)]$.

\begin{prop} \label{prop:sigma} Suppose $G$ is connected with torsion-free fundamental group, and let $\rho\colon G \to U(n)$ be a representation with $[w_2(\rho)]=0$. Then there is an isomorphism of holomorphic lines $\rho^*\mc{A} \simeq \mc{L}(\frac{1}{2} c_1(\rho)^2 - c_2(\rho)) \otimes \omega^{-n}$ on $\Bun_G(\EE)$. \end{prop}

\begin{proof}
Given $T<U(n)$ and $T_G<G$ as above, let $\mf{t}_G:={\rm Lie}(T_G) \subset \fg$ denote the Lie algebra of $T_G$. Then $\rho$ gives a map $\HH \times (\mf{t}_{G})_\C\to \HH \times \mf{t}_\C\simeq \HH \times \C^n$. Let $\sigma_{\rho}(\tau, \mathbf{z})\in \mathcal{O}(\Lat\times (\mf{t}_{G})_\C)$ denote the pullback of $\sigma(\tau, \mathbf{z})$ along this map.

We have the formulas for cocycle representatives of characteristic classes in terms of Chern roots:
\beq
&&c_1(\rho)=\sum n_i,\qquad \frac{1}{2}c_1(\rho)^2-c_2(\rho)=\sum n_i^2. \label{eq:Cherncocycle}
\eeq
More precisely, the right-hand side of the equations above represent homogeneous forms on $X_*(T)$ that are then pulled back to $X_*(T_G)$ via $X_*(\rho)$, and we recall that degree $k$ homogeneous forms on $X_*(T_G)$ represent degree $2k$ integral cohomology classes on $BT_G$.

The transformation properties of $\sigma_{\rho}$ can be read off from~\eqref{eq:bigsigmasign1} and \eqref{eq:bigsigmasign}, with $\mathbf{m}, \mathbf{n} \in X_*(T_G), \mathbf{z} \in (\mf{t}_{G})_\C$. We certainly still have 
$$
\sigma_{\rho}(\tau+n, \mathbf{z}) = \sigma_{\rho}(\tau, \mathbf{z}).
$$
Next, the condition $[w_2(\rho)]=0$ and~\eqref{eq:Cherncocycle} implies $|{\bf m}|=\sum_i m_i$ takes values in $2\Z$, and so
$$
\sigma_{\rho}(\tau, \mathbf{z}+\mathbf{m}) = \sigma_{\rho}(\tau,\mathbf{z}).$$ This same condition also simplifies~\eqref{eq:bigsigmasign} to the following: 
$$
\sigma_{\rho}(\tau, \mathbf{z} + \mathbf{n} \tau) = e^{-2\pi i \tau (\mathbf{n} \cdot \mathbf{n})/2} e^{-2\pi i (\mathbf{n} \cdot \mathbf{z})} \sigma_{\rho}(\tau, \mathbf{z})
$$ 
$$
\sigma_{\rho} \Big(-\frac{1}{\tau}, \frac{\mathbf{z}}{\tau} \Big) = \tau^{-n} e^{i \pi (\mathbf{z} \cdot \mathbf{z}) / \tau} \sigma_{\rho}(\tau, \mathbf{z})
$$ 
for $\mathbf{n} \in X_*(T_G)$. The second formula shows that $\sigma_\rho(\tau,{\bf z})$ transforms with weight~$-n$. Hence, $\sigma_{\rho}$ transforms as a section of $\omega^{-n}$ tensor with the Looijenga line for the bilinear form that restricts from the standard bilinear form on $\Z^n$ along the representation $X_*(\rho)\colon X_*(T_G) \to X_*(T) \simeq \Z^n$. This form is even when $\sum n_i$ is even (i.e., $[w_2(\rho)]=0$) and so does indeed define a Looijenga line associated with the cocycle~$\frac{1}{2} c_1^2(\rho) - c_2(\rho)$. By~\eqref{eq:p12}, this represents an integral class in $\H^4(BG; \Z)$ under the hypothesis that $[w_2(\rho)]=0$. \end{proof}

It is natural to ask for which representations $[\frac{1}{2} c_1(\rho)^2 - c_2(\rho)]=0$, since then $\mc{L}(\frac{1}{2} c_1(\rho)^2 - c_2(\rho))$ trivializes. This essentially never happens for representations of connected groups:

\begin{cor} Given a representation $\rho\colon G \to U(n)$ for $G$ connected, if $[\frac{1}{2} c_1(\rho)^2 - c_2(\rho)]=0$ then 
 $\rho$ is the trivial representation. \end{cor}

\bp Vanishing requires the quadratic form $\mathbf{n} \cdot \mathbf{n} = \sum n_i^2$ on $X_*(T_G)$ to identically vanish. As a pullback of a positive definite quadratic form this can only happen if the map $X_*(T_G) \to X_*(T)$ is trivial, implying that the representation on maximal tori is trivial. Hence, the entire representation is trivial, proving the claim. \ep

\begin{rmk} \label{rmk:p1/2} One way to obtain an interesting class of representations $\rho\colon G \to U(n)$ with $[w_2(\rho)] = 0$ and $[\frac{1}{2} c_1^2(\rho) - c_2(\rho)] = 0$ is to pass to virtual representations, $\rho=\rho_1\ominus \rho_2$. Then the elliptic Euler class is defined as the quotient of Euler classes, viewed as a (meromorphic) section of holomorphic lines on $\Bun_G(\EE)$. Such virtual representation arise naturally when studying $\theta$-functions of lattices, e.g., see~\cite[\S5.3]{HopkinsICM2002}. They also arise physically as part of the Green--Schwartz anomaly cancelation procedure~\cite{GreenSchwarz}, where a gauge anomaly and gravitational anomaly cancel one another. The line bundle $\mathcal{A}_n$ plays the role of the gauge anomaly, whereas the gravitational anomaly arises from the $\mathcal{N}=(0,1)$ super symmetric sigma model with target a Riemannian manifold~$M$. The anomaly cancellation condition is then $(p_1)_G(M)/2=(p_1)_G(V)/2$ in the Borel equivariant cohomology of $M$ where $V$ is a vector bundle classified by a map~$M\to BG$ and the representation~$\rho$, and $p_1(V)/2$ is the pullback of $p_1(\rho)/2$; e.g., see~\cite[pg.~511]{strings1}.
\end{rmk} 

\subsection{A family of operators over $\F_0(\pt\nsq G)$}\label{sec:superfamofops}

We construct a family of operators over $\F_0(\pt\nsq G)$ that for $G=U(1)$ is analogous to the $\bar\partial_L$ from~\S\ref{sec:detreview}. 

Throughout this subsection,~$V=\C^n$. Given a representation $\rho\colon G\to U(V)=U(n)$, there is a vector bundle with connection denoted~$(V,\rho(\nabla))$ on the stack~$[\pt\nsq G]$ gotten by taking associated vector bundles and associated connections for a $G$-bundle using the representation~$V$ (see~Example~\ref{ex:assoc}). Let $\mathcal{V}$ denote the sheaf of sections of~$V$ on~$[\pt\nsq G]$. We also recall the line bundle~$\omega^{1/2}$ over $\F_0(\pt\nsq G)$ as defined by~\eqref{eq:sHodge} and the line bundle $\overline{\omega}^{1/2}$ defined similarly. 

\begin{defn}\label{defn:fermions} Given a representation $\rho\colon G\to U(V)$, define the vector bundle $\Fer(V)\to \F_0(\pt\nsq G)$ as follows. To an $S$-point $\Phi\colon T^{2|1}\to [\pt\nsq G]$ of $\F_0(\pt\nsq G)$, $\Fer(V)$ assigns the sheaf of $C^\infty(S)$-modules given by $\pi_*\Phi^*\mathcal{V}\otimes \omega^{1/2}$, where $\pi\colon T^{2|1}\to S$ is the projection. 
To an isomorphism in $\F_0(\pt\nsq G)$ associated with a fiberwise conformal map $f\colon T^{2|1}\to T'^{2|1}$ covered by an isomorphism $g\colon P\to P'$ of $G$-bundles, $\Fer(V)$ assigns the isomorphism $\rho(g)\circ f^*$ of sheaves of $C^\infty(S)$-modules by pulling back along~$f$ and applying the isomorphism $\rho(g)$ of $C^\infty(S)$-modules associated to~$g$ via~$G\stackrel{\rho}{\to} \Aut(V)$. 
\end{defn}

\begin{defn}
Let $\Hom(\Fer(V),\Fer(V)\otimes \overline{\omega}^{1/2})$ denote the $\Hom$-sheaf on $\F_0(\pt\nsq G)$ whose sections over $S$ are maps between sections of $\Fer(V)$ and $\Fer(V)\otimes \overline{\omega}^{1/2}$ over $S$ (not necessarily $C^\infty(S)$-linear maps). Define a section of this sheaf~$D_\rho:=\rho(\nabla)_D\otimes \id_{\omega^{1/2}}$, where $\rho(\nabla)$ is the associated connection on $\Phi^*\mathcal{V}$ over~$T^{2|1}$, which we evaluate on the vector field $D=\partial_\theta-\theta\partial_{\bar z}$.
\end{defn}

\begin{rmk} 
We recall that a differential operator is determined by its associated morphism on sheaves of sections of the corresponding vector bundles. We view $D_\rho$ above as family of differential operators over the stack $\F_0(\pt\nsq G)$. 
\end{rmk}

\begin{rmk} The operator $D_\rho$ is a section of the asserted sheaf because the vector field $D=\partial_\theta-\theta\partial_{\bar z}$ on $S\times \R^{2|1}$ transforms as $D\mapsto \overline{\mu}^{-1}D$ under the action of $(\mu,\bar\mu)\in \C^\times(S)$. This change in the $\C^\times$-weight of a section leads to the factor of~$\overline{\omega}^{1/2}$. 
\end{rmk}

We obtain the following explicit description of $\Fer(V)$ and $D_\rho$, using that the pullback of $V$ along the composition
$$
S\times \R^{2|1}\twoheadrightarrow T^{2|1}_\Lambda\stackrel{\Phi}{\to} [\pt\nsq G]
$$
trivializes.

\begin{lem} \label{lem:FerWZ}In the groupoid presentation from Lemma~\ref{lem:present},~$\Fer(V)\to \cL_0(\pt\nsq G)$ is determined by the equivariant vector bundle over the groupoid that to an $S$-point of objects specified by $\Lambda\in \Lat(S)$, $h\in \Hom(\Z^2,G)$ and $A\in \Omega^1(S\times \R^{0|1};\mf{g})$ assigns the $C^\infty(S)$-module
\beq
&&\Gamma(T^{2|1}_\Lambda,\Phi^*\Pi V)=C^\infty\left(S\times \R^{2|1}; \Pi V\right)^{\Z^2},\label{eq:sections}
\eeq
where the $\Z^2$-action on $S\times \R^{2|1}$ is through $\Lambda\in \Lat(S)$ and on $\Pi V$ is through $S\times \Z^2\stackrel{h}{\to} G\stackrel{\rho}{\to} \End(V)\simeq \End(\Pi V)$. For an isomorphism in this groupoid presentation over a base change $S\to S'$, we obtain a map of modules as the composition
\beq
C^\infty(S\times \R^{2|1},\Pi V)^{\Z^2}&\stackrel{f^*}{\longrightarrow}& C^\infty(S\times \R^{2|1},\Pi V)^{\Z^2}\stackrel{\mu^{-1}}{\longrightarrow} C^\infty(S\times \R^{2|1},\Pi V)^{\Z^2}\nonumber\\
&\stackrel{\rho(g)}{\to} & C^\infty(S\times \R^{2|1},\Pi V)^{\Z^2}{\to} C^\infty(S'\times \R^{2|1},\Pi V)^{\Z^2},\label{eq:sectionmorphisms}
\eeq
where $f^*$ is the pullback along the action of $f\in \E^{2|1}(S)$ on $S\times \R^{2|1}$, $\mu^{-1}$ is the pullback by the action of $(\mu,\bar\mu)\in \C^\times$ on $S\times \R^{0|1}$ and the action of $\mu^{-1}$ on~$C^\infty(S\times \R^{2|1};\Pi V)$, $\rho(g)$ is the $C^\infty(S)$-module isomorphism associated to the gauge transformation~$g$ from the 2-commuting data in~\eqref{eq:commutetri}, and the final arrow is the pullback along the base change $S\to S'$. The family of operators $D_\rho$ acting on sections in the form~\eqref{eq:sections} is given by
\beq
D_\rho=D+d\rho(A)(D)\label{eq:Drho}
\eeq
where $D=\partial_\theta-\theta\partial_{\bar z}$ and $d\rho(A)$ is the image of $A$ under the map 
$$
A\in \Omega^1_S(\R^{0|1};\mf{g})\to \Omega^1_S(\R^{0|1};\End(V))
$$
determined by $d\rho\colon\mf{g}\to \End(V)$. 
\end{lem}

\bp
 We recall that~$\omega^{1/2}$ trivializes on $S$-points of objects in this groupoid presentation. So at the level of objects, tensoring a vector bundle with $\omega^{1/2}$ is the same as tensoring with a trivial odd line, meaning it is the same as the parity reversal functor on vector bundles. Incorporating this change of parity, the claimed value of $\Fer(V)$ on objects follows from the construction of an associated bundle over $[\pt\nsq G]$ from a representation in Example~\ref{ex:assoc}, along with definitions of pullback and direct image sheaf. The value on morphisms also follows from Example~\ref{ex:assoc}, together with the transformation properties of sections of $\omega^{1/2}$ (whence the dilation by $\mu^{-1}$ in~\eqref{eq:sectionmorphisms}). The family of operators takes the claimed form because the $G$-connection at an $S$-point is determined by the trivial connection and a $\fg$-valued 1-form~$A$. The associated connection is therefore determined by the operator~
 $$
 d+\rho(A)\colon C^\infty(S\times \R^{2|1}; \Pi V)\to \Omega^1(S\times \R^{2|1}; \Pi V),\qquad \rho(A)\in \Omega^1(S\times \R^{2|1}; \End(\Pi V))
 $$ 
 Contracting with the vector field $D$ gives~\eqref{eq:Drho}.
\ep

\subsection{The free fermion theory}
This subsection explains how the family of operators $D_\rho$ constructs a theory of free fermions over $\F_0(\pt\nsq G)$. This discussion is not logically essential and so can be skipped by readers less interested in the physical motivation.
 
 The standard hermitian pairing on $V=\C^n$ gives a $\C$-bilinear pairing $\langle-,-\rangle\colon \overline{V}\oplus V\to \C$. This determines a functional on sections of $\Fer(\overline{V}\oplus V)$ as follows. 

\begin{defn}\label{defn:sFF}
The \emph{free fermion classical action} with background gauge fields is the functional on sections of~$\Fer(\overline{V}\oplus V)$ specified by
\beq
\mathcal{S}(\psi)=\frac{1}{2}\int \langle \psi,D_{\rho^\dagger\oplus \rho}\psi\rangle d\theta d^2z \label{eq:classaction}
\eeq
where the integral is over the fibers of the bundle of super tori 
\beq
T^{2|1}=(S\times \R^{2|1})/\Z^2\to S \label{eq:21integral}
\eeq
using the fiberwise Berezinian measure $d\theta d^2z=d\theta \frac{1}{2i}d\bar z dz$.
\end{defn}

\begin{rmk} Sections of $\Fer(\overline{V}\oplus V)$ endowed with the classical action~\eqref{eq:classaction} is a superspace extension of the chiral free fermion classical field theory in Example~\ref{ex:fermions}. 
\end{rmk}
\begin{lem} \label{lem:Sinvt}The functional~\eqref{eq:classaction} defined on objects over each $S$-point descends to a functional defined on the stack. 
\end{lem}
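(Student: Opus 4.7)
The plan is to verify invariance of $\mathcal{S}(\psi)$ under each of the three types of isomorphisms that generate morphisms in $\cL_0(\pt\nsq G)$, namely: (a) the $\Z^2$-action that makes the integrand descend to the family of super tori, (b) gauge transformations $g\in \SM(\R^{2|1},G)$, and (c) rigid conformal isomorphisms in $\E^{2|1}\rtimes \C^\times$ together with the $\SL_2(\Z)$-action on lattices. Since the integrand is constructed from the pairing $\langle-,-\rangle$, the covariant derivative $D_A$, and the Berezinian measure, these reduce to standard computations in supergeometry, all patterned on the proof of invariance of the free-fermion action in~\cite{DBE_TMF}.

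First, for (a), observe that an object $(\Lambda,\Phi)$ encodes a $\Z^2$-action on $\Pi(\overline V\oplus V)$ given by composing the holonomy $h\colon S\times \Z^2\to G$ with $\bar\rho\oplus\rho$. Sections $\psi\in \Gamma(T^{2|1},\Phi^*\Pi(\overline V\oplus V))$ are by definition $\Z^2$-invariant, and since $\langle -,-\rangle$ is $G$-invariant and $D_A$ commutes with the $\Z^2$-action on $\Phi^*\Pi(\overline V\oplus V)$ (being built from the $\Z^2$-equivariant data $\nabla$), the integrand $\langle \psi,D_A\psi\rangle$ descends to a function on $T^{2|1}$. The Berezinian measure $d\theta\,d\bar z\,dz$ is $\E^{2|1}$-invariant on $\R^{2|1}$ and hence descends to any quotient by a lattice $\Lambda$. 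The integral~\eqref{eq:21integral} is then well-defined as a map $C^\infty(S)\to C^\infty(S)$.

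Second, for (b), a gauge transformation $g\colon S\times \R^{2|1}\to G$ acts on sections by pointwise application of $\bar\rho\oplus\rho$ and on the connection by $A\mapsto \Ad_{g^{-1}}A+g^{-1}dg$. A direct calculation shows $D_{\Ad_{g^{-1}}A+g^{-1}dg}(\rho(g)^{-1}\psi)=\rho(g)^{-1}D_A\psi$, so that the $G$-invariance of the pairing $\langle-,-\rangle$ on $\overline V\oplus V$ yields $\langle \rho(g)^{-1}\psi,D_{A'}\rho(g)^{-1}\psi\rangle=\langle \psi,D_A\psi\rangle$ pointwise on $T^{2|1}$. The integral is thus unchanged.

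Third, for (c), the delicate point is matching Berezinian weights. A super translation by $(w,\bar w,\eta)\in \E^{2|1}$ preserves both $D=\partial_\theta-\theta\partial_{\bar z}$ and $d\theta\,d\bar z\,dz$, so contributes nothing. For $(\mu,\bar\mu)\in \C^\times$ acting by $(z,\bar z,\theta)\mapsto(\mu^2 z,\bar\mu^2 \bar z,\bar\mu\theta)$, a short computation gives $D\mapsto \bar\mu^{-1}D$, while the Berezinian of the Jacobian is $\mu^2\bar\mu^2/\bar\mu=\mu^2\bar\mu$. Combined with the prescribed rescaling $\psi\mapsto \mu^{-1}\psi$ of sections in the definition of $\Fer(V)$, the integrand $\langle \psi,D_A\psi\rangle$ acquires a factor of $\mu^{-2}\bar\mu^{-1}$, exactly canceling the Berezinian weight. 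The $\SL_2(\Z)$-action relates two presentations of the same super torus as $\R^{2|1}/\Z^2$ via a change of lattice generators, under which the integral on the quotient is manifestly unchanged. Assembling these, $\mathcal{S}$ is invariant under every isomorphism and hence factors through the stack $\cL_0(\pt\nsq G)$.

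The main obstacle is (c): keeping careful track of how $\theta$, $\partial_\theta$, $\partial_{\bar z}$, and the section rescaling $\psi\mapsto \mu^{-1}\psi$ interact, which is what dictated the particular choice of $\mu^{-1}$ rescaling in the definition of morphisms of $\Fer(V)$. All other steps are routine unwindings of definitions.
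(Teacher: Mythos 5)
Your proposal follows essentially the same strategy as the paper's proof: check invariance of $\mathcal{S}$ under each generating piece of the isomorphism group, via the same constituent computations (gauge invariance from $G$-invariance of the pairing, conformal weights via $\psi\mapsto\mu^{-1}\psi$, $D_A\mapsto\bar\mu^{-1}D_A$, and $d\theta\,d\bar z\,dz\mapsto\mu^2\bar\mu\,d\theta\,d\bar z\,dz$). Your explicit treatment of $\mathbb{Z}^2$-descent in part (a), which the paper leaves implicit, is a useful addition.

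However, there is a gap in part (c). You write that ``a super translation by $(w,\bar w,\eta)\in\mathbb{E}^{2|1}$ preserves both $D=\partial_\theta-\theta\partial_{\bar z}$ and $d\theta\,d\bar z\,dz$, so contributes nothing.'' This addresses $D$ and the Berezinian measure, but not the connection $A$. An isomorphism in $\cL_0(\pt\nsq G)$ given by a super translation sends $(\Lambda,A)$ to $(\Lambda,f^*A)$, and the odd part $\eta\in\mathbb{E}^{0|1}$ genuinely changes $A$ --- for instance, $A=\theta d\theta\otimes X$ pulls back to $(\theta\pm\eta)d\theta\otimes X$. Since the operator $D_A=D+\iota_D\rho(A)$ depends on $A$, one cannot conclude invariance from the invariance of $D$ and the measure alone. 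What is missing is either (i) the naturality statement $f^*(D_A\psi)=D_{f^*A}(f^*\psi)$, valid because $D$ is left-invariant, from which $\mathcal{S}_{(\Lambda,f^*A)}(f^*\psi)=\int f^*\langle\psi,D_A\psi\rangle=\mathcal{S}_{(\Lambda,A)}(\psi)$ by invariance of the Berezinian, or (ii) the paper's shortcut of restricting to the Wess--Zumino gauge and absorbing the $\mathbb{E}^{0|1}$-shift of $A$ via a compensating gauge transformation, then invoking part (b). Your argument would be complete with one more sentence making either of these explicit; as stated, the assertion that super translations ``contribute nothing'' is not justified by the premises you give.
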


\bp
From the definitions, we observe that $\langle \psi,D_{\rho^\dagger\oplus \rho}\psi\rangle$ is a section of $\omega^{1/2}\otimes\omega^{1/2}\otimes \overline{\omega}^{1/2}\simeq \omega\otimes\bar\omega^{1/2}$. On the other hand, the section of the Berezinian line transforms as $d\theta  d\bar z d z\mapsto \frac{1}{\bar \mu} d\theta \mu^2 dz \bar \mu^2 d\bar z =\bar \mu \mu^2 (d\theta d^2z)$, and so is a section of $\omega^{-1}\otimes \overline{\omega}^{-1/2}$. The integral~\eqref{eq:classaction} therefore defines a section of the trivial line, giving a function on the stack. 
\ep

\begin{rmk}
The \emph{partition function} of the classical field theory from Definition~\ref{defn:sFF} is the regularized Pfaffian of the family of operators~$D_{\rho^\dagger\oplus \rho}$. These operators have the form
$$
D_{\rho^\dagger\oplus \rho}=\left(\begin{array}{cc} 0 & D_{\rho} \\ D_{\rho^\dagger} & 0\end{array}\right)
$$
with respect to the block diagonal decomposition $\Fer(\overline{V}\oplus V)\simeq \Fer(\overline{V})\oplus \Fer(V)$. For a Pfaffian of a skew matrix on a $2d$-dimensional vector space, we have the formula 
\beq
\pf\left(\begin{array}{cc} 0 & A \\ -A^T & 0\end{array}\right)=(-1)^{d(d-1)/2}\det(A).\label{eq:Pfaffinite}
\eeq
This explains our focus on the determinant of~$D_\rho$ below: it determines (up to a phase) the partition function of the free fermion theory. 
\end{rmk}

\subsection{Eigenvalues of $D_\rho$ and comparison with $\bar\partial_L$}

This subsection serves to justify Definition~\ref{defn:calD} of $\Det(D_\rho)$ as the unique analytic extension of the Bismut--Freed--Quillen line bundle; however the results that follow are not logically dependent on this subsection, and so can be skipped by readers willing to accept Definition~\ref{defn:calD}.

Let $\theta\in C^\infty(\R^{2|1})^\odd$ be the standard odd coordinate function. Since $\theta\mapsto \bar\mu\theta$ for the $\C^\times$-action on $\R^{2|1}$, we observe that multiplying a section of $\Fer(V)$ by $\theta$ determines a map of vector bundles $\Fer(V)\to \Fer(V)\otimes \bar\omega^{-1/2}$ over $\F_0(\pt\nsq G)$. 

\begin{defn} Let $\psi$ be a local section of $\Fer(V)$ on $\F_0(\pt\nsq G)$. Then $\psi$ is an \emph{eigenvector with eigenvalue $\eigen$} if there exists a function $\eigen$ such that 
\beq
D_{\rho}\psi=\theta \eigen \psi.\label{eq:eigenvalue}
\eeq
\end{defn}

\begin{lem} \label{lem:eigen} The eigenvalues of $D_{\rho}$ are analytic functions on $\F_0(\pt\ncq G)$.
\end{lem}
\bp 
We observe~\eqref{eq:eigenvalue} is gauge-invariant, and in particular the function $\eigen$ is a gauge-invariant function. By Definition~\ref{defn:generalanalytic}, analyticity therefore amounts to showing that $\epsilon$ depends holomorphically on $\Lat$ and is invariant under deformations. Using naturality in~$G$, we may assume $G=U(n)$ and $D_\rho=\mathcal{D}$. Since any pair of commuting elements in $U(n)$ can be diagonalized, conjugation invariance allows us to restrict attention to eigenvalues for~$\mathcal{D}$ restricted to $\F_0(\pt\ncq T)\hookrightarrow \F_0(\pt\ncq U(n))$. 
The map
\beq
&&\pi\colon \Lat\times\mf{t}\times \mf{t}\times\Omega^1(-\times \R^{0|1};\mf{t})_\wz \to \Lat\times T\times T\times\Omega^1(-\times \R^{0|1};\mf{t})_\wz \to \F_0(\pt\nsq T) \label{eq:pi}
\eeq
is an epimorphism, where the first arrow is determined by the exponential map $\exp\colon \mf{t}\to T$ and the second arrow is from Lemma~\ref{lem:WZ}. Therefore, for any $S$-point of~$\F_0(\pt\ncq T)$ there exists a cover of $S$ that factors through~$\pi$. Since analyticity is a local condition, it therefore suffices to consider eigenvalues of $\pi^*\mathcal{D}$ acting on sections of~$\pi^*\Fer(V)$. 


By Lemma~\ref{lem:FerWZ}, $\pi^*\Fer(V)$ has as sections
$$
C^\infty(\Lat\times\mf{t}\times \mf{t}\times\Omega^1(-\times \R^{0|1};\mf{t})_\wz\times \R^{2|1};\Pi V )^{\Z^2}\simeq C^\infty(\Lat\times\mf{t}\times \mf{t}\times \underline{\mf{t}}\times \R^{2|1};\Pi V)^{\Z^2}.
$$
Taylor expanding in the odd variable $\theta\in C^\infty(\R^{2|1})$, a section can be written as $\psi=\alpha+\theta a$ for
\beq
&&(\alpha,a)\in C^\infty(\Lat\times\mf{t}\times \mf{t}\times \underline{\mf{t}}\times \R^{2};\Pi V)^{\Z^2}\oplus C^\infty(\Lat\times\mf{t}\times \mf{t}\times \underline{\mf{t}}\times \R^{2};V)^{\Z^2}.\label{eq:components}
\eeq
The $\Z^2$-action with respect to which invariants are taken is the action on $\Lat\times\mf{t}\times \mf{t}\times \underline{\mf{t}}\times \R^{2}$, 
$$
(n,m)\cdot (\lambda_1,\bar\lambda_1,\lambda_2,\bar\lambda_2, x_1,x_2,X,z) =(\lambda_1,\bar\lambda_1,\lambda_2,\bar\lambda_2, x_1+n,x_2+m,X,z+n\lambda_1+m\lambda_2,\bar z+n\bar\lambda_1+m\bar \lambda_2),
$$
for $(\lambda_1,\bar\lambda_1,\lambda_2,\bar\lambda_2)\in \Lat(S), (x_1,x_2)\in \mf{t}^{\times 2}(S), X\in \underline{\mf{t}}(S), (z,\bar z)\in \R^2(S)$, and the action on $V$ through $h\colon S\times \Z^2\to T\stackrel{\rho}{\to}\Aut(V)$. For a connection in the Wess--Zumino gauge of the form $A=\theta d\theta \otimes X$, the operator $\pi^*\mathcal{D}$ takes the form
$$
\pi^*\mathcal{D}=(\partial_\theta-\theta\partial_{\bar z})-\theta \otimes \rho(X).
$$
Hence the eigenvalue equation~\eqref{eq:eigenvalue} for $\psi=\alpha+\theta a$ becomes the more standard eigenvalue equation the $\partial_{\bar z}$-operator deformed by~$X$,
\beq
a=0,\qquad (\partial_{\bar z}+\rho(X))\alpha=-\eigen\alpha. \label{eq:eigenvector2}
\eeq
Let $\{v_i\}$ denote the standard basis of $\Pi V=\Pi \C^n$. We observe that the following $\Pi V$-valued functions on $\Lat\times\mf{t}\times \mf{t}\times \underline{\mf{t}}\times \R^{2}$ are $\Z^2$-invariant, 
\beq
&&\alpha^i_{k,l}=\exp\left(\frac{\pi}{\vol}(-z((k-x_2^i)\bar\lambda_1+(l+x_1^i)\bar\lambda_2)+\bar z((k-x_2^i)\lambda_1+(l+x_1^i)\lambda_2)\right)\otimes v_i\label{eq:basis1}
\eeq
for $(j,k)\in \Z^2$ and $i=1,\dots,n$. Hence the above define elements of the odd summand of~\eqref{eq:components}, and we observe their span is dense. Furthermore, since 
\beq
(\partial_{\bar z}+\rho(X))\alpha^i_{k,l}&=&\frac{\lambda_2}{\vol} \pi \left((k-x_2^i)\lambda_1/\lambda_2+(l+x_1^i)+\frac{\vol}{\lambda_2}\rho(X)^i\right)\alpha^i_{k,l}\label{eq:eigen}\\
&=&\pi (k\tau+l+z_i)\alpha^i_{k,l}\nonumber
\eeq
they determine eigenvectors via~\eqref{eq:eigenvector2}, where we have defined 
\beq
z_i:=x_1^i-\tau x_2^i+\frac{\vol}{\lambda_2}\rho(X)^i\qquad \tau=\lambda_1/\lambda_2\label{eq:holocoord}
\eeq 
for $\rho(X)^i$ the value of the $i$th coordinate function on~$\mf{t}_\C$ on~$X$. These eigenvalues are analytic in the sense of Definition~\ref{defn:generalanalytic}: the dependence on $\Lat$ is holomorphic (taking care with the normalizations from~\eqref{eq:varphimap}), and the behavior under deformations of $G$-bundles agrees with~\eqref{eq:formulaforanalytic}. Since the analytic condition is a closed condition on the space of smooth functions and the span of~\eqref{eq:basis1} is dense in the solution set~\eqref{eq:eigenvector2}, we conclude that all eigenvalues are analytic. 
\ep

\begin{rmk}\label{rmk:analytic}
The eigenvalues of the previous lemma are observables in the associated field theory with action functional from Definition~\ref{defn:sFF}. Our definition of analyticity was motivated by the flavor of holomorphy these observables possess. 
\end{rmk}

We observe that the family of operators~$D_\rho$ is natural with respect to group homomorphisms, and in fact pulls back from a universal family along the functor induced by the representation~$\rho\colon G\to U(n)$
$$
\cL_0(\pt\nsq G)\to \cL_0(\pt\nsq U(n)),
$$
where~$D_\rho$ over $\cL_0(\pt\nsq U(n))$ is defined using the representation~$\rho=\id\colon U(n)\to U(n)$. One way to ensure that a putative determinant line bundle $\Det(D_\rho)$ is natural with respect to homomorphisms of compact Lie groups is to construct it universally over $ \cL_0(\pt\nsq U(n))$. 

\begin{notation}
We use the notation $\mathcal{D}:=D_\id$ for the universal family $D_\rho$ associated with $\id\colon U(n)\to U(n)$. 
\end{notation}

We have the following comparison between the families of operators $\mathcal{D}$ and $\bar\partial_{L_1\oplus \cdots L_n}$. 

\begin{lem}\label{lem:dbarcompare} There exists a map 
\beq
j\colon (\Lat\times \C^n)\cq \Z^{2n}\to \F_0(\pt\nsq U(n))\label{eq:jmap}
\eeq
such that the odd summand $j^*\Fer(V)\simeq (j^*\Fer(V))^\ev\oplus (j^*\Fer(V))^\odd$ is isomorphic to the vector bundle on which $\bar\partial_{L_1\oplus \cdots \oplus L_n}$ acts. Furthermore, the pullback of~$\mathcal{D}$ restricted to this odd summand is
$$
j^*\mathcal{D}|_{\Fer(V)^{\odd}}=\bar\partial_{L_1\oplus \cdots \oplus L_n}.$$ 
\end{lem}
\bp 
We will construct $j$ as the lower line of the diagram
\beq
&&\begin{tikzpicture}[baseline=(basepoint)];
\node (A) at (0,0) {$\Lat\times \C^n$};
\node (B) at (4,0) {$\Lat\times \mf{t}\times \mf{t}\times \underline{\mf{t}}\times \Omega^1(-;\mf{t})$};
\node (C) at (0,-1.2) {$(\Lat\times \C^n)\cq \Z^{2n}$};
\node (D) at (4,-1.2) {$\Lat \times T\times T\times \underline{\mf{t}}\times \Omega^1(-;\mf{t})$};
\node (E) at (8,-1.2) {$\F_0(\pt\nsq T)$};
\node (F) at (10.5,-1.2) {$\F_0(\pt\nsq U(n))$};
\draw[->] (A) to node [above] {$\tilde{j}$} (B);
\draw[->] (A) to  (C);
\draw[->] (C) to (D);
\draw[->] (B) to (D);
\draw[->] (D) to (E);
\draw[->] (E) to (F);
\draw[->] (B) to node [above] {$\pi$} (E);
\path (0,-.75) coordinate (basepoint);
\end{tikzpicture}\label{eq:tildej}
\eeq
Here, $\tilde{j}$ is the inverse to the isomorphism $\Lat\times \R^{n}\times \R^n\stackrel{\sim}{\to} \Lat\times \C^n$ given by
\beq
(\lambda_1,\bar\lambda_1,\lambda_2,\bar\lambda_2,x,y)\mapsto (\lambda_1,\bar\lambda_1,\lambda_2,\bar\lambda_2,\lambda_2 x-\lambda_1 y,\bar\lambda_2x-\bar\lambda_1 y)\label{eq:squareiso}\\
(\lambda_1,\bar\lambda_1,\lambda_2,\bar\lambda_2)\in \Lat(S),\ (x,y)\in \R^n(S), \nonumber
\eeq
composed with the inclusion $\Lat \times \R^{n}\times \R^n\simeq \Lat \times\mf{t}\times \mf{t}\hookrightarrow \Lat\times \mf{t}\times \mf{t}\times \underline{\mf{t}}\times \Omega^1(-;\mf{t})$  along $0\colon \pt\to \underline{\mf{t}}\times \Omega^1(0;\mf{t})$. The map $\tilde{j}$ is $\Z^{2n}$-equivariant, so determines the map between the $\Z^{2n}$-quotients. The map $\pi$ is from~\eqref{eq:pi} (using the generalized atlas from Lemma~\ref{lem:WZ}), and the remaining map comes from the inclusion $T\hookrightarrow U(n)$ of the standard maximal torus. Then by the description of $\pi^*\Fer(V)$ in~\eqref{eq:components}, we see that the odd summand of $j^*\Fer(V)$ is indeed  a direct sum of $n$ flat line bundles over $\widetilde{\EE}$, proving the first part of the lemma. The second part follows from the description of $\mathcal{D}$ from~\eqref{eq:eigenvector2} where on $j^*\Fer(V)^\odd$ it is the $\bar\partial_{\bar z}$-operator.
\ep

Consider the composition of~\eqref{eq:jmap} with the coarse quotient by gauge transformations~\eqref{eq:F0MG},
\beq
&&(\Lat\times \C^n)/ \Z^{2n}\stackrel{j}{\to} \F_0(\pt\nsq U(n))\to \F_0(\pt\ncq U(n)). \label{eq:extendj}
\eeq
Proposition~\ref{prop:Unext} below shows that there is a unique line bundle and section on $\F_0(\pt\nsq U(n))$ satisfying the following conditions: (1) the line bundle and section pull back from an analytic line bundle and analytic section over $\F_0(\pt\ncq U(n))$ and (2) the further pullback of the line bundle and section along $j$ is the Bismut--Freed--Quillen determinant line bundle of $\bar\partial_{L_1\oplus \cdots L_n}$ with its holomorphic determinant section. We define the determinant line of $\mathcal{D}$ as this uniquely specified line bundle on $\F_0(\pt\nsq U(n))$ and define its section to be the determinant section. This terminology is justified as follows. Condition (1) is justified because the eigenvalues of $\mathcal{D}$ are analytic (and in particular, gauge-invariant), and so we would expect the determinant of this operator to similarly be analytic, pulling back from $\F_0(\pt\ncq U(n))$. Condition (2) is justified by the requirement that determinant lines be natural with respect to restriction of their family of operators. 


\subsection{The determinant line of $D_\rho$ over $\Mst_G$}\label{sec:sdetline}

In view of the source of~\eqref{eq:jmap}, we shall require a description of the Bismut--Freed--Quillen line bundle~$\mathcal{A}$ and section $\sigma$ over the universal curve $\EE^\vee$ pulled back along the equivalence of stacks, 
$$
[\Lat\sq \SL_2(\Z)\times \C^\times]\stackrel{\sim}{\longrightarrow} [\HH\sq \SL_2(\Z)]\simeq \Mell. 
$$
This follows the standard translation between modular forms as functions on $\HH$ and functions on $\Lat$, e.g., see~\cite[pg.~4]{ZagierMF}. We first recall the description of the $\sigma$-function as an $\SL_2(\Z)$-invariant function on $\Lat\times \C$ with $\C^\times$-transformation properties, 
\beq
&&\sigma(\lambda_1,\lambda_2,z)= 2\pi i z\lambda_2 \exp\left(-\sum_{k>0} \frac{E_{2k}(\lambda_1,\lambda_2) (2\pi i z)^{2k}}{2k}\right)\in \mathcal{O}(\Lat \times \C)\label{eq:sigmaLat}\label{eq:sigmalattices}
\eeq
using the standard definition 
$$
E_{2k}(\lambda_1,\lambda_2):=\lambda_2^{-2k}E_{2k}(\lambda_1/\lambda_2)\in \mathcal{O}(\Lat)
$$
of Eisenstein series as functions on based lattices. Then the transformation properties of the $\sigma$-function~\eqref{eq:sigmalattices} give a cocycle for the line bundle on the quotient
$$
\mathcal{E}^\vee\simeq [\Lat\times \C\sq \SL_2(\Z)\ltimes \Z^2\times \C^\times]. 
$$
We similarly define the product of $\sigma$-functions (following~\eqref{eq:sigmatau})
\beq
\sigma(\lambda_1,\lambda_2,  \mathbf{z}) := \prod_{i=1}^n \sigma(\lambda_1,\lambda_2,  z_i)\in \mathcal{O}(\Lat\times \C^n) \label{eq:sigmaT}
\eeq
which we similarly view as defining a section of $\mathcal{A}_n$ over $\Bun_{U(n)}(\EE)$. 

\begin{prop} \label{prop:Unext}
The determinant line bundle $\Det(\bar\partial_{L_1\oplus \cdots L_n})$ has a unique extension along~\eqref{eq:extendj}, i.e., there is an analytic line bundle $\Det(\mathcal{D})$ on~$\F_0(\pt\ncq U(n))$ whose pullback along~\eqref{eq:extendj} is isomorphic to $\Det(\bar\partial_{L_1\oplus \cdots L_n})$, and (up to unique isomorphism) $\Det(\mathcal{D})$ is the unique line bundle on $\F_0(\pt\ncq U(n))$ with this property. Furthermore, there is a unique analytic section $\det(\mathcal{D})$ of $\Det(\mathcal{D})$ over $\F_0(\pt\ncq U(n))$ whose restriction along~\eqref{eq:extendj} is $\det(\bar\partial_L)$. 

\end{prop}

\bp
Consider the evident factorization of the composition~\eqref{eq:extendj} as
\beq
(\Lat\times \C^n)/ \Z^{2n}\to \F_0(\pt\ncq T)\to \F_0(\pt\ncq U(n))\label{eq:thefirstarrow}
\eeq
for $T=U(1)^{\times n}<U(n)$ the standard maximal torus. We start by extending the determinant line along the first arrow above. We have
$$
\Ftil_0(\pt\ncq T)\simeq (\Lat\times T\times T\times\Omega^1(-\times \R^{0|1};\mf{t})_\wz)/T\simeq \Lat\times T\times T\times\Omega^1(-\times\R^{0|1};\mf{t})_\wz.
$$
Consider the open cover determined by $\exp\colon\mf{t}\to T$,
\beq
\Lat\times\mf{t}\times\mf{t}\times \Omega^1(-\times\R^{0|1};\mf{t})_\wz\twoheadrightarrow \Ftil_0(\pt\ncq T).\label{eq:coverofT}
\eeq
We observe that any holomorphic function on $\Lat\times\mf{t}_\C$ uniquely extends along
\beq
&&\Lat\times\mf{t}_\C \simeq \Lat\times\mf{t}\times\mf{t} \hookrightarrow \Lat\times\mf{t}\times\mf{t}\times \Omega^1(-\times\R^{0|1};\mf{t})_\wz,\nonumber
\eeq
via the assignment 
\beq
f(w_1,\dots,w_n)\mapsto f(z_1,\dots,z_n)\label{eq:analyticext}
\eeq 
for $z_i$ as defined in~\eqref{eq:holocoord} with $w_j=x_1^j+ix_2^j\in \mf{t}_\C$. Furthermore, for any open subsheaf $U\subset \Ftil_0(\pt\ncq T)$ pulled back along the cover~\eqref{eq:coverofT}, the above assignment determines an analytic function on the pullback of~$U$. We will refer to~\eqref{eq:analyticext} as the \emph{unique analytic extension} of a holomorphic function on $\Lat\times\mf{t}\times\mf{t}$. Consider the unique analytic extension for the $\sigma$-function,~\eqref{eq:sigmaT}, using the standard identification $\mathfrak{t}_\C\simeq \C^n$ for the Lie algebra of the maximal torus of~$U(n)$.
Since the usual $\sigma$-function transforms under the action by $\Z^{2n}\rtimes \SL_2(\Z)\times \C^\times$ by a nonvanishing holomorphic function, it is a formal consequence that the image of~\eqref{eq:sigmaT} under~\eqref{eq:analyticext} transforms under this action by a nonvanishing analytic function, namely the unique analytic extension of the transformation law for~$\sigma$. This gives rise to a line bundle with section over the quotient, 
$$
[\Lat\times\mf{t}\times\mf{t}\times \Omega^1(-\times\R^{0|1};\mf{t})_\wz\sq \Z^{2n}\rtimes \SL_2(\Z)\times \C^\times]\simeq [\Ftil_0(\pt\ncq T)\sq \SL_2(\Z)\times \C^\times], 
$$
where the equivalence uses that the $\Z^{2n}$-action is free with quotient $\Ftil_0(\pt\ncq T)$. We observe that the unique analytic extension of $\sigma(\lambda_1,\lambda_2,{\bf z})$ is invariant under the $\E^{0|1}$-action. Hence, this unique analytic extension determines a line bundle with section over $\F_0(\pt\ncq T)$. By construction, this extends $\det(\bar\partial_{L_1\oplus \cdots L_n})$ and $\Det(\bar\partial_{L_1\oplus \cdots L_n})$ along the first arrow of~\eqref{eq:thefirstarrow}, and is the unique such analytic extension. 

Invariance of~\eqref{eq:sigmaT} under the action of the symmetric group (i.e., the Weyl group of $U(1)^{\times n}<U(n)$) together with Propositions~\ref{prop:ellgrp} and~\ref{prop:torus} yields a further gauge-invariant extension of the line bundle with section along the second arrow of~\eqref{eq:thefirstarrow}.
By construction, the $\E^{0|1}\rtimes \C^\times \times \SL_2(\Z)$-equivariant properties of this extension are completely determined by restriction to $\Ftil_0(\pt\ncq T)$. Hence, the equivariant structure is unique. This completes the proof. 
\ep

\begin{defn}\label{defn:calD} Define $\Det(\mathcal{D})$ and $\det(\mathcal{D})$ as the line bundle and section over $\F_0(\pt\ncq U(n))$ constructed by Lemma~\ref{prop:Unext}. \end{defn}

\begin{defn}\label{defn:detDrho} For $\rho\colon G\to U(n)$ a representation, let $\Det(D_\rho)$ denote the pullback of $\Det(\mathcal{D})$ along the induced functor $\F_0(\pt\ncq G)\to\F_0(\pt\ncq U(n))$, and $\det(D_\rho)$ denote the pull back of the section  $\det(\mathcal{D})$. 
\end{defn}

\begin{prop} \label{prop:chaincomparison}
By Lemma~\ref{lem:chaincomplex}, the sheaf $\Det(\mathcal{D})$ determines a sheaf of chain complexes on $\Mst_{U(n)}$. This sheaf is concentrated in degree~$2n$ with trivial differential and $\det(\mathcal{D})$ descends to a $Q$-closed global section over $\Mst_{U(n)}$, which we continue to denote by $\det(\mathcal{D})$.  Furthermore, $\Ch^{2n}(\Det(\mathcal{D}))$ pulls back along $\iota\colon \Bun_{U(n)}(\EE)\to i^*\Mst_{U(n)}$ to~$\mathcal{A}_n$ from Definition~\ref{defn:Aboxtime}, and the section $\det(\mathcal{D})$ pulls back to~$\sigma(\tau,{\bf z})$. 
\end{prop}

\bp
The first statement follows immediately from Lemma~\ref{lem:chaincomplex}. To prove the second statement, we must compute the transformation properties of sections of $\Det(\mathcal{D})$ under the $\C^\times$-action. By construction, these transformation properties are determined by the $\C^\times$-action on the $\sigma$-function, which can be computed from~\eqref{eq:sigmalattices}. We see that $\sigma(\lambda_1,\lambda_2,{\bf z})\mapsto \mu^{2n} \sigma(\lambda_1,\lambda_2,{\bf z})$ for $(\mu,\bar\mu)\in \C^\times$. Hence, sections are in the $2n$th weight space of the $\C^\times$-action, so that the resulting sheaf of chain complexes is indeed concentrated in degree~$2n$. The only possible differential is the trivial one, and indeed, the section $\det(\mathcal{D})$ is invariant under the $\E^{0|1}$-action by construction. 

Finally we observe that there is a 2-commutative square
\beq
\begin{tikzpicture}[baseline=(basepoint)];
\node (A) at (0,0) {$(\Lat\times \C^n)\cq \Z^{2n}$};
\node (B) at (4,0) {$i^*\F_0(\pt\ncq U(n))$};
\node (C) at (0,-1.2) {$\Bun_{U(n)}(\EE)$};
\node (D) at (4,-1.2) {$i^*\Mst_{U(n)}$};
\draw[->] (A) to (B);
\draw[->] (A) to  (C);
\draw[->] (C) to node [above] {$\iota$} (D);
\draw[->] (B) to (D);
\path (0,-.75) coordinate (basepoint);
\end{tikzpicture}\nonumber
\eeq
where the upper horizontal arrow is the functor $i^*$ applied to~\eqref{eq:extendj}. The vertical arrows take $\C^\times$-quotients, with the vertical arrow on the left then composing with the map $(\HH\times \C^n)/\Z^2\to \Bun_{U(n)}(\EE)$ induced by the inclusion $T\hookrightarrow U(n)$ of the standard maximal torus. Now we observe that the pullback of $\Ch^{2n}(\Det(\mathcal{D}))$ along the quotient map $\F_0(\pt\ncq U(n))\to \Mst_{U(n)}$ is $\Det(\mathcal{D})$: this pullback simply tensors the $2n$th weight space with analytic functions, which recovers the sections of the analytic line bundle $\Det(\mathcal{D})$. By Proposition~\ref{prop:Unext} this line bundle and section restrict to the line bundle $\mathcal{A}_n$ with section $\sigma$ over $(\Lat\times \C^n)\cq \Z^{2n}$. Then by~\eqref{eq:sigmalattices} and Lemma~\ref{lem:detLoon}, this determines the claimed line and section over $\Bun_{U(n)}(\EE)$ by descent. This proves the last statement in the proposition. 
\ep

From naturality and Definition~\ref{defn:detDrho}, the following is immediate:

\begin{cor}
For $\rho\colon G\to U(n)$ a representation, the sheaf $\Det(D_\rho)$ determines a sheaf of chain complexes on $\Mst_{G}$ concentrated in degree~$2n$ with trivial differential, and $\det(D_\rho)$ descends to a global section over $\Mst_{G}$, which we continue to denote by $\det(D_\rho)$.  
\end{cor}

For any representation $\rho\colon G\to U(V)$, there is a twisted $G$-equivariant elliptic Euler class of $\rho$; see~\cite[Proposition 6.3]{BET0}. 

\begin{thm}\label{thm:Eulercompare}
The section~$\det(D_\rho)$ constructs the twisted elliptic Euler class associated to $\rho$ by restriction to~$\Bun_G(\EE)$. 
\end{thm}

\bp By Proposition~\ref{prop:chaincomparison}, the pullback of $\det(D_{\rho})$ to $\Bun_G(\EE)$ agrees with the pullback of $\sigma(\tau, \mathbf{z})$ along $\Bun_G(\EE) \to \Bun_{U(n)}(\EE)$ determined by $\rho$, and this agrees with the definition of the twisted elliptic Euler class of the representation $\rho$ from~\cite{BET0}. \ep

\section{Elliptic group laws from $U(1)$-gauge fields}\label{sec:FGL}

\subsection{A group structure on fields} 
\begin{lem}\label{lem:GKunneth}
For a $G$-manifold $X$ and $H$-manifold $Y$, there are equivalences of stacks,
\beq
\F_0(X\times Y \nsq G\times H)\stackrel{\sim}{\to} \F_0(X\nsq G)\times_{\F_0(\pt)} \F_0(Y\nsq H).\label{eq:fibprod}
\eeq
\end{lem}
\bp Consider the 2-commutative diagram of stacks 
\beq
\begin{tikzpicture}[baseline=(basepoint)];
\node (A) at (0,0) {$[(X\times Y)\nsq (G\times H)]$};
\node (B) at (4,0) {$[X\nsq G]$};
\node (C) at (0,-1.2) {$[Y\nsq H]$};
\node (D) at (4,-1.2) {$\pt$}; 
\draw[->] (A) to  (B);
\draw[->] (A) to  (C);
\draw[->] (C) to (D);
\draw[->] (B) to (D);
\path (0,-.75) coordinate (basepoint);
\end{tikzpicture}\nonumber
\eeq
By naturality (see Remark~\ref{rmk:nat}) and the universal property of the fibered product, we obtain a canonical map~\eqref{eq:fibprod}. To show that this map is an equivalence, we first observe that in this case the 2-fibered product in stacks coincides with the strict fibered product. This is because the maps $\F_0(X\nsq G)\to \F_0(\pt)$ and $\F_0(Y\nsq H)\to \F_0(\pt)$ are \emph{isofibrations} for each $S$-point: any isomorphism~$\F_0(\pt)$ over~$S$ together with a lift of its source object to $\F_0(X\nsq G)$ can be lifted to an isomorphism in $\F_0(X\nsq G)$ over $S$. The isofibration property gives a weak equivalence between the 2-fibered product and the strict fibered product at each $S$-point (e.g.,~\cite[Lemma~8.2]{CoyneNoohi}) and hence an equivalence of stacks. 

Finally, we verify that the map from $\F_0(X\times Y \nsq G\times H)$ to the strict fibered product is an equivalence of stacks using the the groupoid presentations of $\F_0((X\times Y) \nsq (G\times H))$, $\F_0(X\nsq G)$ and $\F_0(Y\nsq H)$ offered by Lemma~\ref{lem:present}. In particular, the pair of isomorphisms 
$$
\Omega^1(-\times \R^{0|1};\fg\oplus\mathfrak{h})\simeq \Omega^1(-\times \R^{0|1};\fg)\oplus \Omega^1(-\times \R^{0|1};\mathfrak{h}),
$$$$ \Hom(\Z^2,\Map(\R^{0|1},G\times H))\simeq \Hom(\Z^2,\Map(\R^{0|1},G))\times \Hom(\Z^2,\Map(\R^{0|1},H))
$$
yield the desired equivalence. 
\ep

We now specialize to $T$ a torus. Naturality for the multiplication homomorphism $T\times T\to T$ and Lemma~\ref{lem:GKunneth} gives a map 
\beq
&&\F_0(\pt\nsq T)\times_{\F_0(\pt)} \F_0(\pt\nsq T) \simeq \F_0(\pt\nsq T\times T)\to \F_0(\pt\nsq T).\label{eq:torusnat}
\eeq
This map corresponds to the tensor product operation operation on $T$-bundles with connection over super tori. The multiplication operation descends along the composition of coarse quotient maps
\beq
\F_0(\pt\nsq T)\to \F_0(\pt\ncq T)\to \Mst_T\label{eq:itsanotherqupotoient}
\eeq
in the following sense.

\begin{prop} \label{prop:fieldgrouplaw} The quotient map~\eqref{eq:itsanotherqupotoient} determines a multiplication operation on $\Mst_T$ compatible with the one on $\F_0(\pt\nsq T)$ in the form of a 2-commutative diagrams of stacks:
\beq
\begin{tikzpicture}[baseline=(basepoint)];
\node (A) at (0,0) {$ \F_0(\pt\nsq T)\times_{\F_0(\pt)} \F_0(\pt\nsq T)$};
\node (C) at (0,-1.2) {$\F_0(\pt\nsq T)$};
\node (B) at (6,0) {$\Mst_T\times_\Mst \Mst_T$};
\node (D) at (6,-1.2) {$\Mst_T$};
\draw[->] (A) to  (B);
\draw[->] (A) to node [left] {$p_1,p_2,m$} (C);
\draw[->] (C) to (D);
\draw[->,dashed] (B) to node [right] {$p_1,p_2,m$} (D);
\path (0,-.75) coordinate (basepoint);
\end{tikzpicture}\nonumber
\eeq
where $p_1,p_2$ are the projections and $m$ is determined by~\eqref{eq:torusnat}.
\end{prop}
\bp
By the isofibration property discussed in the proof of Lemma~\ref{lem:GKunneth}, the 2-fibered products of stacks in the diagram above are presented by strict fiber products of generalized super Lie groupoids, using the groupoid presentation of $\F_0(\pt\nsq T)$ from Lemma~\ref{lem:present} and the definition of $\Mst_T$ as the stackification of a groupoid~\eqref{eq:Mtildef}. This allows us to construct the diagram above as the stackification of a strictly commuting diagram of generalized super Lie groupoids. But then the dashed arrow is uniquely determined by taking a quotient of the maps $p_1,p_2,m$ defined at the level of groupoids. The stackification of the strictly commuting diagram of generalized Lie groupoids is then the claimed 2-commuting diagram of stacks. 
\ep

\subsection{Compatibility with the elliptic group law}

\begin{lem}\label{lem:Mell} The map
$$
\iota\colon \Mell=\Bun_{\{e\}}(\EE)\to i^*\Mst_{\{e\}}
$$
for $\iota$ from~\eqref{eq:iota} is an isomorphism of stacks. 
\end{lem}
\bp
We compute the coarse quotient,
$$
\Mstil_{\{e\}}:=\Ftil_0(\pt)\cq \C^\times=\Lat\cq \C^\times\simeq \HH.
$$
By its construction in Lemma~\ref{lem:iota}, the map~$\iota$ is induced by a map of groupoids which in this case is the identity functor $\HH\sq\SL_2(\Z)\to \HH\sq\SL_2(\Z)$. The claimed result follows. 
\ep

We recall that
$$
\Ftil_0(\pt\ncq T) \simeq \Lat\times T\times T\times\Omega^1(-\times \R^{0|1};\mf{t})_\wz ,
$$
since the action of $T$ by gauge transformations is trivial as $T$ is abelian. The coarse quotient by the $\C^\times$-action is
\beq
\Mstil_T=\Ftil_0(\pt\ncq T)\cq \C^\times\simeq \HH\times T\times T\times\Omega^1(-\times \R^{0|1};\mf{t})_\wz.\label{eq:Mstformula}
\eeq
Restricting to manifolds, observing that $i^*$ preserves products (being a right adjoint), and using Example~\ref{ex:istartV} we find
$$
i^*\Mstil_T\simeq \HH\times T\times T\times \mf{t}_\C \times \Omega^1(-;\mf{t}). 
$$ 
By the construction of $\iota$ in Lemma~\ref{lem:iota}, the map
\beq
&&[\HH\times T\times T\sq\SL_2(\Z)]\simeq \Bun_{T}(\EE) \stackrel{\iota}{\to} i^*\Mst_T=[\HH\times T\times T\times\Omega^1(-;\mf{t})\times \mf{t}_{\C}\sq\SL_2(\Z)]\label{eq:iotaT}
\eeq
is inclusion along the map $0\colon \pt\to \Omega^1(-;\mf{t})\times \mf{t}_{\C}$ determined by the $0$-form in $\Omega^1(\pt;\mf{t})$ and the origin of $\mf{t}_\C$. 

Below we adopt the simplified notation
$$
\mathcal{O}_{\Mst_G}:=\Ch^0(\F_0(\pt\ncq G)),\qquad \mathcal{O}_{\Mstil_G}:=\Ch^0(\Ftil_0(\pt\ncq G)). 
$$
We recall that there is an isomorphism of complex analytic stacks
$$
\Bun_{T}(\EE)\simeq (\EE^\vee)^{\times {\rm rk}(T)}\simeq \EE^\vee\times_{\Mell}\cdots \times_{\Mell} \EE^\vee,
$$
between $\Bun_T(\EE)$ and the ${\rm rk}(T)$th iterated fibered product of $\EE^\vee$ over $\Mell$. In particular, their sheaves of holomorphic functions are isomorphic. The following is then a direct corollary of Theorem~\ref{thm:comparison}; see Remark~\ref{rmk:Hodge}. 

\begin{cor}\label{cor:coefficients}
When $G=T$, there is a canonical isomorphism of sheaves
\beq
\iota^{-1}i^*\mathcal{O}_{\Mst_{T}}\simeq \mathcal{O}_{(\EE^\vee)^{\times {\rm rk}(T)}}. \label{eq:isoofsheavesEvee}
\eeq
\end{cor}

\bp
By Proposition 3.8 and Example 2.3 of~\cite{BET0}, $\dEll^0_T(\pt) \simeq \mc{O}_{(\EE^\vee)^{\times {\rm rk}(T)}}$. The result follows from composing with the isomorphism of Theorem~\ref{thm:comparison}. 
\ep

\begin{prop}\label{prop:85}
For $G=U(1)$, the map $\iota$ is an injective homomorphism of commutative group objects over~$\Mell$, meaning there are commutative squares of stacks
\beq
\begin{tikzpicture}[baseline=(basepoint)];
\node (A) at (0,0) {$\EE^\vee\times_{\Mell} \EE^\vee$};
\node (C) at (0,-1.2) {$\EE^\vee$};
\node (B) at (5,0) {$i^*\Mst_{U(1)}\times_{\Mell} i^*\Mst_{U(1)}$};
\node (D) at (5,-1.2) {$i^*\Mst_{U(1)}$};
\draw[->] (A) to  (B);
\draw[->] (A) to node [left] {$p_1,p_2,m$} (C);
\draw[->] (C) to (D);
\draw[->] (B) to node [right] {$p_1,p_2,m$} (D);
\path (0,-.75) coordinate (basepoint);
\end{tikzpicture}\nonumber
\eeq
where $p_1,p_2$ are the projections and $m$ is the multiplication on $\EE^\vee$, respectively, the map induced by naturality in the multiplication $U(1)\times U(1)\to U(1)$. Moreover, this homomorphism is compatible with the isomorphism of sheaves~\eqref{eq:isoofsheavesEvee} in the sense that if $f$ denotes any of the maps $p_1, p_2$, or $m$, then we have the following commutative diagram of sheaves on $\EE^{\vee} \times_{\Mell} \EE^{\vee}$:
\beq
\begin{tikzpicture}[baseline=(basepoint)];
\node (A) at (0,0) {$f^{-1} \iota^{-1} i^* \mc{O}_{\mc{M}_{U(1)}}$};
\node (C) at (0,-1.2) {$\iota^{-1} i^* \mc{O}_{\mc{M}_{U(1) \times U(1)}}$};
\node (B) at (4,0) {$f^{-1} \mc{O}_{\EE^{\vee}}$};
\node (D) at (4,-1.2) {$\mc{O}_{(\EE^{\vee})^{\times 2}}.$};
\draw[->] (A) to node [above] {$\sim$} (B);
\draw[->] (A) to (C);
\draw[->] (C) to node [above] {$\sim$} (D);
\draw[->] (B) to (D);
\path (0,-.75) coordinate (basepoint);
\end{tikzpicture}\nonumber
\eeq
\end{prop}

\bp Using the isomorphism of (smooth) Lie groups over $\HH$,
\beq
&&\widetilde{\EE}^\vee\simeq \HH\times U(1)\times U(1)\hookrightarrow \HH\times U(1)\times U(1)\times \C\times \Omega^1(-;\R)\simeq i^*\Mstil_{U(1)}\label{eq:homomorphismthing}
\eeq
and the description of $\iota$ as inclusion at zero~\eqref{eq:iotaT}, compatibility with projections and multiplications is evident at the level of the groupoid presentations. Note that the first map above uses the isomorphism $\mb{H} \times U(1) \times U(1) \to \widetilde{\EE}^{\vee}$ given by $(\tau, e^{2\pi i x}, e^{2\pi i y}) \mapsto (\tau, z = x - \tau y)$. The second statement regarding compatibility with analytic functions follows from Corollary~\ref{cor:coefficients}: analytic functions pull back along~\eqref{eq:homomorphismthing} to holomorphic functions on $\widetilde{\EE}^\vee$. \ep

\subsection{Elliptic formal group laws from trivializations of $\Det(\mathcal{D})$}

We briefly recall some general facts about formal groups and formal group laws over~$\C$. Let $A$ be a one-dimensional complex analytic group with identity element $0\in A$ and multiplication $m\colon A\times A\to A$. A \emph{coordinate} on $A$ is a function $t \in \mc{O}_A(U)$ defined on a neighborhood $U$ of $0\in A$ which vanishes to (exactly) first order at $0$. We may expand functions in power series of~$t$ at $0\in A$, giving an injection $\mc{O}_A(U) \hookrightarrow \C[[x]]$ that sends $t$ to $x$. Similarly, we obtain an injection $\mc{O}_{A \times A}(U \times U) \hookrightarrow \C[[x_1, x_2]]$. This allows one to express the image of $m^*t$ in $\C[[x_1, x_2]]$ as $F \in \C[[x_1, x_2]]$. Commutativity, associativity, and unitality of~$m$ imply that the element $F$ defines a \emph{formal group law} over $\C$. More generally, given a complex analytic family $A\to B$ of 1-dimensional complex analytic groups over~$B$ and a coordinate $t\in \mathcal{O}(U)$ defined on a neighborhood of the identity section $0\colon B\to A$ that vanishes to first order along~$0$, we obtain a formal group law $F\in \mathcal{O}(B)[[x_1,x_2]]$. In the example of interest, take $B=\HH$ and $A=\widetilde{\EE}^\vee$. Then a choice of coordinate determines a formal group law~$F \in \mc{O}(\HH)[[x_1, x_2]]$.

Let $\Det(\mathcal{D})$ and $\det(\mathcal{D})$ denote the determinant line bundle and section on $\F_0(\pt\ncq U(1))$. Recall from Proposition~\ref{prop:chaincomparison} that under $\EE^{\vee} \stackrel{\iota}{\to} i^* \mc{M}_{U(1)}$, the analytic line $\Ch^2(\Det(\mc{D}))$ with section $\det(\mc{D})$ pulls back to $\mc{A}$ with section~$\sigma$. A trivialization of $\Ch^2(\Det(\mathcal{D}))$ on an open subsheaf $U\subset \Mstil_{U(1)}$ of $\Ch^2(\Det(\mathcal{D}))$ is \emph{analytic} if it is a trivialization in the category of $\mathcal{O}_{\Mstil_{U(1)}}$-modules, i.e., it is an isomorphism between the sheaf of sections of $\Ch^2(\Det(\mathcal{D}))$ on $U$ with $\mathcal{O}_{\Mstil_{U(1)}}$ on $U$. 

\begin{thm}\label{thm:GL} 
A choice of analytic trivialization of $C^2(\Det(\mc{D}))$ on $\Mstil_{U(1)}$ over an open subsheaf $U\subset \Mstil_{U(1)}$ containing the image of $\Mstil_{\{e\}} \hookrightarrow \Mstil_{U(1)}$ uniquely determines a trivialization of $\mathcal{A}$ in a neighborhood of $\HH \hookrightarrow \widetilde{\EE}^\vee$, and the section $\det(\mc{D})$ expressed as a function in the trivialization uniquely determines a coordinate on $\widetilde{\EE}^\vee$. Furthermore, there exists a two-variable power series $F \in \mc{O}(\HH)[[x_1, x_2]]$ such that (possibly after shrinking $U$) we have an equality of analytic functions on $U$
$$
m^* \det(\mc{D}) = F(p_1^*\det(\mc{D}), p_2^*\det(\mc{D})).
$$  
This $F$ is equal to the formal group law of $\widetilde{\EE}^\vee$ with the choice of coordinate determined by~$\sigma$ in the trivialization of $\mc{A}$. \end{thm}

\begin{proof}[Proof of Theorem~\ref{thm:GL}] 
The order of vanishing of a section of a holomorphic line bundle is independent of the choice of holomorphic trivialization. Hence, the first sentence follows from Proposition~\ref{prop:chaincomparison} and the fact that $\sigma$ determines a coordinate in the standard trivialization of $\mathcal{A}$, together with the fact that $\det(\mathcal{D})$ is the unique analytic extension of $\sigma$~\eqref{eq:sigmalattices} for $z_i$ as defined in~\eqref{eq:holocoord}. Indeed, $\sigma$ has a first-order zero at $z = 0$ from~\eqref{eq:sigmalattices}. Consequently,~$\sigma$ yields a formal group law~$F$ associated to complex analytic group $\widetilde{\EE}^\vee$,
 \beq \label{eq:fglEvee} 
 m^* \sigma = F(p_1^*\sigma, p_2^*\sigma).
 \eeq 
By Proposition~\ref{prop:85} and the fact that holomorphic functions on $\widetilde{\EE}^\vee\simeq \Bun_{U(1)}(\EE)$ uniquely extend to sections of $\mathcal{O}_{\Mstil_{U(1)}}$, we must have the same equality of functions on $U\subset \Mstil_{U(1)}$, \beq \label{eq:fglfields} m^*\det(\mc{D}) = F(p_1^*\det(\mc{D}), p_2^* \det(\mc{D})),\eeq 
proving the proposition.
\ep

\appendix

\section{Supermanifolds and superstacks}\label{appen:equivdeRham}


%

\subsection{Supermanifolds}\label{sec:smfld}

We recall that a \emph{locally ringed space} $(X,\mathcal{O})$ is a topological space~$X$ equipped with a sheaf of rings~$\mathcal{O}$ (called the \emph{structure sheaf}) whose stalks are local rings. A \emph{$k|l$-dimensional supermanifold} is a second countable, Hausdorff locally ringed space that is locally isomorphic to $(U,C^\infty_U\otimes_\C\Lambda^\bullet(\C^l))$ where $U\subset \R^k$ an open submanifold and $C^\infty_U\otimes_\C\Lambda^\bullet(\C^l)$ defines a sheaf of $\Z/2$-graded $\C$-algebras on~$U$. Here, $C^\infty_U$ is the sheaf of $\C$-valued smooth functions on $U$ and $\Lambda^\bullet(\C^l)$ is the exterior algebra. Morphisms between supermanifolds are defined as maps between the locally ringed spaces that respect the $\Z/2$-grading; these objects and morphisms comprise the category of supermanifolds, denoted~${\sf SMfld}$. Standard references include~\cite{Leites,DM}. 

For a supermanifold~$N$, we follow the common notation wherein $C^\infty(N)$ denotes the $\Z/2$-graded algebra of global sections of the structure sheaf of~$N$. We refer to elements of $C^\infty(N)$ as \emph{functions} on~$N$. The existence of partitions of unity implies that a map $N\to N'$ of supermanifolds is determined by a map $C^\infty(N')\to C^\infty(N)$ of $\Z/2$-graded algebras. 

\begin{rmk} There is a similarly defined category of supermanifolds with structure sheaves defined over~$\R$. We call these \emph{real supermanifolds} below, though we only need them when explaining differences between real supermanifolds and the category ${\sf SMfld}$. This category ${\sf SMfld}$---often called the category of cs-supermanifolds---is somewhat more subtle than the category of real supermanifolds; e.g., see Example~\ref{ex:GL1} below. However, ${\sf SMfld}$ is important both in our context of Wick-rotated quantum field theory (e.g., see~\cite[pg.~95, Example~4.9.3]{strings1}) and in connection with complex analytic elliptic curves. 
\end{rmk}

There is a faithful functor 
\beq
i\colon {\sf Mfld}\hookrightarrow {\sf SMfld},\qquad M\mapsto (M,C^\infty(-;\C))\label{eq:itsfaithful}
\eeq 
that regards a smooth manifold with its sheaf of complex-valued functions as a supermanifold. With this in mind, we will often implicitly identify a manifold with its image in supermanifolds. The functor~$i$ has a right adjoint: for a supermanifold~$N$, let $N_\red$ denote the \emph{reduced supermanifold} obtained by taking the quotient of the structure sheaf of~$N$ by its nilpotent ideal. Then~$N_\red$ has the structure of an ordinary manifold regarded as a supermanifold. Furthermore, there is an evident map of supermanifolds $N_\red\hookrightarrow N$ induced by the map of $\Z/2$-graded algebras $C^\infty(N)\twoheadrightarrow C^\infty(N)/{\rm nil}=:C^\infty(N_\red)$. Note that the sheaf of functions on $N_\red$ has a real structure (i.e., a complex-conjugation map) but this structure does not extend to the structure sheaf of~$N$. In particular, the complex conjugate of a function on a supermanifold has no meaning in general. 

\begin{ex} 
The \emph{superpoint} is the supermanifold~$\R^{0|1}$ given by the 1-point manifold equipped with the structure sheaf $C^\infty(\R^{0|1})\simeq \Lambda^\bullet(\C)$. Often we write this algebra as $C^\infty(\R^{0|1})\simeq \C[\theta]$ where $\theta$ is odd (so in particular, $\theta^2=0$). The reduced manifold of $\R^{0|1}$ is the 1-point manifold, and $\R^{0|1}$ can be regarded as a nilpotent thickening of the point. 
\end{ex}

\begin{rmk} The superpoint plays a role in the theory of supermanifolds loosely analogous to the scheme ${\rm Spec}(k[\epsilon]/\epsilon^2)$ in algebraic geometry. \end{rmk}

\begin{ex}\label{ex:Rnm}
Generalizing the previous example are the supermanifolds~$\R^{n|m}$ for natural numbers~$n$ and $m$. These are defined as the ordinary manifold $\R^n$ equipped with the structure sheaf $U\mapsto C^\infty(U)\otimes_\C\Lambda^\bullet(\C^m)$ for $U\subset \R^n$. We observe that~$(\R^{n|m})_\red=\R^n$. A choice of identification
$$
C^\infty(\R^{n|m})\simeq C^\infty(\R^n)[\theta_1,\dots, \theta_m]
$$
for odd (and hence anticommuting, square zero) elements $\theta_i$ allows us to write functions as a finite sum
\beq
f=\sum_I f_I\theta_I \qquad f\in C^\infty(\R^{n|m})\label{eq:Taylor}
\eeq
where $I=\{i_1,i_2,\dots, i_k\}$ is a multi-index, $\theta_I=\theta_{i_1}\theta_{i_2}\cdots \theta_{i_k}$. We observe that
$$
f_I=(\partial_{\theta_{i_k}}\cdots \partial_{\theta_{i_1}}f)|_{\theta_1=\cdots=\theta_m=0},
$$
and so~\eqref{eq:Taylor} is often referred to as \emph{Taylor expansion in the odd coordinates.}
\end{ex}

\begin{ex}\label{ex:Batchelor}
Batchelor's Theorem~\cite{batchelor} states that any supermanifold~$N$ is isomorphic to $(N_\red,\Gamma(\Lambda^\bullet E^\vee))$ for $E\to N_\red$ a complex vector bundle and $\Gamma(\Lambda^\bullet E^\vee)$ the associated sheaf of $\Z/2$-graded algebras on $N_\red$. This supermanifold is denoted $\Pi E$. In Example~\ref{ex:Rnm}, $\R^{n|m}$ is $\Pi\underline{\C}^m$ for $\underline{\C}^m\to \R^n$ the trivial rank $m$ vector bundle. 
\end{ex}

\subsection{Presheaves and the functor of points}
It is often convenient to enlarge the category of supermanifolds to the category of presheaves of sets on supermanifolds, 
$$
{\sf SMfld}\stackrel{Y}{\to} {\sf Fun}({\sf SMfld}^\op,{\sf Set})\qquad N\mapsto (S\mapsto {\sf SMfld}(S,N))
$$
where $Y$ is the Yoneda embedding. Maps between representable presheaves are in bijection with maps between the associated supermanifolds. Here and throughout~$S$ denotes a test supermanifold and the set $N(S):={\sf SMfld}(S,N)$ is the (set of) \emph{$S$-points} of~$N$. 

\begin{ex}\label{ex:SptRnm}
The $S$-points of~$\R^{n|m}$ are given by
\beq
&&\R^{n|m}(S)\simeq \{x_1,\dots,x_n\in C^\infty(S)^\ev,\ \theta_1,\dots,\theta_m\in C^\infty(S)^\odd\mid (x_i)_\red=\overline{(x_i)}_\red\}\label{eq:Rnmfunofpts}
\eeq
where $(x_i)_\red$ denotes the restriction to the reduced manifold, $S_\red\hookrightarrow S$: on this reduced manifold, complex-valued functions have a well-defined conjugation. 
\end{ex}

The functor of points is often just a convenient device for organizing computations. At other times, there are presheaves on supermanifolds that fail to be representable but are still geometrically interesting, as illustrated in the following three examples. 

\begin{ex} \label{defn:mappresh}
For a pair of supermanifolds $N$ and $N'$, let $\SM(N',N)$ denote the presheaf on the site of supermanifolds whose value on a test supermanifold~$S$ is the set
$$
\Map(N',N)(S):={\sf SMfld}(S\times N',N). 
$$
The failure of representability is for exactly the same reason that maps between ordinary manifolds cannot usually be represented by a (finite-dimensional) manifold. 
\end{ex}

\begin{ex} \label{ex:Cinftypre} Let $C^\infty(-)$ denote the presheaf that assigns to a supermanifold its ring of functions. This presheaf is not representable; this is in contrast to the situation for real supermanifolds, where it is represented by $\R^{1|1}$. 
\end{ex} 

\begin{ex} Let $V$ be a real vector space. Define the presheaf~$\Omega^k(-;V)$ by
$$
\Omega^k(S;V):=\Omega^k(S)\otimes V.
$$
This presheaf is not representable for any~$k$. 
\end{ex}

\begin{defn}\label{defn:funs} Define the \emph{functions} on a presheaf as the set of natural transformations from the presheaf to the sheaf~$C^\infty(-)$. 
\end{defn}
Because $C^\infty(-)$ is a presheaf of $\Z/2$-graded algebras (rather than just sets), the functions on a presheaf carry this additional structure. 


\subsection{Vector bundles on supermanifolds}\label{appen:vb}

A \emph{vector bundle} on a supermanifold is a locally free sheaf of modules over the structure sheaf; by a partition of unity argument, this sheaf is determined by its global sections as a module over the global sections of the structure sheaf. For a vector bundle $V$ on $N$, we use the notation $\Gamma(V)$ to denote the module over~$C^\infty(N)$. 

\begin{ex} The tangent space $TN$ of a supermanifold is the sheaf of derivations of its structure sheaf, $C^\infty(N)$. The sheaf of 1-forms, $\Omega^1(N)$, is the dual sheaf of~$C^\infty(N)$-modules. \end{ex}

For a vector bundle $V$ on $N$, let $\Pi V$ denote the vector bundle associated with the opposite parity when viewed as a $C^\infty(N)$-module~$\Gamma(V)$. We note that this potentially conflicts with the notation $\Pi E$ from Example~\ref{ex:Batchelor}: above $\Pi E$ denoted a supermanifold, whereas here $\Pi V$ is a vector bundle over a supermanifold. However, the presheaf on supermanifolds from Example~\ref{ex:Batchelor} is the same as the presheaf defined by the vector bundle on supermanifolds, 
defined as 
\beq
\Pi V(S):=\{x\in N(S), v\in \Gamma(S,x^*V)^\odd\}.\label{eq:oddbundle}
\eeq
More generally, define the $S$-points of a vector bundle~$V$ over an arbitrary supermanifold 
$$
V(S):=\{x\in N(S), v\in \Gamma(S,x^*V)^\ev\}.
$$
This definition of $V(S)$  introduces yet another possibility for confusion when~$V$ is a vector bundle over an ordinary manifold: an {\it a priori} similar construction would be to take the total space of the vector bundle~$V$ (as a manifold) and consider the presheaf that this total space represents; these presheaves are not the same. This is a feature that is not present for real supermanifolds. To spell out what is going on, consider three presheaves on ${\sf SMfld}$ defined by a (purely even) finite-dimensional real vector space $V$:
\begin{enumerate}
\item The presheaf $\underline{V}$ that assigns to $S$ the even sections of the trivial bundle over~$S$ with fiber~$V$, $(C^\infty(S)\otimes_\R V)^\ev\simeq (C^\infty(S)\otimes_\C V_\C)^\ev$. For ${\rm dim}(V)>0$, this presheaf is not representable. 
\item The presheaf $V$ whose value on $S$ is $\{ x\in (C^\infty(S)\otimes_\R V)^\ev \,\, | \,\, x_\red=\overline{x}_\red \}$. Equivalently, consider $V$ as an ordinary manifold and then promote it to a supermanifold; this object represents the presheaf $V$. 
\item The presheaf $V_\C$ whose value on $S$ is $\{ (z,w)\in (C^\infty(S)\otimes_\R V)^\ev\simeq (C^\infty(S)\otimes_\C V_\C)^\ev \,\, | \,\, z_\red=\overline{w}_\red \}$. Equivalently, consider $V_\C\simeq V\otimes_\R \C$ as an ordinary manifold and promote it to a supermanifold; this object represents the presheaf $V_{\C}$. As explained after~\eqref{eq:Rnmfunofpts} above, we adopt the customary notation of $(z, \bar z)$ for the pair~$(z, w)$.
\end{enumerate}
While $\underline{V}$ is typically not representable, it does admit a map from the representable presheaf~$V_{\C}$,
\beq
V_\C\twoheadrightarrow \underline{V}\qquad (z, \bar z) \in V_{\C}(S) \mapsto z \in \underline{V}(S).\label{eq:mapvb}
\eeq 
This morphism is a surjection (i.e., surjective on sets of $S$-points). This allows us to compute functions on $\underline{V}$ via descent:

\begin{lem} \label{lem:holodesc} The pullback of the sheaf of $C^\infty$-functions on $\underline{V}$ along~\eqref{eq:mapvb} is isomorphic to the sheaf of holomorphic functions on $V_\C$. In particular, 
\beq
 C^\infty(\underline{V}) \simeq \mathcal{O}(V_\C). \label{eq:descentC}
\eeq
\end{lem}

\begin{proof} By choosing a basis of $V$, it suffices to prove the statement for $V=\R$ and $V_\C=\R_\C\simeq \C$. We identify an $S$-point $S\to \C$ with $(z,\bar z)\in C^\infty(S)^{\ev}$ as in Example~\ref{ex:SptRnm}; we further identify $(z,\bar z)\in C^\infty(S)^{\ev}$ with the pullback of the standard complex coordinate functions on $\C$ along a map~$S\to \C$. In terms of the functor of points, a smooth function on $\C$ is determined by a natural transformation $\C\to C^\infty(-)$. Given a smooth function $f\in C^\infty(\C)$, this natural transformation sends $(z,\bar z)\in \C(S)$ to $f(z,\bar z)\in C^\infty(S)$. Viewing a function on $\underline{\R}$ as a natural transformation $\underline{\R}\to C^\infty$, descent along~\eqref{eq:mapvb} requires that $f(z,\bar z)\in C^\infty(S)$ is independent of $\bar z$ for every $S$-point of~$\C$. Hence, $f$ comes from a holomorphic function.
\end{proof}

We defined $\underline{V}$ above for $V$ an even vector space, but we may define the notation more generally.

\begin{defn} For $V \simeq V^0 \oplus \Pi V^1$ a $\mb{Z}/2$-graded real vector space written in terms of its decomposition into even and odd parts define, $$\underline{V} := \underline{V^0} \times \Pi V^1.$$ \end{defn}

The above definition is essentially convenient shorthand; note that per this more general definition, we have $C^{\infty}(\underline{V}) \simeq \mc{O}(V^0_{\mb{C}}) \otimes_\C \Lambda V^1_\C$.

\begin{lem} \label{lem:Cinfty}
There is an isomorphism of sheaves $C^\infty(-)\simeq \underline{\R^{1|1}}$. 
\end{lem}

\bp By definition, an $S$-point of $\underline{\R^{1|1}} \simeq \underline{\R} \times \mb{R}^{0|1}$ is a pair $(z, \theta)$ for $z \in (C^{\infty}(S))^{\text{ev}}$ and $\theta \in (C^{\infty}(S))^{\text{odd}}$. This is the same as an element of $C^{\infty}(S)$, yielding the desired lemma. \ep

\begin{prop} \label{prop:underlinesplaywell} For $S \in {\sf SMfld}$ and $V \in {\sf Vect}$ a finite-dimensional real vector space, we have that $C^{\infty}(S \times \underline{V}) \simeq C^{\infty}(S) \otimes \mc{O}(V_{\C})$. \end{prop}

\begin{proof} Recall that for $S, T \in {\sf SMfld}$, we have $C^{\infty}(S \times T) \simeq C^{\infty}(S) \otimes C^{\infty}(T)$ using the projective tensor product of Fr\'echet spaces, e.g., see~\cite[Example~49]{HST}. It remains to show that the subalgebras of $C^{\infty}(S \times V_{\C})$ are the same
$$
C^{\infty}(S \times \underline{V})\hookrightarrow C^{\infty}(S \times V_{\C})\simeq C^{\infty}(S) \otimes C^\infty(V_{\C}) \hookleftarrow C^{\infty}(S) \otimes \mc{O}(V_{\C})
$$
where the inclusion on the left is the pullback along the surjection $S\times V_{\C} \twoheadrightarrow S\times \underline{V}$. Both of these subspaces of $C^{\infty}(S \times V_{\C})$ are closed. By Lemma~\ref{lem:holodesc}, the algebraic tensor product $C^\infty(S)\otimes_{\rm alg}\mathcal{O}(V_\C)\subset C^{\infty}(S \times \underline{V})$ is a subalgebra. Hence $C^\infty(S)\otimes\mathcal{O}(V_\C)\subset C^{\infty}(S \times \underline{V})$ and it remains to prove the opposite inclusion. 

Use Grothendieck's representation of elements in a projective tensor product~\cite[page~94]{Schaefer} to write an element of $C^{\infty}(S) \otimes \mc{O}(V_{\C})$ as $\sum \lambda_i f_i g_i$ for an absolutely convergent sum with $\sum |\lambda_i| < \infty$ and $f_i \in C^{\infty}(S), g_i \in C^{\infty}(V_{\C})$ null sequences. The same argument from Lemma~\ref{lem:holodesc} shows $0 = \sum \lambda_i f_i \partial_{\overline{z}} g_i$. We may assume that the $\partial_{\overline{z}} g_i$ are either zero or linearly independent: appending each $i$ inductively, if $\partial_{\overline{z}} g_i$ is linearly dependent with the $\partial_{\overline{z}} g_j$ for $j < i$, then $g_i$ may be rewritten as $g_i' + \sum c_j g_j$ for $g_i'$ with $\partial_{\overline{z}} g_i = 0$, and one may modify the earlier coefficients $f_j \mapsto f_j + c_j f_i$ to replace $g_i$ by $g_i'$. Using the Fr\'echet topology on $C^{\infty}(\C)$ guarantees that this procedure produces another absolutely convergent sequence of the same sort. The linear relation hence yields that we must have $\partial_{\bar z} g_i=0$ for all $i$, so $C^{\infty}(S \times \underline{V}) \subset C^{\infty}(S) {\otimes} \mc{O}(V_{\C})$ as well, as desired. \end{proof}

We require some technical results computing functions on nonrepresentable presheaves involving $\Omega^1(-)$. 

\begin{prop} \label{coolprop} For any $S \in {\sf SMfld}$, the natural pullback morphism induces an isomorphism $C^{\infty}(S \times \Omega^1) \simeq C^{\infty}(S)$.\end{prop}

Before we give the proof of this general statement, we prove a version of this statement for sheaves on the site of manifolds. Doing so is logically unnecessary, but gives useful intuition for the result.

\begin{prop} The set of natural transformations $\Omega^1 \to C^{\infty}$ as sheaves on ${\sf Mfld}$ is isomorphic to the set~$\C$, i.e., the only functions on $\Omega^1$ are constant functions. \end{prop}

\bp Let a natural transformation $F\colon \Omega^1\to C^\infty$ be given, and let $c\in \C\simeq C^\infty(\pt)$ denote the function on $\pt$ associated with the composition 
$$
\pt\stackrel{0}{\to}\Omega^1\stackrel{F}{\to}C^\infty
$$
for the unique map $\pt\to \Omega^1$ associated with the 1-form $0\in \Omega^1(\pt)$. Now let $M$ be a manifold and $M\to \Omega^1$ a map associated to a differential form $\omega\in \Omega^1(M)$. Consider the composition 
$$
\pt\stackrel{x}{\to} M\stackrel{\omega}{\to} \Omega^1\stackrel{F}{\to}C^\infty. 
$$
Naturality shows that the function $F(\omega)\in C^\infty(M)$ takes the value $c$ at each point $x\in M$. Hence $F(\omega)$ is constant, and the value it takes is the same for any manifold~$M$. This identifies the natural transformation $F$ with the complex number $c$, giving the claimed bijection. \ep


\begin{proof}[Proof of Proposition~\ref{coolprop}] Consider a natural transformation $S \times \Omega^1 \stackrel{F}{\to} C^{\infty}$. We need to show that this natural transformation is equivalent data to a function on $S$. Analogous to the reduction to the 1-point manifold in the previous proof, the goal below is to leverage naturality to reduce the claim to computations involving the supermanifolds $\R^{0|n}$. This is combinatorially more demanding, but the general idea is the same. 

Let $\sigma_n: (C^{\infty})^{2n} \to \Omega^1$ be the morphism that on $T$-points sends $a_1, \cdots, a_n, b_1, \cdots, b_n \in C^{\infty}(T)$ to $\sum a_i db_i \in \Omega^1(T)$. From Lemma~\ref{lem:Cinfty}, the composition $F \circ(\id_S\times \sigma_n) \colon S\times (C^\infty)^{2n}\to C^\infty$ can be identified with an element 
$$
\sum \lambda_i f_i g_i\in C^{\infty}(S) \otimes C^{\infty}(\underline{\mb{R}^{2n|2n}})\simeq C^{\infty}(S \times (\underline{\mb{R}^{1|1}})^{2n})
$$$$
\lambda_i\in \C, \ \sum |\lambda_i| < \infty, \  f_i\in C^\infty(S), \ g_i\in C^{\infty}(\underline{\mb{R}^{2n|2n}})\simeq \mc{O}(\C^{2n})[\eta_1, \cdots, \eta_n, \theta_1, \cdots, \theta_n]
$$ 
where we have used Grothendieck's representation of functions in the projective tensor product. The result will follow from showing $\sum \lambda_i f_i g_i$ is equal to $f\in C^\infty(S)$. 

The fact that 
$$
\sum a_i db_i=\sum a_i d(b_i+w), 
$$
for complex numbers $w_1,\dots, w_n$ translates into $\sum \lambda_i f_i g_i$ being invariant under translations in the coordinates~$z_i$. We conclude that the $\sum \lambda_i f_i \partial_{z_j} g_i = 0$ vanishes. We further claim that this implies $\partial_{z_j}g_i=0$ for all $i,j$. This follows from the fact that we may assume the $\{g_i\}$ are linearly independent and the $\{\partial_{z_j} g_i\}$ are linearly independent or identically zero: appending each $g_i$ in turn, if the latest $g_i$ has $z_j$ derivative linearly dependent with the prior terms, simply resum to absorb some of the $g_i$ dependency in prior terms and make the newest $g_i$ constant in $z_j$. So we conclude that $\partial_{z_j}g_i=0$ for all $i,j$. 

We observe that $\sigma_n$ is invariant under the action by $(\C^\times)^{n}$ given on $T$-points by 
$$
(a_1,\dots, a_n,b_1,\dots, b_n)\mapsto (\mu a_1,\dots ,\mu a_n,\mu^{-1} b_1,\dots ,\mu^{-1} b_n)\qquad a_1, \cdots, a_n, b_1, \cdots, b_n \in C^{\infty}(T)
$$
as $\sum a_i db_i \in \Omega^1(T)$ is invariant under this action. From this we learn that 
$$
\sum \lambda_i f_i g_i(y_j, \eta_j, \theta_j) \equiv \sum \lambda_i f_i g_i(\mu_j y_j, \mu_j \eta_j, \mu_j^{-1} \theta_j).
$$ 
We conclude that $g_i$ must be a polynomial in the combinations $\{y_j \theta_j\}$ and $\{\eta_j \theta_j\}$. 

Next we observe that $\sigma_n$ is invariant under the precomposition action by the symmetric group~$\Sigma_n$ permuting the factors $((C^\infty)^{\times n})^{\times n}$. Hence, $\sum \lambda_i f_i g_i$ must also be invariant under this action.  Since the $\lambda_i$ and $f_i$ are automatically invariant, this requires the $g_i$ to be invariant under the action. Resumming the Grothendieck representation if necessary, this allows us to write 
$$
F \circ \sigma_n = \sum_{\ell_1,\ell_2\ge 0} f_{\ell_1, \ell_2} \Big( \sum y_j \theta_j \Big)^{\ell_1} \Big( \sum \eta_j \theta_j \Big)^{\ell_2}
$$ 
where the $f_{\ell_1,\ell_2}\in C^\infty(S)$. 

Note that upon comparing different $n$, the functions $f_{\ell_1, \ell_2} \in C^{\infty}(S)$ must agree, i.e., they are well-defined independently of $n$. More explicitly, given a map $T \stackrel{\phi}{\to} S$ and $a_i, b_i \in C^{\infty}(T)$, we have that 
\beq \label{eq:wonderful} 
F\Big(\phi, \sum a_i db_i \Big) = \sum (f_{\ell_1, \ell_2} \circ \phi) \Big( \sum a_i^{\text{even}} b_i^{\text{odd}} \Big)^{\ell_1} \Big( \sum a_i^{\text{odd}} b_i^{\text{odd}} \Big)^{\ell_2}.
\eeq 
Now, take $T = \mb{R}^{0|4n}$ coordinatized by $\eta_i, \theta_{i1}, \theta_{i2}, \theta_{i3}$ for $1 \le i \le n$; for convenience, we take these indices to be valued mod $n$, so $\eta_{n+1}$ denotes $\eta_1$. Now, take $\phi$ an arbitrary map, and consider the following choices of $a_i, b_i$: first, for $1 \le i \le n$, take $a_i = 1$ and $b_i = \theta_{i1} \theta_{i2} \theta_{i3}$, while for $n  + 1 \le i \le 2n$, take $a_i = \eta_{i}$ and $b_i = \theta_{i1} \theta_{i2} \theta_{i3}$. Alternatively, for $1 \le i \le n$, take $a_i' = \theta_{i1} \theta_{i2}, b_i' = \theta_{i3}$, and similarly for $n+1 \le i \le 2n, 2n+1 \le i \le 3n$, take $a_i' = \theta_{i1} \theta_{i3}, b_i' = \theta_{i2}$ and $a_i' = \theta_{i2} \theta_{i3}, b_i' = \theta_{i1}$, respectively. And then for $3n+1 \le i \le 6n$, instead set $a_i' = \eta_i a_{i-3n}', b_i' = b_{i-3n}'$. Then $\sum a_i db_i = \sum a_i' db_i'$, so by~\eqref{eq:wonderful}, we have the equality in $C^{\infty}(S) \otimes C^{\infty}(\underline{\mb{R}^{2n|2n}})$,
$$
\resizebox{\textwidth}{!}{$
\sum (f_{\ell_1, \ell_2} \circ \phi) \Big( \sum \theta_{i1} \theta_{i2} \theta_{i3} \Big)^{\ell_1} \Big( \sum \eta_i \theta_{i1} \theta_{i2} \theta_{i3} \Big)^{\ell_2} = \sum (f_{\ell_1, \ell_2} \circ \phi) \Big(3 \sum \theta_{i1} \theta_{i2} \theta_{i3} \Big)^{\ell_1} \Big(3 \sum \eta_i \theta_{i1} \theta_{i2} \theta_{i3} \Big)^{\ell_2}.$}
$$ 
As we vary over all $n$, we therefore find $f_{\ell_1, \ell_2} \circ \phi = 0$ unless $(\ell_1, \ell_2) = (0, 0)$. And then varying over all $\phi$, we find that all $f_{\ell_1, \ell_2}$ must identically vanish, and so $F \circ \sigma$ is just $f_{0, 0}$, a function on $C^{\infty}(S)$. As all one-forms can be written locally as in the essential image of $(C^{\infty})^{2n} \to \Omega^1$ for some $n$, we have indeed learned that $F$ is always just given by the function $f_{0, 0}$, as desired.
\end{proof}

\begin{cor} For any $\X \in {\sf Shv(SMfld)}$, the natural pullback morphism induces an isomorphism $C^{\infty}(\X \times \Omega^1) \simeq C^{\infty}(\X)$. \end{cor}

\begin{proof} For any $f \in C^{\infty}(\X \times \Omega^1)$ and $S$-point of $\X$ for $S \in {\sf SMfld}$, we obtain a pulled-back function in $C^{\infty}(S \times \Omega^1)$, which, by the prior theorem, must arise from pulling a function on $C^{\infty}(S)$. The assignment of a function in $C^{\infty}(S)$ for every $S$-point of $\X$ is exactly the datum of a function on $\X$; this returns the original function under pullback. \end{proof}

\begin{cor} \label{cor:functionsonOmega} For $V$ a real vector space and $\X \in {\sf Shv(SMfld)}$, we have $C^{\infty}(\X \times \Omega^1(-; V)) \simeq C^{\infty}(\X)$. \end{cor}

\begin{proof} Choose a basis of $V$ and apply the prior corollary iteratively. \end{proof}

\subsection{Open subsheaves and topologies} \label{sec:topologies}

\begin{defn}\label{defn:open}
An \emph{open cover} of a supermanifold $N$ is a collection of local isomorphisms $\{ U_\alpha \to N\}$ with the property that the induced map on reduced manifolds $\{(U_\alpha)_\red\to N_\red\}$ is an ordinary open cover of the manifold~$N_\red$. An \emph{open sub-supermanifold} is a local isomorphism $U\to N$ with the property that the map on reduced manifolds $U_\red\to N_\red$ is the inclusion of an open submanifold. 
\end{defn}

\begin{defn} 
A presheaf of sets on a supermanifold~$N$ is a \emph{sheaf} when it satisfies the sheaf condition for all open covers of~$N$. A presheaf of sets on the site of supermanifolds is a \emph{sheaf} when it satisfies the sheaf condition for all open covers of all supermanifolds. 
\end{defn}

\begin{defn} \label{defn:subsheaf} Given a sheaf of sets $\X$ on ${\sf SMfld}$ or ${\sf Mfld}$, a \emph{subsheaf} $\Y \subset \X$ is a subset $\Y(S) \subset \X(S)$ for each $S$ that is natural in $S$ and satisfies the sheaf condition. For subsheaves $\Y_1,\Y_2\subset \X$, an \emph{inclusion of subsheaves} $\Y_1\to \Y_2$ is a collection of inclusions $\Y_1(S)\subset \Y_2(S)$ natural in $S$.  We often write $\Y_1\subset \Y_2$ for an inclusion of subsheaves.\end{defn}

The collection of subsheaves of a sheaf $\X$ together with inclusions of subsheaves form a category. We observe that the morphism sets between subsheaves $\Y_1,\Y_2\subset \X$ are either empty or contain a single point. 


\begin{defn} \label{defn:opensub} An \emph{open subsheaf} of a sheaf $\X$ is a subsheaf $\mc{U} \subset \X$ with the property that for any representable presheaf~$S$ and any $S$-point $S\to \X$, the pullback 
\beq \begin{tikzpicture}[baseline=(basepoint)]; 
\node (A) at (0,0) {$U$}; 
\node (B) at (4,0) {$\mc{U}$}; 
\node (C) at (0,-1) {$S$}; 
\node (D) at (4,-1) {$\X$};
\draw[->] (A) to (B); 
\draw[->] (A) to (C); 
\draw[->] (B) to  (D);
\draw[->] (C) to  (D);
\path (0,-.75) coordinate (basepoint); 
\end{tikzpicture}\nonumber
\eeq
is representable by a supermanifold $U$, and the map $U\to S$ is an open sub-supermanifold. Let ${\rm Open}(\X)$ denote the category whose objects are open subsheaves of $\X$ and morphisms are inclusions, which we write e.g., as $\mc{U} \subset \mc{V} \subset \X$. An \emph{open cover} of a sheaf $\X$ is a collection of open subsheaves $\{\mc{U}_\alpha \to \X\}$ with the property that for any representable presheaf~$S$ and any $S$-point $S\to \X$, the pullback as above defines an open cover $\{U_\alpha \to S\}$.
\end{defn}

\begin{rmk}
When a sheaf is representable, open subsheaves coincide with open sub-supermanifolds and open covers coincide with usual open covers. 
\end{rmk}

For an arbitrary sheaf~$\X$, the empty sheaf and $\X$ itself always define open subsheaves. We also observe that for open subsheaves $\mathcal{U},\mathcal{V}\subset \X$, the pullback $\mathcal{U}\bigcap \mathcal{V}$ is again an open subsheaf. The following property then verifies that open subsheaves behave the same way as open subsets of a topological space. 

\begin{lem} \label{lem:unionsofopen} Given a sheaf of sets $\X$ as above and an arbitrary collection $\mathcal{U}_{\alpha} \subset \X$ of open subsheaves, the union $\mathcal{U}=\bigcup_{\alpha} \mathcal{U}_{\alpha} \subset \X$ is an open subsheaf. \end{lem}

\begin{proof} 
We recall that the subsheaf $\bigcup_{\alpha} \mathcal{U}_{\alpha} \subset \X$ is the sheafification of the presheaf $S\mapsto \bigcup_{\alpha} \mathcal{U}_{\alpha}(S)$; this is again a subsheaf because sheafification preserves monomorphisms. It is then straightforward to see that the pullback $f^*\mathcal{U}$ along $f\colon S\to \X$ is given by the open sub-representable $\bigcup_{\alpha} f^*(\mathcal{U}_{\alpha})\subset S$. Hence $\mathcal{U}$ is open. \end{proof}

\begin{defn}\label{defn:sheafonsheaf}
A \emph{presheaf} on a sheaf~$\X$ is a functor ${\rm Open}(\X)^{\rm op}\to {\sf Set}$. A presheaf is a \emph{sheaf} if it satisfies the usual sheaf condition for open subsheaves. 
\end{defn}

\begin{terminology}
When a sheaf on the site of supermanifolds plays the role of a generalized space, we often refer to it as a \emph{generalized supermanifold}. The reason for this terminology is that we work with sheaves (e.g., of functions) on generalized supermanifolds, and wish to avoid confusing statements about ``sheaves on sheaves." 
\end{terminology}

We end this subsection with a characterization of open subsheaves of~$\Omega^k(-;V)$. 

\begin{prop}\label{prop:opensofOmega} For $V$ a finite-dimensional real vector space and $k>0$, the only open subsheaves of $\Omega^k(-;V)$ are the empty sheaf and the entire sheaf. \end{prop}

\bp
Assuming that $\mathcal{U}\subset \Omega^k(-;V)$ is a nonempty subsheaf, there exists a map $S\to \Omega^k(-;V)$ such that the pullback of $\mathcal{U}$ to $S$ is a nonempty open sub supermanifold. Then we can choose a map $\pt\to S$ with image in (the reduced manifold of) this sub supermanifold so that the pullback of $\mathcal{U}$ along the unique map $0\colon \pt\to \Omega^k(-;V)$ is~$\pt$, the unique nonempty open subset of the 1-point supermanifold. For $S=M$ an ordinary manifold, using naturality and letting $x\colon \pt\to M$ run over all possible points, the pullback of $\mathcal{U}$ along $M\to \Omega^k(-;V)$ must be the open subset~$M$ of $M$. Now, for a supermanifold~$S$ consider the composition $S_{\rm red}\hookrightarrow S\to \Omega^k(-;V)$. The pullback of $\mathcal{U}$ to $S_{\rm red}$ is $S_{\rm red}$, and this must pullback from an open sub-supermanifold of~$S$. The only such sub-supermanifold is $S$ itself. Since the pullback of $\mathcal{U}$ along any map $S\to \Omega^k(-;V)$ is $S$, this shows that $\mathcal{U}=\Omega^k(-;V)$. 
\ep

\subsection{Super Lie groups}\label{sec:appensLie}

A \emph{super Lie group} is a group object in supermanifolds, meaning a supermanifold~$G$, a multiplication map $\mu\colon G\times G\to G$, identity element $e\hookrightarrow G$, and inversion $(-)^{-1}\colon G\to G$ making all the expected diagrams commute. In terms of the functor of points, a super Lie group is a representable presheaf~$G$ on supermanifolds where~$G(S)$ is endowed with the structure of a group (in sets) for every supermanifold~$S$, and the property that $G(S')\to G(S)$ is a group homomorphism for each map of supermanifolds~$S\to S'$. 

The \emph{super Lie algebra} of a super Lie group consists of vector fields invariant under the left action of the group on itself.
The Lie bracket of a super Lie algebra is skew in the graded sense; in particular, the bracket of an odd element with itself need not be zero. We often drop the adjective ``super" when describing Lie algebras and Lie groups in supermanifolds. 

\begin{ex}
Let $\E^{0|1}$ denote the Lie group gotten from $\R^{0|1}$ with its additive structure. In terms of the functor of points, this multiplication is given by
\beq
\E^{0|1}(S)\times \E^{0|1}(S)&\to& \E^{0|1}(S)\nonumber\\
(\theta,\eta)&\mapsto& (\theta+\eta)\qquad \theta,\eta\in \E^{0|1}(S)\simeq C^\infty(S)^\odd.\nonumber
\eeq
We note the multiplication is commutative. The Lie algebra of $\E^{0|1}$ is generated by an element $\partial_\theta$ that satisfies $[\partial_\theta,\partial_\theta]=0$ (which is a nontrivial relation since $\partial_\theta$ is odd). 
\end{ex}


\begin{ex}
Let $\C^\times$ denote the super Lie group associated with the ordinary Lie group $\C^\times$ with complex multiplication. As usual, the Lie algebra of $\C^\times$ is generated by commuting even elements $\partial_{\mu}$ and $\partial_{\bar \mu}$ for the standard complex coordinate $(\mu,\bar\mu)$ on $\C^\times\subset \C$. When working with the functor of points of $\C^\times$, we use the notation
\beq
\C^\times(S)\simeq \{\mu,\bar\mu\in (C^\infty(S)^{\times})^{\ev} \mid (\mu)_\red=\overline{(\bar\mu)}_\red\},
\eeq
where we emphasize that $\mu,\bar\mu$ are not complex conjugates in $C^\infty(S)$ (as this does not make sense) but instead are only conjugate on restriction to the reduced manifold of $S$; compare~\eqref{eq:r211}. Under the inclusion $\C^\times\subset \C\simeq \R^2$, this translates into the functor of points description of Example~\ref{ex:SptRnm} by taking $\mu=x+iy$, $\bar\mu=x-iy$ and
$$
\C^\times(S)\simeq \{x,y\in (C^\infty(S)^\times)^\ev \mid (x)_\red=\overline{(x)}_\red, (y)_\red=\overline{(y)}_\red\}\subset \R^2(S).
$$

\end{ex}

\begin{ex}\label{ex:E01} Let $\E^{0|1}\rtimes \C^\times$ denote the semi-direct product for the left action of $\C^\times$ on~$\E^{0|1}$,
\beq
(\mu,\bar\mu,\eta)\mapsto \bar\mu\eta\qquad (\mu,\bar\mu)\in \C^\times(S), \ \eta\in \E^{0|1}(S). \label{eq:defnCstaract}
\eeq
In the semi-direct product $\E^{0|1}\rtimes \C^\times$, the nontrivial bracket is $[\partial_{\bar\mu},\partial_\theta]=\partial_\theta$. 
\end{ex}

\begin{rmk}
One of the main super Lie groups of interest in this paper is $\E^{2|1}$; see~\eqref{eq:defnE21}. The $\C^\times$-action on $\E^{0|1}$ as~\eqref{eq:defnCstaract} defining $\E^{0|1}\rtimes \C^\times$ is chosen so the projection $\E^{2|1}\to \E^{0|1}$ and the identity map $\C^\times\to \C^\times$ determine a homomorphism
$$
\E^{2|1}\rtimes \C^\times\to \E^{0|1}\rtimes \C^\times. 
$$
\end{rmk}



\begin{ex}\label{ex:GL1} Let $\GL_1$ denote the presheaf whose value on~$S$ consists of automorphisms of the trivial line bundle on~$S$, i.e., automorphisms of $C^\infty(S)$ as a $C^\infty(S)$-module. Such automorphisms are simply invertible even functions
$$
\GL_1(S)\simeq (C^\infty(S)^\ev)^\times.
$$
One strange feature of supermanifolds with structure sheaves defined over~$\C$ (when compared with the category of real supermanifolds, with structure sheaves over~$\R$) is that~$\GL_1$ fails to be representable, as we now explain. First, since $\GL_1(\pt)\simeq \C^\times$, if $\GL_1$ where representable its reduced manifold would be $\C^\times$. This lifts to a surjective homomorphism of group objects in sheaves 
\beq
\C^\times\to \GL_1\qquad (\mu,\bar\mu)\mapsto \mu \label{eq:GL1rep}
\eeq 
that assigns to an $S$-point $(\mu,\bar\mu)\in \C^\times(S)$ the invertible even function $\mu\in C^\infty(S)^\ev$. But this map is clearly not an isomorphism as there is data lost by forgetting $\bar\mu$. So since $\GL_1$ is not isomorphic to $\C^\times$, it fails to be representable. We learned this fact from Lars Borutzky, Stephan Stolz, and Peter Teichner. For convenience, we record the ``conjugate" homomorphism
\beq
\C^\times\to \GL_1\label{eq:GL1rep2}\qquad (\mu,\bar\mu)\mapsto \bar\mu.
\eeq 

\end{ex}


\begin{ex}\label{ex:spt}
Let $\Aut(\R^{0|1})$ denote the presheaf on supermanifolds whose $S$-points are isomorphisms $S\times \R^{0|1}\to S\times \R^{0|1}$ over $S$. 
Composing isomorphisms gives $\Aut(\R^{0|1})$ the structure of a group object in presheaves. This presheaf fails to be representable for basically the same reasons as $\GL_1$ defined above. There is a (surjective) homomorphism 
\beq
\E^{0|1}\rtimes \C^\times \to \Aut(\R^{0|1})\label{eq:AutR01}
\eeq
for the left action of $\E^{0|1}$ on $\R^{0|1}$ by translation and the $\C^\times$-action defined by $\theta\mapsto \bar\mu\theta$.

\end{ex}

\subsection{Group actions on sheaves}

We recall that a $G$-action on a sheaf $\X$ consists of maps~$G(S)\times \X(S)\to \X(S)$ that satisfy the properties to define an action in sets and are natural in $S$. Our goal below is to characterize open subsheaves of the quotient sheaf~$\X\cq G$. We begin with some basic definitions. 

\begin{defn}
For a sheaf $\X$ with $G$-action, a subsheaf $\Y\subset \X$ is \emph{$G$-invariant} if for any $y\in \Y(S)$ and $g\in G(S)$, the composition
\beq
(g,y)\in G(S)\times \Y(S)\subset G(S)\times \X(S)\to \X(S)\label{eq:invariantsub}
\eeq
has image in the subset $\Y(S)\subset \X(S)$. 
\end{defn}

For a subsheaf that isn't necessarily invariant, we denote the image under the composition~\eqref{eq:invariantsub} by $g\cdot y$. 

\begin{defn}
For a sheaf $\X$ with $G$-action and a subsheaf $\Y\subset \X$, the \emph{orbit} of $\Y$ is the subsheaf of $\X$ given by the sheafification of the presheaf with $S$-points
$$
(G\cdot \Y)(S)=\{x\in \X(S)\mid \exists (g,y)\in G(S)\times \Y(S), \ x=g\cdot y\}
$$
using the notation defined above. 
\end{defn}

\begin{rmk} Sheafification preserves monomorphisms, so that $G\cdot \Y$ is a subsheaf of $\X$. \end{rmk}

\begin{lem} \label{lem:orbitopen} Let $\X$ be a sheaf with the action of a Lie group $G$. Given an open subsheaf $U \subset \X$, its orbit $G \cdot U \subset \X$ is a $G$-invariant open subsheaf. \end{lem}

\begin{proof} By construction, the orbit of a subsheaf is a $G$-invariant subsheaf.  For an $S\to \X$, the pullback of $G\cdot U$ to~$S$ consists of the union over the set of maps $g\colon S\to G$ of the pullback of the image of~$U$ under~$g$. Being union of open subsets of $S$, the result is an open subset. \ep

\begin{lem} \label{lem:sheafquotient} The coarse quotient $\X\cq G$ has as $S$-points a set subject to an equivalence relation. An element of the set is given by the data of (1) an open cover $\{S_{\alpha}\}$ of $S$ (2) morphisms $x_\alpha\colon S_{\alpha} \to \X$ and (3) maps $g_{\alpha \beta}\colon S_\alpha\bigcap S_\beta\to G$. These data are required to satisfy the properties $x_\alpha=g_{\alpha\beta} \cdot x_\beta$ and $g_{\alpha\beta}g_{\beta\gamma}=g_{\alpha\gamma}$. The equivalence relation is $(x_\alpha,g_{\alpha\beta})\sim (x_{\gamma},g_{\gamma\delta})$ if there is a mutual refinement of covers $\{S_{\alpha'}\}$ and $g_{\alpha'}\colon S_{\alpha'}\to G$ such the action of $g_{\alpha'}$ sends the restrictions of the $x_\alpha$ to the $x_\gamma$ and this is compatible with the restrictions of the $g_{\alpha\beta}$ and $g_{\gamma\delta}$. 
 \end{lem}

\begin{proof} The quotient in presheaves defined by the universal property has the explicit description in terms of $S$-points $x\in \X(S)$ modulo an equivalence relation, 
$$
(\X/G)_{\rm pre}(S):=\X(S)/\sim \qquad  x\sim x' \ {\rm if} \ \exists g\in G(S) \ {\rm such\ that} \ g\cdot x=x'.
$$
Since sheafification is a left adjoint, the sheafification of this presheaf has the correct universal property in the category of sheaves. It is easy to see that the claimed data modulo the equivalence relation gives an $S$-point of the sheafification. \end{proof}

\begin{prop} \label{prop:opensofquot} Given a sheaf $\X$ with $G$-action as above, consider the canonical quotient map $\pi: \X \to \X \cq G$. There is an equivalence of categories $\mathrm{Open}(\X)^G \stackrel{\sim}{\to} \mathrm{Open}(\X\cq G)$ between the $G$-invariant open subsheaves of $\X$ and the open subsheaves of $\X\cq G$. 
\end{prop}

\begin{proof} Given an open subsheaf $U\subset \X/G$, $\pi^{-1}(U)\subset \X$ is clearly open and $G$-invariant. 
For $U \subset V \subset \X\cq G$, we see there is a unique map $\pi^{-1}(U) \subset \pi^{-1}(V) \subset \X$, and this determines a functor $\mathrm{Open}(\X\cq G)\to \mathrm{Open}(\X)^G$. 

For the candidate inverse, if $\widetilde{U} \subset \X$ is a $G$-invariant open, we claim that $\widetilde{U}\cq G \to \X\cq G$ determines an open subsheaf. First observe that 
\beq
\begin{tikzpicture}[baseline=(basepoint)];
\node (A) at (0,0) {$\widetilde{U}$};
\node (B) at (3,0) {$\X$};
\node (C) at (0,-1.25) {$\widetilde{U}\cq G$};
\node (D) at (3,-1.25) {$\X\cq G$};
\draw[->] (A) to (B);
\draw[->] (B) to (D);
\draw[->] (A) to (C);
\draw[->] (C) to (D);
\path (0,-.75) coordinate (basepoint);
\end{tikzpicture}\nonumber
\eeq
is a pullback square. Next, using the notation from Lemma~\ref{lem:sheafquotient}, we observe that a map $S\to \X/G$ can be locally lifted to maps $S_\alpha\to \X$. Then the pullback of $\tilde{U}/G\to \X/G$ agrees with the the pullback of $\tilde{U}\to \X$ on $S_\alpha$. The pullback of $\tilde{U}$ is open since $\tilde{U}$ is an open subsheaf. Hence, the pullback of $\tilde{U}/G$ to $S$ is locally open (subordinate to $\{S_\alpha\}$), and therefore is open. For $\tilde{V}\subset \tilde{U}\subset \X$ an inclusion of $G$-invariant open subsheaves, we get a morphism $\tilde{V}/G\subset \tilde{U}/G\subset \X/G$. All together, this gives a functor $\mathrm{Open}(\X)^G\to \mathrm{Open}(\X\cq G)$. It is easy to see that the compositions with the previously constructed functor $\mathrm{Open}(\X\cq G)\to \mathrm{Open}(\X)^G$ give identities. We conclude the claimed equivalence of categories. 
\ep

Recall the full and faithful functor $i\colon {\sf Mfld}\to {\sf SMfld}$ from~\eqref{eq:itsfaithful} and~\S\ref{sec:sheafsection}. 

\begin{lem}\label{lem:quotientcom} Let $G$ be an ordinary Lie group, regarded as a super Lie group. Then for any sheaf $\X\in {\sf Sh}_{\sf SMfld}$ with a $G$-action, there is an isomorphism of coarse quotients
$$
(i^*\X)/G\simeq i^*(\X/G). 
$$
\end{lem}

\bp
Let $S\in {\sf Mfld}$ be a test manifold, so that $(i^*\X)(S)=\X(S)$, where we regard $S$ as a supermanifold using~$i$. Using the description of the the quotient sheaf from Lemma~\ref{lem:sheafquotient}, an $S$-point of $(i^*\X)/G$ is a cover $\{S_\alpha\}$ of~$S$, elements $x_\alpha\in \X(S_\alpha)$ and $g_{\alpha\beta}\in G(S_\alpha\cap S_\beta)$ so that $g_{\alpha\beta}\cdot x_\alpha=x_\beta\in \X(S_\alpha\cap S_\beta)$. Such data determine the same $S$-point of $(i^*\X)/G$ if (after passing to a common refinement of open covers) there exist $g_\alpha \in \X(S_\alpha)$ such that $g_\alpha \cdot x_\alpha=x_\alpha'$ compatible with the transition data $g_{\alpha\beta}$ and $g_{\alpha\beta}'$. An $S$-point of~$i^*(\X/G)$ is a $i(S)$-point of $\X/G$, where we regard $S$ as a supermanifold using~$i$. We can identify such $S$-points with precisely the same data under the same equivalence relation. 
\ep

\subsection{Super families of homomorphisms $\Z^2\to G$}

We will require some results about $S$-families of homomorphisms $S\times \Z^2\to G$ for $G$ an ordinary compact Lie group and $S$ a supermanifold. The relevant results combine aspects of ordinary smooth geometry and super geometry. We begin with some standard results for smooth manifolds. 

For $A\subset B$ a submanifold, let $\mathsf{Nbhd}_A(B)$ be a tubular neighborhood of~$A$ in~$B$.

\begin{thm}[Slice theorem] Given an isometric action of a compact Lie group $G$ on a Riemannian manifold $M$ and a point $x \in M$ with stabilizer $G_x$, the inclusion of the orbit $G / G_x \hookrightarrow M$ extends to a local diffeomorphism $G \times_{G_x} \mathsf{Nbhd}_0(T_xM/T_x(G \cdot x)) \to \mathsf{Nbhd}_{G \cdot x}(M)$ given by $(g, X) \mapsto g \exp_x(X)$.
 \end{thm}

The conclusion of the theorem above yields a diffeomorphism of manifolds. Hence, {\it a fortiori}, we have an isomorphism of supermanifolds. This gives the following version of the slice theorem for super families.

\begin{thm}[Slice theorem for $S$-points]\label{thm:slice} Given an isometric action of a compact Lie group $G$ on a Riemannian manifold $M$ and a point $x \in M$, for any $\epsilon > 0$, let $N_{\epsilon}$ denote the ball of radius $\epsilon$ about $0$ in the vector space $T_xM / T_x(G \cdot x)$. Regard $M$ and $G$ as supermanifolds. Then there exists some $\delta > 0$ such that the open sub supermanifold $B_{\delta}(x) \subset M$ satisfies the following: for any $y\in U(S)$ for $S \in {\sf SMfld}$, there exists an open cover $\{S_{\alpha}\}$ of $S$ and $S_{\alpha}$-points of $g_\alpha \in G(S_{\alpha}), X_\alpha \in N_{\epsilon}(S_\alpha)$ such that $y|_{S_\alpha}=g_\alpha \exp_x(X_{\alpha})\in U(S_\alpha)$. \end{thm}

\begin{proof} The conclusion of the slice theorem yields a diffeomorphism $\mathsf{Nbhd}_{G \cdot x}(M) \simeq (G \times \mathsf{Nbhd}_0(T_xM/T_x(G \cdot x))) / G_x$. Shrinking the neighborhoods (and $\epsilon$) as necessary, we may suppose the neighborhood on the right hand side simply is $N_{\epsilon}$, i.e. $\mathsf{Nbhd}_{G \cdot x}(M) \simeq (G \times N_{\epsilon}) / G_x$. In other words, $\mathsf{Nbhd}_{G \cdot x}(M)$ is the base of a principal $G_x$-bundle with total space $G \times N_{\epsilon}$. Hence, it suffices to note that $S$-points of quotients of principal bundles may, locally on $S$, be lifted to the total space; this assertion in turn follows by using local trivializations of the principal bundle. Finally, choose some $\delta > 0$ such that $B_{\delta}(x) \subset \mathsf{Nbhd}_{G \cdot x}(M)$. \end{proof}

\begin{lem} \label{lem:centslices} Suppose $G$ a compact Lie group with given element $h \in G$. Then for any $S \in {\sf SMfld}$ and any $X \in \mf{g}(S)$ sufficiently small, locally on $S$ there exists some small $Y \in \mf{g}^h(S)$ such that $he^X$ is conjugate to $he^Y$ as $S$-points of $G$. More precisely, given $\epsilon > 0$, there exists $\delta > 0$ such that if $X \in (B_{\delta}(\mf{g}))(S)$, then there locally on $S$ exists $Y \in (B_{\epsilon}(\mf{g}^h))(S)$ with $he^X$ conjugate to $he^Y$. The above balls are taken centered at the origin.  \end{lem}

\begin{proof} This is an extension of~\cite[Lemma A.8]{BET0} to $S$-families of supermanifolds rather than just ordinary manifolds. We apply Theorem~\ref{thm:slice} with $M=G$ for the conjugation action by $G$ and set $x=h \in G$. We freely apply the identification $T_xM := T_hG \simeq \mf{g}$ via left-multiplication by $h$ (or $h^{-1}$ in the reverse direction), and use that the Riemannian exponential map for a bi-invariant metric on $G$ coincides with the exponential on its Lie algebra. 
We have the direct sum decomposition $T_hG \simeq \mf{g} \simeq T_h(G \cdot h) \oplus \mf{g}^h$. Hence, we may identify~$\mathrm{Nbhd}_0(T_hG / T_h(G \cdot h))$ with a neighborhood of $0$ in $\fg^h$. The desired conclusion then follows from Theorem~\ref{thm:slice}: given the $S$-point $he^X\in G(S)$, there locally exist group elements $g_\alpha \in G(S_\alpha)$ and $Y_{\alpha} \in \mf{g}^h(S_\alpha)$ such that $he^X = g_{\alpha} he^{Y_{\alpha}} g_{\alpha}^{-1}$ as $S_\alpha$-points of $G$. 
\end{proof}


We recall a useful result of Block and Getzler.

\begin{lem}[\cite{BlockGetzler}, Lemma 1.3] \label{lem:BG} For $G$ a compact Lie group and $h \in G$, there exists some $\delta > 0$ such that if $X \in B_{\delta}(\mf{g}^h)$, then $C(he^X) \subset C(h)$. \end{lem}

We generalize this pointwise statement to one for $S$-families.

\begin{lem} \label{lem:superBG} For $G$ a compact Lie group and $h \in G$, there exists some $\delta > 0$ such that if $X \in B_{\delta}(\mf{g}^h)(S)$ and $g \in G(S)$ such that $g$ and $he^X\in G(S)$ commute, then $g \in C(h)(S)$. \end{lem}

\begin{proof} Choose a faithful representation $G \hookrightarrow U(N)$. Then $C_G(h) = G \cap C_{U(N)}(h)$, so it suffices to show the statement for $G = U(N)$. Choose a basis so that $h$ is diagonal, with $N$ diagonal entries given by $k_1$ consecutive copies of $\lambda_1$, followed by $k_2$ consecutive copies of $\lambda_2$, and so forth, where all the $\lambda_i$ are distinct. We then claim we may choose $\delta$ sufficiently small such that if $|\mu_i|, |\mu_j| < \delta$, then $\lambda_i e^{\mu_i}, \lambda_j e^{\mu_j}$ are still distinct for all distinct $i, j$. Indeed, as $S$-points of $U(N)$ 
we may directly compute the condition for $g$ to commute with the $S$-family of diagonal matrices $he^X$; we find that $g$ must still be an $S$-family of block-diagonal matrices specified by an $S$-point of $C(h)$ inside $U(N)$. \end{proof}

\begin{lem} \label{lem:ellcentslices} For $G$ a compact Lie group and $h = (h_1, h_2)\in G^{\times 2}$ a pair of commuting elements of $G$, then for any $\epsilon > 0$, there exists $\delta > 0$ such that if $(X_1, X_2) \in B_{\delta}(\mf{g})^{\times 2}(S)$ such that the $h_ie^{X_i}$ continue to commute, there exist (locally on $S$) $Y_1, Y_2 \in B_{\epsilon}(\mf{g}^{h})(S)$ such that $h_ie^{X_i}$ are simultaneously conjugated to $h_ie^{Y_i}$. \end{lem}

\begin{proof} Given an $\epsilon$, we choose $\delta$ to be the minimum of the $\delta$ produced in Lemma~\ref{lem:centslices} and the values specified by the Lemma~\ref{lem:superBG} for $h_1$ and $h_2$. We now apply the above Lemma~\ref{lem:centslices} to assume, without loss of generality, that $X_1 \in B_{\delta}(\mf{g}^{h_1})(S)$ by simultaneous conjugation of $X_1, X_2$. (Note that as the metrics here are induced by the conjugation-invariant Killing form, conjugation preserves neighborhoods of the form $B_{\delta}$.) But now, as the $S$-points $h_2e^{X_2}$ and $h_1e^{X_1}$ commute, and $X_1$ is sufficiently small in the sense of Lemma~\ref{lem:superBG}, we conclude that the $S$-point $h_2e^{X_2}$ must in fact be an $S$-point of $C(h_1)$. As we already have $h_2 \in C(h_1)$, this statement is equivalent to $e^{X_2} \in C(h_1)(S)$, i.e., $X_2 \in \mf{g}^{h_1}(S)$. 

We again apply Lemma~\ref{lem:centslices}, now to the group $C(h_1)$, in order to simultaneously conjugate $X_1, X_2$ with the conclusion that $X_2 \in \mf{g}^{h_2}(S)$. As we were conjugating within $C(h_1)$, we still have that $X_1, X_2 \in \mf{g}^{h_1}(S)$. Applying Lemma~\ref{lem:superBG} to the commuting points $h_1e^{X_1}, h_2e^{X_2}$, where now we use that $X_2$ is sufficiently small, we conclude that $h_1e^{X_1} \in C(h_2)(S) \implies X_1 \in \mf{g}^{h_2}(S)$. Hence both $X_1, X_2$ are $S$-points of $\mf{g}^{h_1} \cap \mf{g}^{h_2} = \mf{g}^h$, as desired. \end{proof}

Informally, the previous lemma states that any deformation of $g\in G$ by a Lie algebra element $X$ is conjugate to a deformation through $Y$ in the Lie algebra of the centralizer of~$g$, and the same for commuting pairs of elements. 

We finally record a few further results on the structure of conjugacy classes and conjugacy-invariant functions. 

\begin{prop}\label{prop:grp} Let $G$ be a compact Lie group. Given $h \in G(S)$, there exists some $\delta > 0$ such that for $X, X' \in (B_{\delta}(\mf{g}^h))(S)$, the set of $S$-points which conjugates $he^X$ to $he^{X'}$ is contained in $C(h)(S)$.  \end{prop}

\begin{proof} As in Proposition A.15 of~\cite{BET0} and the above Lemma~\ref{lem:superBG}, it suffices to embed $G$ in $U(N)$ and then prove the statement for $U(N)$. We once again choose $\delta$ as in Lemma~\ref{lem:superBG} such that the eigenvalues stay distinct, so that the result follows by explicit matrix computation of $S$-families. \end{proof}

\begin{prop} \label{prop:ellgrp} For $G$ a compact Lie group with $h = (h_1, h_2)$ a commuting pair of $S$-points of $G$, there exists some $\delta > 0$ such that for $(X_1, X_2), (X_1', X_2')$ commuting pairs of elements in $(B_{\delta}(\mf{g}^h))(S)$, then the set of $S$-points which simultaneously conjugate $h_ie^{X_i}$ to $h_ie^{X_i'}$ is contained in $C(h)(S)$.\end{prop}

\begin{proof} This is a direct corollary of the previous Proposition~\ref{prop:grp}. \end{proof}

\begin{prop} \label{prop:torus} For $G$ a connected Lie group with maximal torus $T<G$ and Weyl group $W=N(T)/T$, the ring of $W$-invariant holomorphic functions on $\mf{t}_{\C}$ is equivalent to the ring of $G$-invariant holomorphic functions on $\mf{g}_{\C}$. Moreover, the same result holds for holomorphic functions defined on $\epsilon$-balls about the origin, i.e. $\mc{O}(B_{\epsilon}(\mf{t}_{\C}))^W \simeq \mc{O}(B_{\epsilon}(\mf{g}_{\C}))^G$, where balls are taken about the origin. Taking $\epsilon \to 0$, the same result holds for germs of holomorphic functions at the origin. \end{prop}

\begin{proof} This statement, known as Chevalley restriction, is well-known for algebraic functions. One elegant way to observe the above statements for holomorphic functions, holomorphic functions defined on balls, or germs of holomorphic functions is to use the geometry of the Grothendieck--Springer resolution. See, for example,~\cite[Theorem~3.1.38]{ChrissGinzburg} for details. \end{proof}

%

\subsection{Super Lie groupoids and stacks}\label{appen:groupoids}

We recommend the appendix of~\cite{HKST} for background on stacks on the site of supermanifolds; we review some basic aspects of the theory below. 

A \emph{super Lie groupoid} $\mathcal{G}=\{G_1\rightrightarrows G_0\}$ consists of a supermanifold $G_0$ of objects, a supermanifold $G_1$ of morphisms, source and target maps $s,t\colon G_1 \to G_0$, a unit map $G_0\to G_1$ and a composition map $c\colon G_1\times_{G_0}G_1\to G_1$. The source map is required to be a submersion so that the fibered product $G_1\times_{G_0}G_1$ exists in supermanifolds. These data are required to satisfy the axioms of a groupoid object. 

A \emph{functor} $\mathcal{G}\to \mathcal{H}$ is the data of maps of supermanifolds $G_i\to H_i$ for $i=0,1$ satisfying the axioms of a functor. A \emph{natural transformation} between functors $\mathcal{G}\rightrightarrows \mathcal{H}$ is the data of a map of supermanifolds $G_0\to H_1$ satisfying the axioms of a natural transformation. 

Let ${\sf Grpd}$ denote the strict 2-category whose objects are super Lie groupoids, 1-morphisms are functors between super Lie groupoids, and 2-morphisms are natural transformations between functors.

\begin{ex}
Let a super Lie group~$G$ act on a supermanifold~$M$. We can form the \emph{action groupoid} denoted~$M\sq G$, whose objects are~$M$ and morphisms are~$G\times M$. The source map $s\colon G\times M\to M$ is the projection, and the target map $t\colon G\times M\to M$ is the action map. The unit $M\to G\times M$ is the inclusion along the identity element $e\in G$. 
\end{ex}

The basic objects of differential (super) geometry can be lifted to Lie groupoids by asking for structures on $G_0$ that are equivariant for the action by $G_1$. We give a few definitions demonstrating this. 

\begin{defn} A \emph{function} on a Lie groupoid is the data of a function $f\in C^\infty(G_0)$ satisfying the property $s^*f=t^*f$ on $G_1$. \end{defn}

\begin{defn} A \emph{vector bundle} on a Lie groupoid is the data of a vector bundle $V\to G_0$ and isomorphism of vector bundles $s^*V\simeq t^*V$ on $G_1$. This isomorphism of vector bundles must be compatible with composition, i.e., it satisfies a cocycle condition on $G_1\times_{G_0}G_1$. \end{defn}

\begin{defn} Similar to the previous definition, a \emph{sheaf} on a Lie groupoid is a sheaf $\F$ on $G_0$ and an isomorphism of sheaves $s^*\F\simeq t^*\F$ on $G_1$ satisfying a condition on $G_1\times_{G_0}G_1$. \end{defn}

The data of a super Lie groupoid can be repackaged in terms of the functor of points, giving a functor from supermanifolds to groupoid (in sets). This perspective leads to a useful enlargement of the category of super Lie groupoids.

\begin{defn}
A \emph{generalized super Lie groupoid}, denoted $\{ G_1 \rightrightarrows G_0 \}$, is a pair of sheaves on the site of supermanifolds with the source, target, unit and composition maps as before, which together define a functor ${\sf SMfld}^\op \to {\sf Grpd}$ to groupoids, given by~$S \mapsto \{ G_1(S) \rightrightarrows G_0(S) \}$.
\end{defn}

\begin{rmk} 
Functions, vector bundles, and sheaves can be defined over generalized super Lie groupoids by rephrasing the former definitions in terms of the functor of points. 
\end{rmk}



\begin{defn}
A \emph{stack} on the site of supermanifolds is a category fibered in groupoids over supermanifolds satisfying descent with respect to open covers (see Definition~\ref{defn:open}). In particular, for each~$S$, a stack assigns a groupoid called the \emph{$S$-points} of the stack, and to each map $S\to S'$, a stack assigns a functor between the associated groupoids. 
\end{defn}

\begin{ex} Any sheaf of sets defines a stack, where we regard a set as a \emph{discrete} category, having only identity morphisms. In particular, supermanifolds (regarded as sheaves) define stacks. A stack that is isomorphic to a supermanifold (regarded as a stack) is called \emph{representable}. \end{ex}

\begin{ex}\label{ex:stackify} The $S$-points of a generalized super Lie groupoid $\{G_1\rightrightarrows G_0\}$ define a \emph{prestack}: the fibered category whose fiber at $S$ is the groupoid $\{G_1(S)\rightrightarrows G_0(S)\}$ need not satisfy descent for open covers. There is a 2-functor called {\it stackification} that is left adjoint to the forgetful functor from stacks to prestacks; see~\cite{HKST} for details.  \end{ex}

\begin{defn}
The stackification of the prestack associated to a super Lie groupoid is the \emph{underlying stack} of the super Lie groupoid. We use square brackets to denote this stack, e.g.,~$[G_1\rightrightarrows G_0]$ or $[M\sq G]$.
\end{defn}

\begin{rmk} Although we distinguish notationally between super Lie groupoids and stacks, we will often commit the abuse of notation of identifying a supermanifold with its super Lie groupoid and associated stack, meaning the supermanifold~$M$ denotes the super Lie groupoid~$\{M\rightrightarrows M\}$ and its underlying stack~$[M\rightrightarrows M]$. \end{rmk}

\begin{rmk}\label{rmk:grpdtostack}
We review a useful trick that allows one to understand much of the geometry of a stack $[G_1\rightrightarrows G_0]$ in terms of the geometry of the super Lie groupoid $\{G_1\rightrightarrows G_0\}$. Using the adjunction between the forgetful functor and stackification, the groupoid of maps $[G_1\rightrightarrows G_0]\to \X$ for $\X$ a stack is equivalent to the groupoid of maps from the prestack associated to $\{G_1\rightrightarrows G_0\}$ to the stack $\X$. For example, if $\X=C^\infty(-)$, this shows that functions on the stack $[G_1\rightrightarrows G_0]$ are the same as invariant functions on the super Lie groupoid $\{G_1\rightrightarrows G_0\}$. For $\X=\Vect$, the stack classifying vector bundles, this shows that a vector bundle on $[G_1\rightrightarrows G_0]$ is the same data as a groupoid equivariant vector bundle on~$\{G_1\rightrightarrows G_0\}$. 
\end{rmk}

\begin{ex} The stackification of the prestack defined by the action groupoid $M\sq G$ has objects over $S$ defined as pairs $(P, \phi)$ for $P$ a principal $G$-bundle over $S$ and $\phi: P \to M$ a $G$-equivariant map. Morphisms are given by isomorphisms $f\colon (P_1,  \phi_1) \to (P_2, \phi_2)$, i.e., an isomorphism of $G$-bundles $f\colon  P_1 \to P_2$ over a base change $S\to S'$ such that $\phi_2 \circ f = \phi_1$. We observe that the prestack defined by~$M\sq G$ can be viewed as the sub-prestack of $[M\sq G]$ consisting of \emph{trivial} $G$-bundles. 
\end{ex}


\begin{ex}\label{defn:stackquoconn} For a $G$-manifold $M$, define the (generalized) super Lie groupoid
$$
M\nsq G:=(\Omega^1(-;\mathfrak{g})\times M)\sq G
$$
for the $G$-action on $M$ and the $G$-action on the presheaf $\Omega^1(-;\mf{g})$ by gauge transformations. To spell this out, the value on~$S$ consists of a map $S\to M$, and $A\in \Omega^1(S;\mathfrak{g})$. A morphism over~$S$ is determined by an $S$-point of~$G$, which acts on the map $S\to M$ by postcomposition, and acts on the $\mf{g}$-valued 1-form as
\beq
A\mapsto \Ad_{g^{-1}}A+g^{-1}dg. \label{eq:gaugedef}
\eeq
There is a forgetful map $M\nsq G\to M\sq G$ that forgets the $\mf{g}$-valued 1-form. In fact, $M\nsq G$ can be realized as the pullback (which is also a 2-pullback)
\beq
\begin{tikzpicture}[baseline=(basepoint)];
\node (A) at (0,0) {$M\nsq G$};
\node (B) at (3,0) {$\pt\nsq G$};
\node (D) at (0,-1.25) {$M\sq G$};
\node (E) at (3,-1.25) {$\pt\sq G$};
\draw[->] (A) to  (B);
\draw[->] (A) to (D);
\draw[->] (B) to (E);
\draw[->] (D) to (E);
\path (0,-.75) coordinate (basepoint);
\end{tikzpicture}\label{eq:pullbackofMnsqG}
\eeq 
where the horizontal arrows come from the map $M\to \pt$ and the vertical arrows forget the data of the connection. 
The stackification of the prestack associated to $M\nsq G$ has objects over $S$ defined as triples $(P, \nabla, \phi)$ for $P$ a principal $G$-bundle over~$S$, $\nabla$ a connection on $P$, and $\phi\colon P \to M$ a $G$-equivariant map. Morphisms are given by isomorphisms $f\colon (P_1, \nabla_1, \phi_1) \to (P_2, \nabla_2, \phi_2)$, i.e., an isomorphism of principal bundles $f\colon P_1 \to P_2$ over a base change $S\to S'$ such that $\phi_2 \circ f = \phi_1$ and $\nabla_1$ is carried to~$\nabla_2$ under $f$. The prestack defined by~$M\nsq G$ can be viewed as the sub-prestack of $[M\nsq G]$ consisting of \emph{trivial} $G$-bundles with connection. By stackifying the diagram~\ref{eq:pullbackofMnsqG}, the stack $[M\nsq G]$ can also be described as a 2-pullback.\end{ex}


\begin{ex} \label{ex:assoc}
Given a representation $\rho\colon G\to U(V)$, there is an associated vector bundle~$V$ on $[\pt\nsq G]$ with connection~$\rho(V)$. Using the description of $[\pt\nsq G]$ as the stack underlyling the super Lie groupoid $\Omega^1(-;\mf{g})\sq G$ and Remark~\ref{rmk:grpdtostack}, the vector bundle with connection $(V,\rho(\nabla))$ is determined by a $G$-equivariant vector bundle with invariant connection on the quotient groupoid $\Omega^1(-;\mf{g})\sq G$. At an $S$-point $A\in \Omega^1(S;\mf{g})$, this vector bundle is the $C^\infty(S)$-module $C^\infty(S)\otimes V$ with connection $d+\rho(A)$ where $\rho(A)$ is the image of $A\in \Omega^1(S;\mf{g})$ under the map $\Omega^1(S;\mf{g})\to \Omega^1(S;\End(V))$ determined by $d\rho\colon \mf{g}\to \End(V)$. For an $S$-point of $G$, we obtain an isomorphism of vector bundles over~$S$ given by the automorphisms of the $C^\infty(S)$-module $C^\infty(S)\otimes V$ from $S\to G\stackrel{\rho}{\to} \Aut(V)$, and the connection is transformed according to the uniquely determined $G$-action on $\Omega^1(S;\End(V))$ for which the map $\Omega^1(S;\mf{g})\to \Omega^1(S;\End(V))$ is~$G$-equivariant. 
\end{ex}

Generalizing the assignment $\mathcal{G}\mapsto [\mathcal{G}]$ that sends a super Lie groupoid to a stack from Example~\ref{ex:stackify}, there is a 2-functor from the 2-category of super Lie groupoids, functors and natural transformations to the bicategory of stacks, 
$$
[-]\colon {\sf LieGrpd}\to {\sf Stacks}. 
$$
The image of this functor consists of \emph{differentiable stacks}, as we review presently. 

\begin{defn}\label{defn:epi}
 A map of stacks $\X\to \Y$ is an \emph{epimorphism} if for every $S\to \Y$ there exists an open cover $\{S_\alpha\}$ of $S$ and a 2-commuting diagram 
\beq
\begin{tikzpicture}[baseline=(basepoint)];
\node (A) at (0,0) {$\coprod_\alpha S_\alpha$};
\node (B) at (3,0) {$\X$};
\node (D) at (0,-1.25) {$S$};
\node (E) at (3,-1.25) {$\Y.$};
\draw[->] (A) to  (B);
\draw[->] (A) to (D);
\draw[->] (B) to (E);
\draw[->] (D) to (E);
\path (0,-.75) coordinate (basepoint);
\end{tikzpicture}\nonumber
\eeq 
More informally, an epimorphism $\X\to \Y$ has local sections at every $S$-point of $\Y$. 
\end{defn}

\begin{defn} A map of stacks $\X\to \Y$ is \emph{representable} if for every $S\to \X$ for $S \in {\sf SMfld}$ there is a 2-pullback
\beq
\begin{tikzpicture}[baseline=(basepoint)];
\node (A) at (0,0) {$T$};
\node (B) at (3,0) {$\X$};
\node (D) at (0,-1.25) {$S$};
\node (E) at (3,-1.25) {$\Y$};
\draw[->] (A) to  (B);
\draw[->] (A) to (D);
\draw[->] (B) to (E);
\draw[->] (D) to (E);
\path (0,-.75) coordinate (basepoint);
\end{tikzpicture}\nonumber
\eeq
for a representable stack $T\simeq S\times_\Y \X$. 
A representable map of stacks $\X\to \Y$ is a \emph{submersion} if in addition the maps $T\to S$ are submersions for every $S\to \X$. A representable map $\X\to \Y$ is \emph{open} if $T\to S$ is the inclusion of an open sub supermanifold for every $S\to \X$ (compare Definition~\ref{defn:opensub}). 
\end{defn}

\begin{defn} \label{defn:genatlas} A \emph{differentiable stack} is a stack $\X$ for which there exists a representable submersion $X\to \X$ for $X\in {\sf SMfld}$ a representable stack. In this case, such a map $X\to \X$ is called an \emph{atlas}. A \emph{generalized atlas} is a representable submersion $X\to \X$ where $X$ is a sheaf (of sets) on the site of supermanifolds.  \end{defn}


\begin{lem}\label{lem:genatlas} An epimorphism $U \to \X$ is a generalized atlas if $U$ is a sheaf of sets and the maps $U\times_\X U\rightrightarrows U$ are submersions. \end{lem}

\begin{proof} 
The proof is identical to~\cite[Lemma~2.2]{BehrendXu}. \end{proof}

%
%

\begin{defn} A \emph{groupoid presentation} of a stack $\X$ is a generalized super Lie groupoid $\{G_1\rightrightarrows G_0\}$ whose underlying stack is equivalent to~$\X$, i.e., $\X\simeq [G_1\rightrightarrows G_0]$. When $\{G_1\rightrightarrows G_0\}$ is a super Lie groupoid (i.e., $G_0$ and $G_1$ are supermanifolds) $\X\simeq [G_1\rightrightarrows G_0]$ is a \emph{super Lie groupoid} presentation.  \end{defn}

\begin{prop} \label{prop:appenpresentation} A generalized atlas $\mc{U} \to \mc{X}$ determines the groupoid presentation~$\mc{U} \times_{\mc{X}} \mc{U} \rightrightarrows \mc{U}$. \end{prop}

\begin{proof} The proof is identical to~\cite[Proposition~2.2]{BehrendXu}. \end{proof}

\begin{ex}\label{eg:defnMap} We generalize the mapping object from Example~\ref{defn:mappresh} to stacks. Given stacks $\X$ and $\Y$, let $\Map(\X,\Y)$ denote the stack that assigns to~$S$ the groupoid of maps of stacks $S\times \X\to \Y$, and pulls these groupoids back (using precomposition) along base changes $S'\to S$. 
\end{ex}

\section{Supergeometry of the equivariant de~Rham complex}\label{sec:appende}

The goal of this section is to explain an interpretation of equivariant de~Rham cohomology using the geometry of supermanifolds. 
We claim no originality. The supergeometric interpretation of the de~Rham complex as functions on the odd tangent bundle was likely first observed by Kontsevich~\cite[\S7.2]{Kontsevich}, \cite{AKSZ}; our approach is influenced by the more recent account by Hohnhold, Kreck, Stolz and Teichner~\cite{HKST}. The connection between moduli spaces of gauge fields on $0|1$-dimensional supermanifolds and equivariant de~Rham cohomology was inferred by Mathai--Quillen~\cite{MathaiQuillen}. It has been applied in various places in the physics literature, notably the work of Kalkman~\cite{Kalkman}, Dijkgraaf--Moore~\cite[\S3]{DijkgraafMoore} and Blau--Thompson~\cite[\S2]{BlauThompson}. Variations on similar themes have since appeared in the mathematics literature, sometimes independently. The presentation below draws from the textbook by Guillemin--Sternberg~\cite{GuilleminSternberg}, the concise treatment by Wu~\cite[\S4.1-4.2]{Wu}, and the announcement of results by Han--Schommer-Pries--Stolz--Teichner~\cite{ST11,Stolzclass}. We were introduced to this intersection of super geometry and equivariant cohomology through an inspiring course taught by Stolz and Teichner in Bonn in the fall of~2009. 


\begin{rmk} \label{rmk:semi}\label{rmk:leftright} Our conventions for group actions are as follows. We follow the usual convention in geometry (as in in field theory~\cite{Freed5,strings1}) where symmetries act on the left. For a group $G$ acting on $\Sigma$ we get a left action on mapping objects
$$
G\times \SM(\Sigma,M)\to \SM(\Sigma,M),\qquad (f,\phi)\mapsto \phi\circ g^{-1},\quad g\in G, f\in \SM(\Sigma,M)
$$ 
by inversion composed with the precomposition $G$-action. Furthermore, for a left $G$-action on a (super) manifold, we obtain a right $G$-action on functions (or differential forms)  by pullback. 
 Finally, we recall a useful fact: an action of a semidirect product $H\ltimes G$ on $M$ is equivalent to an $H$-action on $M$ and a $G$-action $G\times M\to M$ that is $H$-equivariant for the diagonal $H$-action on $G\times M$.
\end{rmk}

\subsection{Models for equivariant de~Rham cohomology}\label{eq:equivariantdeRham}

We give a brief overview of equivariant de~Rham cohomology; for more detailed introductions we recommend the survey~\cite{MeinrenkenCartan} and the textbook~\cite{GuilleminSternberg}. Let $G$ be a compact Lie group acting on a manifold~$M$ on the left. This induces a right $G$-action on $\Omega^\bullet(M)$ by pullback of forms. For $X\in \mathfrak{g}_\C$, let $L_X$ denote the corresponding derivation on $\Omega^\bullet(M)$ and $\iota_X$ the contraction with~$X$. Let $d$ be the de~Rham differential on $\Omega^\bullet(M)$. These satisfy the relations,
\beq
\begin{array}{cc}
[d,d]=0,\ \ [L_X,d]=0,\ \ [d,\iota_X]=L_X,\ \ [\iota_X,\iota_Y]=0 \\ 
\phantom{\null}[L_X,\iota_Y]=\iota_{[X,Y]}, \ \  [L_X,L_Y]=L_{[Y,X]}=L_{-[X,Y]}
\end{array}\label{eq:Gstar}
\eeq
where the brackets are graded commutators for $L_X$ even and $\iota_X, d$ odd so that, e.g., $[d,\iota_X]=d\iota_X+\iota_Xd$ and $[L_X,\iota_Y]=L_X\iota_Y-\iota_YL_X$. The sign $[L_X,L_Y]=L_{[Y,X]}=L_{-[X,Y]}$ comes from the fact that $G$ acts on differential forms on the right; see Remark~\ref{rmk:leftright}. Let
$$
W^\bullet(\mathfrak{g}):=\bigoplus_{\bullet=2k+l}\Sym^k(\mf{g}_{\C}^\vee) \otimes \Lambda^l(\mathfrak{g}^\vee_\C)
$$ 
denote the Weil algebra of $\fg$. It carries an action of a Lie super algebra with generators and relations specified as in~\eqref{eq:Gstar} as well as a $G$-action that lifts the $\fg$-action determined by~$L_X$, $X\in \fg$. The (graded) tensor product $\Omega(M)\otimes W(\mf{g})$ again carries an action by this Lie super algebra and has a compatible $G$-action. In particular, the de~Rham differential and differential on $W(\fg)$ combine to a differential denoted $d_W$ on $\Omega(M)\otimes W(\mf{g})$. Define the \emph{horizontal subalgebra} $(\Omega(M)\otimes W(\mf{g}))_{\rm hor}\subset \Omega(M)\otimes W(\mf{g})$ as the subalgebra on which the contraction operators $\iota_X$ act by zero for all $X\in \mathfrak{g}_\C$. Define the \emph{basic subalgebra}, $(\Omega(M)\otimes W(\mf{g}))_{\rm bas}= (\Omega(M)\otimes W(\mf{g}))_{\rm hor}^G$ as the $G$-invariant subspace. In particular, $L_X$ acts by zero on the basic subalgebra for all $X\in \mathfrak{g}_\C$.  The chain complex $((\Omega(M)\otimes W(\mf{g}))_{\rm bas},d_W)$ is the \emph{Weil model} for equivariant cohomology. The Cartan model starts with the graded algebra
\beq
\bigoplus_{i+2j=\bullet} \Omega^i(M)\otimes \Sym^j (\mf{g}_{\C}^\vee)\simeq \bigoplus_{i+2j=\bullet} \Sym^j (\mf{g}_{\C}^\vee;\Omega^i(M)) \label{eq:Cartanvs}
\eeq
that we identify with polynomial functions on $\mathfrak{g}_{\C}$ valued in~$\Omega^\bullet(M)$. Define a differential~$d_C$ whose value on an element~$\alpha$ is 
\beq
(d_C\alpha)(X)=d(\alpha(X))-\iota_X\alpha(X),\qquad X\in \mathfrak{g}_\C\label{eq:Cartandiff}
\eeq
where $d$ is the ordinary de~Rham differential on forms, and $\iota_X$ denotes contraction with the vector field on~$M$ associated to~$X$ under the infinitesimal action of~$G$ on~$M$. For often use the shorthand notation for this differential,
\beq
\label{eq:Cartandiff2}
d_C=d-\iota. 
\eeq
We observe that $d_C^2=0$ if we restrict to the subalgebra of~\eqref{eq:Cartanvs} invariant under the operators~$L_X$ acting on differential forms by Lie derivative and on $\Sym(\fg_\C^\vee)$ by the coadjoint representation (acting on the right). This gives the chain complex $((\Omega(M)\otimes \Sym(\mf{g}_{\C}^\vee))^{G},d_C)$, which is the \emph{Cartan model} for equivariant cohomology. It is quasi-isomorphic to $((\Omega(M)\otimes W(\mf{g}))_{\rm bas},d_W)$, and these chain complexes compute the $G$-equivariant cohomology of $M$ with complex coefficients,
$$
\H_G(M;\C):=\H(M\times_G EG;\C)\simeq \H((\Omega(M)\otimes W(\mf{g}))_{\rm bas},d)\simeq \H((\Omega^\bullet(M)\otimes \Sym(\mf{g}_{\C}^\vee))^G,d_C).
$$

\begin{rmk} \label{rmk:pi0G}Let $G_0\triangleleft G$ denote the connected component of the identity. Then $\pi_0(G)\simeq G/G_0$ acts on $\H_{G_0}(M;\C)$, and $\H_G(M)\simeq {\H_{G_0}(M)}^{\pi_0(G)}$. 
\end{rmk}
\subsection{The superspace $\SM(\R^{0|1},M)$ and the de~Rham complex}\label{appen:TM1}

Recall the presheaf $\SM(\R^{0|1},M)$ from Example~\ref{defn:mappresh}. See~\cite[Proposition~3.1]{HKST} for a detailed proof of the following. 

\begin{lem}\label{lemPIT} Let $M$ be a manifold. A choice of coordinate on $\R^{0|1}$ determines an isomorphism of presheaves $\SM(\R^{0|1},M)\simeq \Pi TM$ that is natural in~$M$. In particular, $\Map(\R^{0|1},M)$ is representable, and its algebra of functions is naturally isomorphic to the super algebra of differential forms on~$M$, $C^\infty(\SM(\R^{0|1},M))\simeq \Omega^\bullet(M)$. \end{lem}

\begin{proof}[Proof sketch]
Taylor expansion in a choice of odd coordinate $\theta\in C^\infty(\R^{0|1})$ yields isomorphisms of super vector spaces
$$
C^\infty(S\times \R^{0|1})\simeq C^\infty(S)[\theta]\simeq C^\infty(S)\oplus \theta \cdot C^\infty(S).
$$
Hence a map $S\times \R^{0|1}\to M$ is determined by the data of a linear map $x\oplus \psi\colon C^\infty(M)\to C^\infty(S)\oplus \theta \cdot C^\infty(S) \simeq C^\infty(S\times \R^{0|1})$. The homomorphism property for $x\oplus \psi$ implies that $x\colon C^\infty(M)\to C^\infty(S)^\ev$ is an algebra homomorphism and $\psi\colon C^\infty(M)\to C^\infty(S)^\odd$ is an odd derivation with respect to~$x$. Such data is precisely an $S$-point of the odd tangent bundle,~$\Pi TM$, in~\eqref{eq:oddbundle}. By definition, functions on $\Pi TM$ are differential forms on~$M$ (see Example~\ref{ex:Batchelor}). 
\ep


\begin{rmk} Naturality can be rephrased as a natural isomorphism $\Pi T\simeq\Map(\R^{0|1},-)$, regarding these as functors from manifolds to supermanifolds. 
\end{rmk}

\begin{rmk} 
One often writes $x+\theta \psi$ for the composition $C^\infty(M)\stackrel{x\oplus \psi}{\to} C^\infty(S)\oplus \theta\cdot C^\infty(S)\simeq C^\infty(S\times \R^{0|1})$, and specifies an $S$-point of $\Pi TM\simeq \Map(\R^{0|1},M)$ as $(x,\psi)\in \Pi TM(S)$. 
\end{rmk}

\begin{rmk}\label{rmk:useful}
It will be useful to have an explicit identification between differential forms on~$M$ and natural transformations of presheaves $\SM(\R^{0|1},M)\to C^\infty(-)$, following Definition~\ref{defn:funs}. Given a differential form $\alpha=fdx_1dx_2\cdots dx_k\in \Omega^k(M)$, we obtain a natural transformation that assigns to an $S$-point $(x,\psi)\in \Pi TM(S)$ the function on~$S$:
$$
\alpha(x,\psi)=x(f)\psi(x_1)\cdots \psi(x_k)\in C^\infty(S).
$$
It is easy to verify this formula determines a natural transformation from $S$-points of $\SM(\R^{0|1},M)$ to $C^\infty(-)$, and that all such natural transformations are generated by these (it suffices to check this locally in~$M$). 
\end{rmk}

Recall the definition of the Lie group $\E^{0|1}\rtimes \C^\times$ from Example~\ref{ex:E01} and its action on~$\R^{0|1}$ from~\eqref{eq:AutR01}. Consider the left action
\beq
\E^{0|1}\rtimes \C^\times\times \Map(\R^{0|1},M)\to \Map(\R^{0|1},M),\label{eq:PiTaction}
\eeq
 by precomposition. 

\begin{lem}\label{lem:PiT}
The pullback of $f\in \Omega^k(M)\hookrightarrow \Omega^\bullet(M)\simeq C^\infty(\Map(\R^{0|1},M))$ along~\eqref{eq:PiTaction} is 
\beq
&&f\mapsto \bar\mu^{-k} f-(\bar\mu\eta) (\bar\mu^{-(k+1)}df)=\bar\mu^{-k} (f-\eta df)\in C^\infty(\E^{0|1}\rtimes \C^\times)\otimes \Omega^\bullet(M)\label{eq:itsaformula}
\eeq
for the standard complex coordinates $(\mu,\bar\mu)$ on $\C^\times$ and the standard odd coordinate $\eta$ on~$\E^{0|1}$. We use the isomorphism $C^\infty(\E^{0|1}\rtimes \C^\times\times \Map(\R^{0|1},M))\simeq C^\infty(\E^{0|1}\rtimes \C^\times)\otimes \Omega^\bullet(M)$ in~\eqref{eq:itsaformula}. 
\end{lem}

\begin{proof}
Taking care with the signs for our action conventions (see Remark~\ref{rmk:leftright}), an $S$-point formula for the action~\eqref{eq:PiTaction} is
\beq
\E^{0|1}\times \Pi TM\to \Pi TM,&\quad& (\eta,x,\psi)\mapsto (x-\eta\psi,\psi) \label{eq:PiTaction1}\\
\C^\times \times \Pi TM\to \Pi TM,&\quad& (\mu,\bar\mu, x,\psi)\mapsto (x,\bar\mu^{-1}\psi),\label{eq:PiTaction2}
\eeq
for $(x,\psi)\in \Pi TM(S)\simeq \Map(\R^{0|1},M)(S)$, $\eta\in \E^{0|1}(S),$ and $(\mu,\bar\mu)\in \C^\times(S).$ 
Using Remark~\ref{rmk:useful}, one finds that the $\E^{0|1}$-action is $f\mapsto f-\eta df$ for $d$ the de~Rham differential, and the pullback along the~$\C^\times$-action is $f\mapsto \bar\mu^{-k} f$ for  $f\in \Omega^k(M)$. The result follows. 
\ep

\begin{cor} 
The derivative at zero of the $\E^{0|1}$-action in~\eqref{eq:PiTaction} is the odd derivation $-d$ on $\Omega^\bullet(M)\simeq C^\infty(\Map(\R^{0|1},M)$ where $d$ is the de~Rham differential. The derivative at zero of the $\C^\times$-action in~\eqref{eq:PiTaction} is the even derivation $-\deg$ on $\Omega^\bullet(M)$ where $\deg(f)=kf$ for~$f\in \Omega^k(M)$.
\end{cor}


\subsection{The superstack $\SM(\R^{0|1},[M\nsq G])$}\label{sec:equivdeRham}

We recall the generalized super Lie groupoid $M\nsq G$ from Example~\ref{defn:stackquoconn} and its underlying stack $[M\nsq G]$, as well as the mapping stack construction from Example~\ref{eg:defnMap}. We refer to~\cite[{\S}A.2]{Stoffel} for background on group actions on stacks and quotients by these actions. 

\begin{lem} \label{lem:01present}There are groupoid presentations
\beq
&&\SM(\R^{0|1},[M\nsq G])\simeq [\Map(\R^{0|1},M)\times \Omega^1(-\times \R^{0|1};\mf{g})\sq \Map(\R^{0|1},G)]\label{eq:grpd01}
\eeq
and 
$$
\SM(\R^{0|1},[M\nsq G])\sq\E^{0|1}\rtimes \C^\times \simeq [\Map(\R^{0|1},M)\times \Omega^1(-\times \R^{0|1};\mf{g})\sq (\E^{0|1}\rtimes \C^\times)\ltimes \Map(\R^{0|1},G)]
$$
where the $\E^{0|1}\rtimes \C^\times$-action on the stack $\SM(\R^{0|1},[M\nsq G])$ is through its precomposition action on $\R^{0|1}$, and the quotient by this action is taken in stacks. 
\end{lem}
\begin{proof}
Consider the map
\beq
&&\Map(\R^{0|1},M)\times \Omega^1(-\times \R^{0|1};\mf{g})\to \Map(\R^{0|1},[M\nsq G])\label{eq:01epi}
\eeq
that sends an $S$-point $\phi\in \Map(\R^{0|1},M)(S)$, $A\in \Omega^1(S\times \R^{0|1};\fg)$ of the source to the trivial $G$-bundle $G\times S\times \R^{0|1}\to S\times \R^{0|1}$ with connection determined by $A$ and $G$-equivariant map  given by the composition 
\beq
G\times S\times \R^{0|1}\stackrel{\id_G\times \phi}{\longrightarrow} G\times M\stackrel{{\rm act}}{\to} M.\label{eq:01maptoM}
\eeq
We claim that~\eqref{eq:01epi} is an epimorphism of stacks (see Definition~\ref{defn:epi}). This follows from the fact $\R^{0|1}$ is contractible, so for any principal $G$-bundle $P\to S\times \R^{0|1}$ there exists an open cover $\{S_\alpha\}$ of~$S$ and trivializations of $G$-bundles $P|_{S_\alpha\times \R^{0|1}}\simeq G\times S\times \R^{0|1}$.

With the epimorphism~\eqref{eq:01epi} in hand, by Lemma~\ref{lem:genatlas} the claimed groupoid presentation~\eqref{eq:grpd01} follows if the pullback of~\eqref{eq:01epi} along itself is $\Map(\R^{0|1},M)\times \Omega^1(-\times \R^{0|1};\mf{g})\times \Map(\R^{0|1},G)$ and the structure maps in this pullback are the projection and action maps. The 2-pullback consists of $F\colon (P,\nabla,\phi)\Rightarrow (P',\nabla',\phi')$ for trivial $G$-bundles $P,P'$ with connections $\nabla,\nabla'$ and $G$-equivariant maps $\phi\colon P\to M$, $\phi'\colon P'\to M$ that are related by an isomorphism $F$ of $G$-bundles with connection compatible with the map to $M$. The data of such an isomorphism of trivial $G$-bundles is an $S$-point of~$\Map(\R^{0|1},G)$. Given a source $S$-point of this isomorphism, the target is determined by acting on the connection by gauge transformations and on the map to $M$ through modifying the map $\id_G$ in~\eqref{eq:01maptoM} by $g\in \Map(\R^{0|1},G)(S)$. Hence, the 2-pullback is as required and this proves the groupoid presentation~\eqref{eq:grpd01}. 

For the second statement, we recall that stackification is a left adjoint. Hence, it preserves 2-colimits and in particular quotients. Therefore it suffices to compute the quotient in generalized super Lie groupoids, where the claimed equivalence is clear.
\ep

Ultimately our goal is to characterize the $(\E^{0|1}\rtimes \C^\times)\ltimes \Map(\R^{0|1},G)$-action on the algebra of functions $
C^\infty(\Map(\R^{0|1},M)\times \Omega^1(-\times \R^{0|1};\fg))$ using the action in the groupoid presentation from Lemma~\ref{lem:01present}. It turns out (see Lemma~\ref{lem:stalk}) that it suffices to calculate this action for functions on a subsheaf associated with the inclusion
\beq
\underline{\fg}\times \Pi \fg&\hookrightarrow& \Omega^1(-\times \R^{0|1},\fg),\label{eq:WZinclusion}\\
 (\underline{\fg}\times \Pi \fg)(S)\ni (X,\chi) &\mapsto& d\theta\otimes \chi+\theta d\theta\otimes X\in \Omega^1(S\times \R^{0|1})\otimes \fg\simeq \Omega^1(S\times \R^{0|1};\fg). \nonumber
\eeq
Using the notation for $S$-points 
$$
(X,\chi)\in (\underline{\fg}\times \Pi \fg)(S), \ (x,\psi)\in \Pi TM(S), \ \eta\in \E^{0|1}(S), \ (\mu,\bar\mu)\in \C^\times(S), \ g\in G(S), \gamma\in \Pi \fg(S)
$$
define the actions
\beq
\E^{0|1}\times \underline{\fg}\times \Pi \fg\to \underline{\fg}\times \Pi \fg,&\quad& (\eta,X,\chi)\mapsto (X,\chi+\eta X) \label{eq:act1}\\
\C^\times\times \underline{\fg}\times \Pi \fg\to \underline{\fg}\times \Pi \fg, &\quad& (\mu,\bar\mu,X,\chi)\mapsto (\bar\mu^{-2}X,\bar\mu^{-1}\chi)\label{eq:act2}\\
G\times \underline{\fg}\times \Pi \fg\to \underline{\fg}\times \Pi \fg,&\quad& (g,X,\chi)\mapsto (\Ad_{g^{-1}}X,\Ad_{g^{-1}}\chi)\label{eq:act3}\\
\Pi\fg\times\underline{\fg}\times \Pi \fg\to \underline{\fg}\times \Pi \fg,&\quad& (\gamma,X,\chi)\mapsto (X-[\gamma,\chi],\chi+\gamma)\label{eq:act4}\\
G\times \Pi TM\to \Pi TM,&\quad& (g,x,\psi)\mapsto (gx,g_*\psi)\label{eq:act5}\\
\Pi\fg\times \Pi TM\to \Pi TM,&\quad& (\gamma,x,\psi)\mapsto (x,\psi+\gamma_M).\label{eq:act6}
\eeq
In~\eqref{eq:act6}, $\gamma_M$ is the odd derivation associated with the infinitesimal $G$-action by $\gamma$.

\begin{lem}\label{lem:01action}

The action of $(\E^{0|1}\rtimes \C^\times)\ltimes \Pi TG$ on $\Pi TM \times \Omega^1(-\times \R^{0|1};\fg)$ from the groupoid in Lemma~\ref{lem:01present} restricts along the inclusion 
$$
\Pi TM \times \underline{\fg}\times \Pi \fg\hookrightarrow \Pi TM \times \underline{\fg}\times \Pi \fg\times \Omega^1(-\fg)\simeq \Pi TM \times \Omega^1(-\times \R^{0|1};\fg)
$$ 
determined by~\eqref{eq:WZinclusion}. On this restriction, the action is given by~\eqref{eq:PiTaction1}-\eqref{eq:PiTaction2} and~\eqref{eq:act1}-\eqref{eq:act6}.
\end{lem}
\bp
Using Remark~\ref{rmk:semi} and standard facts about group actions (e.g., $\Ad_{g^{-1}}[\gamma,\chi]=[\Ad_{g^{-1}}\gamma,\Ad_{g^{-1}}\chi]$), one finds that the claimed formulas do indeed give an action of the semidirect product $(\E^{0|1}\rtimes \C^\times)\ltimes \Map(\R^{0|1},G)$ on $\Pi TM\times \underline{\fg}\times\Pi \fg$. We observe that the actions we wish to compare are diagonal on $\Pi TM\times (\underline{\fg}\times \Pi \fg)$ and $\Pi TM\times \Omega^1(-\times \R^{0|1};\fg)$ so we can compare the actions on each factor. 

To see that the $\E^{0|1}\rtimes \C^\times$-action is the correct one, we invoke Lemma~\ref{lem:PiT} for the action on $\Pi TM$. The action~\eqref{eq:act1} and~\eqref{eq:act2} on $\underline{\fg}\times \Pi \fg$ is the restriction of the action on $\Omega^1(S\times \R^{0|1};\fg)$ by the computation
\beq
&&\phantom{B} \theta d\theta \otimes X+d\theta\otimes \chi\mapsto \bar\mu^{-2}(\theta-\bar\mu\eta)d\theta\otimes X+\bar\mu^{-1} d\theta \otimes \chi=\bar\mu^{-2} \theta d\theta X+\bar\mu^{-1}d\theta (\chi+\eta X). \label{eq:actiononconnection}
\eeq
The action of $\Map(\R^{0|1},G)\simeq G\ltimes \Pi \fg\simeq \Pi TG$ on $\Pi TM$ comes from applying the functor $\Pi T\simeq \Map(\R^{0|1},-)$ to the action map $G\times M\to M$, which recovers the claimed formulas~\eqref{eq:act5} and~\eqref{eq:act6}. Finally, the action on $\underline{\fg}\times \Pi \fg\subset \Omega^1(-\times \R^{0|1};\fg)$ by restriction of gauge transformations can be described as follows. Consider $ge^{\theta\gamma} \in \Pi TG(S)\simeq (G\ltimes\Pi \fg)(S)\simeq \Map(\R^{0|1},G)(S)$ and
$$
(ge^{\theta\gamma})^{-1}d(ge^{\theta\gamma})=d\theta \otimes\gamma,\quad \Ad_{-\theta\gamma}=-\theta[\gamma,-]
$$
where $d$ is the $C^\infty(S)$-linear (or fiberwise) de~Rham differential on $S\times \R^{0|1}$. By a short computation using the gauge transformation formula~\eqref{eq:gaugedef}, we obtain~\eqref{eq:act4} and~\eqref{eq:act5}, proving the lemma. 
\ep

\subsection{A super-geometric description of the Weil model}

\begin{lem}\label{lem:stalk}
There is an isomorphism of algebras
\beq
C^\infty(\Omega^1(-\times \R^{0|1};\fg))\simeq C^\infty(\underline{\fg}\times \Pi \fg)\simeq \mc{O}(\fg_\C)\otimes \Lambda \fg^\vee_\C.\label{eq:functionsonforms}
\eeq
\end{lem}
\bp
Define the sheaf $S\mapsto \Omega^1_S(\R^{0|1};\fg)$ by the exact sequence
\beq
&&0\to\Omega^1(S;\fg)\stackrel{p^*}{\to} \Omega^1(S\times \R^{0|1};\fg)\to \Omega^1_S(\R^{0|1};\fg)\to 0 \label{eq:exact}
\eeq
where $p\colon S\times \R^{0|1}\to S$ is the projection. 
Using the standard coordinate $\theta$ on $\R^{0|1}$, this sequence splits by pulling back along the inclusion $i_0\colon S\hookrightarrow S\times \R^{0|1}$ at $\theta=0$. We observe the isomorphism of sheaves $\Omega^1_{(-)}(\R^{0|1};\fg)\simeq \underline{\fg}\times \Pi\fg$ given by
\beq
(\underline{\fg}\times \Pi\fg)(S)\ni (X,\chi)\mapsto \theta d\theta \otimes X+d\theta\otimes \chi\in \Omega^1_S(\R^{0|1};\fg).\label{eq:chinote}
\eeq
This gives
\beq
&&\Omega^1(-\times \R^{0|1};\fg)\simeq \Omega^1(-;\fg)\times \Omega^1_S(\R^{0|1};\fg)\simeq \Omega^1(-;\fg)\times\underline{\fg}\times \Pi\fg.\nonumber
\eeq
The claimed isomorphism then follows from Lemma~\ref{lem:holodesc} and Corollary~\ref{cor:functionsonOmega}.
%
\ep

\begin{rmk} In the same spirit as Remark~\ref{rmk:useful}, there is an explicit identification between functions~\eqref{eq:functionsonforms} and natural transformations $\Omega^1(-\times \R^{0|1};\fg)\to C^\infty(-)$. It suffices to specify this assignment for $z\in \fg_\C^\vee \subset \mathcal{O}(\fg_\C)\subset C^\infty(\Omega^1(-\times \R^{0|1};\fg))$ and $\zeta\in \Lambda^1\fg^\vee_\C\subset \Lambda\fg_\C^\vee$. We may also assume that the $S$-point of $\Omega^1(-\times\R^{0|1};\fg)$ is of the form $A=d\theta \chi+\theta d\theta X\in \Omega^1(S\times \R^{0|1};\fg)$ using the notation of~\eqref{eq:chinote}. Then $z,\zeta\colon \Omega^1(-\times \R^{0|1};\fg)\to C^\infty(-)$ are given by
$$
z(A)=\langle z,X\rangle\in C^\infty(S)^\ev\qquad \zeta(A)=\langle \zeta,\chi\rangle\in C^\infty(S)^\odd
$$
using from the canonical pairing $\langle-,-\rangle\colon \fg_\C^\vee\otimes \fg_\C\to \C$. 
\end{rmk}

\begin{rmk}
We recall that in the category of supermanifolds with structure sheaves defined over~$\C$, the left-invariant vector fields on a Lie group are a complex Lie super algebra. 
\end{rmk}

\begin{lem}\label{lem:superalgebra}
Given a (real) Lie algebra $\fg$, consider the Lie super algebra over~$\C$ with generators $\iota_X,L_X$ and $d$ for $X\in \fg_\C$ satisfying the relations~\eqref{eq:Gstar}. This Lie super algebra is isomorphic to the Lie super algebra of the normal subgroup
$$
\E^{0|1}\ltimes \Map(\R^{0|1},G)\triangleleft (\E^{0|1}\rtimes \C^\times)\ltimes \Map(\R^{0|1},G).
$$
The Lie super algebra of $(\E^{0|1}\rtimes \C^\times)\ltimes \Map(\R^{0|1},G)$ adds even generators $N$ and $\bar N$, where $N$ is central, and 
$$
[\bar N,d]=d,\quad [\bar N,L_X]=0,\quad [\bar N,\iota_X]=-\iota_X.
$$
\end{lem}

\bp
Using that $\Map(\R^{0|1},G)\simeq \Pi \fg\rtimes G$, we see that the vector space underlying the Lie algebra of $\E^{0|1}\ltimes \Map(\R^{0|1},G)$ is $\Pi \C \oplus \fg_\C \oplus \Pi \fg_\C$, and we take the obvious isomorphism with the vector space underlying the Lie algebra from~\eqref{eq:Gstar}. It remains to check that this isomorphism is compatible with the brackets. We recall that the Lie super algebra of a semidirect product $H\ltimes K$ as a vector space is $\mathfrak{h}\oplus\mathfrak{k}$, but the direct sum bracket is modified by $[h,k]=h\cdot k$, where $h\cdot k$ denotes action of $h\in \mathfrak{h}$ on $k\in \mathfrak{k}$ by a derivation coming from the derivative of the $H$-action on $K$ at the identity. From this it is a straightforward computation to verify the brackets are as claimed in~\eqref{eq:Gstar}. 

For the second statement, the generators $N$ and $\bar N$ correspond to the invariant vector fields $\partial_\mu$ and $\partial_{\bar \mu}$ on $\C^\times$, respectively. The claimed brackets again follow directly from the semidirect product description. 
\ep

Using the groupoid presentation from Lemma~\ref{lem:01present}, the sheaf of functions on the stack $\Map(\R^{0|1},[M\nsq G])$ carries an action by $\E^{0|1}\ltimes \Map(\R^{0|1},G)$. By Lemma~\ref{lem:stalk} and~\ref{lem:superalgebra}, this gives an action of the Lie algebra~\eqref{eq:Gstar} on $\Omega^\bullet(M)\otimes \mc{O}(\fg_\C)\otimes \Lambda \fg^\vee$. It remains to show this action is suitably compatible with the action defining the Weil complex. To state the required compatibility, consider the inclusion
\beq
&&\Omega(M)\otimes W(\fg) \hookrightarrow  \Omega(M)\otimes \mathcal{O}(\fg_\C)\otimes \Lambda\fg_\C^\vee \simeq C^\infty(\Map(\R^{0|1},M)\times \Omega^1(-\times \R^{0|1};\fg))\label{eq:Weilsub}
\eeq
induced by the inclusion of polynomial functions into holomorphic ones, $\Sym(\mf{g}_{\C}^\vee)\subset \mathcal{O}(\fg_\C)$.

\begin{lem}\label{lem:superWeil}
Consider the $(\E^{0|1}\rtimes \C^\times)\ltimes \Map(\R^{0|1},G)$-action on $\Map(\R^{0|1},M)\times \Omega^1(-\times \R^{0|1};\fg)$ from the groupoid in Lemma~\ref{lem:01present}. The $-k$th weight space for the $\C^\times$-action on
\beq
C^\infty(\Map(\R^{0|1},M)\times \Omega^1(-\times \R^{0|1};\fg))\simeq \Omega(M)\otimes \mathcal{O}(\fg_\C)\otimes \Lambda\fg_\C^\vee \label{eq:thealgebra}
\eeq
is equal to the image of the $k$th graded subspace of the Weil complex under~\eqref{eq:Weilsub}. The action of the Lie super algebra~\eqref{eq:Gstar} on the Weil complex is identified under Lemma~\ref{lem:superalgebra} with the infinitesimal action of $\E^{0|1}\ltimes \Map(\R^{0|1},G)$ on $\Map(\R^{0|1},M)\times \Omega^1(-\times \R^{0|1};\fg)$. 
\end{lem}
\bp 
We use Lemma~\ref{lem:01action} to compute the action of $(\E^{0|1}\rtimes \C^\times)\ltimes \Map(\R^{0|1},G)$ on the algebra~\eqref{eq:thealgebra}. Throughout, we shall fix a basis of $\fg$ which determines elements $z_i\in \fg_\C^\vee\subset \mathcal{O}(\fg_\C)$ and $\zeta_i\in \Lambda^1\fg_\C^\vee\subset \Lambda\fg_\C^\vee$ that generate a dense subalgebra of $\mathcal{O}(\fg_\C)\otimes \Lambda\fg_\C^\vee$. 

We start by showing that 
\beq
\bigoplus_{i+j=k} \Omega^i(M)\otimes W^j(\fg)\simeq C^\infty(\Map(\R^{0|1},M)\times \Omega^1(-\times \R^{0|1};\fg))^{\C^{\times}_{-k}}\label{eq:sumofweights}
\eeq
where the superscript ${\C^{\times}_{-k}}$ indicates the subspace on which $\C^\times$ acts by $\bar\mu^{-k}$, i.e., the $-k$th weight space. From~\eqref{eq:PiTaction2}, the $\C^\times$-action on $\Omega^k(M)$ is $\omega\mapsto \bar\mu^{-k}\omega$. From~\eqref{eq:act2}, the action on $\mathcal{O}(\fg_\C)\otimes \Lambda\fg_\C^\vee $ is determined by $z_i\mapsto \bar\mu^{-2} z_i$ and $\zeta_i\mapsto \bar\mu^{-1}\zeta_i$. Hence, the $-k$th weight space of this $\C^\times$-action is the $k$th degree subspace of the Weil complex. 
Furthermore, the infinitesimal $\C^\times$-action restricts to the (negative) grading derivation~$\bar N$ on the Weil complex that acts by $-k$ on a degree $k$ element (the action by $ N$ is trivial). 

Using Lemma~\ref{lem:superalgebra}, we match the Lie super algebra actions on either side of~\eqref{eq:sumofweights} using the construction of this action from~\cite[Ch.~3 and~4]{GuilleminSternberg}. Their description of the Weil complex comes from an action of the Lie super algebra with relations~\eqref{eq:Gstar} on~$\Omega(M)$ and also on~$W(\fg)$, leading to an action on the tensor product $\Omega(M)\otimes W(\fg)$. We observe that the $(\E^{0|1}\rtimes \C^\times)\ltimes \Map(\R^{0|1},G)$-action is diagonal on $\Omega(M)\otimes (\mathcal{O}(\fg_\C)\otimes \Lambda\fg_\C^\vee)$. Hence, it remains to match the infinitesimal action on $\Omega(M)$ and $\mathcal{O}(\fg_\C)\otimes \Lambda\fg_\C^\vee$ with the Lie algebra actions defining the Weil complex.

To start, the $\E^{0|1}$-action on $\Omega^\bullet(M)$ is the same as in Lemma~\ref{lem:PiT}, and hence by (minus) the de~Rham differential. The $\E^{0|1}$-action on $\mathcal{O}(\fg_\C)\otimes \Lambda\fg_\C^\vee$ can be computed using~\eqref{eq:act1}. In terms of our chosen basis of $\fg$, the infinitesimal action is $(z_i,\zeta_i)\mapsto (0,-z_i)$. This restricts to (minus) the Koszul differential on~$W(\fg)$. The de~Rham differential and Koszul differential combine to give the (negative) Weil differential in the description from~\cite[Ch.~4]{GuilleminSternberg}.

The $G\ltimes\Pi \fg\simeq \Map(\R^{0|1},G)$-action on $\Omega^\bullet(M)\simeq C^\infty(\Map(\R^{0|1},M))$ can be computed using the functor of points formulas~\eqref{eq:act5} and~\eqref{eq:act6}. As computed there, the action comes from applying the functor $\Map(\R^{0|1},-)$ to the action map $G\times M\to M$, and hence can be computed in terms of the map on differential forms,
$$
\Omega^\bullet(M)\simeq C^\infty(\Map(\R^{0|1},M))\to C^\infty(\Map(\R^{0|1},G\times M))\simeq \Omega^\bullet(G)\otimes \Omega^\bullet(M).
$$
One finds that~$\fg$ acts on $\Omega^\bullet(M)$ by Lie derivatives along the vector fields associated with the infinitesimal $G$-action, and $\Pi \fg$ (as a Lie algebra with trivial bracket) acts on $\Omega^\bullet(M)$ by contraction with these vector fields. The action on $\mathcal{O}(\fg_\C)\otimes \Lambda\fg_\C^\vee$ comes from~\eqref{eq:act3} and~\eqref{eq:act4}: the infinitesimal $G$-action is by the adjoint representation, and the $\Pi \fg$-action is determined by contraction operators. These restrict along~\eqref{eq:sumofweights} to the required action on the Weil algebra~\cite[Equations 3.6, 3.7 and~3.8]{GuilleminSternberg}. 
\ep


Since $\Map(\R^{0|1},G)$-invariant functions on $\Map(\R^{0|1},M)\times \Omega^1(-\times \R^{0|1};\fg)$ can be identified with basic forms, we obtain the following. 

\begin{cor}\label{cor:Weil}
There is an isomorphism of complexes
$$
((\Omega(M)\otimes W(\mf{g}))_{\rm bas},d_W) \simeq \left(\bigoplus_{k\in \Z}\left((C^\infty(\Map(\R^{0|1},M)\times \Omega^1(-\times \R^{0|1};\fg)))^{\C^\times_{-k}}\right)^{\Map(\R^{0|1},G)},-Q\right)
$$
where $Q$ denotes the infinitesimal action of~$\E^{0|1}$ and $(\Omega(M)\otimes W(\mf{g}))_{\rm bas}$ are the basic forms in the Weil complex. Hence, the right hand side computes the $G$-equivariant de~Rham cohomology of $M$ with complex coefficients. 
\end{cor}

\subsection{The Wess--Zumino gauge and the Cartan model}\label{sec:appenCartan}

Corollary~\ref{cor:Weil} gives a super geometric description of equivariant de~Rham cohomology in the Weil model. There is also a description of Cartan model, using the following restriction of $\fg$-valued 1-forms on $\R^{0|1}$. 
\begin{defn}
Define the subsheaf of $\Omega^1(-\times \R^{0|1};\fg)$
\beq
&&\Omega^1(-\times \R^{0|1};\fg)_\wz:=\underline{\fg}\times \Omega^1(-;\fg)\hookrightarrow \underline{\fg}\times \Pi \fg\times \Omega^1(-;\fg)\simeq \Omega^1(-\times \R^{0|1};\fg)\label{eq:diffformdecomp}
\eeq
where the inclusion is along $0\in \Pi\fg$. An $S$-point $A\in \Omega^1(S\times \R^{0|1};\fg)$ is in the \emph{Wess--Zumino gauge} if it lies in the subsheaf $A\in \Omega^1(S\times \R^{0|1};\fg)_\wz$. 
\end{defn}

Said differently, a $\fg$-valued 1-form $A$ on $S\times \R^{0|1}$ is in the Wess--Zumino gauge if $\chi=0$ in the notation of~\eqref{eq:chinote}. The terminology \emph{Wess--Zumino gauge} comes from supersymmetric gauge theory; e.g., see~\cite[\S4.2]{Wu}. The condition $\chi=0$ is a choice of gauge fixing, meaning any $\fg$-valued 1-form is gauge equivalent to one in the Wess--Zumino gauge:

\begin{lem}\label{lem:WZ01}
Consider the groupoid presentation of $\Map(\R^{0|1},[M\nsq G])\sq \E^{0|1}\rtimes \C^\times$ from Lemma~\ref{lem:01present}. The inclusion of the full subgroupoid with objects in the subsheaf $\Map(\R^{0|1},M)\times \Omega^1(-\times \R^{0|1};\fg)_\wz$ yields an equivalent super Lie groupoid, and hence the composition
\beq
\Map(\R^{0|1},M)\times \Omega^1(-\times \R^{0|1};\fg)_\wz&\hookrightarrow& \Map(\R^{0|1},M)\times \Omega^1(-\times \R^{0|1};\fg)\label{eq:inclusion01}\\
&\to& \Map(\R^{0|1},[M\nsq G])\sq \E^{0|1}\rtimes \C^\times \nonumber
\eeq
is also an atlas for the stack. 
\end{lem}
\bp
One need only check that the inclusion of objects~\eqref{eq:inclusion01} is an essential surjection. This in turn follows from~\eqref{eq:act5}: for $A\in \Omega^1(S\times \R^{0|1};\fg)$, there is a (unique) element $\gamma=-\chi\in \Pi \fg(S)$ so that $\gamma\cdot A\in \Omega^1(S\times \R^{0|1};\fg)_\wz$, where we have used the notation from~\eqref{eq:chinote}. 
\ep

Following Lemma~\ref{lem:stalk}, we obtain:

\begin{cor}\label{cor:Cartanvs}
There is an isomorphism of super algebras,
\beq
&&C^\infty(\Map(\R^{0|1},M)\times \Omega^1(-\times \R^{0|1};\fg)_\wz)\simeq \Omega(M)\otimes \mathcal{O}(\fg_\C). \label{eq:Cartanvss}
\eeq
\end{cor}

The right hand side above is a completion of the graded vector space~\eqref{eq:Cartanvs} underlying the Cartan complex, and we will again extract this vector space by taking a direct sum over $\C^\times$-weight spaces; see Lemma~\ref{lem:superCartan} below. However, the differential is a bit more subtle: the $\E^{0|1}$-action on $\Map(\R^{0|1},M)\times \Omega^1(-\times \R^{0|1};\fg)$ does not preserve the subspace $\Map(\R^{0|1},M)\times \Omega^1(-\times \R^{0|1};\fg)_\wz$, and hence its infinitesimal action cannot be used to define an odd derivation on~\eqref{eq:Cartanvss}. Instead, we only get a well-defined $\E^{0|1}$-action on the $G$-quotient. 

\begin{lem}\label{lem:Cartandiff}
The $\E^{0|1}$-action on $\Map(\R^{0|1},M)\times \Omega^1(-\times \R^{0|1};\fg)$ descends to an action on the subquotient $(\Map(\R^{0|1},M)\times \Omega^1(-\times \R^{0|1};\fg)_\wz)/G$. This action is generated by (minus) the Cartan differential on
\beq
C^\infty(\Map(\R^{0|1},M)\times \Omega^1(-\times \R^{0|1};\fg)_\wz/G)&\simeq &C^\infty(\Map(\R^{0|1},M)\times \Omega^1(-\times \R^{0|1};\fg)_\wz)^{G}\nonumber\\
&\simeq& (\Omega(M)\otimes \mathcal{O}(\fg_\C))^{G}.\nonumber
\eeq
\end{lem}

\bp
From~\eqref{eq:act4}, the $\Pi\fg$-action on $\Omega^1(-\times\R^{0|1};\fg)$ is free, and by~\eqref{eq:diffformdecomp} we get
$$
(\Map(\R^{0|1},M)\times \Omega^1(-\times\R^{0|1};\fg))/\Pi \fg\simeq \Map(\R^{0|1},M)\times\Omega^1(-\times\R^{0|1};\fg)_\wz.
$$
From the inclusions of normal subgroups,
 $$\Pi\fg \triangleleft G\ltimes \Pi\fg\simeq \Map(\R^{0|1},G)\triangleleft \E^{0|1}\ltimes \Map(\R^{0|1},G)
$$
we see that the $\E^{0|1}\ltimes\Map(\R^{0|1},G)$-action on $\Map(\R^{0|1},M)\times\Omega^1(-\times\R^{0|1};\fg)$ gives a $G$ action on the $\Pi\fg$-quotient $\Map(\R^{0|1},M)\times \Omega^1(-\times\R^{0|1};\fg)_\wz$, and an $\E^{0|1}$-action on the further $G$-quotient
\beq\label{eq:WZinvtfuns}
&&\resizebox{.9\textwidth}{!}{$(\Map(\R^{0|1},M)\times \Omega^1(-\times\R^{0|1};\fg)_\wz)/G\simeq (\Map(\R^{0|1},M)\times \Omega^1(-\times\R^{0|1};\fg))/\Map(\R^{0|1},G). $}
\eeq
This shows that the action descends as claimed. For the second part of the statement, we identify functions on the coarse quotient with invariant functions,
\beq
\resizebox{\textwidth}{!}{$
C^\infty((\Map(\R^{0|1},M)\times \Omega^1(-\times\R^{0|1};\fg))/\Map(\R^{0|1},G))\simeq C^\infty(\Map(\R^{0|1},M)\times \Omega^1(-\times\R^{0|1};\fg))^{\Map(\R^{0|1},G)}$}\nonumber
\eeq
so that the $\E^{0|1}$-action is inherited from the previous $\E^{0|1}$-action on $\Map(\R^{0|1},M)\times \Omega^1(-\times\R^{0|1};\fg)$. From Lemma~\ref{lem:superWeil}, $\Map(\R^{0|1},G)$-invariant functions are the horizontal, basic forms for the $G$-action on~$M$. Furthermore, the $\E^{0|1}$-action is generated by (minus) the Weil differential on these horizontal basic forms. Consider the commutative square 
\beq
\resizebox{\textwidth}{!}{$
\begin{tikzpicture}[baseline=(basepoint)];
\node (A) at (0,0) {$(\Omega(M)\otimes W(\mf{g}))_{\rm bas}$};
\node (B) at (8,0) {$(\bigoplus_{i+2j=\bullet} \Omega^i(M)\otimes \Sym^j (\mf{g}_{\C}^\vee))^{G}$};
\node (C) at (0,-1.2) {$C^\infty(\Map(\R^{0|1},M)\times\Omega^1(-\times\R^{0|1};\fg))^{\Map(\R^{0|1},G)}$};
\node (D) at (8,-1.2) {$C^\infty(\Map(\R^{0|1},M)\times\Omega^1(-\times\R^{0|1};\fg))^{G}$}; 
\draw[->] (A) to node [above] {$\sim$} (B);
\draw[->] (A) to  (C);
\draw[->] (C) to node [above] {$\sim$} (D);
\draw[->] (B) to (D);
\path (0,-.75) coordinate (basepoint);
\end{tikzpicture}$}\nonumber
\eeq 
where the vertical arrows come from the dense inclusion~\eqref{eq:Weilsub} and the dense inclusion using Corollary~\ref{cor:Cartanvs}. The upper isomorphism is the standard isomorphism between basic forms and the graded vector space underlying the Cartan model. The lower isomorphism comes from~\eqref{eq:WZinvtfuns}. It is well-known that the upper isomorphism sends the Weil differential to the Cartan differential. Hence, the lower isomorphism does as well. 
\ep

\begin{lem}\label{lem:superCartan}
There is an isomorphism of complexes
$$
((\Omega(M)\otimes \Sym(\fg_\C^\vee))^{G},d_C) \simeq \left(\bigoplus_{k\in \Z}\left((C^\infty(\Map(\R^{0|1},M)\times \Omega^1(-\times \R^{0|1};\fg)_\wz))^{\C^\times_{-k}}\right)^{G},-Q\right).
$$
where $Q$ denotes the infinitesimal action of~$\E^{0|1}$ from Lemma~\ref{lem:Cartandiff} and $((\Omega(M)\otimes \Sym(\fg_\C^\vee))^{G},d_C)$ is the Cartan complex. Hence, the right hand side computes the $G$-equivariant de~Rham cohomology of $M$ with complex coefficients. 
\end{lem}

\bp
Corollary~\ref{cor:Cartanvs} along with the computations going into~\eqref{eq:sumofweights} show that
$$
\bigoplus_{j+2k=\bullet}\Omega^j(M)\otimes \Sym^k(\fg_\C^\vee)\simeq \bigoplus_{\bullet\in \Z}(C^\infty(\Map(\R^{0|1},M)\times \Omega^1(-\times \R^{0|1};\fg)_\wz))^{\C^\times_{-\bullet}}.
$$
Taking $G$-invariants on either side then leads to the isomorphism of graded vector spaces in the statement of the lemma. From Lemma~\ref{lem:Cartandiff}, the $\E^{0|1}$-action is generated by the Cartan differential. \ep

\begin{rmk} 
From~\eqref{eq:act1}-\eqref{eq:act6}, one can also understand the Cartan differential before taking $G$-invariants. Namely in the description $d_C=d-\iota$, the de~Rham differential $d$ is the infinitesimal action by $\E^{0|1}$, which does not preserve the Wess--Zumino gauge. The contraction operator $\iota$ is the (unique) compensating infinitesimal gauge transformation from the proof of Lemma~\ref{lem:WZ01} so that the combination~$d-\iota$ together preserves the Wess--Zumino gauge. Note that $d-\iota$ squares to a Lie derivative operator, which in the super geometric interpretation is an infinitesimal gauge transformation. Hence, taking functions invariant under gauge transformations generated by~$G$ results in $d-\iota$ becoming a square zero operator. 
\end{rmk}

\subsection{Equivariant de~Rham cohomology as a sheaf on a super moduli space}\label{sec:01sheaf}

For a $G$-manifold $M$, the $G$-equivariant map $M\to\pt$ determines a morphism of stacks
\beq
&&\pi\colon \SM(\R^{0|1},[M\nsq G])\sq\E^{0|1}\rtimes \C^\times\to \SM(\R^{0|1},[\pt\nsq G])\sq\E^{0|1}\rtimes \C^\times \label{eq:01pi}
\eeq
where we view the target of this map as the moduli stack of $G$-bundles with connection on the super point~$\R^{0|1}$. By Corollary~\ref{cor:Weil} and Lemma~\ref{lem:superCartan}, the direct image sheaf $\pi_*C^\infty_{\SM(\R^{0|1},[M\nsq G])\sq\E^{0|1}\rtimes \C^\times}$ contains the information of the $G$-equivariant de~Rham cohomology of $M$. However, this direct image sheaf is not a sheaf of chain complexes; it is not even a graded algebra. As a warm-up to the elliptic case, we explain how to define a complex of sheaves on a moduli space of $G$-bundles with connection on $\R^{0|1}$ whose global sections compute the $G$-equivariant de~Rham cohomology of $M$. 

We recall that there is a canonical line bundle on $[\pt\sq \C^\times]$ from the homomorphism~\eqref{eq:GL1rep2}. This can be pulled back along the map induced by the homomorphism $\E^{0|1}\rtimes \C^\times\to \C^\times$,
$$
\Map(\R^{0|1},[\pt\nsq G])\sq \E^{0|1}\rtimes \C^\times\to [\pt\sq \C^\times], 
$$
to produce a line bundle over the source, denoted $\omega^{1/2}$. The $\C^\times$-invariant global sections of $\omega^{k/2}\otimes \pi_*C^\infty_{\SM(\R^{0|1},[M\nsq G])\sq\E^{0|1}\rtimes \C^\times}$ are precisely the weight $k$-subspaces
$$
\Gamma(\omega^{k/2}\otimes \pi_*C^\infty_{\SM(\R^{0|1},[M\nsq G])\sq\E^{0|1}\rtimes \C^\times})^{\C^\times} \simeq C^\infty(\Map(\R^{0|1},M)\times \Omega^1(-\times \R^{0|1};\fg))^{\C^\times_k}
$$
featured in Corollary~\ref{cor:Weil}. Hence, $\bigoplus_k \omega^{k/2}\otimes \pi_*C^\infty_{\SM(\R^{0|1},[M\nsq G])\sq\E^{0|1}\rtimes \C^\times}$ is a sheaf of graded algebras. 

For $Q$ the generator of the $\E^{0|1}$-action to define a differential, we must pass to the coarse quotient of the stack $\SM(\R^{0|1},[\pt\nsq G])\sq\E^{0|1}\rtimes \C^\times$ by gauge transformations. From Corollary~\ref{cor:Weil}, we have
$$
(\bigoplus_{k\in \Z}\Gamma(\omega^{k/2}\otimes \pi_*C^\infty_{\SM(\R^{0|1},[M\nsq G])\sq\E^{0|1}\rtimes \C^\times})^{\Map(\R^{0|1},G)\rtimes \C^\times} ,Q) \simeq ((\Omega(M)\otimes W(\mf{g}))_{\rm bas},d_W).
$$
Using the identification between invariant functions on the stack and functions on the coarse quotient, we equivalently understand the left hand side as global sections of a sheaf of cdgas on the coarse quotient $\SM(\R^{0|1},[\pt\nsq G])/\Map(\R^{0|1},G)\rtimes \C^\times$. This type of construction involving tensor powers of (grading) line bundles and coarse quotients is essential in the equivariant elliptic setting, where the goal is to obtain a sheaf of chain complexes over a certain moduli space rather than just a single chain complex. 

\begin{rmk}
One also obtains a sheaf of chain complexes by taking $\Map(\R^{0|1},G_0)\rtimes \C^\times$-invariant sections, i.e., sections over the coarse quotient $\SM(\R^{0|1},[\pt\nsq G])/\Map(\R^{0|1},G_0)\rtimes \C^\times$ for $G_0<G$ the connected component of the identity. Since $\Map(\R^{0|1},G)/\Map(\R^{0|1},G_0)\simeq \pi_0(G)$, the result computes the $G_0$-equivariant de~Rham cohomology of $M$ as a representation of $\pi_0(G)$; see Remark~\ref{rmk:pi0G}. 
\end{rmk}

\section{The de~Rham model for equivariant elliptic cohomology over $\C$}\label{appen:ell}

In this section all sheaves and stacks are on the site of (ordinary) smooth manifolds. The goal is to review the de~Rham model for equivariant elliptic cohomology over~$\C$ developed in~\cite{BET0}.  In brief, complex analytic equivariant cohomology assigns to any $G$-manifold~$M$ a sheaf of commutative differential graded algebras (cdgas) $\dEll_G(M)$ on the moduli stack $\Bun_G(\EE)$ of flat $G$-bundles on complex analytic elliptic curves. 


Recall the stack of elliptic curves $\Mell \simeq [\HH/\SL_2(\Z)]$. The Hodge line bundles $\omega^{\otimes k}$ have as holomorphic sections functions $F\in \mathcal{O}(\HH)$ satisfying
$$
F\left(\frac{a\tau+b}{c\tau+d}\right)=(c\tau+d)^{k}F(\tau)\qquad {\rm for}\qquad \left[\begin{array}{cc} a & b \\ c & d\end{array}\right]\in \SL_2(\Z).
$$
As stated in the introduction, we always impose meromorphicity at the cusp so that global sections of $\omega^{\otimes k}$ are weakly holomorphic modular forms of weight $k$. We take the following grading convention on the ring of modular forms.

\begin{defn}
Define the graded commutative algebra of modular forms, $\MF=\bigoplus_{k\in \Z} \MF^k$ whose graded subspaces are
$$
\MF^k:=\left\{\begin{array}{cl} \begin{smallmatrix} {\rm weakly\ holomorphic\ modular } \\ {\rm  forms\ of\ weight\ }-k/2 \end{smallmatrix} & k\ {\rm even}\\ 0 & k \ {\rm odd}. \end{array}\right.
$$ 
\end{defn}

We require the following extension of equivariant differential forms to sheaves. 

\begin{defn}\label{defn:valuesinasheaf} 
For $H$ a complex manifold, let $\Omega_{G,M}$ denote the sheaf of commutative differential graded algebras (cdgas) in $\mathcal{O}$-modules on~$H\times \fg_\C$ defined by
\beq
U\mapsto \mathcal{O}(U;\Omega^j(M)[\beta,\beta^{-1}])^G, \qquad U\subset H\times \fg_\C,\ \ |\beta|=-2\label{eq:sheafone}
\eeq
equipped with the Cartan differential $Q=d-\beta^{-1}\iota$ (see~\S\ref{eq:equivariantdeRham}). Define the sheaf on $H$
$$
V\mapsto \Omega^{\bullet}_G(M;\mathcal{O}(V)[\beta,\beta^{-1}]):=(i_0^{-1}\Omega_{G,M})(V)\qquad V\subset H
$$
as the inverse image sheaf of~\eqref{eq:sheafone} along the inclusion at zero,
$$
i_0\colon H\simeq H\times \{0\}\hookrightarrow H\times \fg_\C. 
$$
\end{defn}

\begin{rmk}
When $H=\pt$, we observe that 
$$
\Omega^{\bullet}_G(M;\mathcal{O}(\pt)[\beta,\beta^{-1}])=\mathcal{O}_0(\mf{g}_{\C}; \Omega^j(M;\C[\beta,\beta^{-1}]))^G
$$
is the germ of $G$-invariant holomorphic functions at $0\in \fg_\C$ with values in 2-periodic differential forms. We emphasize that $\mathcal{O}_0(\mf{g}_{\C})^G\subset \mathcal{O}_0(\mf{g}_{\C}; \Omega^j(M;\C[\beta,\beta^{-1}]))^G$ is in degree zero. The standard algebra of equivariant differential forms consists of \emph{polynomial} functions on~$\fg_\C$, where the monomial generators are in degree~2. The Bott class $\beta$ allows us to place these generators in degree zero, and then Definition~\ref{defn:valuesinasheaf} extends this both by a completion to holomorphic functions on $\fg_\C$, and then to a sheaf on $H$. 
\end{rmk}

Next we recall the moduli stack $\Bun_G(\EE)$ of $G$-bundles on elliptic curves. We define this as a stack on the site of smooth manifolds; it has a holomorphic structure constructed in~\cite[\S2]{BET0} that we comment on briefly below. Let $\mc{C}^2(G)$ denote the sheaf on the site of manifolds whose $S$-points are $S$-families of homomorphisms $S \times \Z^2 \to G$; note $G$ acts via postcomposition by conjugation and $\SL_2(\Z)$ acts via precomposition through its standard action on $\Z^2$. These actions commute, so there is an $\SL_2(\Z)$-action on the coarse quotient $\mc{C}^2[G] := \mc{C}^2(G)/G_0$ by the $G_0$-action. Define the sheaf of $G$-bundles on elliptic curves by
\begin{eqnarray}
\widetilde{\Bun}_G(\EE)&:=&\HH \times \mc{C}^2[G] \nonumber \\
\Bun_G(\EE)&:=&[\widetilde{\Bun}_G(\EE)\sq \SL_2(\Z)\times \pi_0(G)], \label{eq:BunG}
\end{eqnarray}
for the usual $\SL_2(\Z)$-action on $\HH$, the $\SL_2(\Z)$-action on $\mathcal{C}^2[G]$ defined above, and $\pi_0(G)\simeq G/G_0$ acting from the residual $G$-action on $\mathcal{C}^2[G]=\mathcal{C}^2(G)/G_0$. When $G=T$ is a torus,
$$
\Bun_G(\EE)\simeq [\HH\times T\times T\sq \SL_2(\Z)]\simeq \EE^{\vee}\times_{\Mell}\cdots \times_{\Mell}\EE^\vee
$$
where the complex structure on $\Bun_G(\EE)$ is inherited from the identification with the ${\rm rk}(T)$-fold fibered product of the universal dual elliptic curve. 

Next we define a convenient family of open subsheaves $U_h^\epsilon \subset \widetilde{\Bun}_G(\EE)$ for $h\colon \Z^2\to G$ and $\epsilon>0$.
 Let $B_\epsilon\subset \fg$ be an $\epsilon$-ball centered at $0\in \fg$ for an $\Ad$-invariant metric on~$\fg$. For $h=(h_1,h_2)\in \mathcal{C}^2(G)\subset G\times G$ a pair of commuting elements, define $B_\epsilon^h$ as the pullback
\beq
\begin{tikzpicture}[baseline=(basepoint)];
\node (A) at (0,0) {$B_\epsilon^h$};
\node (B) at (5,0) {$B_\epsilon\times B_\epsilon$};
\node (C) at (0,-1.2) {$\mathcal{C}^2(G)$};
\node (D) at (5,-1.2) {$G\times G$}; 
\draw[->] (A) to  (B);
\draw[->] (A) to  (C);
\draw[->] (C) to (D);
\draw[->] (B) to node [right] {$\exp_h$} (D);
\path (0,-.75) coordinate (basepoint);
\end{tikzpicture}
\label{eq:GGcovmfld}
\eeq 
where $\exp_h$ is the exponential map shifted by~$h$,
$$
(X_1,X_2)\mapsto (h_1e^{X_1},h_2e^{X_2})\qquad h=(h_1,h_2),\ (X_1,X_2)\in B_\epsilon\times B_\epsilon\subset \fg\times \fg.
$$ 
Observe that $B_\epsilon^h \subset \mathcal{C}^2(G)$ is an open subsheaf, being the pullback of an open subsheaf. Consider now the $G$-action on $\mathcal{C}^2(G)$ by conjugation and take the $G_0$-orbit $\widetilde{U}_h^{\epsilon} := G_0 \cdot B_\epsilon^h$. By Lemma~\ref{lem:orbitopen}, this is a $G_0$-invariant open subsheaf $\widetilde{U}_h^{\epsilon} \subset \mathcal{C}^2(G)$. By Proposition~\ref{prop:opensofquot}, this uniquely specifies an open subsheaf $U_h^{\epsilon}\subset \mathcal{C}^2[G]\simeq \mathcal{C}^2(G)/G_0$ sitting in the pullback diagram
\beq
\begin{tikzpicture}[baseline=(basepoint)];
\node (A) at (0,0) {$\widetilde{U}_h^{\epsilon}$};
\node (B) at (5,0) {$\mathcal{C}^2(G)$};
\node (C) at (0,-1.2) {$U_h^{\epsilon}$};
\node (D) at (5,-1.2) {$\mathcal{C}^2[G].$}; 
\draw[->] (A) to  (B);
\draw[->] (A) to  (C);
\draw[->] (C) to (D);
\draw[->] (B) to (D);
\path (0,-.75) coordinate (basepoint);
\end{tikzpicture}
\label{eq:Utildefn}
\eeq 
In Corollary~\ref{cor:topofCG} we prove that $\{U_h^\epsilon\}$ is a basis for the topology of $\mathcal{C}^2[G]$ and in Corollary~\ref{cor:topofBunG} we prove that opens of the form $V \times U_h^{\epsilon}$ provide a basis for the topology of $\widetilde{\Bun}_G(\EE)$. 


Let $M^h\subset M$ denote the submanifold fixed by the subgroup of $G$ generated by~$h$, and define $G_0^h=G_0\bigcap G^h$ for $G^h<G$ the subgroup fixed by the conjugation action by~$h$ and $G_0<G$ the connected component of the identity. We observe that the Lie algebra of $G_0^h$ is $\mf{g}^h\subset \mf{g}$, the subalgebra invariant under the adjoint action by~$h$. Let $\mf{t}_{\mf{g}^h}\subset \mf{g}^h$ denote a maximal commuting subalgebra. Let $U = V \times Y$ be an open subsheaf of $\widetilde{\Bun}_G(\EE)$ for $V\subset \HH$ an open subset and $Y\subset \mathcal{C}^2[G]$ an open subsheaf.

\begin{defn} \label{defn:ellcocycle}
Define the sheaf of commutative differential graded algebras $\dEll^\bullet_G(M)$ on $\Bun_G(E)$ as having sections $\alpha\in \dEll^\bullet_G(M)(U)$ given by a collection $\{\alpha_{h}\}_{[h]\in Y}$
\beq
\alpha_{h}\in  \Omega^{\bullet}_{G_0^{h}} (M^{h};\mathcal{O}(V)[\beta, \beta^{-1}])\label{eq:cocycledef}
\eeq
for the cdga defined in Definition~\ref{defn:valuesinasheaf} for all $[h]=[h_1,h_2]\in \mathcal{C}^2[G]$ in the image of $Y\to \mathcal{C}^2[G]$. 
These data are required to satisfy:
\begin{enumerate}
\item \emph{Invariance}: for all $g\in G_0$, we have
$$
\alpha_{h}=g^*\alpha_{ghg^{-1}}
$$
where $g^*$ is the pullback along left multiplication by $g$, $M^{h}\to M^{ghg^{-1}}$. 

\item \emph{Analyticity}: for $h'=(h_1e^{X_1},h_2e^{X_2})\in U_{h}^\epsilon\subset Y$ for $\epsilon$ sufficiently small, we have 
\beq
&&\alpha_{h'}(X)={\rm res} (\alpha_{h}(X+(X_1-\tau X_2)))\in \Omega^{\bullet}_{G_0^{h'}}(M^{h'};\O(V)[\beta,\beta^{-1}])\label{eq:analyticity}
\eeq
where ${\rm res}\colon \Omega^{\bullet}_{G_0^h}(M^{h};\O_{\HH}[\beta,\beta^{-1}])(V) \to \Omega^{\bullet}_{G_0^{h'}}(M^{h'};\O(V)[\beta,\beta^{-1}])$ is the restriction map associated to the inclusions $M^{h'}\hookrightarrow M^h$ and $G_0^{h'}<G_0^h$ (see Lemma~\ref{lem:BG}).
\end{enumerate}
There is a compatible differential $Q$ acting on sections given by the Cartan differentials on each~\eqref{eq:cocycledef}. 

We promote this to an $\SL_2(\Z)\times \pi_0(G)$-equivariant sheaf as follows. For $\gamma\in \SL_2(\Z)$, $[g]\in \pi_0(G)$
%
defining an isomorphism $V \times Y=U\to U'=V'\times Y'$ between open subsheaves of $\HH\times\mathcal{C}^2[G]$,
we obtain a map $\dEll^k_G(M)(U')\to \dEll^k_G(M)(U)$ from
\beq
\Omega^{\bullet}_{G_0^{\gamma\cdot ghg^{-1}}} (M^{\gamma\cdot ghg^{-1}};\mathcal{O}(V')[\beta, \beta^{-1}]) {\longrightarrow} \Omega^{\bullet}_{G_0^h} (M^{h};\mathcal{O}(V)[\beta, \beta^{-1}])\label{eq:equiv}
\eeq
for each $h=(h_1,h_2)$. The map~\eqref{eq:equiv} uses the pullback of holomorphic functions~$\mathcal{O}(V')\to  \mathcal{O}(V)$ along the map $V\to V'$ determined by $\gamma$, together with the isomorphisms $M^h\simeq M^{\gamma\cdot ghg^{-1}}$, $G_0^{h}\simeq G_0^{\gamma\cdot ghg^{-1}}$. We then modify this pullback map by rescaling the Lie algebra~$\mf{g}^h$ by $c\tau+d$ (so $z\in (\mf{g}^h)^\vee$ is sent to $\frac{z}{c\tau+d}$), and sending $\beta$ to $\beta/(c\tau+d)$.
\end{defn}


%
%

\subsection{The topology of $\Bun_G(\EE)$}

\begin{prop} For $L \in {\sf Mfld}$, $\ell \in L$, $h=(h_1,h_2) \in \mc{C}^2(G)$, and a $G_0$-invariant open subsheaf $U \subset L \times \mc{C}^2(G)$ such that $(\ell, h)\in U$, there exists an open neighborhood $V \subset L$ containing $\ell$ and some $\epsilon>0$ such that $U \supset V \times \widetilde{U}_h^{\epsilon}$. \end{prop}

\begin{proof}

Consider the surjective morphism $G_0 \times B_{\alpha}(0)^2 \to \widetilde{U}_h^{\alpha}$, where $B_{\alpha}(0)$ denotes the ball of radius $\alpha$ about $0$ in $\mf{t}_{\mf{g}^h}$ and the morphism sends $(g, Y_1, Y_2) \mapsto (g h_1e^{Y_1} g^{-1}, g h_2e^{Y_2} g^{-1})$. Then pulling back $U$ under the similar morphism $L \times G_0 \times B_{\alpha}(0)^2 \to L \times \mc{C}^2(G)$ yields an open submanifold, which we denote $U'$, of the smooth manifold $L \times G_0 \times B_{\alpha}(0)^2$. Since $U$ is $G_0$-invariant (by assumption) its pullback $U'$ must be of the form $G_0 \times U''$, where $U'' \subset L \times B_{\alpha}(0)^2$. Moreover, by assumption, we have that $U'' \ni (\ell, 0)$ so that as $L \times B_{\alpha}(0)^2$ has the product topology. Hence there exists some open $V \subset L$ containing $\ell$ and some $\epsilon > 0$ such that $U'' \supset V \times B_{\epsilon}(0)^2$. Since $U$ is a concrete subsheaf of $L \times \mc{C}^2(G)$, it is determined by its points $U(\pt)\subset \mc{C}^2(G)(\pt)$ as a subset. Hence it necessarily contains $V \times \widetilde{U}_h^{\epsilon}$, as desired. \end{proof}

By Proposition~\ref{prop:opensofquot}, there is an equivalence of categories between $G_0$-invariant opens of $L \times \mc{C}^2(G)$ and opens of $L \times \mc{C}^2[G]$. Applying this to the previous proposition with $L = \pt $ and $L=\HH$, we obtain the following corollaries. 

\begin{cor} \label{cor:topofCG} The open subsheaves $U_h^{\epsilon} \subset \mc{C}^2[G]$ provide a basis for its topology. \end{cor}

\begin{cor} \label{cor:topofBunG} The open subsheaves $V \times U_h^{\epsilon} \subset \widetilde{\Bun}_G(\EE)$ provide a basis for its topology. \end{cor}

\bibliographystyle{amsalpha}
\bibliography{references}

\end{document}